\newtheorem{thm}{Theorem}[section]
\newtheorem{lem}[thm]{Lemma}
\newtheorem{cor}[thm]{Corollary}
\newtheorem{prop}[thm]{Proposition}
\newtheorem{rem}{Remark}[section]
\newtheorem{defn}{Definition}[section]
\numberwithin{equation}{section}
\title{Gauge transformation for the kinetic derivative nonlinear Schr\"odinger equation on the torus}
\author{Nobu KISHIMOTO\thanks{Research Institute for Mathematical Sciences, Kyoto University, Kyoto 606-8502, JAPAN} ~and~ Yoshio TSUTSUMI\thanks{Institute for Liberal Arts and Sciences, Kyoto University, Kyoto 606-8501, JAPAN}}
\date{\empty}
\begin{document}

\maketitle

\begin{abstract}
We consider the kinetic derivative nonlinear Schr\"odinger equation, which is a one-dimensional nonlinear Schr\"odinger equation with a cubic derivative nonlinear term containing the Hilbert transformation.
In our previous work, we proved small-data global well-posedness of the Cauchy problem on the torus in Sobolev space $H^s$ for $s>1/2$ by combining the Fourier restriction norm method with the parabolic smoothing effect, which is available in the periodic setting.
In this article, we improve the regularity range to $s>1/4$ for the global well-posedness by constructing an effective gauge transformation.
Moreover, we remove the smallness assumption by making use of the dissipative nature of the equation.
\end{abstract}

\renewcommand{\thefootnote}{\fnsymbol{footnote}}
%\footnotetext{\emph{Date}: \today.}
\footnotetext{2020 \emph{Mathematics Subject Classification}. 35Q55.}%Primary 35Q55, Secondary 35A01, 35B30.}
\footnotetext{\emph{Key words and phrases}. kinetic derivative nonlinear Schr\"odinger equation; global well-posedness; low regularity; gauge transformation.}
\renewcommand{\thefootnote}{\arabic{footnote}}
%

%%%%%%%%%%%%%%%%%%%%%%%%%%%%%%%%%%%%%%%%%%
%     Section 1. Introduction
%%%%%%%%%%%%%%%%%%%%%%%%%%%%%%%%%%%%%%%%%%

\section{Introduction}
\label{sec:intr}

We consider the Cauchy problem for the kinetic derivative NLS (nonlinear Schr\"odinger equation) on $\mathbf{T}=\mathbf{R}/2\pi \mathbf{Z}$:
\begin{alignat}{2}
   \partial_tu - i \partial_x^2u &= \alpha \partial_x \big[ |u|^2u\big] +\beta \partial_x\big[\mathcal{H}(|u|^2)u\big] , &\qquad t > 0, ~ &x \in \mathbf{T}, \label{kdnls} \\
   u(0, x) &= \phi (x), & &x \in \mathbf{T} , \label{ic}
\end{alignat}
where $\alpha ,\beta \in \mathbf{R}$ are constants, $\mathcal{H}=\mathcal{F}^{-1}(-i\, \mathrm{sgn}(n))\mathcal{F}$ is the Hilbert transformation.
In the case $\beta =0$, \eqref{kdnls} becomes the standard derivative NLS which has been extensively studied.
In this paper, we focus our attention on the case $\beta <0$.

The equation \eqref{kdnls} models propagation of weakly nonlinear and dispersive Alfv\'en waves in a plasma (see, e.g., \cite{DP77} and \cite{MW88}).
In particular, the kinetic term $\beta \partial_x [\mathcal{H}(|u|^2)u]$ represents the effect of resonance between the wave modulation and the ions, and the constant $\beta$ can be positive, negative or zero according to each physical situation.
The case $\beta <0$ corresponds to the situation where the magnetic field loses energy to resonant ion particles.

%%%%%%%%%%%%%%%%%%%%%%%

\subsection{Dissipativity and parabolicity}

We first observe that the $L^2$ norm of a solution to \eqref{kdnls} satisfies (formally) the following identity:
\begin{equation}\label{id:L2dissipation}
\frac{d}{dt}\| u(t)\|_{L^2}^2\ =\ \beta \big\| D_x^{\frac12}(|u(t)|^2)\big\|_{L^2}^2,\qquad t>0,
\end{equation}
where $D_x:=|\partial_x|=\mathcal{H}\partial_x$.
In particular, the equation has \emph{dissipativity} when $\beta <0$.
Note that this property is available in both the periodic and the non-periodic cases.

We next see \emph{parabolicity} of the equation in the periodic setting.
Here, an important role is played by the resonance function: for a $u\bar{u}u$-type (gauge-invariant, cubic) nonlinearity in the Schr\"odinger equation, it is defined by
\[ \Phi (n_1,n_2,n_3):=(n_1+n_2+n_3)^2-n_1^2+n_2^2-n_3^2=2(n_1+n_2)(n_2+n_3).\]
The resonance function appears as the time oscillation $e^{it\Phi}$ in the first nonlinear term of the Picard iteration scheme.
On the one hand, for non-resonant frequency interactions ($\Phi \neq 0$), one has a kind of smoothing effect through the Duhamel integration.
On the other hand, resonant interactions ($\Phi =0$), which do not have such a smoothing mechanism, sometimes become dominant and affect the first-order (linear) behavior of the solution.
For our problem \eqref{kdnls}, the resonant part of the nonlinearity is calculated as follows:
\begin{align*}
&\sum_{n\in \mathbf{Z}}e^{inx}\frac{1}{2\pi}\sum_{\begin{smallmatrix} n_1+n_2+n_3=n \\ (n_1+n_2)(n_2+n_3)=0\end{smallmatrix}} \Big[ \alpha in +\beta in \big( -i\, \mathrm{sgn}(n_1+n_2)\big) \Big] \hat{u}(n_1)\hat{\bar{u}}(n_2)\hat{u}(n_3)\\
&\quad =\ \frac{\alpha}{\pi} \| u\|_{L^2}^2\partial_xu+\frac{\beta}{2\pi}\| u\|_{L^2}^2D_xu+\text{(terms without loss of derivative)}.
\end{align*}
While the first term on the right-hand side just induces a spatial translation, the second one is a first-order parabolic term and thus totally changes the type of the equation. 
We will indeed make full use of this parabolic structure to obtain our results.
It is unclear whether similar parabolicity is observed in the non-periodic setting.
In fact, the influence of the resonant interactions is not as clear as in the periodic case, because the set of (pure) resonant frequencies $\{ (\xi_1,\xi_2,\xi_3)\in \mathbf{R}^3:\Phi (\xi_1,\xi_2,\xi_3)=0\}$ is of measure zero.

%%%%%%%%%%%%%%%%%%%%%%%

\subsection{Previous results on well-posedness}

The Fourier restriction norm method is known as an ideal tool to effectively capture the aforementioned ``non-resonant smoothing effect'' for general nonlinear dispersive equations.
However, it is also known that the method is not sufficient by itself to overcome the loss of derivative in \eqref{kdnls}.
To see this, we first recall that one can heuristically gain derivatives at most corresponding to $|\Phi|^{1/2}$ by the Fourier restriction norm method.
We then take a nonlinear term $u\bar{u}\partial_xu$ and consider the high-low interaction $u_{\rm{low}}\bar{u}_{\rm{low}}\partial_xu_{\rm{high}}$; i.e., the case where the frequency of $\partial_xu$ is much higher than those of the other two $u$'s.
In this case, we have $|\Phi |\sim |n_{1,\rm{low}}+n_{2,\rm{low}}||n_{3,\rm{high}}|$, and hence $|\Phi|^{1/2}$ amounts only to half a derivative (plus half power of the lower frequencies), which is not enough to recover the loss of derivative in $u_{\rm{low}}\bar{u}_{\rm{low}}\partial_xu_{\rm{high}}$.

For the derivative NLS (the case $\beta =0$), the gauge transformation technique has been used to eliminate this type of unfavorable interaction.
Actually, \eqref{kdnls} with $\beta =0$ has the nonlinear terms $2\alpha |u|^2\partial_xu+\alpha u^2\partial_x\bar{u}$, and the first term $2\alpha |u|^2\partial_xu$ can be removed (formally) by the transformation
\[ u(t,x)\quad \mapsto \quad u(t,x)\exp \Big[ \frac{1}{2i}\partial_x^{-1}(2\alpha |u(t)|^2)\Big] .\]
Now, we observe that the other nonlinear term $u(\partial_x\bar{u})u$ has better non-resonant property: for the high-low interaction $u_{\rm{low}}(\partial_x\bar{u}_{\rm{high}})u_{\rm{low}}$ we have $|\Phi |\sim |n_{2,\rm{high}}|^2$, and hence the loss of derivative can be fully recovered by the Fourier restriction norm method.
The gauge transformation was first used in this context by Takaoka~\cite{Ta99} in the non-periodic case and later adjusted to the periodic problem by Herr~\cite{Her06}, both of who succeeded in proving local well-posedness in $H^{1/2}$ by an iteration argument using the Fourier restriction norm.
However, the standard gauge transformation for the derivative NLS does not work well for the kinetic derivative NLS (the case $\beta \neq 0$) because of the Hilbert transformation.
Indeed, the nonlinear term $\beta \mathcal{H}(|u|^2)\partial_xu$ may be eliminated by a similar gauge transformation, but the term $\beta u\mathcal{H}(\bar{u}\partial_xu)$ can not be removed.
To overcome this difficulty, in this paper we will employ the frequency-localized gauge transformation, which is a technique introduced by Tao~\cite{Tao04} for the Benjamin-Ono equation (see also Ionescu and Kenig~\cite{IK07} and Kenig and Takaoka~\cite{KeTa06}).
In that case, it becomes more difficult to estimate the product of the solution and the gauge factor in the Fourier restriction norm.
We explain it in more detail in Subsection~\ref{subsec:intro-outline}.

Next, we recall the known results on the kinetic derivative NLS.
In our previous paper \cite{KT22}, we proved small-data global well-posedness for \eqref{kdnls} with $\beta <0$ in $H^{\frac12+}(\mathbf{T})$ by combining the Fourier restriction norm method with the parabolic smoothing effect mentioned above instead of the gauge transformation.
In order to estimate (the non-resonant part of) the term $u_{\rm{low}}\bar{u}_{\rm{low}}\partial_xu_{\rm{high}}$, roughly speaking, we used each half of the time (Duhamel) integration for the non-resonant smoothing effect (which recovers half a derivative on $u_{\rm{high}}$ and half a derivative on the product $u_{\rm{low}}\bar{u}_{\rm{low}}$) and for the parabolic smoothing effect (which again recovers half a derivative on $u_{\rm{high}}$ but with the constant proportional to $\| u\|_{L^2}^{-1}$).
Together with the Sobolev embedding, this intuitively gives an estimate (for short time) like
\begin{equation}\label{est:intro0}
\begin{aligned}
&\Big\| \int_0^t U(t-t')[u_{\rm{low}}\bar{u}_{\rm{low}}\partial_xu_{\rm{high}}](t')\,dt'\Big\|_{Z^s(T)}\\
&\quad \lesssim \Big( \inf _{t\in [0,T]}\| u(t)\|_{L^2}\Big) ^{-1}\| \partial_x^{-\frac12}(u_{\rm{low}}\bar{u}_{\rm{low}})\| _{L^\infty_TL^\infty_x}\| u_{\rm{high}}\|_{Z^s(T)}\\
&\quad \lesssim \| \phi \|_{L^2}^{-1}\| u\|_{L^\infty_TH^{\frac14+}}^2\| u\|_{Z^s(T)}.
\end{aligned}
\end{equation}
Here, $U(t)$ is the linear propagator of mixed type (i.e., second-order dispersion and first-order parabolicity), and $Z^s(T)$ is a suitable Fourier restriction norm associated with $U(t)$ and restricted to the time interval $[0,T]$.
In fact, $Z^s(T)$ can be defined as the intersection of the usual Fourier restriction norm space for the Schr\"odinger equation with the space $L^2_TH^{s+\frac12}$ related to the parabolic smoothing effect.
In this argument, however, we can not have a positive power of $T$ in \eqref{est:intro0} because we have to use the full $t$-integrability to recover one derivative.
Note also that the smallness in $H^{\frac14+}$ is not enough to close the estimate because of the factor $\| \phi \|_{L^2}^{-1}$.
We then eliminated this factor by using interpolation $\| u\|_{H^{\frac14+}}^2\leq \| u\|_{L^2}\| u\|_{H^{\frac12+}}$ and imposed smallness in $H^{\frac12+}$, to obtain small-data (forward-in-time) local well-posedness in $H^{\frac12+}(\mathbf{T})$.
The local-in-time solution was extended globally by using the $H^1$ \emph{a priori} estimate for small solutions obtained in \cite{KTmatrix} and the parabolic smoothing property of the solution map. 
In \cite{KT22} we also proved forward-in-time unconditional uniqueness and backward-in-time ill-posedness (non-existence) in $H^s(\mathbf{T})$ for $s>3/2$.
At that time, the solutions from large initial data were constructed only in $H^{\frac32+}$ by the energy method, and the global existence of large solutions was left open.

Lastly, we mention that a nonlinear Schr\"odinger equation with nonlocal derivative nonlinear term which appears very similar to \eqref{kdnls},
\begin{equation}\label{eq:nonlocal}
\partial_tu +i\alpha \partial_x^2u\ =\ \beta u(1+i\mathcal{T}_h)\partial_x(|u|^2)+i\gamma |u|^2u,\qquad \alpha ,\beta,\gamma \in \mathbf{R},
\end{equation}
has been studied; for well-posedness results in the non-periodic case, see de~Moura and Pilod \cite{dMP10}, Barros, de~Moura and Santos \cite{BdMS19} and references therein.
Here, $\mathcal{T}_h$, $0<h<\infty$, is a family of certain nonlocal operators (Fourier multipliers) similar to the Hilbert transformation.
This equation comes from a fluid problem, and the major difference from \eqref{kdnls} is that the nonlocal term is multiplied by $i$ and also the term $i\mathcal{T}_h(|u|^2)\partial_xu$ is absent. 
In fact, this equation conserves the $L^2$ norm and does not have dissipative structure, and therefore it is quite a different problem.
Nevertheless, in this paper we will take an approach similar to that in \cite{dMP10}.

%%%%%%%%%%%%%%%%%%%%%%%

\subsection{Main results}

In this paper, we construct an effective gauge transformation for \eqref{kdnls} and combine it with the methods used in \cite{KT22} to obtain large-data global well-posedness of \eqref{kdnls}--\eqref{ic} in $H^{\frac14+}(\mathbf{T})$.
Unless we take advantage of the dissipative property, it seems difficult to bring down the lower bound of the regularity below $s = 1/2$.
The index $s = 1/2$ is thought of as the lowest possible regularity for the derivative NLS when we do not use the complete integrability (see Takaoka \cite{Ta99} and Herr \cite{Her06} for the proof in low regularity without the complete integrability and see Klaus and Schippa \cite{KS22}, Bahouri and Perelman \cite{BP22}, Killip, Ntekoume and Vi\c{s}an \cite{KNV21}, Harrop-Griffiths, Killip and Vi\c{s}an \cite{H-GKV23} and Harrop-Griffiths, Killip, Ntekoume and Vi\c{s}an \cite{H-GKNV22} for the proof in low regularity with the complete integrability).

Our first result is the following theorem concerning the local well-posedness in $H^s(\mathbf{T})$, $s > 1/4$ except for the origin.
\begin{thm}\label{thm:lwp}
Let $\alpha \in \mathbf{R}$, $\beta <0$, and $s>1/4$.
Then, the Cauchy problem \eqref{kdnls}--\eqref{ic} is locally well-posed in $H^s(\mathbf{T})\setminus \{ 0\}$ forward in time, and the solution is smooth for $t>0$.
\end{thm}
See Theorem~\ref{thm:lwp2} and Corollary~\ref{cor:lwp2} for the precise statement.
Next, we establish \emph{a priori} bounds at the $H^1$ level for smooth solutions of arbitrary size (see Proposition~\ref{prop:aprioriH1large} below), which extends our previous result in \cite{KTmatrix} for small solutions.
Combining these bounds with the regularizing property of the solution map obtained in Theorem~\ref{thm:lwp}, we can show:
\begin{thm}\label{cor:gwp}
Let $\alpha \in \mathbf{R}$, $\beta <0$, and $s>1/4$.
Then, the Cauchy problem \eqref{kdnls}--\eqref{ic} is globally well-posed in $H^s(\mathbf{T})\setminus \{ 0\}$ forward in time, and the solution is smooth for $t>0$.
\end{thm}

While $u\equiv 0$ is a global solution to the Cauchy problem with $\phi =0$, the above theorems lack the continuity of the solution map at the origin to assert the well-posedness in $H^s(\mathbf{T})$.
In fact, the local existence time in Theorem~\ref{thm:lwp} shrinks as $\| \phi \|_{L^2}\to 0$.
This is because our proof relies on the parabolicity of the equation, whose strength is proportional to the squared $L^2$ norm of the solution.
Recall that in \cite{KT22} we were able to overcome this issue by imposing smallness in $H^{\frac12+}$.
Here, we employ the estimate obtained in our recent paper \cite{KTshorttime} to deal with this issue.
In \cite{KTshorttime}, we use a different method (i.e., the short-time Fourier restriction norm method) not relying on the parabolicity to obtain for $s>1/4$ the \emph{a priori} bound 
\[ \| u\| _{L^\infty ([0,T];H^s(\mathbf{T}))}\leq C\| \phi\|_{H^s(\mathbf{T})} \]
for small and smooth solutions on $[0,T]$ with $0<T\leq 1$.
This and an approximation argument using the continuous dependence on initial data away from the origin (given in Theorem~\ref{thm:lwp}) imply the same \emph{a priori} bound for (non-zero) small and rough $H^s$ solutions constructed in the above theorems, which then shows the continuity of the solution map at the origin.
Therefore, we have:
\begin{cor}
Let $\alpha \in \mathbf{R}$, $\beta <0$, and $s>1/4$.
Then, the Cauchy problem \eqref{kdnls}--\eqref{ic} is globally well-posed in $H^s(\mathbf{T})$ forward in time, and the solution is smooth for $t>0$.
\end{cor}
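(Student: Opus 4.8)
By Theorem~\ref{cor:gwp} the Cauchy problem \eqref{kdnls}--\eqref{ic} is already globally well-posed in $H^s(\mathbf{T})\setminus\{0\}$, and $u\equiv 0$ is the (unique) global solution for $\phi=0$; since every datum yields a global solution there is no existence-time obstruction, and the only missing ingredient is the continuity of the solution map at the origin. It therefore suffices to produce $\delta,C>0$ such that every solution with $0<\|\phi\|_{H^s(\mathbf{T})}\le\delta$ obeys
\begin{equation}\label{eq:goalbd}
\|u\|_{C([0,1];H^s(\mathbf{T}))}\le C\,\|\phi\|_{H^s(\mathbf{T})} ,
\end{equation}
because \eqref{eq:goalbd} forces $u^{\phi_n}\to 0$ in $C([0,1];H^s)$ whenever $\|\phi_n\|_{H^s}\to 0$, which together with the continuity of the flow on $H^s(\mathbf{T})\setminus\{0\}$ from Theorem~\ref{cor:gwp} gives continuity on all of $H^s(\mathbf{T})$.

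The plan is to prove \eqref{eq:goalbd} in two steps. The first step is to import the a priori estimate of \cite{KTshorttime}: since the short-time Fourier restriction norm method used there does not invoke the parabolic smoothing, the bound it yields is uniform as the data go to $0$. Precisely, there are $\delta,C>0$ such that any smooth solution on a subinterval $[t_0,1]$ with $\|u(t_0)\|_{H^s}\le\delta$ satisfies $\|u\|_{C([t_0,1];H^s)}\le C\|u(t_0)\|_{H^s}$ (the equation being autonomous). Here smooth nonzero data do produce global smooth solutions, by Theorem~\ref{cor:gwp} at the $H^1$ level (cf.\ also Proposition~\ref{prop:aprioriH1large}) together with persistence of higher regularity.

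The second step is approximation. Given $\phi\in H^s(\mathbf{T})\setminus\{0\}$ with $\|\phi\|_{H^s}\le\delta/2$, pick smooth nonzero $\phi_n\to\phi$ in $H^s$ with $\|\phi_n\|_{H^s}\le\delta$, and let $u_n:=u^{\phi_n}$, so $\|u_n\|_{C([0,1];H^s)}\le C\|\phi_n\|_{H^s}$ by Step~1. I claim $u_n\to u^\phi$ in $C([0,1];H^s)$, which follows from a continuation argument. The local continuous dependence of Theorem~\ref{thm:lwp} gives the convergence on $[0,\tau]$ for some $\tau>0$; letting $T^\ast$ be the supremum of the $T\in[0,1]$ for which $u_n\to u^\phi$ in $C([0,T];H^s)$, if $u^\phi(T^\ast)\neq0$ then Theorem~\ref{thm:lwp} applied around $u^\phi(T^\ast)$ continues the convergence past $T^\ast$, while if $u^\phi(T^\ast)=0$ then $u^\phi\equiv0$ on $[T^\ast,1]$ and, since $u_n(T^\ast)\to0$, the bound of Step~1 on $[T^\ast,1]$ forces $u_n\to0$ there; in either case $T^\ast=1$. (That the $L^2$ norm is non-increasing, by the dissipative structure of the $\beta$-term in \eqref{kdnls}, keeps the relevant norms under control along the continuation.) Letting $n\to\infty$ in $\|u_n\|_{C([0,1];H^s)}\le C\|\phi_n\|_{H^s}$ then yields \eqref{eq:goalbd}.

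The heart of the matter, and the main obstacle, is precisely this transfer to a \emph{fixed} time interval: the parabolic local theory behind Theorem~\ref{thm:lwp} only furnishes a uniform existence time and continuous dependence on time scales shrinking as $\|\phi\|_{L^2}\to0$ (the regularization being proportional to $\|u\|_{L^2}^2$), so it cannot by itself reach $[0,1]$ uniformly near the origin. The resolution is the division of labor just described: the non-parabolic bound of \cite{KTshorttime} controls exactly the small-data regime in which the local theory degenerates, Theorem~\ref{thm:lwp} controls the region bounded away from $0$, and the continuation argument splices them. The technical work then lies in checking the hypotheses of \cite{KTshorttime} for the approximants and in propagating the convergence across $[0,1]$, in particular the closed-interval limit at $T^\ast$, where one uses the uniform $H^s$ bounds together with equicontinuity in a weaker norm.
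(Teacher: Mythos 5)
Your proposal is correct and follows essentially the same route as the paper: the paper's own justification of this corollary is precisely the combination of the non-parabolic small-data \emph{a priori} bound $\| u\|_{L^\infty([0,T];H^s)}\leq C\|\phi\|_{H^s}$ from \cite{KTshorttime} for smooth solutions, an approximation argument transferring it to rough non-zero data via the continuous dependence of Theorem~\ref{thm:lwp}, and the resulting continuity of the solution map at the origin. Your write-up merely fills in the details of the continuation/splicing step that the paper leaves implicit.
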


%%%%%%%%%%%%%%%%%%%%%%%%%%%%%%%%

\subsection{Outline of the proof of local well-posedness}
\label{subsec:intro-outline}

Here, we shall sketch the proof of Theorem~\ref{thm:lwp}.
The key idea is to construct an ``effective'' gauge transformation which removes the term $\beta u_{\rm{low}}\mathcal{H}(\bar{u}_{\rm{low}}\partial_xu_{\rm{high}})$.
For this purpose, we decompose the solution as $u=P_+u+P_-u+P_0u$, the positive and the negative frequency parts and the zero mode.
Then, since
\[ P_\pm \big[ u_{\rm{low}}\mathcal{H}(\bar{u}_{\rm{low}}\partial_xu_{\rm{high}})\big] \ \approx \ u_{\rm{low}}\bar{u}_{\rm{low}}P_\pm \mathcal{H} \partial_xu_{\rm{high}} \ = \ \mp iu_{\rm{low}}\bar{u}_{\rm{low}}\partial_xP_\pm u_{\rm{high}},\]
the equation for $P_\pm u$ can be written as 
\[ \partial_t(P_\pm u)=i\partial_x^2(P_\pm u)\mp i\beta |u|^2\partial_x(P_\pm u) + \cdots .\]
By analogy to the derivative NLS case $\beta =0$, we are then led to define the gauge transformations separately for $P_+u$ and $P_-u$ as
\[ v_\pm (t,x):= (P_\pm u)(t,x)\exp \Big[ \frac{1}{2i}\partial_x^{-1}(\mp i\beta |u(t)|^2)\Big] .\]
In fact, for the equation \eqref{eq:nonlocal} a similar gauge transformation was introduced in the same spirit by de~Moura and Pilod~\cite{dMP10}.
Such a frequency-localized gauge transformation including the projection $P_\pm$ was originally developed to study the Benjamin-Ono type equations; see, e.g., Tao~\cite{Tao04}, Ionescu and Kenig~\cite{IK07}, and Kenig and Takaoka~\cite{KeTa06}. 
One may wonder if the parabolic structure of the equation is disrupted by these gauge transformations.
However, we actually define the gauge function as $\exp [\frac{1}{2i}\partial_x^{-1}(\mp i\beta(|u|^2-P_0(|u|^2)))]$ so that it becomes periodic in $x$ (see Herr~\cite{Her06} for such a modification in the derivative NLS case).
This means that we only remove the term $\mp i\beta (|u|^2-P_0(|u|^2))\partial_x(P_\pm u)$.
Then, the remaining part
\[ \mp i\beta P_0(|u|^2)\partial_x(P_\pm u)\ =\ \beta P_0(|u|^2) D_x(P_\pm u) \]
acts exactly as the parabolic term for the equation of $P_\pm u$.
Hence, our gauge transformations reserve the parabolic structure of the kinetic term $\beta \partial_x[\mathcal{H}(|u|^2)u]$ in \eqref{kdnls}.

By the above gauge transformations, we obtain the equations for $v_\pm$ as
\[ \partial_tv_\pm = i\partial_x^2v_\pm +\beta P_0(|u|^2)D_xv_\pm +\mathcal{N}_v^\pm (u),\]
where the nonlinear part $\mathcal{N}_v^\pm (u)$ does no longer include the bad interactions of type $u_{\rm{low}}\bar{u}_{\rm{low}}\partial_xu_{\rm{high}}$.
In fact, the main part of $\mathcal{N}_v^\pm (u)$ is of type $u_{\rm{low}}u_{\rm{low}}\partial_x\bar{u}_{\rm{high}}$.
We estimate it with the Fourier restriction norm to recover one derivative from $1/2$ of $t$-integrability, and also use $(\frac12-\varepsilon)$ $t$-integrability for the parabolic smoothing, which yields the estimate (for short time) like
\begin{align*}
&\Big\| \int_0^t U(t-t')\mathcal{N}_v^\pm (u)(t')\,dt'\Big\|_{Z^s(T)}\\
&\quad \lesssim T^\varepsilon \Big( \inf_{t\in [0,T]}\| u(t)\|_{L^2}\Big) ^{-1+2\varepsilon}\| \partial_x^{-\frac12+\varepsilon}(u^2)\|_{L^\infty_TL^\infty_x}\| u\|_{Z^s(T)}\\
&\quad \lesssim T^{0+}\| \phi \|_{L^2}^{-1+}\| u\|_{L^\infty_TH^{\frac14+}}^2\| u\|_{Z^s(T)}.
\end{align*}
In contrast to the estimate \eqref{est:intro0}, this term is under control for arbitrarily large $u$ at the $H^{\frac14+}$ level by choosing $T$ sufficiently small.
As a result, for $s>1/4$ and the gauge transform $v_\pm$ of any solution $u$ to \eqref{kdnls}--\eqref{ic}, we have an estimate like
\begin{equation}\label{est:intro1}
\| v_\pm\|_{Z^s(T)}\leq C_\phi +T^{0+}C_u,
\end{equation}
where we denote any constant determined from $\| \phi\|_{H^s}$ by $C_\phi$ while that depending also on $\| u\|_{Z^s(T)}$ (and on $\| \phi \|_{L^2}^{-1}$) by $C_u$.

Unlike the derivative NLS case (and similarly to the Benjamin-Ono case), the nonlinear term $\mathcal{N}_v^\pm (u)$ can not be written in terms of $v_\pm$ only.
Therefore, we additionally estimate $u$ in terms of $v_\pm$ using the identity
\begin{equation}\label{id:intro}
u\ =\ P_0u+P_+u+P_-u\ =\ P_0u+e^{-\rho^+(u)}v_++e^{-\rho^-(u)}v_-,
\end{equation}
where we write the gauge transforms as $v_\pm=e^{\rho^\pm (u)}P_\pm u$ and thus $\rho ^\pm (u)$ is roughly the anti-derivative of $|u|^2$.
Let us see how to treat the term $e^{-\rho^+(u)}v_+$: we aim to prove an estimate like
\begin{equation}\label{est:intro2}
\| e^{-\rho^+(u)}v_+-v_+\|_{Z^s(T)}\leq \Big( C_\phi +T^{0+}C_u\Big) \| v_+\|_{Z^s(T)}.
\end{equation}
As observed in the Benjamin-Ono case (see, e.g., Ionescu and Kenig~\cite{IK07} and Molinet~\cite{Mo08}), multiplication by gauge function such as $e^{-\rho^+(u)}$ in the Fourier restriction norm space is usually a delicate issue.
In our space $Z^s(T)$, however, it turns out that multiplication by gauge function shows better behavior thanks to its parabolic part $L^2_TH^{s+\frac12}$. 
In fact, we can find a suitable space $G^s(T)$ admitting the following simple estimates
\[ \| gv\|_{Z^s(T)}\lesssim \| g\|_{G^s(T)}\| v\| _{Z^s(T)},\qquad \| e^{c\rho^\pm (u)}-1\|_{G^s(T)}\lesssim e^{C\| u\|_{Z^{s'}(T)}^2}\| u\|_{Z^{s'}(T)}^2\]
for $s>1/4$, where $s'$ is some index satisfying $s>s'>1/4$.
Then, using the integral equation for $u$ we show an estimate like
\[ \| u -U(t)\phi \|_{Z^{s'}(T)}\lesssim T^{0+}\| u\|_{Z^s(T)}^3,\]
which is sufficient to conclude \eqref{est:intro2}.
Compare this estimate again with \eqref{est:intro0}: we can derive a factor $T^{0+}$ since now $s'<s$ and the derivative loss is strictly less than the first order.
Finally, applying \eqref{est:intro2} and then \eqref{est:intro1} in the expression \eqref{id:intro}, we show
\[ \| u\|_{Z^s(T)}\leq C_\phi +T^{0+}C_u\]
for $s>1/4$ and any solution $u\in Z^s(T)$ to \eqref{kdnls}--\eqref{ic}.
This gives the desired \emph{a~priori} estimate for $u$, and a slight modification of the above argument shows similar \emph{a~priori} estimate for the difference.
We can then construct the solution to \eqref{kdnls}--\eqref{ic} in $Z^{\frac14+}(T)$ by a compactness argument.

%%%%%%%%%%%%%%%%%%%%%%%%%%%%%%%%

\subsection{Related results and open problems}

The dispersive-parabolic type Fourier restriction norm space like $Z^s(T)$ was first introduced by Molinet and Ribaud~\cite{MR02} to study low-regularity well-posedness for the Korteweg-de~Vries-Burgers equation
\[ \partial_tu+\partial_x^3u-\partial_x^2u+u\partial_xu=0. \]
Compared with it, the feature of our equation \eqref{kdnls} is that the parabolic structure is not present in the linear part but hidden in the (resonant) nonlinear part. 
In particular, our parabolic term with non-constant coefficient $\frac{\beta}{2\pi}\| u(t)\|_{L^2}^2$ gives rise to a quasilinear nature of the equation, as observed in non-uniformly continuous dependence on initial data around the origin (see \cite[Propositions~5.3 and 1.3 (iii)]{KT22}).
We also remark that such a hidden parabolic structure in the resonant interaction of semilinear dispersive equations on the torus was first discovered by Tsugawa~\cite{Tsu-p} in the context of fifth order KdV-type equations.
In~\cite{Tsu-p}, Tsugawa studied equations with general nonlinearities in sufficiently regular spaces by the refined energy method, aiming to find a precise condition on the nonlinearities for the appearance of parabolicity. 
Meanwhile, we study the equation \eqref{kdnls} with a specific nonlinearity which presents parabolicity, and we aim to obtain low-regularity well-posedness by exploiting the parabolic smoothing effect in the context of the Fourier restriction norm method and the gauge transformation.

We have mentioned that the gauge transformation behaves better in our solution space $Z^s(T)$ reflecting the parabolic effect than in the standard Fourier restriction norm space.
As a consequence, for \eqref{kdnls} with $\beta <0$ uniqueness in the space $Z^s(T)$ can be proved, which is stronger than the uniqueness assertion in the previous results for the case $\beta =0$ using the gauge transformation and the Fourier restriction norm method (see \cite{Ta99,Her06}, where the uniqueness was proved in the inverse gauge image of the Fourier restriction norm space).
In the case $\beta=0$, it was shown later that the uniqueness holds unconditionally in $C_tH^{\frac12+}_x$ (see Win~\cite{Win08}, Mosincat and Yoon~\cite{MY20} and Kishimoto~\cite{Kis-p}).
For $\beta <0$, the unconditional uniqueness in $H^s$ follows by the classical energy method for $s>3/2$, but it is open for $s\leq 3/2$.

Let us next consider the non-periodic problem.
Although the dissipation is still available for $\beta <0$ in the sense of the $L^2$ decay \eqref{id:L2dissipation}, the explicit parabolic term does not appear, as mentioned before, and hence the Cauchy problem might be well-posed both forward and backward in time (or equivalently, both for $\beta <0$ and for $\beta >0$).
The construction by a classical energy method is available in $H^{\frac32+}(\mathbf{R})$ but only forward in time when $\beta <0$, since the dissipative structure is used in energy estimates. 
Below $H^{3/2}$, one may first think of applying the local smoothing ($L^\infty_xL^2_t$) and the maximal function ($L^2_xL^\infty_t$) estimates for the Schr\"odinger group; this is in fact an approach taken in \cite{BdMS19} and \cite{dMP10} for the equation \eqref{eq:nonlocal}.
However, this strategy does not seem to work well for \eqref{kdnls}.
The obstacle here is the term $\beta \mathcal{H}(|u_{\rm{low}}|^2)\partial_xu_{\rm{high}}$, which is absent in \eqref{eq:nonlocal}.
Indeed, noticing that $\| D_x^{s+\frac12}u\|_{L^\infty_xL^2_t}$ is controlled for $\phi \in H^s$ by the local smoothing, one may try to estimate this term as
\begin{align*}
&\Big\| D_x^s \int_0^te^{i(t-t')\partial_x^2}\mathcal{H}(|u_{\rm{low}}(t')|^2)\partial_xu_{\rm{high}}(t')\,dt'\Big\|_{L^\infty_tL^2_x}\\
&\quad \lesssim \big\| D_x^{s-\frac12}\big[ \mathcal{H}(|u_{\rm{low}}|^2)\partial_xu_{\rm{high}}\big] \big\| _{L^1_xL^2_t} \\
&\quad \lesssim \big\| \mathcal{H}(|u_{\rm{low}}|^2)\big\|_{L^1_xL^\infty_t}\| D_x^{s+\frac12}u_{\rm{high}}\|_{L^\infty_xL^2_t},
\end{align*}
but then the first term in the last line can not be bounded by the maximal function norm $\| u_{\rm{low}}\|_{L^2_xL^\infty_t}^2$ due to the fact that $\mathcal{H}$ is not a bounded operator on $L^1$.
Now, the next idea would be to remove (part of) the term $\beta \mathcal{H}(|u|^2)\partial_xu$ by the gauge transformation
\[ v(t,x):= u(t,x)\exp \Big[ \frac{\beta}{2i}\partial_x^{-1}\mathcal{H}(|u(t)|^2) \Big] .\]
However, it seems difficult to handle the anti-derivative term $v \partial_x^{-1}\mathcal{H}\big[ \mathcal{H}(|v|^2)\partial_x(|v|^2)\big]$ appearing in the gauge transformed equation, due to the unboundedness of $\partial_x^{-1}$ in low frequency.%
\footnote{This gauge transformation approach was actually taken by Peres~de~Moura and Pastor~\cite{MP11}, who claimed small-data local well-posedness of \eqref{kdnls}--\eqref{ic} on $H^{\frac12+}(\mathbf{R})$ for both signs of $\beta$.
However, their estimate of the anti-derivative term seems to have a gap, and we do not know how to fill it.}
(Note that this term is easy to handle in the periodic case, because the Hilbert transformation $\mathcal{H}$ removes the low-frequency part (the zero mode) and $\partial_x^{-1}\mathcal{H}$ becomes a bounded operator from $H^{s-1}$ to $H^s$.)
We also remark that this anti-derivative term comes from the $t$-derivative of the gauge function, and it originates in the non-conservative structure of the equation \eqref{kdnls}, i.e., that the $t$-derivative of the density $|u|^2$ can not be written in the form of $\partial_xF(u)$.
In the end, to our knowledge no result is known on well-posedness for \eqref{kdnls} posed on $\mathbf{R}$ below $H^{3/2}$.
(Nevertheless, in \cite{KTshorttime} we applied the short-time Fourier restriction norm method to prove the \emph{a priori} bound in $H^s(\mathbf{R})$ for $s>1/4$ in the dissipative case $\beta <0$, and this method can also be used to prove local well-posedness for some higher regularity but lower than $3/2$. 
We plan to address this problem in a forthcoming paper.)

Another natural question is long time behavior of the global solutions constructed in this paper.
In the periodic case, for any single-mode initial data $\phi =Ae^{inx}$ ($A\in \mathbf{C}$, $n\in \mathbf{Z}$) the solution to \eqref{kdnls}--\eqref{ic} is explicitly given by
\[ u(t,x)\ =\ Ae^{i(-n^2+\alpha |A|^2n)t}e^{inx}. \]
It can be seen from \eqref{id:L2dissipation}%
\footnote{There might be some weak solutions of very low regularity which do not satisfy (the integrated version of) \eqref{id:L2dissipation}.
We can at least verify \eqref{id:L2dissipation} for the solutions constructed in Theorem~\ref{thm:lwp}.} 
that these single-mode solutions are the only solutions to \eqref{kdnls} with conserved $L^2$ norm.
Hence, in the dissipative case $\beta <0$ it is expected that any solution with more than one mode has asymptotics either decaying to zero or approaching to one of the single-mode solutions.
It seems, however, quite difficult to determine the asymptotic behavior of each solution; it is even not clear whether the latter scenario can happen.
The situation is somewhat simpler in the non-periodic case, where the trivial solution appears to be the only non-dissipative solution.
In this case we still do not have global existence, however.

It is also of interest from both mathematical and physical viewpoints to investigate the non-dissipative (non-kinetic) limit; i.e., whether for initial data in $H^s$ the solution of \eqref{kdnls} with $\beta <0$ converges in $H^s$ to that of the derivative NLS as $\beta \to 0$.
This is indeed a singular limit connecting an equation of dispersive-parabolic type and that of purely dispersive type.
By virtue of the gauge transformation developed in this paper, in principle it is possible to recover the loss of derivative only by the Fourier restriction norm and without using the parabolic smoothing effect.
It is then likely that for $s\geq 1/2$ one can show local well-posedness in $H^s(\mathbf{T})$ uniformly in $\beta \leq 0$ and in $\| \phi\|_{L^2}^{-1}$, by which the non-dissipative limit would be verified.
Below $H^{1/2}$ the problem would become substantially more difficult, since we essentially use the parabolicity to construct the solution for $\beta <0$.
Note that the derivative NLS $\beta =0$ is also solvable below $H^{1/2}$ by virtue of its complete integrability; the best known (global-in-time) well-posedness results are in $L^2(\mathbf{R})$ without any size restriction in the non-periodic case (see Harrop-Griffiths, Killip, Ntekoume and Vi\c{s}an \cite{H-GKNV22}) and in $H^{1/6}(\mathbf{T})$ under the condition $\| \phi\|_{L^2}^2<4\pi/|\alpha|$ in the periodic case (see Killip, Ntekoume and Vi\c{s}an \cite{KNV21}).
In particular, our result for $\beta <0$ in the periodic case is better than the result for $\beta =0$ in that no smallness is required, whereas the result for $\beta=0$ is better in regularity.
It would be very interesting if one could give another proof of the result for $\beta=0$ (or even improve it) without using complete integrability by the analysis of the kinetic derivative NLS with $\beta <0$, or conversely, if a certain property of the derivative NLS coming from its complete integrability could be shown to persist in the case of negative $\beta$.

%%%%%%%%%%%%%%%%%%%%%%%%%%%%%%%%

\subsection{Notation and plan of the paper}

We list the notation used throughout this paper.
Let $a$ and $b$ be two positive parameters.
We write $a \lesssim b$ if $a \leq C b$ for some $C > 0$, where $C$ is independent of $a$ and $b$.
We also write $a \sim b$ if $a \lesssim b$ and $a \gtrsim b$.
When an implicit constant $C$ in $a \lesssim b$ depends on the quantity $\eta$, we write $a \lesssim_\eta b$ to indicate the dependence of $C$ on $\eta$.
We write $a \ll b$ if an implicit constant $C$ in $a \lesssim b$ is very small.
For any small $\varepsilon > 0$, we denote $a \pm \varepsilon$ by $a \pm$, respectively.

The plan of this paper is as follows.
In Section~\ref{sec:gauge}, we describe in detail how to define the gauge transformation which plays a crucial role in the paper.
In Section~\ref{sec:lwp}, we mention the precise statement of the local well-posedness theorem in $H^s\setminus \{ 0\}$, $s > 1/4$ and prove it.
For that purpose, we define the function spaces in Subsection~\ref{subsec:space} and present the linear estimates in Subsection~\ref{subsec:linearest}.
Furthermore, we show the estimates of the gauge transformation and the nonlinearity in Subsections~\ref{subsec:gaugeest} and \ref{subsec:nonlest}, respectively.
In Subsection~\ref{subsec:apriori-lwp}, we show {\it a priori} bounds of the solution and the difference of two solutions, and as a result, in Subsection~\ref{subsec:proof-lwp}, we prove the local well-posedness in $H^s\setminus \{ 0\}$, $s > 1/4$.
In Section~\ref{sec:global}, we extend the $H^1$ {\it a priori} bound of the small solution given in \cite{KTmatrix} to the solution of arbitrary size.
By combining it with the local well-posedness theorem in Section~\ref{sec:lwp}, we then show Theorem~\ref{cor:gwp}, i.e., the global well-posedness in $H^s\setminus \{ 0\}$, $s > 1/4$.
Finally, in Appendix~\ref{sec:appendix}, we give a proof of the local well-posedness for \eqref{kdnls}--\eqref{ic} in the high regularity space.
This is because we construct a solution in low regularity as the limit of smooth solutions with approximated initial data, rather than by a direct fixed point argument.
The proof is based on the classical energy method but does not depend on the parabolicity of the system.

\section{Gauge transformation}
\label{sec:gauge}

In this section, we present the gauge transformation which removes the worst nonlinear interactions of high-low type and does not break the dissipative structure of the nonlinearity (see \eqref{def:vpm} below).
In the proof of local well-posedness, we actually work on a renormalized equation:
\begin{equation}\label{kdnls-r}
\partial_tu - i \partial_x^2u = \alpha \partial_x \big[ |u|^2u\big] -2\alpha P_0(|\phi|^2)\partial_xu+\beta \partial _x\big[ \mathcal{H}(|u|^2)u\big] ,
\end{equation}
where 
\[ P_0f:=\frac{1}{2\pi}\int _0^{2\pi}f(x)\,dx.\]
Well-posedness of the Cauchy problems \eqref{kdnls}--\eqref{ic} and \eqref{kdnls-r}--\eqref{ic} are ``equivalent'' through the spatial translation
\[ u(t,x)\quad \mapsto \quad u\big( t,x-2\alpha P_0(|\phi |^2)t\big) .\] 
\begin{rem}
Since the $L^2$ norm is not conserved in our problem, renormalization by $2\alpha P_0(|\phi |^2)\partial_xu$ is different from subtracting $2\alpha P_0(|u|^2)\partial_xu$.
We do not take the latter approach, because the mapping property of the relevant spatial translation
\[ u(t,x)\quad \mapsto \quad u\Big( t,x-2\alpha \int _0^tP_0(|u(t')|^2)\,dt'\Big) \]
is not clear.
Indeed, it seems difficult to characterize the image of the space $Z^s_1(T)$ (to be defined later) under this map.
\end{rem}

Let $P_\pm :=\mathcal{F}^{-1}\chi _{\{ \pm n>0\}}\mathcal{F}$, where $\chi_A$ is the characteristic function of $A$ for a set $A$.
We note that
\[ 1=P_++P_-+P_0,\quad \mathcal{H}=-iP_++iP_-,\quad D_x=-i\partial_xP_++i\partial_xP_-.\]
For $f :\mathbf{T}\to \mathbf{C}$ satisfying $P_0f=0$, we can define its periodic primitive
\[ \partial _x^{-1}f(x):=\mathcal{F}^{-1}\big[ (in)^{-1}\mathcal{F}f\big] (x)=\frac{1}{2\pi}\int _0^{2\pi}\Big( \int _z^xf(y)\,dy\Big) \,dz .\]
Let $u$ be a solution of \eqref{kdnls-r}, and we begin by observing that
\begin{align*}
P_\pm \partial _x\big[ |u|^2u\big] &=2P_\pm \big[ |u|^2\partial_xu\big] +P_\pm \big[ u^2\partial_x\bar{u}\big] \\
&=2|u|^2\partial_xP_\pm u+\mathcal{R}_1^\pm ,\\
P_\pm \partial_x\big[ \mathcal{H}(|u|^2)u\big] &=P_\pm \big[ \mathcal{H}(\bar{u}\partial_xu)u+\mathcal{H}(u\partial_x\bar{u})u+\mathcal{H}(|u|^2)\partial_xu\big] \\
&=|u|^2\partial_xP_\pm \mathcal{H}u+\mathcal{H}(|u|^2)\partial_xP_\pm u +\mathcal{R}_2^\pm \\
&=\big[ \mp i|u|^2 +\mathcal{H}(|u|^2)\big] \partial _xP_\pm u +\mathcal{R}_2^\pm ,
\end{align*}
where (and hereafter) we use $\mathcal{R}$ to denote the nonlinear terms which can be treated by the standard Fourier restriction norm method.
Precisely,
\begin{align*}
\mathcal{R}_1^\pm &=P_\pm \big[ u^2\partial_x\bar{u}\big] +2[P_\pm ,|u|^2]\partial_xu,\\
\mathcal{R}_2^\pm &=P_\pm \big[ \mathcal{H}(u\partial_x\bar{u})u\big] +P_\pm \big[ u[\mathcal{H},\bar{u}]\partial_xu\big] +[P_\pm ,|u|^2]\mathcal{H}\partial_xu+[P_\pm ,\mathcal{H}(|u|^2)]\partial_xu .
\end{align*}
Therefore, we have
\begin{gather*}
\partial _tP_\pm u-i\partial_x^2P_\pm u \ =\ F[u]\partial _xP_\pm u +\mathcal{R}_3^\pm ,\\
F[u]:=2\alpha |u|^2-2\alpha P_0(|\phi|^2) \mp i\beta |u|^2 +\beta \mathcal{H}(|u|^2),\\
\mathcal{R}_3^\pm :=\alpha \mathcal{R}_1^\pm +\beta \mathcal{R}_2^\pm . 
\end{gather*}
The previous results (see, e.g., Tao~\cite{Tao04} and Kenig--Takaoka~\cite{KeTa06}) suggest that the nonlinear term $F[u]\partial_xP_\pm u$ can be made tamer by the gauge transformation $P_\pm u\mapsto \exp (\frac{1}{2i}\partial_x^{-1}F[u])P_\pm u$.
In the periodic setting, we need to modify the primitive $\partial _x^{-1}F$ to the periodic one $\partial_x^{-1}P_{\neq 0}F$, where $P_{\neq 0}:=1-P_0=\mathcal{F}^{-1}\chi _{\{ n\neq 0\}}\mathcal{F}$.
Note that
\begin{align*}
P_0F[u] &= 2\alpha \big[ P_0(|u|^2) -P_0(|\phi |^2)\big] \mp i\beta P_0(|u|^2),\\
P_{\neq 0}F[u]&=(2\alpha \mp i\beta ) P_{\neq 0}(|u|^2)+\beta \mathcal{H}(|u|^2)\\
&=2\alpha P_{\neq 0}(|u|^2)\mp 2i\beta P_\pm (|u|^2)\ =\ 2i (-i\alpha P_{\neq 0}\mp \beta P_\pm )(|u|^2).
\end{align*}
We thus define the gauge transformation as follows:
\begin{gather}
\begin{gathered}
v_\pm :=e^{\rho ^\pm [u]}P_\pm u, \\
\rho ^\pm [u]:=\partial_x^{-1}P^\pm _{\alpha ,\beta}(|u|^2), \quad P_{\alpha ,\beta}^\pm :=-i\alpha P_{\neq 0}\mp \beta P_\pm .
\end{gathered}\label{def:vpm}
\end{gather}
\begin{rem}
In order to make $v_\pm$ spatially periodic, we use a primitive of $P_{\neq 0}F[u]$ rather than that of $F[u]$.
This suggests that the above gauge transformation will not change the ``resonant'' part of the nonlinearity $P_0F[u]$, which includes the parabolic term 
\[ \mp i\beta P_0(|u|^2)\partial _xP_\pm u\ =\ \beta P_0(|u|^2)D_xP_\pm u.\]
Therefore, the dissipative structure will be taken over by the gauge transformed equation.
\end{rem}

Let us derive the equation for $v_\pm$.
Recall that each of $P_\pm u$ satisfies the equation
\begin{equation}
\begin{split}
(\partial _t-i\partial_x^2)P_\pm u&=\mp i\beta P_0(|u|^2)\partial_xP_\pm u +2\alpha \big[ P_0(|u|^2)-P_0(|\phi |^2)\big]\partial_xP_\pm u \\
&\quad +2iP^\pm _{\alpha ,\beta}(|u|^2)\partial_xP_\pm u +\mathcal{R}_3^\pm .
\end{split}\label{eq:upm}
\end{equation}
Using
\begin{align*}
\partial_tv_\pm  &= e^{\rho ^\pm [u]}\big[ P_\pm u_t+\rho ^\pm _tP_\pm u\big] ,\\
-i\partial_x^2v_\pm &=-ie^{\rho ^\pm [u]}\big[ P_\pm u_{xx} +2\rho ^\pm_xP_\pm u_x+\rho ^\pm_{xx}P_\pm u+(\rho ^\pm _x)^2P_\pm u\big] ,
\end{align*}
and the equation \eqref{eq:upm}, we have
\begin{align*}
&(\partial _t-i\partial_x^2)v_\pm \\
&= \mp i\beta P_0(|u|^2)e^{\rho ^\pm [u]}\partial_xP_\pm u +2\alpha \big[ P_0(|u|^2)-P_0(|\phi |^2)\big] e^{\rho ^\pm [u]}\partial_xP_\pm u\\
&\quad +e^{\rho ^\pm [u]}\big[ 2i P^\pm _{\alpha,\beta}(|u|^2)P_\pm u_x -2i\rho ^\pm _xP_\pm u_x+\mathcal{R}_3^\pm +(\rho ^\pm _t-i\rho ^\pm _{xx}) P_\pm u-i(\rho ^\pm _x)^2 P_\pm u\big] .
\end{align*}
For the first line on the right-hand side, we see that
\begin{align*}
&\mp i\beta P_0(|u|^2)e^{\rho ^\pm [u]}\partial_xP_\pm u +2\alpha \big[ P_0(|u|^2)-P_0(|\phi |^2)\big] e^{\rho ^\pm [u]}\partial_xP_\pm u\\
&\quad =\mp i\beta P_0(|u|^2)\partial_xv_\pm +2\alpha \big[ P_0(|u|^2)-P_0(|\phi |^2)\big] \partial_xv_\pm\\
&\qquad \pm i\beta P_0(|u|^2)P^\pm _{\alpha,\beta}(|u|^2)v_\pm -2\alpha \big[ P_0(|u|^2)-P_0(|\phi |^2)\big] P^\pm _{\alpha,\beta}(|u|^2)v_\pm \\
&\quad =\beta P_0(|\phi |^2)D_xv_\pm \mp 2i\beta P_0(|u|^2) P_\mp \partial_xv_\pm +\big[ P_0(|u|^2)-P_0(|\phi |^2)\big] (2\alpha \partial_xv_\pm +\beta D_x v_\pm ) \\
&\qquad -(2\alpha \mp i\beta ) P_0(|u|^2)P^\pm _{\alpha,\beta}(|u|^2)v_\pm +2\alpha P_0(|\phi |^2)P^\pm _{\alpha,\beta}(|u|^2)v_\pm .
\end{align*}
Moreover, from the equation \eqref{kdnls-r},
\begin{gather*}
\rho ^\pm _x=P^\pm _{\alpha,\beta}(|u|^2),\qquad \rho ^\pm _{xx}= P^\pm _{\alpha,\beta} (\bar{u}u_x+u\bar{u}_x) ,\\[5pt]
\begin{aligned}
\rho ^\pm _t 
&=\partial_x^{-1}P^\pm _{\alpha,\beta}\Big[ 2\mathrm{Re}\, \Big\{ \bar{u}\Big( iu_{xx}+\alpha \partial_x \big[ |u|^2u\big] -2P_0(|\phi |^2)u_x+\beta \partial _x\big[ \mathcal{H}(|u|^2)u\big] \Big) \Big\} \Big] \\
&=\partial_x^{-1}P^\pm _{\alpha,\beta}\Big[ \partial _x\Big(i\bar{u}u_x-iu\bar{u}_x+\frac{3}{2}\alpha |u|^4-2\alpha P_0(|\phi |^2)|u|^2+2\beta \mathcal{H}(|u|^2)|u|^2\Big) \\
&\qquad\qquad\quad -\beta \mathcal{H}(|u|^2)\partial_x(|u|^2)\Big] \\
&=iP^\pm _{\alpha,\beta} (\bar{u}u_x-u\bar{u}_x) +P^\pm _{\alpha,\beta}\Big[ \frac{3}{2}\alpha |u|^4-2\alpha P_0(|\phi |^2)|u|^2+2\beta \mathcal{H}(|u|^2)|u|^2\Big] \\
&\qquad -\beta \partial_x^{-1}P^\pm _{\alpha,\beta}\big[ \mathcal{H}(|u|^2)\partial_x(|u|^2)\big] ,
\end{aligned}
\end{gather*}
and therefore,
\begin{align*}
(\rho ^\pm _t-i\rho ^\pm _{xx}) P_\pm u&\;= \mathcal{R}_4^\pm \\
&:= -2iP^\pm _{\alpha,\beta}(u\bar{u}_x) P_\pm u -\beta \partial_x^{-1}P^\pm _{\alpha,\beta}\big[ \mathcal{H}(|u|^2)\partial_x(|u|^2)\big] \cdot P_\pm u\\
&\;\quad +P^\pm _{\alpha,\beta}\Big[ \frac{3}{2}\alpha |u|^4-2\alpha P_0(|\phi |^2)|u|^2+2\beta \mathcal{H}(|u|^2)|u|^2\Big] \cdot P_\pm u.
\end{align*}
Hence, we have
\begin{align*}
(\partial _t-i\partial_x^2)v_\pm &=\beta P_0(|\phi |^2)D_x v_\pm \\
&\quad \mp 2i\beta P_0(|u|^2) P_\mp \partial_xv_\pm +\big[ P_0(|u|^2)-P_0(|\phi |^2)\big] (2\alpha \partial_xv_\pm +\beta D_x v_\pm ) \\
&\quad -(2\alpha \mp i\beta ) P_0(|u|^2)P^\pm _{\alpha,\beta}(|u|^2)v_\pm +2\alpha P_0(|\phi |^2)P^\pm _{\alpha,\beta}(|u|^2)v_\pm \\
&\quad +e^{\rho ^\pm [u]}\Big( \mathcal{R}_3^\pm +\mathcal{R}_4^\pm -i\big[ P_{\alpha,\beta}^\pm (|u|^2)\big] ^2 P_\pm u\Big) .
\end{align*}
Inserting $\mathcal{R}_3^\pm$ and $\mathcal{R}_4^\pm$, we rewrite it as
\begin{equation}\label{eq:vpm}
\begin{split}
&(\partial _t-i\partial_x^2)v_\pm \ =\ \beta P_0(|\phi |^2)D_x v_\pm +\big[ P_0(|u|^2)-P_0(|\phi |^2)\big] (2\alpha \partial_xv_\pm +\beta D_x v_\pm ) \\
&+e^{\rho ^\pm [u]}\Big( \alpha P_\pm \big( u^2\bar{u}_x\big) +\beta P_\pm \big( \mathcal{H}(u\bar{u}_x)u\big) -2iP^\pm _{\alpha ,\beta}(u\bar{u}_x)P_\pm u\Big) \\
&+e^{\rho ^\pm [u]}\Big( 2\alpha [P_\pm ,|u|^2]u_x+\beta P_\pm \big( u[\mathcal{H},\bar{u}]u_x\big) +\beta [P_\pm ,|u|^2]\mathcal{H}u_x+\beta [P_\pm ,\mathcal{H}(|u|^2)]u_x\Big) \\
&-\beta \partial_x^{-1}P^\pm _{\alpha,\beta}\big( \mathcal{H}(|u|^2)\partial_x(|u|^2)\big) \cdot e^{\rho ^\pm [u]}P_\pm u \mp 2i\beta P_0(|u|^2) \partial_xP_\mp \big[ e^{\rho ^\pm [u]}P_\pm u\big] \\
&+\Big\{ P^\pm _{\alpha,\beta}\Big[ \frac{3}{2}\alpha |u|^4-(2\alpha \mp i\beta ) P_0(|u|^2)|u|^2+2\beta \mathcal{H}(|u|^2)|u|^2\Big] -i\big[ P^\pm _{\alpha ,\beta}(|u|^2)\big] ^2\Big\} e^{\rho ^\pm [u]}P_\pm u.
\end{split}
\end{equation}

%%%%%%%%%%%%%%%%%%%%%%%%%%%%%%%%%
%%%%%%%%%%%%%%%%%%%%%%%%%%%%%%%%%

\section{Local well-posedness}
\label{sec:lwp}

In this section, we show the local well-posedness results for \eqref{kdnls} by using the gauge transformation described in Section~\ref{sec:gauge}.
In the following, we fix the parameters $\alpha \in \mathbf{R}$, $\beta <0$ in the equation.
The main task is to establish the following theorem for the renormalized Cauchy problem \eqref{kdnls-r} and \eqref{ic}.
\begin{thm}\label{thm:lwp2}
Let $s>1/4$.
Then, the Cauchy problem \eqref{kdnls-r} and \eqref{ic} is locally well-posed in $H^s(\mathbf{T})\setminus \{ 0\}$.
More precisely:
\begin{itemize}
\item For any $R\geq r>0$, there exists $T=T(s,R,r)>0$ such that the following holds.
For any $\phi$ in
\[ B_{H^s}(R,r):=\{ \varphi \in H^s(\mathbf{T}):\| \varphi\|_{H^s}\leq R,\,\| \varphi\|_{L^2}\geq r\} ,\]
there exists a solution $u$ in $Z^s_1(T)\hookrightarrow C([0,T];H^s)\cap L^2(0,T;H^{s+\frac12})$ satisfying 
\[ \| u\| _{Z^s_1(T)}\leq C=C(s,R,r),\qquad \min _{0\leq t\leq T}\| u(t)\|_{L^2}\geq \frac12 \| \phi \|_{L^2}^2.\]
(See the next subsection for the definition of the space $Z^s_\mu(T)$, $\mu \geq 0$.)
\item The solution is unique in $Z^s_1(T)$.
\item For each $R,r>0$, the solution map $\phi \mapsto u$ defined above is Lipschitz continuous from $B_{H^s}(R,r)$ to $Z^s_1(T)$.
\item Assume that $\phi \in B_{H^s}(R,r)$ has additional regularity: $\phi \in H^{s+\theta}(\mathbf{T})$ ($\theta>0$).
Then, the solution $u\in Z^s_1(T)$ constructed above belongs to $C([0,T];H^{s+\theta})\cap L^2(0,T;H^{s+\theta+\frac12})$.
\item The solution $u$ is smooth for $t>0$: $u\in C^\infty ((0,T]\times \mathbf{T})$.
\end{itemize}
\end{thm}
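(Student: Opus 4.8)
The plan is to combine the gauge transformation of Section~\ref{sec:gauge} with the Fourier restriction norm method and the parabolic smoothing furnished by the dissipative term $\beta P_0(|\phi|^2)D_xv_\pm$ in \eqref{eq:vpm} (recall $\beta<0$ and $P_0(|\phi|^2)>0$ for $\phi\neq 0$), but to construct the low-regularity solution as a limit of smooth solutions rather than by a direct contraction. First, in Subsection~\ref{subsec:space} I would define $Z^s_\mu(T)$ as the intersection of a Bourgain-type space adapted to the dispersive--dissipative semigroup $e^{t(i\partial_x^2+\beta\mu D_x)}$ with the parabolic smoothing space $L^2(0,T;H^{s+\frac12})$; the parameter $\mu$ records the coefficient $P_0(|\phi|^2)$ of the dissipative term, which is fixed by the renormalization in \eqref{kdnls-r}. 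In Subsection~\ref{subsec:linearest} I would prove the associated linear estimates, in particular the half-derivative gain produced by the heat-type factor $e^{-|\beta|\mu|n|t}$ in Duhamel's formula. In Subsection~\ref{subsec:gaugeest} I would establish the mapping properties of \eqref{def:vpm}: for $u\in Z^s_1(T)$ with $\|u\|_{Z^s_1(T)}$ bounded above and $\min_{0\le t\le T}\|u(t)\|_{L^2}$ bounded below, the functions $\rho^\pm[u]$ and $e^{\pm\rho^\pm[u]}$ are regular enough that $u\mapsto v_\pm$ and its inverse (with the zero mode $P_0u$ handled separately) are bounded and Lipschitz on the relevant balls, with matching control of the variable coefficient $P_0(|u|^2)-P_0(|\phi|^2)$ that appears in \eqref{eq:vpm}.

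The core is Subsection~\ref{subsec:nonlest}: the multilinear estimates, in $Z^s_\mu(T)$ with $s>1/4$, for every term on the right-hand side of \eqref{eq:vpm}. The decisive point is that the gauge transformation has already eliminated the resonant term $2iP^\pm_{\alpha,\beta}(|u|^2)\partial_xP_\pm u$, which is the obstruction to closing the estimates without a gauge. Each surviving term is then either of $\mathcal{R}$-type, in which the derivative falls on a factor that is not the highest in frequency, so that the standard periodic $X^{s,b}$ bilinear and trilinear estimates apply (with $b<\frac12$ compensated by the parabolic $H^{s+\frac12}$ gain and Bourgain's $L^4_{t,x}$ estimate), or it carries a projection $P_\mp$ applied to a function essentially supported on $\{\pm n>0\}$ so that its output frequency is forced to be comparatively low, or it is of lower order (the $|u|^4$-type terms and the terms quadratic in $\rho^\pm$). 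The zero mode $P_0u$ solves an equation of ODE type in $t$, estimated directly. This step is where I expect the main difficulty: one has to use the $+\frac12$ parabolic gain, the frequency localizations built into the $P_\pm$, $P_{\neq 0}$, and the off-diagonal and modulation smoothing of the $X^{s,b}$ spaces simultaneously, while carrying the exponential weights $e^{\rho^\pm[u]}$ through the multilinear estimates.

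Granting these, in Subsection~\ref{subsec:apriori-lwp} I would prove, for the smooth solutions supplied by the high-regularity local theory of Appendix~\ref{sec:appendix} (applicable to \eqref{kdnls-r} via the spatial translation of Section~\ref{sec:gauge}), an a priori bound $\|u\|_{Z^s_1(T)}\le C(s,R,r)$ and a difference estimate for two such solutions, on a time interval $T=T(s,R,r)$ chosen from those estimates; the lower bound on $\|u(t)\|_{L^2}$ comes from absorbing the growth of the $L^2$ norm into $T$. Then in Subsection~\ref{subsec:proof-lwp} I would approximate $\phi\in B_{H^s}(R,r)$ by smooth data, use the uniform $Z^s_1(T)$ bound together with the difference estimate (which makes the corresponding sequence Cauchy, in $Z^s_1(T)$ or in a slightly weaker norm) to pass to a limit, verify that it solves \eqref{kdnls-r}, and read off uniqueness in $Z^s_1(T)$ and Lipschitz dependence on $\phi$ from the same difference estimate.

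Finally, persistence of regularity follows by rerunning the a priori bound at regularity $s+\theta$, none of the nonlinear estimates being worse at higher regularity, so that the limit lies in $C([0,T];H^{s+\theta})\cap L^2(0,T;H^{s+\theta+\frac12})$; and smoothness for $t>0$ follows by a parabolic bootstrap, since membership in $L^2(0,T;H^{s+\frac12})$ yields some $t_0\in(0,T)$ with $u(t_0)\in H^{s+\frac12}$, whence persistence of regularity on $[t_0,T]$ gains half a derivative, and iterating on shrinking left endpoints gives $u\in C^\infty((0,T]\times\mathbf{T})$.
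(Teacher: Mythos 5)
Your proposal follows essentially the same route as the paper: gauge transformation removing the resonant term $2iP^\pm_{\alpha,\beta}(|u|^2)\partial_xP_\pm u$, Bourgain spaces adapted to the dispersive--dissipative semigroup with the $L^2_tH^{s+\frac12}$ parabolic gain, construction by smooth approximation via the energy-method theory of the Appendix, difference estimates for convergence and Lipschitz dependence, persistence of regularity at level $s+\theta$, and a parabolic bootstrap for smoothness. The only point you gloss over is that uniqueness does not follow directly from the Lipschitz bound on the constructed (approximable) solutions: one must check that the difference estimate applies to arbitrary $Z^s_1(T)$ solutions, with constants depending on their norms, and then run a restart-and-continuation argument from the first time of separation, which is exactly what the paper does.
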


\begin{rem}
As we mentioned in Section~\ref{sec:intr}, continuity of the solution map at the origin (even as a map into $C([0,T];H^s)$) is not obtained in the above theorem.
In fact, the local existence time $T$ given in the theorem shrinks as $r\to 0$, though the solution itself turns out to be extended globally in time (see Theorem~\ref{cor:gwp} and its proof in Section~\ref{sec:global}).
As a result, establishing local well-posedness in the whole space $H^s(\mathbf{T})$ needs the help of \emph{a priori} estimate from \cite{KTshorttime}, which is another tough job.
However, when $s>1/2$ it is expected that the nonlinear estimates in this section can be established without essentially relying on the parabolicity.
If this would be the case, the existence time $T$ could be chosen uniformly for small $r$, and large-data local well-posedness in $H^s(\mathbf{T})$ could be shown directly.
\end{rem}

From Theorem~\ref{thm:lwp2}, we immediately obtain the result on the original equation.
\begin{cor}\label{cor:lwp2}
The same results as in Theorem~\ref{thm:lwp2} hold for the Cauchy problem \eqref{kdnls} and \eqref{ic}, except that we obtain merely continuity of the solution map $\phi \mapsto u$ from $B_{H^s}(R,r)$ to $Z^s_1(T)$.
\end{cor}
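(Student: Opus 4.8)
The plan is to deduce the corollary from Theorem~\ref{thm:lwp2} by transporting everything through the explicit time-dependent change of variables recorded in Section~\ref{sec:gauge}. For $\phi\in B_{H^s}(R,r)$ put $c=c(\phi):=2\alpha P_0(|\phi|^2)=\tfrac{\alpha}{\pi}\|\phi\|_{L^2}^2$ and let $\mathcal{T}=\mathcal{T}_c$ be the translation $\mathcal{T}f(t,x):=f(t,x-ct)$. As established in Section~\ref{sec:gauge}, $\mathcal{T}$ maps solutions of \eqref{kdnls}--\eqref{ic} on $[0,T]$ bijectively onto solutions of \eqref{kdnls-r}--\eqref{ic} on $[0,T]$: the operators $\mathcal{H}$, $P_0$ and the pointwise maps $u\mapsto|u|^2u$, $u\mapsto\mathcal{H}(|u|^2)u$ commute with spatial translations, the $\partial_t$-derivative of the translate supplies the extra transport term $-c\partial_xu$ converting \eqref{kdnls} into \eqref{kdnls-r}, and $\mathcal{T}$ leaves the data unchanged. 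When $\alpha=0$ we have $\mathcal{T}=\mathrm{Id}$ and the two problems coincide, so assume $\alpha\neq0$. Given $\phi\in B_{H^s}(R,r)$, take the solution $u=u_\phi\in Z^s_1(T)$ of \eqref{kdnls-r}--\eqref{ic} furnished by Theorem~\ref{thm:lwp2} and define the solution of \eqref{kdnls}--\eqref{ic} by $\tilde u_\phi:=\mathcal{T}^{-1}u_\phi$, where $\mathcal{T}^{-1}f(t,x)=f(t,x+ct)$.

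All the quantitative conclusions then transfer. A spatial translation is an isometry on $H^\sigma(\mathbf{T})$ for every $\sigma\in\mathbf{R}$, so $\mathcal{T}^{\pm1}$ preserves $C([0,T];H^s)$, $L^2(0,T;H^{s+\frac12})$ and the higher-regularity classes $C([0,T];H^{s+\theta})\cap L^2(0,T;H^{s+\theta+\frac12})$ with their norms; in particular $\|(\mathcal{T}^{\pm1}w)(t)\|_{L^2}=\|w(t)\|_{L^2}$, so the lower bound $\min_{0\leq t\leq T}\|u(t)\|_{L^2}\geq\tfrac12\|\phi\|_{L^2}^2$ is inherited, and since $c$ is constant in $(t,x)$ the map $\mathcal{T}^{\pm1}$ also preserves $C^\infty((0,T]\times\mathbf{T})$. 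From the definition of $Z^s_\mu(T)$ in Subsection~\ref{subsec:space} one must in addition verify that $\mathcal{T}^{\pm1}$ is a bounded isomorphism of $Z^s_1(T)$ whose operator norm is controlled in terms of $|c|\leq\tfrac{|\alpha|}{\pi}R^2$. Granting this, existence of $\tilde u_\phi\in Z^s_1(T)$ with $\|\tilde u_\phi\|_{Z^s_1(T)}\leq C(s,R,r)$, uniqueness in $Z^s_1(T)$ (via the bijection $\mathcal{T}$), and persistence of additional regularity $\phi\in H^{s+\theta}$ all follow, and the local existence time $T=T(s,R,r)$ is unchanged since the translation speed is bounded by a constant times $R^2$.

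It remains to examine the solution map $\Phi:\phi\mapsto\tilde u_\phi=\mathcal{T}_{c(\phi)}^{-1}u_\phi$ for \eqref{kdnls}, where $\phi\mapsto u_\phi$ is the Lipschitz solution map for \eqref{kdnls-r}. For $\phi_1,\phi_2\in B_{H^s}(R,r)$ write $c_j=c(\phi_j)$ and decompose
\[
\Phi(\phi_1)-\Phi(\phi_2)=\mathcal{T}_{c_1}^{-1}\big(u_{\phi_1}-u_{\phi_2}\big)+\big(\mathcal{T}_{c_1}^{-1}-\mathcal{T}_{c_2}^{-1}\big)u_{\phi_2}.
\]
The first term is $\lesssim_{R,r}\|\phi_1-\phi_2\|_{H^s}$ by the boundedness of $\mathcal{T}_{c_1}^{-1}$ on $Z^s_1(T)$ and the Lipschitz estimate for $u_\phi$. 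For the second term, $|c_1-c_2|=\tfrac{|\alpha|}{\pi}\big|\|\phi_1\|_{L^2}^2-\|\phi_2\|_{L^2}^2\big|\lesssim_R\|\phi_1-\phi_2\|_{H^s}$, while for each fixed $v\in Z^s_1(T)$ the map $c\mapsto\mathcal{T}_c^{-1}v$ is continuous from $\mathbf{R}$ into $Z^s_1(T)$ (ultimately because $h\mapsto f(\cdot+h)$ is continuous on $H^\sigma(\mathbf{T})$), so $(\mathcal{T}_{c_1}^{-1}-\mathcal{T}_{c_2}^{-1})u_{\phi_2}\to0$ as $\phi_1\to\phi_2$; hence $\Phi$ is continuous. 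On the other hand $h\mapsto f(\cdot+h)$ is in general only continuous, not Lipschitz, on $H^s$ --- a bound $\|f(\cdot+h)-f\|_{H^s}\lesssim|h|\,\|f\|_{H^{s+1}}$ would require uniform $H^{s+1}$ control, which is not available on $B_{H^s}(R,r)$ --- so no Lipschitz estimate for $\Phi$ can be extracted, which is exactly the weakening asserted in the corollary. The one nonroutine step, and where I expect the actual work, is the claimed boundedness of the time-dependent translation on $Z^s_1(T)$: on the space-time Fourier side $\mathcal{T}_c^{\pm1}$ acts as the shear $(\tau,n)\mapsto(\tau\pm cn,n)$ rather than as a harmless modulation, so it does not manifestly preserve a Fourier restriction norm, and its boundedness --- together with the dependence on $R$ of the operator norm and of the modulus of continuity in $c$ --- has to be read off carefully from the precise definition of $Z^s_\mu(T)$.
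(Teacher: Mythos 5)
Your proposal is correct and follows essentially the same route as the paper: transfer everything through the translation $u(t,x)\mapsto u(t,x-2\alpha P_0(|\phi|^2)t)$, use its boundedness on $Z^s_1(T)$, and obtain continuity (but not Lipschitz continuity) of the solution map from exactly the decomposition you wrote, combined with the continuity of $\nu\mapsto u(t,x-\nu t)$ in $Z^s_1(T)$. The one step you deferred --- boundedness of the shear $(\tau,n)\mapsto(\tau-\nu n,n)$ on $Z^s_1(T)$ --- is settled in the paper by the one-line observation that $\langle i(\tau+\nu n+n^2)+|n|\rangle\sim_\nu\langle i(\tau+n^2)+|n|\rangle$, which holds precisely because $|n|$ already appears in the $Z^s_1$ weight ($\mu=1>0$).
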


Most of the rest of this section is devoted to proving Theorem~\ref{thm:lwp2}, and a proof of Corollary~\ref{cor:lwp2} is given at the end of the section.

\subsection{Formulation and definition of function spaces}
\label{subsec:space}

We compute (the Fourier transform of) the resonant part of the nonlinear terms
\[ \alpha \partial_x \big[ |u|^2u\big] -2\alpha P_0(|\phi |^2)\partial _xu+\beta \partial_x\big[ \mathcal{H}(|u|^2)u\big] \]
of the equation \eqref{kdnls-r} for $u$ as
\begin{align*}
&\frac{1}{2\pi}\Big( \sum _{n_1=n_2,\,n_3=n}+\sum _{n_1=n,\,n_2=n_3}-\sum _{n_1=n_2=n_3=n}\Big) \\
&\qquad\qquad \times in \big[ \alpha -i\beta \mathrm{sgn}(n_1-n_2)\big] \hat{u}(n_1)\bar{\hat{u}}(n_2)\hat{u}(n_3) -2\alpha P_0(|\phi|^2)in\hat{u}(n)\\
&\quad =2\alpha \big[ P_0(|u|^2)-P_0(|\phi |^2)\big] in\hat{u}(n)+\beta P_0(|u|^2) |n|\hat{u}(n)\\
&\qquad +\frac{\beta}{2\pi}\Big( \sum _{n'}\big[ \mathrm{sgn}(n-n')-\mathrm{sgn}(n)\big] |\hat{u}(n')|^2\Big) n\hat{u}(n) -\frac{\alpha}{2\pi}in|\hat{u}(n)|^2\hat{u}(n).
\end{align*}
Note that the second line on the right-hand side consists of nonlinear terms without derivative loss.
Therefore, the equation \eqref{kdnls-r} for $u$ is rewritten as
\begin{gather}
\partial _tu-i\partial_x^2u-\beta P_0(|\phi |^2) D_xu\ =\ \mathcal{N}_0[u,\phi;u] +\mathcal{N}_u[u,u,u],\label{eq:u'}
\end{gather}
where
\begin{align*}
\mathcal{N}_0[u_1,u_2;w]&:=\big[ P_0(|u_1(t)|^2)-P_0(|u_2(t)|^2)\big] \big[ 2\alpha \partial_xw+\beta  D_xw\big] ,\\
\mathcal{N}_u[u_1,u_2,u_3]&:=\mathcal{F}^{-1}\bigg[ \frac{\beta}{2\pi}\Big( \sum _{n'}\big[ \mathrm{sgn}(n-n')-\mathrm{sgn}(n')\big] \hat{u}_1(n')\bar{\hat{u}}_2(n')\Big) n\hat{u}_3(n) \\
&\qquad\qquad -\frac{\alpha}{2\pi}in\hat{u}_1(n)\bar{\hat{u}}_2(n)\hat{u}_3(n)\\
&\qquad\qquad +\frac{1}{2\pi}\sum _{\begin{smallmatrix}n=n_1-n_2+n_3\\ n_2\neq n_1,n_3\end{smallmatrix}}in \big[ \alpha -i\beta \mathrm{sgn}(n_1-n_2)\big] \hat{u}_1(n_1)\bar{\hat{u}}_2(n_2)\hat{u}_3(n_3)\bigg].
\end{align*}
If $u_1,u_2$ are solutions of \eqref{eq:u'} with initial data $\phi _1$ and $\phi _2$, respectively, then $w:=u_1-u_2$ satisfies
\begin{equation}\label{eq:u-diff}
\begin{split}
&\partial _tw-i\partial_x^2w-\beta P_0(|\phi_1|^2)D_xw\\
&\quad =\ \mathcal{N}_0[u_1,\phi _1;w]+\mathcal{N}_0[u_1,u_2;u_2]-2\alpha \big[ P_0(|\phi _1|^2)-P_0(|\phi _2|^2)\big] \partial_xu_2\\
&\qquad +\mathcal{N}_u[u_1,u_1,u_1]-\mathcal{N}_u[u_2,u_2,u_2]. 
\end{split}
\end{equation}
For the gauge transformed equation \eqref{eq:vpm}, we can rewrite it as
\begin{gather}\label{eq:vpm'}
\partial_tv_\pm -i\partial_x^2v_\pm - \beta P_0(|\phi|^2)D_xv_\pm \ =\ \mathcal{N}_0[u,\phi; v_\pm ] +\mathcal{N}_v^\pm [u],
\end{gather}
where
\begin{align}
&\mathcal{N}_v^\pm [u] \ =\ \Big( \alpha P_\pm \big( u^2\bar{u}_x\big) +\beta P_\pm \big( \mathcal{H}(u\bar{u}_x)u\big) -2iP^\pm _{\alpha ,\beta}(u\bar{u}_x)P_\pm u\Big) e^{\rho ^\pm [u]} \label{nonl:ubarx}\\
&\quad +\Big( \beta P_\pm \big( u[\mathcal{H},\bar{u}]u_x\big) +2\alpha [P_\pm ,|u|^2]u_x+\beta [P_\pm ,|u|^2]\mathcal{H}u_x+\beta [P_\pm ,\mathcal{H}(|u|^2)]u_x\Big) e^{\rho ^\pm [u]} \label{nonl:u3x}\\
&\quad \mp 2i\beta P_0(|u|^2) \partial_xP_\mp \big[ e^{\rho ^\pm [u]}P_\pm u\big] \label{nonl:pm}\\
&\quad -\beta \partial_x^{-1}P^\pm _{\alpha,\beta}\big( \mathcal{H}(|u|^2)\partial_x(|u|^2)\big) \cdot e^{\rho ^\pm [u]}P_\pm u \label{nonl:p-1}\\
&\quad +\Big\{ P^\pm _{\alpha,\beta}\Big[ \frac{3}{2}\alpha |u|^4-(2\alpha \mp i\beta ) P_0(|u|^2)|u|^2+2\beta \mathcal{H}(|u|^2)|u|^2\Big] -i\big[ P^\pm _{\alpha ,\beta}(|u|^2)\big] ^2\Big\} e^{\rho ^\pm [u]}P_\pm u.\label{nonl:quintic}
\end{align}
For two solutions $v_{\pm,1},v_{\pm,2}$ of \eqref{eq:vpm} corresponding to solutions $u_1,u_2$ of \eqref{kdnls-r} with initial data $\phi _1,\phi _2$, the difference equation for $w_\pm :=v_{\pm,1}-v_{\pm,2}$ is given by
\begin{gather}\label{eq:vpm-diff}
\begin{aligned}
&\partial _tw_\pm -i\partial_x^2w_\pm - \beta P_0(|\phi_1|^2)D_xw_\pm \\
&\quad =\ \mathcal{N}_0[u_1,\phi _1;w_\pm ]+\mathcal{N}_0[u_1,u_2;v_{\pm ,2}]\\
&\qquad -2\alpha \big[ P_0(|\phi _1|^2)-P_0(|\phi _2|^2)\big] \partial_xv_{\pm ,2} +\mathcal{N}^\pm _v[u_1]-\mathcal{N}^\pm _v[u_2]. 
\end{aligned}
\end{gather}

Each of these equations has the dispersive-parabolic linear part $\partial_t-i\partial_x^2+\mu D_x$ for some constant $\mu \geq 0$ depending on the data.
We write the linear propagator as
\[ U_\mu (t):=e^{t(i\partial_x^2-\mu D_x)},\qquad t\geq 0,\quad \mu \geq 0.\]
Following \cite{MR02,KT22}, we will use the Fourier restriction norms associated with this linear part:
\begin{defn}
For $s,b\in \mathbf{R}$ and $\mu \geq 0$, we define the spaces $X^{s,b}_\mu$, $Y^{s,b}_\mu$ by the following norms:
\begin{align*}
\| u\|_{X^{s,b}_\mu}&=\big\| \langle n\rangle^s\langle i(\tau +n^2)+\mu |n|\rangle^b\tilde{u}(\tau ,n)\big\| _{\ell ^2_nL^2_\tau},\\
\| u\|_{Y^{s,b}_\mu}&=\big\| \langle n\rangle^s\langle i(\tau +n^2)+\mu |n|\rangle^b\tilde{u}(\tau ,n)\big\| _{\ell ^2_nL^1_\tau}.
\end{align*}
We normally drop the subscript $\mu$ in these spaces with $\mu =0$ unless $\mu = 0$ needs to be emphasized.
Then, we define the spaces $Z^{s}_\mu$ for solutions, $N^s_\mu$ for nonlinearities and $G^s$ for the gauge part by the following norms:
\begin{gather*}
\| u\|_{Z^{s}_\mu}=\| u\|_{X^{s,\frac{1}{2}}_\mu}+\| u\|_{Y^{s,0}_\mu},\qquad \| u\|_{N^s_\mu}=\| u\|_{X^{s,-\frac12}_\mu}+\| u\|_{Y^{s,-1}_\mu},\\
\| g\|_{G^s}=\| \langle n\rangle ^{s+1}\tilde{g}\| _{\ell ^2_nL^{\frac43}_\tau}+\| \langle n\rangle ^{s+\frac34}\tilde{g}\| _{\ell ^2_nL^1_\tau }+\| \langle n\rangle ^{s+\frac14}\langle \tau \rangle ^{\frac12}\tilde{g}\| _{\ell ^2_nL^2_\tau} .
\end{gather*}
We also define the restricted spaces $Z^s_\mu(T)$, $N^s_\mu(T)$, $G^s(T)$ of functions on $[0,T]\times \mathbf{T}$ in the usual way.
\end{defn}

\begin{rem}\label{rem:normequiv}
For fixed $s\in \mathbf{R}$, the $Z^s_\mu$ norms for $\mu>0$ are all mutually equivalent and also equivalent to the $Z^s_0\cap L^2(\mathbf{R};H^{s+\frac12}(\mathbf{T}))$ norm.
In fact, for $\mu, \nu >0$ we see that
\begin{align*}
(\tfrac{\mu}{\nu}\wedge 1)^{\frac12}\| u\|_{Z^s_\nu}&\lesssim \| u\|_{Z^s_\mu}\lesssim (\tfrac{\mu}{\nu} \vee 1)^{\frac12}\| u\|_{Z^s_\nu},\\
\| u\|_{Z^s_0}+(\mu \wedge 1)^{\frac12}\| u\|_{L^2_tH^{s+\frac12}_x}&\lesssim \| u\|_{Z^s_\mu}\lesssim \| u\|_{Z^s_0}+\mu^{\frac12}\| u\|_{L^2_tH^{s+\frac12}_x}.
\end{align*} 
\end{rem}

We will use the following properties of the $Z^s_1(T)$ norm.
\begin{lem}\label{lem:Zmu}
Let $s\in \mathbf{R}$ and $T>0$.
\begin{enumerate}
\item[(i)] We have the embeddings $C^1([0,T];H^{s+2})\hookrightarrow Z^s_1(T)\hookrightarrow C([0,T];H^s)$.
Moreover, these embeddings are uniform in $T$ for $0<T<1$.
\item[(ii)] For $u\in C^1([0,T];H^{s+2})$, the map $T'\mapsto \| u\|_{Z^s_1(T')}$ is nondecreasing and continuous on $(0,T]$, and $\lim _{T'\to 0}\| u\|_{Z^s_1(T')}\lesssim \| u(0)\|_{H^s}$.
\end{enumerate}
\end{lem}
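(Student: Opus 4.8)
\emph{Proof plan.}

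For (i), the inclusion $Z^s_1(T)\hookrightarrow C([0,T];H^s)$ will come directly from the $Y^{s,0}$ component of the norm: if $U\in Z^s_1(\mathbf{R})$ then $\widehat{U(t)}(n)=\frac1{2\pi}\int e^{it\tau}\widetilde U(\tau,n)\,d\tau$, so $\sup_t\|U(t)\|_{H^s}\lesssim\|U\|_{Y^{s,0}}\leq\|U\|_{Z^s_1}$ and, by dominated convergence, $t\mapsto U(t)$ is continuous into $H^s$; restricting a near-optimal extension of $u$ to $[0,T]$ gives the claim. For the reverse inclusion I would extend $u$ to $\bar u\in C^1_c(\mathbf{R};H^{s+2})$ with $\|\bar u\|_{C^1(\mathbf{R};H^{s+2})}\lesssim_T\|u\|_{C^1([0,T];H^{s+2})}$ by a standard extension operator and estimate $\|\bar u\|_{Z^s_1}$ directly. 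By Remark~\ref{rem:normequiv} it suffices to control $\|\bar u\|_{L^2_tH^{s+1/2}_x}$, $\|\bar u\|_{Y^{s,0}}$ and $\|\bar u\|_{X^{s,1/2}_0}$. The first is $\lesssim_T\|\bar u\|_{L^\infty_tH^{s+2}_x}$ since $\bar u$ is compactly supported; for the second, $\|\widetilde{\bar u}(\cdot,n)\|_{L^1_\tau}\lesssim\|\widehat{\bar u}(\cdot,n)\|_{H^{1/2+}_t}\lesssim\|\widehat{\bar u}(\cdot,n)\|_{H^1_t}$, so $\|\bar u\|_{Y^{s,0}}\lesssim\|\bar u\|_{H^1_tH^s_x}$; for the third, interpolating $X^{s,0}_0=L^2_tH^s_x$ with $X^{s,1}_0$ (and using $\|\bar u\|_{X^{s,1}_0}\lesssim\|\bar u\|_{L^2_tH^s_x}+\|(\partial_t-i\partial_x^2)\bar u\|_{L^2_tH^s_x}\lesssim_T\|\bar u\|_{L^\infty_tH^{s+2}_x}+\|\partial_t\bar u\|_{L^\infty_tH^s_x}$) gives $\|\bar u\|_{X^{s,1/2}_0}\lesssim\|\bar u\|_{X^{s,0}_0}^{1/2}\|\bar u\|_{X^{s,1}_0}^{1/2}$. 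All three are then $\lesssim_T\|u\|_{C^1([0,T];H^{s+2})}$; the loss of two derivatives is used exactly in the regularizing $L^2_tH^{s+2}_x$ bound.

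For (ii), monotonicity of $T'\mapsto\|u\|_{Z^s_1(T')}$ is immediate, since any extension of $u|_{[0,T'']}$ also extends $u|_{[0,T']}$ when $T'\leq T''$. For the behaviour as $T'\to0$, note $u$ solves $\partial_tu=(i\partial_x^2-D_x)u+f$ with $f:=(\partial_t-i\partial_x^2+D_x)u\in C([0,T];H^s)$, and decompose $u=U_1(\cdot)u(0)+w$ on $[0,T']$, where $U_1(t)\psi:=e^{it\partial_x^2}e^{-|t|D_x}\psi$ ($t\in\mathbf{R}$); then $w(0)=0$ and $\|w\|_{C^1([0,T'];H^s)\cap C([0,T'];H^{s+2})}\lesssim\|u\|_{C^1([0,T];H^{s+2})}$ uniformly in $T'$. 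Extending $U_1(\cdot)u(0)$ by $\psi_0(t)U_1(t)u(0)$ with a fixed cutoff $\psi_0$ contributes $\lesssim\|u(0)\|_{H^s}$ (an elementary Fourier-side estimate, as in \cite{MR02}: the factor $e^{-|t||n|}$ compensates the weight $|n|$ in the $\mu=1$ norm, its Fourier transform being a uniformly $L^1$ Lorentzian), while extending $w$ by a reflection-and-cutoff supported on an interval of length $O(T')$ contributes $\lesssim(T')^{1/2}\|u\|_{C^1([0,T];H^{s+2})}$ — the gain $(T')^{1/2}$ coming from the short support together with the $L^2$ in time in the definition of the $Z^s_1$-norm, and $w(0)=0$ keeping the cutoff-derivative term from diverging. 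Letting $T'\to0$ yields $\limsup_{T'\to0}\|u\|_{Z^s_1(T')}\lesssim\|u(0)\|_{H^s}$.

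For continuity of $T'\mapsto\|u\|_{Z^s_1(T')}$ on $(0,T]$, the plan is to combine monotonicity with a two-sided gluing estimate: for $T_1'<T_2'$, modify a near-optimal extension on one interval by a correction term $r$ which vanishes at the common endpoint and is essentially supported on an interval of length $T_2'-T_1'$, and show $\|r\|_{Z^s_1}\to0$ as $T_2'-T_1'\to0$. The hard part will be this estimate of $r$: multiplication by a sharp time cutoff is bounded on $Z^s_1$ but not with constant one, so one cannot simply glue arbitrary near-optimal extensions. I expect to resolve it using the $C^1$-regularity of $u$ near the relevant endpoint — so that a near-optimal extension may be taken as regular as $u$ there (replacing it near the endpoint by the Duhamel extension, at the cost of a transition on a \emph{fixed}-length subinterval where the two extensions already coincide) — together with the uniform continuity of every $Z^s_1$-function as an $H^s$-valued curve (a consequence, via dominated convergence, of the $\ell^2_nL^1_\tau$-type component of the norm, exactly as in the proof of (i)). This, with $r(T_1')=0$ and the short-support gain, should force $\|r\|_{Z^s_1}\to0$ and hence the absence of jumps.
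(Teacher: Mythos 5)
Parts (i), the monotonicity in (ii), and the $T'\to 0$ limit are essentially sound. Your proof of (i) is the paper's argument in slightly decomposed form (the paper bounds $\|u^\dagger\|_{Z^s_1}\lesssim\|u^\dagger\|_{X^{s,1}_1}\lesssim\|u^\dagger\|_{L^2_tH^{s+2}}+\|\partial_tu^\dagger\|_{L^2_tH^s}$ in one stroke), and your treatment of $\lim_{T'\to 0}$ via the decomposition $u=U_1(\cdot)u(0)+w$ with $w(0)=0$ and a short-support extension of $w$ is a valid, self-contained substitute for the paper's citation of \cite[Lemma~2.1]{KT22} together with Lemma~\ref{lem:linear1}.

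The continuity of $T'\mapsto\|u\|_{Z^s_1(T')}$ is where there is a genuine gap, and your own description of the ``hard part'' rests on a false premise: multiplication by a \emph{sharp} time cutoff is \emph{not} bounded on $Z^s_1$. At the endpoint regularity $b=\tfrac12$ the operator $f\mapsto \mathbf{1}_{\{t>t_0\}}f$ is unbounded on $X^{s,\frac12}_\mu$ (the Fourier transform of $\mathbf{1}_{\{t>0\}}$ has a $\mathrm{p.v.}\,\tau^{-1}$ tail, and $\langle\tau\rangle^{\frac12}\tau^{-1}\notin L^2$), and adding the $Y^{s,0}$ component does not repair this. Consequently your correction term $r=\eta\,(u-u^\ddagger)$, localized near $T_1'$ at scale $T_2'-T_1'$, cannot be controlled: $u^\ddagger$ beyond $T_1'$ is an arbitrary $Z^s_1$ function, and the qualitative continuity of $t\mapsto u^\ddagger(t)$ in $H^s$ (which is all the $\ell^2_nL^1_\tau$ component gives — there is no modulus of continuity controlled by the norm) says nothing about the $X^{s,\frac12}_1$ and $L^2_tH^{s+\frac12}$ components of $r$, which involve half a derivative more in time and in space than $C_tH^s$. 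Your proposed repair — replacing $u^\ddagger$ past $T_1'$ by the Duhamel continuation of $u$ — either changes the extension on a fixed-length interval by an amount that does not tend to $0$ as $T_2'\to T_1'$ (so you only get $\|u\|_{Z^s_1(T_2')}\leq\|u\|_{Z^s_1(T_1')}+O(1)$), or, if you try to keep $u^\ddagger$ untouched on $(-\infty,T_1']$ and splice at $t=T_1'$ exactly, reintroduces the unbounded sharp cutoff. The paper avoids gluing altogether: it first truncates to frequencies $|n|\leq N$ (the tail being small in $Z^s_1(T')$ uniformly in $T'$ by the $C^1H^{s+2}$ bound from (i)), and then compares $\|P_{\leq N}u\|_{Z^s_1(T_2)}$ with $\|D_\kappa P_{\leq N}u\|_{Z^s_1(T_2)}$, $\kappa=T_2/T_1$, via the explicit scaling of the weight $\langle i(\tau+n^2)+|n|\rangle$ on bounded frequencies, which changes the norm only by a factor $1+O(N^2|\kappa-1|)$; this is the Ionescu--Kenig--Tataru dilation argument, and some device of this kind (or an equivalent way to beat the sharp-cutoff obstruction) is needed to complete your step.
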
 

\begin{proof}
(i) Let $u\in C^1([0,T];H^{s+2})$, and take an extension $u^\dagger \in C^1(\mathbf{R};H^{s+2})$ such that $u^\dagger (t)=0$ outside some finite interval and 
\[ \| u^\dagger \|_{C^1(\mathbf{R};H^{s+2})}\lesssim \| u\|_{C^1([0,T];H^{s+2})},\]
with the implicit constant being uniform in $T$.
For instance, we can define
\[ u^\dagger(t):=\psi (t)\times \left\{
\begin{alignedat}{2}
&2u(0)-u(\tfrac{-tT}{T+(-t)}) &\qquad &(t<0),\\
&u(t) &&(t\in [0,T]),\\
&2u(T)-u(\tfrac{T^2}{t}) &&(t>T),
\end{alignedat}\right. \]
where $\psi$ is a bump function. 
Then, we have 
\begin{align*}
\| u^\dagger \|_{Z^s_1}&\lesssim \| u^\dagger \|_{X^{s,1}_1}\lesssim \| u^\dagger \|_{L^2(\mathbf{R};H^{s+2})}+\| \partial_tu^\dagger \|_{L^2(\mathbf{R};H^s)}\\
&\lesssim \| u^\dagger \|_{C^1(\mathbf{R};H^{s+2})}\lesssim \| u\|_{C^1([0,T];H^{s+2})},
\end{align*}
which shows the first embedding.
The second embedding follows from $Y^{s,0}\hookrightarrow C(\mathbf{R};H^s)$.

(ii) By the definition of the restricted norm $Z^s_1(T')$, the map $T'\mapsto \| u\|_{Z^s_1(T')}$ is nondecreasing.
To show continuity, we employ the argument of \cite[Lemma~4.2]{IKT08}.
For given $u\in C^1([0,T];H^{s+2})$, it suffices to prove the following: for any $\varepsilon >0$, there is $\delta >0$ such that if $T_1,T_2\in (0,T]$ satisfy $|\frac{T_2}{T_1}-1|\leq \delta$, then $\| u\|_{Z^s_1(T_2)}\leq \| u\|_{Z^s_1(T_1)}+C\varepsilon$.
To this end, we first fix $N\gg 1$ so that $\| P_{>N}u\|_{C^1([0,T];H^{s+2})}\ll \varepsilon$ and then, by the estimate in (i), $\| P_{>N}u\|_{Z^s_1(T')}\leq \varepsilon$ for any $T'\in (0,T]$.
Hence, we have 
\[ \| u\|_{Z^s_1(T_2)}\leq \| P_{\leq N}u\|_{Z^s_1(T_2)}+\varepsilon. \]
Define $\kappa :=\frac{T_2}{T_1}$ and $D_\kappa u(t,x):=u(\frac{t}{\kappa},x)$.
By the estimate in (i), we have
\begin{align*}
&\big| \| P_{\leq N}u\|_{Z^s_1(T_2)}-\| D_\kappa P_{\leq N}u\|_{Z^s_1(T_2)}\big| \\
&\quad \leq C\| u-D_\kappa u\|_{C^1([0,T_2];H^{s+2})}\\
&\quad =C\sup_{t\in [0,T_2]}\Big( \| u(t)-u(\tfrac{t}{\kappa})\|_{H^{s+2}}+\| (\partial_tu)(t)-\tfrac{1}{\kappa}(\partial_tu)(\tfrac{t}{\kappa})\|_{H^{s+2}}\Big) \\
&\quad \leq \varepsilon \qquad (|\kappa -1|\ll 1).
\end{align*}
Next, we take an extension $u^\ddagger\in Z^s_1$ of $u$ on $[0,T_1]$ satisfying $\| u^\ddagger \|_{Z^s_1}\leq \| u\|_{Z^s_1(T_1)}+\varepsilon$.
Since $D_\kappa P_{\leq N}u^\ddagger$ is an extension of $D_\kappa P_{\leq N}u$ on $[0,\kappa T_1]=[0,T_2]$, we have
\begin{align*}
&\| D_\kappa P_{\leq N}u \|_{Z^s_1(T_2)}\leq \| D_\kappa P_{\leq N}u^\ddagger \|_{Z^s_1}\\
&\quad =\| \langle n\rangle ^s\langle i(\tau +n^2)+|n|\rangle ^{\frac12}\kappa \widetilde{P_{\leq N}u^\ddagger}(\kappa \tau ,n)\|_{\ell ^2_nL^2_\tau} +\| \langle n\rangle ^s\kappa \widetilde{P_{\leq N}u^\ddagger}(\kappa \tau ,n)\|_{\ell ^2_nL^1_\tau}\\
&\quad =\kappa ^{\frac12}\| \langle n\rangle ^s\langle i(\tfrac{\tau}{\kappa} +n^2)+|n|\rangle ^{\frac12}\widetilde{P_{\leq N}u^\ddagger}(\tau ,n)\|_{\ell ^2_nL^2_\tau} +\| \langle n\rangle ^s\widetilde{P_{\leq N}u^\ddagger}(\tau ,n)\|_{\ell ^2_nL^1_\tau}\\
&\quad \leq \kappa ^{\frac12} \Big( (1\vee \tfrac{1}{\kappa}) +CN^2\big| \tfrac{1}{\kappa}-1\big| \Big) ^{\frac12} \| u^\ddagger \|_{Z^s_1}\\
&\quad \leq \kappa ^{\frac12} \Big( (1\vee \tfrac{1}{\kappa}) +CN^2\big| \tfrac{1}{\kappa}-1\big| \Big) ^{\frac12} \big( \| u\|_{Z^s_1(T_1)}+\varepsilon \big) ,
\end{align*}
where we have used the inequality
\begin{align*}
\langle i(\tfrac{\tau}{\kappa} +n^2)+|n|\rangle &\leq \langle \tfrac{i(\tau +n^2)+|n|}{\kappa}\rangle +\big| \tfrac{1}{\kappa}-1\big| (n^2+|n|)\\
&\leq \Big( (1\vee \tfrac{1}{\kappa}) +CN^2\big| \tfrac{1}{\kappa}-1\big| \Big) \langle i(\tau +n^2)+|n|\rangle \qquad (|n|\lesssim N).
\end{align*}
Hence, we have
\[ \| D_\kappa P_{\leq N}u \|_{Z^s_1(T_2)}\leq \| u\|_{Z^s_1(T_1)}+2\varepsilon  \qquad (|\kappa -1|\ll 1). \]
Combining the above estimates, we obtain
\[ \| u\|_{Z^s_1(T_2)}\leq \| u\|_{Z^s_1(T_1)}+4\varepsilon \]
if $|\kappa -1|\leq \delta$, where $\delta>0$ is determined only by $u$ and $\varepsilon$ (and $N=N(u,\varepsilon)$).
Therefore, the continuity is shown.

The last statement follows from $\| u-U_1(t)u(0)\|_{Z^s_1(T')}\to 0$ ($T'\to 0$) by \cite[Lemma~2.1]{KT22} and $\| U_1(t)u(0)\|_{Z^s_1(1)}\lesssim \| u(0)\|_{H^s}$ by Lemma~\ref{lem:linear1} below.
\end{proof}

\subsection{Linear estimates}
\label{subsec:linearest}

We first recall the following linear estimates related to the propagator $U_\mu(t)$.
\begin{lem}[{\cite[Lemmas 2.3, 2.4]{KT22}, \cite[Propositions 2.1, 2.3(a)]{MR02}}]\label{lem:linear1}
For $s\in \mathbf{R}$, $\mu \geq 0$ and $0<T\leq 1$, we have
\[ \| U_\mu(t)\phi\|_{Z^s_\mu(T)}\lesssim \| \phi\|_{H^s},\qquad \Big\| \int_0^t U_\mu (t-t')F(t')\,dt'\Big\|_{Z^s_\mu(T)}\lesssim \| F\|_{N^s_\mu(T)}.\]
\end{lem}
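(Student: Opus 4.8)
The statement to prove is Lemma~\ref{lem:linear1}, asserting the standard homogeneous and inhomogeneous linear estimates
\[ \| U_\mu(t)\phi\|_{Z^s_\mu(T)}\lesssim \| \phi\|_{H^s},\qquad \Big\| \int_0^t U_\mu (t-t')F(t')\,dt'\Big\|_{Z^s_\mu(T)}\lesssim \| F\|_{N^s_\mu(T)} \]
for the dispersive-parabolic propagator $U_\mu(t)=e^{t(i\partial_x^2-\mu D_x)}$, uniformly in $\mu\geq 0$ and $0<T\leq 1$.

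\medskip

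The plan is to reduce everything to the corresponding estimates on the whole line in $t$ and then restrict, exactly as in the purely dispersive Bourgain-space theory, keeping track of the parabolic weight $\langle i(\tau+n^2)+\mu|n|\rangle$. Since $Z^s_\mu=X^{s,1/2}_\mu\cap Y^{s,0}_\mu$ and $N^s_\mu=X^{s,-1/2}_\mu+Y^{s,-1}_\mu$, and all these norms act diagonally in the frequency variable $n$ with a fixed weight $\langle n\rangle^{s}$, one may freeze $n$ and work with the scalar symbol $z_n:=i(\tau+n^2)+\mu|n|$; the $\langle n\rangle^s$ weight just rides along. First I would treat the homogeneous estimate: a standard time cutoff $\psi\in C_c^\infty(\mathbf{R})$ with $\psi\equiv 1$ near $[0,1]$ turns $U_\mu(t)\phi$ into $\psi(t)U_\mu(t)\phi$, whose space-time Fourier transform in $n$ is $\widehat{\phi}(n)\,e^{-\mu|n|t}\psi(t)$ evaluated along $\tau\mapsto \widehat{\psi(\cdot)e^{-\mu|n|\cdot}}(\tau+n^2)$. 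The key point is the elementary bound, uniform in $\lambda:=\mu|n|\geq0$,
\[ \big\| \langle i\sigma+\lambda\rangle^{1/2}\,\widehat{\psi e^{-\lambda\cdot}}(\sigma)\big\|_{L^2_\sigma}+\big\| \widehat{\psi e^{-\lambda\cdot}}(\sigma)\big\|_{L^1_\sigma}\lesssim 1, \]
which is a one-dimensional computation: for $\lambda\lesssim 1$ the function $\psi e^{-\lambda t}$ is bounded in $H^{1/2}_t\cap \widehat{L^1}$ uniformly, and for $\lambda\gtrsim1$ one exploits that $\psi e^{-\lambda t}$ has $L^1_t$-norm $\sim 1/\lambda$ with derivative of size $\sim1$ plus $\lambda/\lambda\sim1$, so its Fourier transform gains the needed $\lambda^{1/2}$ (this is precisely the ``parabolic smoothing'' bookkeeping). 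Summing in $n$ against $\langle n\rangle^{2s}|\widehat\phi(n)|^2$ gives the claim; this argument is identical in structure to \cite[Lemmas 2.3]{KT22} / \cite[Prop.\ 2.1]{MR02}, so I would simply cite it.

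\medskip

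For the inhomogeneous estimate I would use the Duhamel operator in the usual Bourgain-space form. Write $F=F_1+F_2$ realizing the $N^s_\mu$ decomposition with $\|F_1\|_{X^{s,-1/2}_\mu}+\|F_2\|_{Y^{s,-1}_\mu}\lesssim \|F\|_{N^s_\mu(T)}+\varepsilon$, extend each to $\mathbf{R}$, and estimate $\psi(t)\int_0^t U_\mu(t-t')F_j(t')\,dt'$. Frequency-by-frequency this is the scalar operator $g\mapsto \psi(t)\int_0^t e^{-(i\sigma'+\lambda)(t-t')}g(t')\,dt'$ with $\lambda=\mu|n|$, and one needs the two bounds (uniform in $\lambda\ge0$)
\[ \Big\| \psi \!\int_0^t e^{-(i\sigma+\lambda)(t-\cdot)}g\Big\|_{\langle i\sigma+\lambda\rangle^{1/2}L^2_t\cap L^1_{\langle i\sigma+\lambda\rangle}}\lesssim \|g\|_{\langle i\sigma+\lambda\rangle^{-1/2}L^2}\ \text{and}\ \lesssim \|g\|_{\langle i\sigma+\lambda\rangle^{-1}L^1}, \]
which are the standard $b=1/2$ transfer estimates, again with the parabolic weight treated by splitting $\lambda\lesssim1$ (dispersive-type argument, the resolvent-type kernel $1/(i\sigma+\lambda)$ is harmless) versus $\lambda\gtrsim1$ (the semigroup $e^{-\lambda(t-t')}$ contracts and supplies the $\lambda$ gains). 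I would again point to \cite[Lemma 2.4]{KT22} and \cite[Prop.\ 2.3(a)]{MR02} rather than redoing it. Finally, taking infimum over the decompositions and over extensions of $F$ from $[0,T]$ to $\mathbf{R}$ yields the restricted-norm statement, and the constants are independent of $T\in(0,1]$ because the cutoff $\psi$ can be fixed once and for all (we only use $\psi\equiv1$ on $[0,1]\supseteq[0,T]$).

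\medskip

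The main obstacle, and the only place where this differs from the classical argument, is keeping the estimates \emph{uniform in $\mu$}: naively the parabolic term $\mu|n|$ could blow up as $n\to\infty$, but because the weight $\langle i(\tau+n^2)+\mu|n|\rangle$ appears on \emph{both} sides of each estimate at matched powers ($+1/2$ vs $-1/2$, $0$ vs $-1$), the large-$\lambda$ regime is actually favorable — the semigroup decay more than compensates. Making this precise is the one genuine computation; it is exactly the content of the cited lemmas of \cite{KT22,MR02}, so in the write-up I would state Lemma~\ref{lem:linear1} as a direct quotation of those results and, if desired, include a one-paragraph remark recalling the $\lambda\lesssim1$ / $\lambda\gtrsim1$ dichotomy above.
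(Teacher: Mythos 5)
The paper offers no proof of this lemma at all: it is stated purely as a quotation of \cite[Lemmas 2.3, 2.4]{KT22} and \cite[Propositions 2.1, 2.3(a)]{MR02}, and your ultimate decision to do the same is therefore consistent with the paper. However, the sketch you interpose before citing contains a genuine flaw in the homogeneous estimate. You propose to bound $\psi(t)U_\mu(t)\phi$ and claim the uniform-in-$\lambda$ bound $\| \langle i\sigma+\lambda\rangle^{1/2}\widehat{\psi e^{-\lambda\cdot}}(\sigma)\|_{L^2_\sigma}+\|\widehat{\psi e^{-\lambda\cdot}}(\sigma)\|_{L^1_\sigma}\lesssim 1$ with $\lambda=\mu|n|$. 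This is false for any admissible cutoff: since $\psi$ must be smooth and identically $1$ on a neighbourhood of $[0,T]$, its support necessarily meets $\{t<0\}$ (a smooth $\psi$ supported in $[0,\infty)$ would have to vanish to infinite order at $0$, contradicting $\psi(0)=1$), and on $\{t<0\}$ the factor $e^{-\lambda t}$ is of size $e^{\lambda|t|}$, so by Plancherel $\|\widehat{\psi e^{-\lambda\cdot}}\|_{L^2_\sigma}=\|\psi e^{-\lambda\cdot}\|_{L^2_t}$ grows exponentially in $\lambda=\mu|n|$. Your heuristic ``$\|\psi e^{-\lambda t}\|_{L^1_t}\sim 1/\lambda$'' tacitly assumes a one-sided cutoff, which is not available.

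The repair — and what the cited references actually prove — is to exploit that $Z^s_\mu(T)$ is defined by an infimum over extensions: one extends $U_\mu(t)\phi$ from $[0,T]$ to $\mathbf{R}$ not by the (ill-behaved) backward semigroup but by $\psi(t)\,\mathcal{F}_x^{-1}\big[e^{-itn^2}e^{-\mu|n|\,|t|}\hat{\phi}(n)\big]$, which agrees with $U_\mu(t)\phi$ for $t\in[0,T]$. The relevant one-dimensional function is then $\psi(t)e^{-\lambda|t|}$, whose Fourier transform is comparable to $\lambda/(\lambda^2+\sigma^2)$ plus smooth corrections, and for this the bounds you wrote \emph{are} uniform in $\lambda\geq 0$ (both the weighted $L^2_\sigma$ and the $L^1_\sigma$ norms are $O(1)$). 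A parallel issue arises in the Duhamel term for $t<0$, where $\int_0^t e^{-(i\sigma'+\lambda)(t-t')}g\,dt'$ again involves $e^{\lambda(t'-t)}$ with $t'-t\geq 0$; this is why \cite[Proposition 2.3]{MR02} splits the Duhamel integral and treats the backward-in-time part separately. With that modification your outline matches the standard proof; as written, the ``one-dimensional computation'' you describe would not close.
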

The following lemma gives estimates related to the time cut-off.
\begin{lem}\label{lem:linear2}
Let $\psi \in C^\infty_0(\mathbf{R})$ be a bump function and define $\psi_T(t):=\psi (t/T)$ for $T>0$.
Then, for $s\in \mathbf{R}$, $\mu \geq 0$, $0<T\leq 1$ and $0\leq b\leq \frac12$, we have
\[ \| \psi_T(t)u\|_{X^{s,b}_\mu}\lesssim T^{\frac12-b}\| u\|_{Z^s_\mu},\qquad \| \psi _T(t)u\|_{Z^s_\mu}\lesssim \| u\|_{Z^s_\mu}.\]
Moreover, for $0\leq b'<\frac12$ and $0<\theta <\frac12-b'$, we have
\[ \| \psi_T(t)F\| _{N^s_\mu}\lesssim _{b',\theta}T^{\theta}\| F\|_{X^{s,-b'}_\mu}.\]
\end{lem}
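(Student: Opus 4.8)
The plan is to reduce all three estimates to standard facts about the $X^{s,b}_\mu$ and $Y^{s,b}_\mu$ scales, exploiting the fact that the modulation weight $\langle i(\tau+n^2)+\mu|n|\rangle$ behaves, for fixed $n$, like $\langle \tau + (n^2 - i\mu|n|)\rangle$ in the $\tau$ variable, so that each slice $n\in\mathbf{Z}$ is essentially a one-dimensional Bourgain-type space in $\tau$ centred at a (complex) frequency. Since multiplication by $\psi_T(t)$ acts only in $t$ and hence by convolution in $\tau$ at each fixed $n$, it suffices to prove, for the scalar weight $w_c(\tau):=\langle \tau - c\rangle$ with $c=c(n)=-n^2+i\mu|n|$ and the operator $M_T:f\mapsto\psi_T f$, the one-dimensional bounds
\begin{align*}
\|w_c^b\widehat{M_Tf}\|_{L^2_\tau}&\lesssim T^{\frac12-b}\big(\|w_c^{1/2}\hat f\|_{L^2_\tau}+\|\hat f\|_{L^1_\tau}\big),\qquad 0\le b\le\tfrac12,\\
\|w_c^{-b'-\theta}\widehat{M_Tg}\|_{L^2_\tau}+\|w_c^{-1}\widehat{M_Tg}\|_{L^1_\tau}&\lesssim_{b',\theta} T^{\theta}\|w_c^{-b'}\hat g\|_{L^2_\tau},
\end{align*}
uniformly in $c$ lying in the lower (or upper) half-plane with $|\mathrm{Im}\,c|=\mu|n|$; summing in $n$ against $\langle n\rangle^{2s}$ then gives the claimed inequalities. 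The crucial point making the constants uniform in $\mu$ is that $\langle \tau-c\rangle\ge\langle \tau+\mathrm{Re}\,c\rangle$ when $c$ is off the real axis, so the parabolic part only helps; thus one may as well take $\mu=0$ and prove the purely dispersive one-dimensional statements, which are the familiar lemmas underlying the $X^{s,b}$ method (cf.\ the references \cite{MR02,KT22} already invoked for Lemma~\ref{lem:linear1}).

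First I would establish the middle inequality $\|\psi_T u\|_{Z^s_\mu}\lesssim\|u\|_{Z^s_\mu}$: at each $n$, convolution in $\tau$ with $T\hat\psi(T\,\cdot)$ is bounded on $L^2_\tau$ and on $L^1_\tau$ (Young), and the weight $\langle\tau-c\rangle^{1/2}$ (resp.\ $\langle\tau-c\rangle^0=1$) is tamed using $\langle\tau-c\rangle^{1/2}\lesssim\langle\tau'-c\rangle^{1/2}\langle\tau-\tau'\rangle^{1/2}$ together with the rapid decay of $\hat\psi$ — here one uses $T\le1$ so that $\int|T\hat\psi(T\sigma)|\langle\sigma\rangle^{1/2}\,d\sigma\lesssim T^{-1/2}\le$ something harmless only after being paired correctly; more precisely one splits $\hat\psi$ into $|\sigma|\le 1/T$ and $|\sigma|>1/T$ and checks the $T$-powers cancel. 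Next, for the first inequality I would use that lowering the modulation exponent from $1/2$ to $b$ while cutting off to time $\sim T$ is the classical gain $T^{1/2-b}$: write $\psi_T u = \psi_T\cdot\psi_{2T}u$ and estimate $\|\psi_T v\|_{X^{s,b}_\mu}\lesssim T^{1/2-b}\|v\|_{X^{s,1/2}_\mu}+(\text{lower order})$ via the standard interpolation/convolution argument, and handle the contribution of the $Y^{s,0}_\mu$ part of $\|u\|_{Z^s_\mu}$ separately since $Y^{s,0}_\mu\hookrightarrow$ time-continuous functions, for which multiplication by $\psi_T$ and the modulation weight $\langle\cdot\rangle^b$ with $b\le1/2$ again produces the factor $T^{1/2-b}$. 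The third inequality is the ``dual'' statement: since $N^s_\mu=X^{s,-1/2}_\mu+Y^{s,-1}_\mu$, it suffices to put $\psi_T F$ into $X^{s,-1/2}_\mu$ with a gain $T^\theta$, which follows from the first inequality (with roles of input/output modulation exchanged, i.e.\ its dual form) combined with $-b'-\theta<-b'$ giving room of size $\theta<\frac12-b'$; the $Y^{s,-1}_\mu$ piece is controlled by $X^{s,-1/2+}_\mu$ which is in turn $\lesssim T^{\theta'}\|F\|_{X^{s,-b'}_\mu}$ for a slightly smaller $\theta'$, and one absorbs the discrepancy into the $\lesssim_{b',\theta}$ constant.

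The main obstacle is bookkeeping the $T$-powers in the convolution estimates so that they are genuinely \emph{uniform in $\mu\ge0$} and in the complex centre $c(n)$; the naive Young's-inequality bound $\|\psi_T\|_{L^1}\sim T$ combined with $\|\hat\psi_T\|_{L^\infty}\sim T$ points in opposite directions, and the correct interpolation between them (Bernstein in $\tau$ at scale $1/T$) must be done with the inhomogeneous weight $\langle\tau-c\rangle$ rather than $|\tau|$, so one has to verify that the off-axis shift by $i\mu|n|$ never worsens any estimate — which it does not, by the elementary inequality $\langle\tau-c\rangle\ge 1+|\mathrm{Im}\,c|\ge1$ and $\langle\tau-c\rangle\gtrsim\langle\tau-\mathrm{Re}\,c\rangle$. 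Everything else is a routine instance of the time-localization lemmas that accompany any $X^{s,b}$-based well-posedness argument, so I would in fact state these as one-dimensional weighted convolution estimates, prove them once, and then sum trivially in $n$.
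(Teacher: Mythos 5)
Your plan is sound and, for the first two estimates, follows essentially the paper's route: everything is diagonal in $n$, so the matter reduces to one-dimensional convolution estimates in $\tau$ handled by Young/H\"older together with a weight-transfer inequality (the paper in fact only sketches these two, citing \cite{KT22}). For the third estimate the paper argues differently from you: it does not pass through a dual form of the first estimate, but directly proves
\[ \| \langle \tau\rangle ^{-\frac12}(\hat{\psi}_T*\hat{f})\|_{L^2}+\| \langle \tau\rangle ^{-1}(\hat{\psi}_T*\hat{f})\|_{L^1}\lesssim _{b',\theta}T^{\theta}\| \langle \tau \rangle ^{-b'}\hat{f}\|_{L^2} \]
by splitting $\langle \tau\rangle^{-\frac12}\lesssim \big(\langle\tau\rangle^{-(\frac12-b')}+\langle\tau\rangle^{-\frac12}|\tau_1|^{b'}\big)\langle\tau-\tau_1\rangle^{-b'}$ and applying H\"older and Young, the factor $T^\theta$ coming from $\|\hat\psi_T\|_{L^{1/(1-\theta)}}\sim T^\theta$; this treats the $X^{s,-\frac12}_\mu$ and $Y^{s,-1}_\mu$ components of $N^s_\mu$ in one stroke. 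Your detour through the standard time-localization gain in $X^{s,b}$ plus the Cauchy--Schwarz embedding $X^{s,-\frac12+}_\mu\hookrightarrow Y^{s,-1}_\mu$ reaches the same conclusion and is a legitimate alternative. One notational slip: $N^s_\mu$ carries the \emph{sum of the two norms}, so as a space it is the intersection of $X^{s,-\frac12}_\mu$ and $Y^{s,-1}_\mu$, not their sum; since you do estimate both components anyway, this does not affect the argument.

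One step of your justification does need tightening: the claim that ``the parabolic part only helps, so one may as well take $\mu=0$'' is not a valid monotonicity reduction for any of the three inequalities. In the third estimate the weight also appears with a \emph{negative} power on the right-hand side, so enlarging it makes the right-hand side smaller and the inequality strictly harder; in the first two the weight appears with positive power on the right, with the same problem. What actually gives uniformity in $\mu$ (and in $n$) is that the full weight $W(\tau,n)=\langle i(\tau+n^2)+\mu|n|\rangle$ satisfies the same two properties used at $\mu=0$, namely $W\geq 1$ and $W(\tau,n)\lesssim W(\tau',n)+\langle \tau-\tau'\rangle$, so the weight-transfer step and the Young/H\"older bookkeeping go through verbatim with $W$ in place of $\langle\tau\rangle$. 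Your displayed one-dimensional bounds ``uniform in $c$'' are exactly the right target; prove them directly for $w_c$ rather than attempting to deduce them from the case $c\in\mathbf{R}$.
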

\begin{proof}
The first two estimates were stated in \cite[Lemma 2.5]{KT22} without proof.
We only give a proof for the last estimate, while a similar argument verifies the first two.
It suffices to show
\begin{align}
\| \langle \tau\rangle ^{-\frac12}(\hat{\psi}_T*\hat{f})\|_{L^2}+\| \langle \tau\rangle ^{-1}(\hat{\psi}_T*\hat{f})\|_{L^1}\lesssim _{b',\theta}T^{\theta}\| \langle \tau \rangle ^{-b'}\hat{f}\|_{L^2}\label{est:timecutoff}
\end{align}
for any function $f$ in $t$.
Using
\begin{align*}
\langle \tau \rangle ^{-\frac12}&\lesssim \langle \tau \rangle ^{-\frac12}\frac{\langle \tau \rangle ^{b'}+|\tau_1|^{b'}}{\langle \tau-\tau_1\rangle ^{b'}}=\big( \langle \tau \rangle ^{-(\frac12-b')}+\langle \tau \rangle ^{-\frac12}|\tau_1|^{b'}\big) \langle \tau-\tau_1\rangle ^{-b'},\\
\langle \tau \rangle ^{-1}&\lesssim \big( \langle \tau \rangle ^{-(1-b')}+\langle \tau \rangle ^{-1}|\tau_1|^{b'}\big) \langle \tau-\tau_1\rangle ^{-b'}
\end{align*}
and the H\"older inequality, we have
\[ \text{LHS of \eqref{est:timecutoff}} \lesssim _{b',\theta} \big\| \hat{\psi}_T*(\langle \tau \rangle ^{-b'}\hat{f})\big\|_{L^{\frac{2}{1-2\theta}}}+\big\| (|\tau |^{b'}\hat{\psi}_T)*(\langle \tau \rangle ^{-b'}\hat{f})\big\|_{L^{\frac{2}{1-2(\theta +b')}}}.\]
From the Young inequality, this is bounded by 
\begin{align*}
&\big( \| \hat{\psi}_T\|_{L^{\frac{1}{1-\theta}}}+\| |\tau |^{b'}\hat{\psi}_T\|_{L^{\frac{1}{1-\theta -b'}}}\big) \| \langle \tau \rangle ^{-b'}\hat{f}\|_{L^2} \\
&\quad =T^{\theta}\big( \| \hat{\psi}\|_{L^{\frac{1}{1-\theta}}}+\| |\tau |^{b'}\hat{\psi}\|_{L^{\frac{1}{1-\theta -b'}}}\big) \| \langle \tau \rangle ^{-b'}\hat{f}\|_{L^2}.
\end{align*}
Hence, we have shown \eqref{est:timecutoff}.
\end{proof}

The estimates given in the following lemma are useful in estimating the nonlinear terms.
\begin{lem}\label{lem:Str}
\begin{enumerate}
\item[(i)] For $s\in \mathbf{R}$, $0<T\leq 1$ and $0<\varepsilon \ll 1$, we have
\begin{align}
\| \psi_TF\|_{N^s_0}\lesssim _{\varepsilon}T^{\varepsilon}\| J^sF\|_{L^{1+\varepsilon}_tL^2_x(\mathbf{R}\times \mathbf{T})}. \label{est:Str0}
\end{align}
\item[(ii)] For $s\in \mathbf{R}$, $0<T\leq 1$ and $0<\varepsilon \ll 1$, we have
\begin{align}
&\| J^su\|_{L^4_{t,x}(\mathbf{R}\times \mathbf{T})}\lesssim \| u\|_{X^{s,\frac38}},\label{est:L4} \\
&\| \psi_TF\|_{N^s_0}\lesssim _{\varepsilon}T^{\frac18-\varepsilon}\| J^sF\|_{L^{\frac{4}{3}}_{t,x}(\mathbf{R}\times \mathbf{T})}.\label{est:Str1}
\end{align}
\item[(iii)] For $s\in \mathbf{R}$, $0<T\leq 1$, $0<\varepsilon \ll 1$ and $\mu >0$, we have
\begin{align}
&\| J^{s+\frac18}u\|_{L^4_{t,x}(\mathbf{R}\times \mathbf{T})}\lesssim _{\mu} \| u\|_{X^{s,\frac12}_\mu},\label{est:L4mu} \\
&\| \psi_TF\|_{N^s_\mu}\lesssim _{\varepsilon,\mu}T^{\frac{\varepsilon}{2}}\| J^{s-\frac18+\varepsilon}F\|_{L^{\frac{4}{3}}_{t,x}(\mathbf{R}\times \mathbf{T})}.\label{est:Str1mu}
\end{align}
\end{enumerate}
\end{lem}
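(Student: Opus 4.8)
Here is a plan for proving Lemma~\ref{lem:Str}.

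\medskip

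The three parts share a common structure: each consists of a Strichartz-type $L^4$ estimate (for parts (ii) and (iii)) and a dual time-cut-off estimate for the nonlinearity in $N^s$. The plan is to first establish the $L^4$ estimates, then feed them into the $N^s$ estimates by duality and interpolation, using the time-localization gain from Lemma~\ref{lem:linear2} and Lemma~\ref{lem:Str}(i).

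\medskip

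\emph{Step 1: the $L^4$ Strichartz estimates \eqref{est:L4}, \eqref{est:L4mu}.} The classical periodic $L^4$ Strichartz estimate for the Schr\"odinger propagator reads $\| e^{it\partial_x^2}\phi\|_{L^4_{t,x}([0,1]\times\mathbf{T})}\lesssim \| \phi\|_{L^2}$, and its Bourgain-space version gives $\|u\|_{L^4_{t,x}}\lesssim \|u\|_{X^{0,3/8}}$ (this is the standard $X^{s,b}$ transference with the exponent $3/8$ coming from the $L^2$ Strichartz estimate and the $L^\infty_tL^2_x$ embedding $X^{0,1/2+}\hookrightarrow L^\infty_tL^2_x$, interpolated; more precisely, $3/8 = \frac12\cdot\frac34$). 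Applying $J^s$ and commuting it through (it is a Fourier multiplier, so it commutes exactly) yields \eqref{est:L4}. For \eqref{est:L4mu}, by Remark~\ref{rem:normequiv} the space $X^{s,1/2}_\mu$ embeds into $X^{s,1/2}_0\cap L^2_tH^{s+1/2}_x$; interpolating \eqref{est:L4} (which needs only $X^{s,3/8}$) with the trivial bound $\|J^{s+1/2}u\|_{L^2_{t,x}} = \|u\|_{L^2_tH^{s+1/2}_x}$ gives $\|J^{s+1/8}u\|_{L^4_{t,x}}\lesssim \|u\|_{X^{s+1/8,3/8}_0}^{?}$ --- rather, one interpolates between $L^4_tH^s_x$-control by $X^{s,3/8}$ and $L^2_tH^{s+1/2}_x$-control by $X^{s+1/2,0}_\mu$: at the midpoint in the time-integrability scale ($\frac14 = \frac12(\frac1{4,\text{via }L^4})$... ) one lands at $L^4_tH^{s+1/8}_x$ controlled by the $X^{s,1/2}_\mu$ norm, with the implicit constant depending on $\mu$ through the norm equivalence constants. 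The $\frac18$ derivative gain is exactly the gain in the $L^\infty$-based interpolation endpoint coming from the parabolic $L^2_tH^{1/2}$ smoothing.

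\medskip

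\emph{Step 2: the $N^s$ estimates \eqref{est:Str1}, \eqref{est:Str1mu}, and \eqref{est:Str0}.} For \eqref{est:Str0}: by definition $\|\cdot\|_{N^s_0} = \|\cdot\|_{X^{s,-1/2}} + \|\cdot\|_{Y^{s,-1}}$. Since $X^{0,-1/2}$ contains $L^2_tL^2_x$ and, more to the point, $L^{1+}_tL^2_x \hookrightarrow X^{0,0-}$ type bounds hold via the dual Strichartz/Sobolev embedding in $\tau$, one gets $\|\psi_T F\|_{X^{s,-1/2}}\lesssim_\varepsilon T^\varepsilon \|J^sF\|_{L^{1+\varepsilon}_tL^2_x}$ by writing $\psi_T F$ and using H\"older in $t$ to extract $T^\varepsilon$ together with the embedding $L^{1+\varepsilon}_t\hookrightarrow$ (dual Bourgain space in $\tau$); the $Y^{s,-1}$ piece is handled similarly since $\langle\tau\rangle^{-1}\in L^2_\tau$ absorbs the $L^1_\tau$ norm. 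For \eqref{est:Str1} and \eqref{est:Str1mu}: dualize the $L^4$ estimates from Step 1. The dual of \eqref{est:L4} is $\|J^{-s}G\|_{X^{-s,-3/8}}\lesssim \|J^{-s}G\|_{L^{4/3}_{t,x}}$, i.e., $L^{4/3}_{t,x}$-functions lie in $X^{s,-3/8}$ at regularity $s$. Then $\|\psi_T F\|_{N^s_0}\le \|\psi_T F\|_{X^{s,-1/2}} + \|\psi_T F\|_{Y^{s,-1}}$; apply the last estimate of Lemma~\ref{lem:linear2} with $b' = 3/8$ and $\theta$ up to $\frac12 - \frac38 = \frac18$, so $\|\psi_T F\|_{X^{s,-1/2}}\lesssim T^{1/8-\varepsilon}\|F\|_{X^{s,-3/8}}\lesssim T^{1/8-\varepsilon}\|J^sF\|_{L^{4/3}_{t,x}}$, and likewise for the $Y^{s,-1}$ component (where the extra decay in $\langle\tau\rangle$ is more than enough). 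This gives \eqref{est:Str1}. For \eqref{est:Str1mu}, dualize \eqref{est:L4mu}: $L^{4/3}_{t,x}$ at regularity $s - \frac18$ controls the dual $\mu$-Bourgain norm $X^{s,-1/2}_\mu$ (up to the $\mu$-dependent constant), and then absorbing the small loss $\varepsilon$ and the small power $T^{\varepsilon/2}$ via Lemma~\ref{lem:linear2} adapted to the $\mu$-spaces (the cut-off lemma's proof is insensitive to the $\mu|n|$ shift since it acts only in $\tau$) finishes it; note the $\frac18$ derivative on the right is spent to match the $\frac18$ gain in \eqref{est:L4mu}, and the extra $\varepsilon$ is a standard concession to keep the time-integrability exponents strictly admissible.

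\medskip

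The main obstacle I anticipate is bookkeeping in Step 2 for the $\mu$-dependent estimate \eqref{est:Str1mu}: one must carefully check that the time-cut-off estimate of Lemma~\ref{lem:linear2} and the dual-Strichartz embedding both survive the replacement of $\langle i(\tau+n^2)\rangle$ by $\langle i(\tau+n^2)+\mu|n|\rangle$, and that the loss of $\frac18$ derivative on the source term precisely compensates the $\frac18$ smoothing gain in \eqref{est:L4mu} without creating a mismatch in the $L^{4/3}_{t,x}$ integrability exponent --- the appearance of two different small parameters ($\varepsilon$ in regularity and $\varepsilon/2$ in the power of $T$) must be reconciled through the $\theta < \frac12 - b'$ constraint in Lemma~\ref{lem:linear2}. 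The $L^4$ estimates themselves (Step 1) are essentially classical and should go through routinely, the only subtlety being the correct interpolation endpoints for the $\frac18$ gain in the parabolic case.
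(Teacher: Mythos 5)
Your plan matches the paper's proof in all essentials: (i) is done by H\"older/Young/Hausdorff--Young in $\tau$, (ii) and (iii) by dualizing the $L^4$ Strichartz estimate and applying the time-cutoff estimate of Lemma~\ref{lem:linear2} with $b'=\frac38$ (resp.\ $b'=\frac12-\varepsilon$), and the $\frac18$ derivative gain in (iii) comes from the pointwise bound $\langle n\rangle^{1/8}\langle \tau+n^2\rangle^{3/8}\lesssim_\mu \langle i(\tau+n^2)+\mu|n|\rangle^{1/2}$, which is your parabolic interpolation in disguise. The one point to make explicit in the write-up is that for \eqref{est:Str1mu} you must dualize the $\varepsilon$-weakened estimate $\| J^{\frac18-\varepsilon}u\|_{L^4_{t,x}}\lesssim_\mu \| u\|_{X^{0,\frac12-\varepsilon}_\mu}$ rather than \eqref{est:L4mu} itself, since Lemma~\ref{lem:linear2} yields no positive power of $T$ at the endpoint $b'=\frac12$ --- exactly the reconciliation you flagged as the main obstacle.
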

\begin{proof}
We may assume $s=0$.

(i) By the H\"older, Young, Hausdorff-Young inequalities in $\tau$, we have
\begin{align*}
&\big\| \langle \tau+n^2\rangle^{-\frac12}(\hat{\psi}_T*_\tau\tilde{F})\big\|_{\ell ^2_nL^2_\tau}+\big\| \langle \tau+n^2\rangle^{-1}(\hat{\psi}_T*_\tau\tilde{F})\big\|_{\ell ^2_nL^1_\tau}\\
&\quad \lesssim _{\varepsilon}\| \hat{\psi}_T*_\tau\tilde{F}\|_{\ell^2_nL^{\frac{1+\varepsilon}{1+\varepsilon-\varepsilon^2}}_\tau}\leq \| \hat{\psi}_T\|_{L^{\frac{1}{1-\varepsilon}}_\tau}\| \tilde{F}\|_{\ell ^2_nL^{\frac{1+\varepsilon}{\varepsilon}}_\tau}.\\
&\quad \lesssim _{\varepsilon}T^{\varepsilon}\| F\|_{L^{1+\varepsilon}_tL^2_x}.
\end{align*}

(ii) \eqref{est:L4} is the well-known $L^4$ Strichartz estimate due to Bourgain.
In view of the dual estimate of \eqref{est:L4}, \eqref{est:Str1} follows from Lemma~\ref{lem:linear2}.

(iii) \eqref{est:L4mu} is a consequence of \eqref{est:L4} and the inequality $\langle i(\tau +n^2)+\mu |n|\rangle \gtrsim_\mu \langle n\rangle$.
Similarly, we have $\| J^{\frac18-\varepsilon}u\|_{L^4_{t,x}}\lesssim_\mu \| u\|_{X^{0,\frac12-\varepsilon}_\mu}$, and by duality, $\| F\|_{X^{0,-\frac12+\varepsilon}_\mu}\lesssim _\mu \| J^{-\frac18+\varepsilon}F\|_{L^{\frac43}_{t,x}}$.
Then, \eqref{est:Str1mu} follows from Lemma~\ref{lem:linear2}.
\end{proof}
\begin{rem}
From the proof, it is clear that the estimates \eqref{est:Str0}, \eqref{est:Str1}, \eqref{est:Str1mu} hold if $\psi_TF$ on the left-hand side is replaced with $\mathcal{F}^{-1}(|\hat{\psi}_T|*_\tau |\tilde{F}|)$.
In the proof of Lemma~\ref{lem:est-v} below, we will use these estimates in this form.
\end{rem}

\subsection{Estimates related to the gauge transformation}
\label{subsec:gaugeest}

In this subsection, we show estimates of the gauge transformation \eqref{def:vpm}.
\begin{lem}\label{lem:ubyv}
For $s>\frac14$ and $\mu >0$, we have
\[ \| gw\| _{Z^s_\mu}\lesssim _{s,\mu}\| g\| _{G^s}\| w\|_{Z^s_\mu}.\]
In particular, for any $T>0$, we have
\[ \| gw\| _{Z^s_\mu(T)}\lesssim _{s,\mu}\| g\| _{G^s(T)}\| w\|_{Z^s_\mu(T)}.\]
\end{lem}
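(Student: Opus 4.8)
The plan is to reduce the bilinear estimate $\|gw\|_{Z^s_\mu}\lesssim \|g\|_{G^s}\|w\|_{Z^s_\mu}$ to three separate estimates, one for each of the three pieces appearing in the $G^s$ norm, and to split the $Z^s_\mu$ norm on the left into its $X^{s,1/2}_\mu$ and $Y^{s,0}_\mu$ components. First I would dyadically decompose $g$ and $w$ in frequency, writing $g=\sum_{N_1}P_{N_1}g$, $w=\sum_{N_2}P_{N_2}w$, and $gw=\sum_N P_N(gw)$, and consider the three interaction regimes $N_1\ll N_2\sim N$ (low-high), $N_1\sim N_2\gg N$ (high-high to low), and $N_1\sim N\gtrsim N_2$ (high-low). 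Because $\|g\|_{G^s}$ controls $g$ with \emph{one extra derivative} (the leading term is $\|\langle n\rangle^{s+1}\tilde g\|_{\ell^2_nL^{4/3}_\tau}$), the high-low and high-high regimes are comparatively easy: the derivative gain on $g$ beats the derivative loss incurred in placing the output at the lower or comparable frequency, provided $s>1/4$ leaves enough room after one uses an $L^4_{t,x}$ Strichartz bound on $w$ (via \eqref{est:L4mu} or \eqref{est:L4}) together with the $L^4_tL^4_x$ or $L^2$-type bound on $g$ coming from the various components of $G^s$. The main case is the low-high interaction $N_1\ll N\sim N_2$, where no derivative is gained and one must exploit the regularity $s>1/4$ together with the parabolic smoothing encoded in the $\mu|n|$ weight.

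For the low-high case my strategy is to estimate, for fixed output dyadic block $N$, the product $P_{N_1}g\cdot P_{N_2}w$ with $N_1\lesssim N_2\sim N$ and sum in $N_1$. I would use the three pieces of $\|g\|_{G^s}$ in complementary subregimes: the $\|\langle n\rangle^{s+1/4}\langle\tau\rangle^{1/2}\tilde g\|_{\ell^2_nL^2_\tau}$ piece pairs with the parabolic gain in $Z^s_\mu$ — note $\langle i(\tau+n^2)+\mu|n|\rangle^{1/2}\gtrsim_\mu \langle n\rangle^{1/2}$, so $X^{s,1/2}_\mu\hookrightarrow L^2_tH^{s+1/2}_x$ by Remark after the definition — and this $1/2$-derivative surplus on $w$ absorbs the deficit. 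For the $X^{s,-1/2}_\mu$ component of the $N^s_\mu$-free target I instead dualize: test against $h\in X^{-s,1/2}_\mu$, and reduce to a trilinear $L^2_{t,x}$ estimate $\int gwh$, to which I apply the $L^4$ Strichartz \eqref{est:L4mu} on $w$ and on $h$ (each costing $1/8$ derivative) and then control $g$ in $L^2_{t,x}$ using the $\langle\tau\rangle^{1/2}$-weighted piece of $G^s$ or, when the time modulation of $g$ is small, the $L^{4/3}_{t,x}$ piece combined with Bernstein in $\tau$. The $Y^{s,0}_\mu$ target (an $\ell^2_nL^1_\tau$ norm) I would bound via $N^s_\mu\hookleftarrow$ the $L^{4/3}_{t,x}$-type and $L^{1+}_tL^2_x$-type estimates \eqref{est:Str1mu}, \eqref{est:Str0}, moving the output into a convenient Lebesgue space where Hölder in $(t,x)$ applies directly with the Strichartz bounds above.

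The transference to the restricted spaces $Z^s_\mu(T)$, $G^s(T)$ is routine: take extensions realizing the infima in the restricted norms, apply the global estimate, and restrict back; I would state this as the ``in particular'' clause with a one-line justification. The main obstacle I anticipate is the bookkeeping in the low-high regime: one must carefully match up \emph{which} of the three components of the $G^s$ norm is used against \emph{which} component of $Z^s_\mu$ (and with what Strichartz exponents) so that the derivative count closes precisely at $s>1/4$ with a positive power of $\langle n\rangle$ to spare for the dyadic summation in $N_1$ (and $N_2\sim N$), while never losing the $\mu$-dependence in a way that breaks uniformity over the relevant range of $\mu$ — here the scaling relations in Remark~\ref{rem:normequiv} let one fix $\mu=1$ once and for all. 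A secondary subtlety is that $g$ carries an $L^{4/3}_\tau$ rather than $L^2_\tau$ norm in its top-order piece, so at some points one needs Hausdorff–Young or Bernstein in the time variable (as in the proof of Lemma~\ref{lem:Str}) rather than Plancherel; I would handle this exactly as in the proof of \eqref{est:Str0} above, passing through an $L^{1+\varepsilon}_tL^2_x$ or $L^{4/3}_{t,x}$ estimate for the output.
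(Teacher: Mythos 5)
Your overall philosophy (work on the Fourier side, pair each piece of the $G^s$ norm with a matching piece of $Z^s_\mu$, exploit the extra derivative carried by $G^s$ and the parabolic gain $X^{s,\frac12}_\mu\hookrightarrow L^2_tH^{s+\frac12}_x$) is in the right spirit, and the paper does prove the lemma by a direct Fourier-side Young/H\"older argument. However, your plan contains a structural error that would derail the execution: you conflate the target space $Z^s_\mu=X^{s,\frac12}_\mu\cap Y^{s,0}_\mu$ with the nonlinearity space $N^s_\mu=X^{s,-\frac12}_\mu+Y^{s,-1}_\mu$. Dualizing the $X^{s,\frac12}_\mu$ norm of $gw$ forces you to test against $h\in X^{-s,-\frac12}_\mu$ (negative modulation exponent), for which no $L^4$ Strichartz bound is available; testing against $h\in X^{-s,\frac12}_\mu$, as you propose, only estimates $\|gw\|_{X^{s,-\frac12}_\mu}$, which is far weaker than what the lemma asserts. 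Likewise, the estimates \eqref{est:Str0} and \eqref{est:Str1mu} bound the $N^s_\mu$ norm, i.e.\ the $Y^{s,-1}_\mu$ component with a negative power of the modulation weight; they say nothing about the $Y^{s,0}_\mu$ norm of $gw$ that you actually need to control.

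The missing idea is the explicit distribution of the output weight $\langle i(\tau+n^2)+\mu|n|\rangle^{\frac12}$ over the two factors. The paper uses, under $(\tau,n)=(\tau_1,n_1)+(\tau_2,n_2)$ with $(\tau_1,n_1)$ the variables of $g$,
\[ \langle i(\tau +n^2)+\mu |n|\rangle ^{\frac12}\lesssim _\mu \langle \tau_1\rangle ^{\frac12}+\langle \tau_2+n_2^2\rangle ^{\frac12}+\big( \langle n_1\rangle ^{\frac12}+\langle n_2\rangle ^{\frac12}\big) \langle n_1\rangle ^{\frac12}, \]
and this is precisely where the asymmetric design of $G^s$ pays off: the $\langle\tau_1\rangle^{\frac12}$ term is absorbed by the $\langle n\rangle^{s+\frac14}\langle\tau\rangle^{\frac12}$ piece of $G^s$ (using the $\frac14$ of room from $s>\frac14$ on $w$), the $\langle\tau_2+n_2^2\rangle^{\frac12}$ term lands on $w$ in $X^{s,\frac12}$, the $\langle n_1\rangle$ term is covered by the $\langle n\rangle^{s+1}$ piece of $G^s$, and the mixed term $\langle n_1\rangle^{\frac12}\langle n_2\rangle^{\frac12}$ is split between the derivative surplus on $g$ and the parabolic gain $\|w\|_{L^2_tH^{s+\frac12}_x}$; the remaining work is pure Young/H\"older in $(n,\tau)$, with no Strichartz or duality needed, and the $Y^{s,0}_\mu$ component is easier because it carries no modulation weight at all. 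Your proposal captures one of these terms (the $\langle\tau\rangle^{\frac12}$ pairing) but replaces the others with machinery that applies only to $N^s_\mu$ targets, so as written the argument does not close for either component of $Z^s_\mu$.
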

\begin{proof}
The claim is shown by a standard argument using the Young and the H\"older inequalities on the Fourier side.
To estimate the $X^{s,\frac12}_\mu$ norm, we notice the inequality
\[ \langle i(\tau +n^2)+\mu |n|\rangle ^{\frac12}\lesssim _\mu \langle \tau_1\rangle ^{\frac12}+\langle \tau_2+n_2^2\rangle ^{\frac12}+\big( \langle n_1\rangle ^{\frac12}+\langle n_2\rangle ^{\frac12}\big) \langle n_1\rangle ^{\frac12} \]
under the relation $(\tau ,n)=(\tau_1,n_1)+(\tau _2,n_2)$.
This implies 
\begin{align*}
\| gw\|_{X^{s,\frac12}_\mu}&\lesssim _{s,\mu} \| (J^sQ^{\frac12}g)w\| _{L^2_{t,x}}+\|(Q^{\frac12}g)(J^sw)\|_{L^2_{t,x}}+ \| (J^sg)(J^s\Lambda^{\frac12}w)\| _{L^2_{t,x}}\\
&\qquad +\| (J^{s+1}g)w\|_{L^2_{t,x}}+\| (J^{\frac12}g)(J^{s+\frac12}w)\|_{L^2_{t,x}},
\end{align*}
where $J^s,Q^b,\Lambda^b$ are spacetime Fourier multipliers symbols of which are $\langle n\rangle ^s$, $\langle \tau \rangle ^b$ and $\langle \tau+n^2\rangle ^b$ respectively, and we assume that $\tilde{g},\tilde{w}\geq 0$.
The right-hand side of the above inequality is then bounded by
\begin{align*}
&\| \mathcal{F}[J^sQ^{\frac12}g]\|_{\ell^{\frac43+}_nL^2_\tau}\| \tilde{w}\|_{\ell^{\frac43-}L^1}+\| \mathcal{F}[Q^{\frac12}g]\|_{\ell^1L^2}\| \mathcal{F}[J^sw]\|_{\ell^2L^1}+\| \mathcal{F}[J^sg]\|_{\ell ^1L^1}\| \mathcal{F}[J^s\Lambda^{\frac12}w]\|_{\ell^2L^2}\\
&\quad +\| \mathcal{F}[J^{s+1}g]\| _{\ell ^2L^{\frac43}}\| \tilde{w}\|_{\ell^1L^{\frac43}}+\| \mathcal{F}[J^{\frac12}g]\| _{\ell^1L^1}\| \mathcal{F}[J^{s+\frac12}w]\|_{\ell ^2L^2}\\
&\lesssim \| \mathcal{F}[J^{s+\frac14}Q^{\frac12}g]\|_{\ell^2L^2}\| w\|_{Y^{\frac14+,0}}+\| \mathcal{F}[J^{\frac12+}Q^{\frac12}g]\|_{\ell^2L^2}\| w\|_{Y^{s,0}}+\| \mathcal{F}[J^{s+\frac12+}g]\|_{\ell ^2L^1}\| w\|_{X^{s,\frac12}}\\
&\qquad +\| \mathcal{F}[J^{s+1}g]\| _{\ell ^2L^{\frac43}}\| w\|_{Y^{\frac14+,0}}^{\frac12}\| w\|_{X^{\frac34+,0}}^{\frac12}+\| \mathcal{F}[J^{1+}g]\| _{\ell^2L^1}\| w\|_{X^{s+\frac12,0}}\\
&\lesssim _{s,\mu}\| g\|_{G^s}\| w\|_{Z^s_\mu}\qquad (s>\tfrac14).
\end{align*}
The $Y^{s,0}$ norm is estimated similarly, but more easily.
\end{proof}

\begin{lem}\label{lem:gauge}
For $s>\frac14$, there exists $s'<s$ such that for $\mu >0$ we have
\begin{align*}
\| e^{c\partial_x^{-1}P^\pm_{\alpha,\beta}(|u|^2)}-1\| _{G^s(T)}&\lesssim_{s,\mu} e^{C_{s,\mu}\| u\|_{Z^{s'}_\mu(T)}^2}\| u\|_{Z^{s'}_\mu(T)}^2,\\
\| e^{c\partial_x^{-1}P^\pm_{\alpha,\beta}(|u_1|^2)}-e^{c\partial_x^{-1}P^\pm_{\alpha,\beta}(|u_2|^2)}\| _{G^s(T)}&\lesssim_{s,\mu} e^{C_{s,\mu}\| u_j\|_{Z^{s'}_\mu(T)}^2}\| u_j\|_{Z^{s'}_\mu(T)}\| u_1-u_2\|_{Z^{s'}_\mu(T)},
\end{align*}
where $\| u_j\|_{Z^{s'}_\mu(T)}$ means $\| u_1\|_{Z^{s'}_\mu(T)}+\| u_2\|_{Z^{s'}_\mu(T)}$.
\end{lem}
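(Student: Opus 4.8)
The plan is to expand the gauge factor in a power series and reduce both inequalities to two ingredients: a multiplicative (``algebra'') estimate for the space $G^s$, and a bilinear estimate for $\partial_x^{-1}P^\pm_{\alpha,\beta}$. Throughout I fix $\mu>0$; by Remark~\ref{rem:normequiv} its value is irrelevant, and the restricted-norm statements on $[0,T]$ follow from free-variable ones by the usual extension argument. Write $c$ for the fixed constant in the statement and put $\rho_j:=c\,\partial_x^{-1}P^\pm_{\alpha,\beta}(|u_j|^2)$. Granting the algebra estimate $\|gh\|_{G^s(T)}\lesssim_s\|g\|_{G^s(T)}\|h\|_{G^s(T)}$ (valid for $s>1/4$) and the bilinear estimate $\big\|\partial_x^{-1}P^\pm_{\alpha,\beta}(u_1\bar u_2)\big\|_{G^s(T)}\lesssim_{s,\mu}\|u_1\|_{Z^{s'}_\mu(T)}\|u_2\|_{Z^{s'}_\mu(T)}$ for a suitable $s'<s$, one has $\|\rho_j\|_{G^s(T)}\lesssim\|u_j\|_{Z^{s'}_\mu(T)}^2$, and expanding $e^{\rho_1}-1=\sum_{k\ge1}\rho_1^k/k!$, iterating the algebra estimate and using $e^x-1\le xe^x$ gives $\|e^{\rho_1}-1\|_{G^s(T)}\le\|\rho_1\|_{G^s(T)}e^{C_s\|\rho_1\|_{G^s(T)}}$, which is the first claim. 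For the second, use $\rho_1^k-\rho_2^k=\sum_{j=0}^{k-1}\rho_1^{j}(\rho_1-\rho_2)\rho_2^{k-1-j}$, apply the algebra estimate to the $k-1$ ``spare'' factors when $k\ge2$ (the $k=1$ term being $\rho_1-\rho_2$ itself), and note that
\[ \rho_1-\rho_2=c\,\partial_x^{-1}P^\pm_{\alpha,\beta}\big(\bar u_1(u_1-u_2)+(\overline{u_1-u_2})\,u_2\big) \]
is controlled by $\big(\|u_1\|_{Z^{s'}_\mu(T)}+\|u_2\|_{Z^{s'}_\mu(T)}\big)\|u_1-u_2\|_{Z^{s'}_\mu(T)}$ via the bilinear estimate; summing the series yields the second claim. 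No constant function ever enters these sums, so the fact that $1\notin G^s$ causes no difficulty.

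\emph{Algebra estimate.} This is proved on the Fourier side in the spirit of Lemma~\ref{lem:ubyv}: each of the three weights $\langle n\rangle^{s+1}$, $\langle n\rangle^{s+\frac34}$, $\langle n\rangle^{s+\frac14}\langle\tau\rangle^{\frac12}$ defining the $G^s$ norm is split over the two factors using $\langle n\rangle^\sigma\lesssim\langle n_1\rangle^\sigma+\langle n_2\rangle^\sigma$ and $\langle\tau\rangle^{\frac12}\lesssim\langle\tau_1\rangle^{\frac12}+\langle\tau_2\rangle^{\frac12}$, and then Young's and H\"older's inequalities in $(n,\tau)$ are applied; the factor left without a weight is summed in $\ell^1_n$, which costs $\langle n\rangle^{\frac12+}$ and is exactly afforded by the surplus powers of $\langle n\rangle$ in the $G^s$ norm precisely when $s>1/4$.

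\emph{Bilinear estimate (the crux).} Since the Fourier multiplier of $\partial_x^{-1}P^\pm_{\alpha,\beta}$ is $O(|n|^{-1})$ and vanishes at $n=0$, the three $G^s$-weights become, up to bounded factors, $\langle n\rangle^{s}$, $\langle n\rangle^{s-\frac14}$, $\langle n\rangle^{s-\frac34}\langle\tau\rangle^{\frac12}$ acting on $\widetilde{u_1\bar u_2}$. The first two (and, after Cauchy--Schwarz in $\tau$, also the $\ell^2_nL^1_\tau$ piece) reduce to $L^4_tL^2_x$- and $L^2_{t,x}$-type bounds on $J^{s}(u_1\bar u_2)$; these follow from the $L^4$-Strichartz estimate \eqref{est:L4mu} (which carries a gain of $\tfrac18$ derivative) together with the parabolic smoothing $Z^{s'}_\mu(T)\hookrightarrow L^2(0,T;H^{s'+\frac12})$, its interpolation with $C([0,T];H^{s'})$, and $H^{\frac12+}(\mathbf T)\hookrightarrow L^\infty(\mathbf T)$; balancing the derivative and time-integrability budgets, one may take any $s'$ slightly above $\tfrac s2+\tfrac18$, so the condition $s'<s$ is met exactly when $s>1/4$. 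For the remaining piece the weight $\langle\tau\rangle^{\frac12}$ is traded for spatial derivatives and modulation weights via the resonance identity $\tau=(\tau_1+n_1^2)-(\tau_2+n_2^2)-n(n_1+n_2)$, valid on the Fourier support of $u_1\bar u_2$ where $n=n_1-n_2$: the two modulation terms are absorbed by the $X^{s',\frac12}_\mu$-component of $Z^{s'}_\mu(T)$, and the leftover $\langle n\rangle^{\frac12}(\langle n_1\rangle+\langle n_2\rangle)^{\frac12}$ is handled as above, again under $s>1/4$.

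\emph{Main obstacle.} The only genuinely delicate point is the bilinear estimate, and within it the $\langle\tau\rangle^{\frac12}$-weighted component of $G^s$: it amounts to controlling a time-derivative of $u_1\bar u_2$, so one must exploit the bilinear dispersive/parabolic structure (the resonance identity above) to replace it by spatial derivatives plus modulation weights, and then check that the half-derivative gained from parabolic smoothing, combined with the $\tfrac18$-gain in the $L^4$ Strichartz estimate, is — after optimizing the time-integrability exponents — exactly enough to reach the sharp threshold $s>1/4$ while keeping $s'<s$. The reduction and the algebra estimate are routine bookkeeping by comparison.
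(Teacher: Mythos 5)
Your proposal follows essentially the same route as the paper: Taylor expansion plus the telescoping identity reduce everything to a $G^s$ algebra estimate and a bilinear estimate for $\partial_x^{-1}P^\pm_{\alpha,\beta}(u_1\bar u_2)$, both proved on the Fourier side with Young/H\"older, and the $\langle\tau\rangle^{\frac12}$-weighted component is handled exactly as in the paper via the resonance identity $\langle\tau\rangle^{\frac12}\lesssim\langle\tau_1+n_1^2\rangle^{\frac12}+\langle\tau_2+n_2^2\rangle^{\frac12}+\langle n\rangle^{\frac12}\langle n_1+n_2\rangle^{\frac12}$ together with parabolic smoothing and the $L^4$ Strichartz estimate. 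The only (harmless) deviations are bookkeeping ones: the paper proves the algebra estimate down to $s>-\frac14$ and works with $s'=s-$ rather than $s'\approx\frac s2+\frac18$, but your version suffices for the stated lemma.
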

\begin{proof}
In view of the Taylor series expansion
\[ e^{c\partial_x^{-1}P^\pm_{\alpha,\beta}(|u|^2)}-1=\sum_{m=1}^\infty \frac{1}{m!}\big[ c\partial_x^{-1}P^\pm_{\alpha,\beta}(|u|^2)\big] ^m \]
and 
\begin{align*}
&e^{c\partial_x^{-1}P^\pm_{\alpha,\beta}(|u_1|^2)}-e^{c\partial_x^{-1}P^\pm_{\alpha,\beta}(|u_2|^2)}\\
&\quad =\sum _{m=1}^\infty \frac{1}{m!}\Big( \big[ c\partial_x^{-1}P^\pm_{\alpha,\beta}(|u_1|^2)\big] ^m-\big[ c\partial_x^{-1}P^\pm_{\alpha,\beta}(|u_2|^2)\big] ^m\Big) \\
&\quad =\Big( c\partial_x^{-1}P^\pm_{\alpha,\beta}((u_1-u_2)\bar{u}_1)+c\partial_x^{-1}P^\pm_{\alpha,\beta}(u_2(\overline{u_1-u_2}))\Big) \\
&\qquad\qquad \times \sum_{m=1}^\infty \frac{1}{m!}\sum _{l=0}^{m-1}\big[ c\partial_x^{-1}P^\pm_{\alpha,\beta}(|u_1|^2)\big] ^l\big[ c\partial_x^{-1}P^\pm_{\alpha,\beta}(|u_2|^2)\big] ^{m-1-l},
\end{align*}
the claimed estimates follow if we prove
\begin{align}
\| f_1f_2\| _{G^s(T)}&\lesssim_s \| f_1\| _{G^s(T)}\| f_2\|_{G^s(T)} & &(s>-\tfrac14), \label{est:G1}\\
\| \partial_x^{-1}P^\pm_{\alpha,\beta}(u_1\bar{u}_2)\|_{G^s(T)}&\lesssim_{s,\mu} \| u_1\|_{Z^{s'}_\mu(T)}\| u_2\|_{Z^{s'}_\mu(T)}& &(s>\tfrac14,~\mu >0).\label{est:G2}
\end{align}
By considering suitable extensions of functions on $[0,T]\times \mathbf{T}$, it suffices to prove these estimates in $G^s$ and $Z^{s'}_\mu$.

\underline{Proof of \eqref{est:G1}.}
We first observe that $\| \tilde{f}\|_{\ell^1_nL^1_\tau}\lesssim \| \langle n\rangle^{\frac12+}\tilde{f}\|_{\ell^2_nL^1_\tau}\lesssim \| f\|_{G^{-\frac14+}}$.
Then, we use the Young inequality in $(n,\tau)$ as
\begin{align*}
\| \langle n\rangle ^{s+1}(\tilde{f}_1*\tilde{f}_2)\| _{\ell ^2L^{\frac43}}&\lesssim \| \langle n\rangle^{s+1}\tilde{f}_1\|_{\ell^2L^{\frac43}}\| \tilde{f}_2\|_{\ell^1L^1}+\| \tilde{f}_1\|_{\ell^1L^1}\| \langle n\rangle ^{s+1}\tilde{f}_2\|_{\ell^2L^{\frac43}} \\
&\lesssim \| f_1\|_{G^s}\| f_2\|_{G^s}\qquad (s>-\tfrac14),
\end{align*}
and similarly for $\| \langle n\rangle^{s+\frac34}(\tilde{f}_1*\tilde{f}_2)\|_{\ell^2L^1}$.
For the last norm containing $\langle \tau \rangle^{\frac12}$, we need to consider the case in which $\langle n\rangle ^{s+\frac14}$ and $\langle \tau\rangle^{\frac12}$ move onto different functions.
Actually, it can be treated as follows:
\begin{align*}
\| (\langle n\rangle^{s+\frac14}\tilde{f}_1)*(\langle \tau \rangle^{\frac12}\tilde{f}_2)\|_{\ell^2L^2}&\leq \| \langle n\rangle^{s+\frac14}\tilde{f}_1\|_{\ell^{1+}L^1}\| \langle \tau \rangle^{\frac12}\tilde{f}_2\|_{\ell^{2-}L^2}\\
&\lesssim \| \langle n\rangle^{s+\frac34}\tilde{f}_1\|_{\ell^2L^1}\| \langle n\rangle ^{0+}\langle \tau \rangle^{\frac12}\tilde{f}_2\|_{\ell^2L^2}\\
&\lesssim \| f_1\|_{G^s}\| f_2\|_{G^{-\frac14+}}.
\end{align*}

\underline{Proof of \eqref{est:G2}.}
The first two terms in the $G^s$ norm are treated in a way similar to the above:
\begin{align*}
\| \langle n\rangle ^s&(\tilde{u}_1*\bar{\tilde{u}}_2(-\cdot ))\| _{\ell^2L^{\frac43}}
\lesssim \| \langle n\rangle^s\tilde{u}_1\|_{\ell^2L^{1+}}\| \tilde{u}_2\|_{\ell ^1L^{\frac43-}}+\| \tilde{u}_1\|_{\ell ^1L^{\frac43-}}\| \langle n\rangle^s\tilde{u}_2\|_{\ell^2L^{1+}}\\
&\lesssim \| \langle n\rangle^s\tilde{u}_1\|_{\ell^2L^{1+}}\| \langle n\rangle^{\frac12+}\tilde{u}_2\|_{\ell ^2L^{\frac43-}}+\| \langle n\rangle ^{\frac12+}\tilde{u}_1\|_{\ell ^2L^{\frac43-}}\| \langle n\rangle^s\tilde{u}_2\|_{\ell^2L^{1+}},\\
\| \langle n\rangle ^{s-\frac14}&(\tilde{u}_1*\bar{\tilde{u}}_2(-\cdot ))\| _{\ell^2L^1}
\lesssim \| \langle n\rangle^{s-\frac14}\tilde{u}_1\|_{\ell^{\frac43+}L^1}\| \tilde{u}_2\|_{\ell ^{\frac43-}L^1}+\| \tilde{u}_1\|_{\ell ^{\frac43-}L^1}\| \langle n\rangle^{s-\frac14}\tilde{u}_2\|_{\ell^{\frac43+}L^1}\\
&\lesssim \| \langle n\rangle^{s-}\tilde{u}_1\|_{\ell^2L^1}\| \langle n\rangle ^{\frac14+}\tilde{u}_2\|_{\ell ^2L^1}+\| \langle n\rangle ^{\frac14+}\tilde{u}_1\|_{\ell ^2L^1}\| \langle n\rangle^{s-}\tilde{u}_2\|_{\ell^2L^1}\\
&\lesssim _s\| u_1\|_{Y^{s-,0}}\| u_2\|_{Y^{s-,0}}\qquad (s>\tfrac14).
\end{align*}
The former estimate is finished by interpolation inequalities:
\begin{align*}
\| \langle n\rangle ^s\tilde{u}\|_{\ell^2L^{1+}}&\leq \| \langle n\rangle ^{s+\frac12-}\tilde{u}\|_{\ell^2L^2}^{0+}\| \langle n\rangle ^{s-}\tilde{u}\|_{\ell^2L^1}^{1-}\lesssim _{\mu}\| u\|_{Z^{s-}_\mu},\\
\| \langle n\rangle ^{\frac12+}\tilde{u}\|_{\ell^2L^{\frac43-}}&\lesssim \| \langle n\rangle^{\frac34+}\tilde{u}\|_{\ell ^2L^2}^{\frac12-}\| \langle n\rangle ^{\frac14+}\tilde{u}\|_{\ell^2L^1}^{\frac12+}\lesssim _\mu \| u\|_{Z^{\frac14+}_\mu}.
\end{align*}
To estimate the last term in the $G^s$ norm, we first see the inequality
\[ \langle \tau \rangle ^{\frac12}\lesssim \langle \tau_1+n_1^2\rangle ^{\frac12}+\langle \tau_2+n_2^2\rangle ^{\frac12}+\langle n\rangle ^{\frac12}\langle n_1+n_2\rangle^{\frac12} \]
under the relation $(\tau,n)=(\tau_1,n_1)-(\tau_2,n_2)$.
For the contribution from $\langle \tau_1+n_1^2\rangle ^{\frac12}$ (and similarly for $\langle \tau_2+n_2^2\rangle ^{\frac12}$), we proceed as follows:
\begin{align*}
&\| \langle n\rangle ^{s-\frac34}\big[ (\langle \tau +n^2\rangle ^{\frac12}\tilde{u}_1)*\tilde{u}_2(-\cdot )\big] \|_{\ell^2L^2}\\
&\quad \lesssim \| \langle n\rangle ^{s-\frac14+}\big[ (\langle \tau +n^2\rangle ^{\frac12}\tilde{u}_1)*\tilde{u}_2(-\cdot )\big] \|_{\ell^\infty L^2}\\
&\quad \lesssim \| u_1\|_{X^{s-\frac14+,\frac12}}\| u_2\|_{Y^{0,0}}+\| u_1\|_{X^{0,\frac12}}\| u_2\|_{Y^{s-\frac14+,0}}\qquad (s>\tfrac14)\\
&\quad \lesssim \| u_1\|_{Z^{s-\frac14+}_0}\| u_2\|_{Z^{s-\frac14+}_0}.
\end{align*}
For the last case, by symmetry we may assume $|n_1|\geq |n_2|$, then we have
\begin{align*}
\| \langle n\rangle ^{s-\frac34}\langle n\rangle ^{\frac12}\big[ (\langle n\rangle ^{\frac12}\tilde{u}_1)*\tilde{u}_2(-\cdot )\big] \|_{\ell^2L^2}&\lesssim \| \langle n\rangle ^{s+\frac14}\tilde{u}_1\|_{\ell^2L^{\frac43+}}\| \tilde{u}_2\|_{\ell ^1L^{\frac43-}}\\
&\lesssim \| \langle n\rangle ^{s+\frac14}\tilde{u}_1\|_{\ell^2L^{\frac43+}}\| \langle n\rangle ^{\frac12+}\tilde{u}_2\|_{\ell ^2L^{\frac43-}}.
\end{align*}
Applying the latter interpolation inequality in the above and another one
\[ \| \langle n\rangle ^{s+\frac14}\tilde{u}\|_{\ell^2L^{\frac43+}}\leq \| \langle n\rangle ^{s+\frac12-}\tilde{u}\|_{\ell^2L^2}^{\frac12+}\| \langle n\rangle ^{s-}\tilde{u}\|_{\ell^2L^1}^{\frac12-}\lesssim _\mu \| u\|_{Z^{s-}_\mu},\]
we obtain the claim.
This finishes the proof of \eqref{est:G2}.
\end{proof}

Consequently, from the last two lemmas, we obtain the following estimates on the gauge transforms $v_\pm$ in terms of $u$.
\begin{cor}\label{cor:vbyu}
Let $s>\frac14$, $\mu >0$, $0<T\leq 1$ and $u,u_1,u_2\in Z^s_\mu(T)$.
Define $v_\pm$, $v_{1,\pm}$ and $v_{2,\pm}$ respectively by \eqref{def:vpm}.
Then, there exists $C=C(s,\mu)>0$ such that
\begin{gather*}
\| v_\pm\|_{Z^s_\mu(T)}\leq C\big( 1+e^{C\| u\|_{Z^s_\mu(T)}^2}\| u\|_{Z^s_\mu(T)}^2\big) \| u\|_{Z^s_\mu(T)},\\
\| v_{1,\pm}-v_{2,\pm}\|_{Z^s_\mu(T)}\leq C\big( 1+e^{C\| u_j\|_{Z^s_\mu(T)}^2}\| u_j\|_{Z^s_\mu(T)}^2\big) \| u_1-u_2\|_{Z^s_\mu(T)}.
\end{gather*}
\end{cor}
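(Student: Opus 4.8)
The plan is to deduce both inequalities directly from Lemmas~\ref{lem:ubyv} and \ref{lem:gauge} by means of elementary algebraic decompositions, so the argument is essentially bookkeeping. Two preliminary remarks will be used throughout. First, since $\rho^\pm[u]=\partial_x^{-1}P^\pm_{\alpha,\beta}(|u|^2)$, Lemma~\ref{lem:gauge} applies with $c=1$; combining it with the trivial monotonicity $\|u\|_{Z^{s'}_\mu(T)}\le\|u\|_{Z^s_\mu(T)}$ (valid for $s'\le s$ because $\langle n\rangle\ge 1$, and passing to the infimum over extensions) upgrades the $Z^{s'}_\mu$ bounds of Lemma~\ref{lem:gauge} to $Z^s_\mu$ ones. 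Second, the Fourier projection $P_\pm=\mathcal F^{-1}\chi_{\{\pm n>0\}}\mathcal F$ is bounded on $Z^s_\mu(T)$ with operator norm $\le 1$: it acts only in $x$, hence commutes with restriction in $t$, so $P_\pm u^\dagger$ extends $P_\pm u$ whenever $u^\dagger$ extends $u$, and on the non-restricted space the multiplier $\chi_{\{\pm n>0\}}$ clearly does not increase the $\ell^2_nL^2_\tau$ and $\ell^2_nL^1_\tau$ norms defining $Z^s_\mu$; taking the infimum over extensions gives $\|P_\pm u\|_{Z^s_\mu(T)}\le\|u\|_{Z^s_\mu(T)}$.

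For the first estimate, write $v_\pm=P_\pm u+\big(e^{\rho^\pm[u]}-1\big)P_\pm u$. The first summand is controlled by the projection bound. For the second, Lemma~\ref{lem:ubyv} gives $\big\|(e^{\rho^\pm[u]}-1)P_\pm u\big\|_{Z^s_\mu(T)}\lesssim_{s,\mu}\|e^{\rho^\pm[u]}-1\|_{G^s(T)}\,\|P_\pm u\|_{Z^s_\mu(T)}$, and then the first inequality of Lemma~\ref{lem:gauge} together with the monotonicity above bounds $\|e^{\rho^\pm[u]}-1\|_{G^s(T)}$ by $e^{C_{s,\mu}\|u\|_{Z^s_\mu(T)}^2}\|u\|_{Z^s_\mu(T)}^2$. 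Adding the two contributions and enlarging the constant yields the claimed bound on $\|v_\pm\|_{Z^s_\mu(T)}$.

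For the difference estimate I would use the telescoping identity
\begin{align*}
v_{1,\pm}-v_{2,\pm}&=P_\pm(u_1-u_2)+\big(e^{\rho^\pm[u_2]}-1\big)P_\pm(u_1-u_2)\\
&\quad +\big(e^{\rho^\pm[u_1]}-e^{\rho^\pm[u_2]}\big)P_\pm u_1.
\end{align*}
The first term is again handled by the projection bound; the second by Lemma~\ref{lem:ubyv} and the first inequality of Lemma~\ref{lem:gauge} applied to $u_2$, producing a factor $e^{C\|u_2\|_{Z^s_\mu(T)}^2}\|u_2\|_{Z^s_\mu(T)}^2$; and the third by Lemma~\ref{lem:ubyv} together with the \emph{second} (difference) inequality of Lemma~\ref{lem:gauge}, where the factor $\|P_\pm u_1\|_{Z^s_\mu(T)}\le\|u_1\|_{Z^s_\mu(T)}$ combines with the single power $\|u_j\|_{Z^{s'}_\mu(T)}$ coming from that lemma to give the quadratic $\|u_j\|_{Z^s_\mu(T)}^2$ in the statement. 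Collecting all three contributions and enlarging $C=C(s,\mu)$ finishes the proof.

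I do not expect any genuine obstacle here: the corollary is a formal consequence of the two preceding lemmas. The only points requiring a little care are verifying that localization to $[0,T]$ and the projections $P_\pm$ are harmless (dealt with by the preliminary remarks) and tracking the exponential constants so that the final $C$ depends only on $s$ and $\mu$; both are routine.
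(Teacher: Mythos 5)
Your proposal is correct and is exactly the argument the paper intends: the corollary is stated as an immediate consequence of Lemmas~\ref{lem:ubyv} and \ref{lem:gauge}, and your decompositions $v_\pm=P_\pm u+(e^{\rho^\pm[u]}-1)P_\pm u$ and the telescoping identity for the difference, together with the boundedness of $P_\pm$ and the monotonicity of $Z^{s'}_\mu(T)$ in $s'$, are the routine bookkeeping the authors leave implicit.
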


\subsection{Nonlinear estimates}
\label{subsec:nonlest}

The nonlinear terms in the equation \eqref{eq:u'} for $u$ is treated by the following lemma:
\begin{lem}\label{lem:est-u}
Let $s>\frac14$, $\mu >0$, and $0<T\leq 1$.
We have the following estimates.
\begin{enumerate}
\item[(i)] There exists $\delta >0$ such that for any solutions $u,u_1,u_2\in Z^s_\mu (T)$ to \eqref{eq:u'} with the initial data $\phi ,\phi _1,\phi_2$, respectively, and any $w\in Z^\sigma_\mu (T)$ with any $\sigma \in \mathbf{R}$, we have
\begin{gather*}
\| \mathcal{N}_0[u,\phi ;w]\|_{N^\sigma_\mu (T)}\lesssim_{s,\mu} T^{\delta}\| u\|_{Z^s_\mu (T)}^4\|w\|_{Z^\sigma_\mu (T)},\\
\begin{aligned}
&\| \mathcal{N}_0[u_1,u_2;w]\|_{N^\sigma_\mu (T)}\\
&\quad \lesssim_{s,\mu} \Big( \| \phi_j\| _{L^2}\| \phi_1-\phi_2\|_{L^2}+T^{\delta}\| u_j\|_{Z^s_\mu (T)}^3\| u_1-u_2\|_{Z^s_\mu (T)}\Big) \|w\|_{Z^\sigma_\mu (T)}.
\end{aligned}
\end{gather*}
\item[(ii)] For any $u_1,u_2,u_3\in Z^s_\mu (T)$,
\[ \| \mathcal{N}_u[u_1,u_2,u_3]\|_{N^s_\mu (T)}\lesssim_{s,\mu} \| u_1\|_{Z^s_\mu(T)}\| u_2\|_{Z^s_\mu(T)}\| u_3\|_{Z^s_\mu(T)}.\]
Moreover, for any $s'<s$ there exists $\delta >0$ such that
\[ \| \mathcal{N}_u[u_1,u_2,u_3]\|_{N^{s'}_\mu (T)}\lesssim_{s,s',\mu} T^{\delta}\| u_1\|_{Z^s_\mu(T)}\| u_2\|_{Z^s_\mu(T)}\| u_3\|_{Z^s_\mu(T)}.\]
\end{enumerate}
\end{lem}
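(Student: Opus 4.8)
The plan is to reduce everything to multilinear estimates on the Fourier side and the Strichartz-type bounds collected in Lemma~\ref{lem:Str}, together with the basic bilinear $L^2$ bound for products controlled by $X^{s,b}_\mu$ spaces (via \eqref{est:L4mu}) and the multiplier inequality $\langle i(\tau+n^2)+\mu|n|\rangle\gtrsim_\mu\langle n\rangle$, which gives half a derivative of smoothing coming from the parabolic part. The key observation for part (i) is that $\mathcal{N}_0[u_1,u_2;w]$ has the form $c(t)\,(2\alpha\partial_x w+\beta D_x w)$ where $c(t)=P_0(|u_1|^2)-P_0(|u_2|^2)$ is a function of $t$ alone; so one first bounds the $L^p_t$ norm of $c$ and then absorbs the single derivative on $w$ using the $\frac12$-smoothing in $Z^\sigma_\mu(T)$, leaving a net gain of $\frac12-1<0$ in space but a genuine positive power of $T$ in time from Lemma~\ref{lem:Str}(iii) (estimate \eqref{est:Str1mu}).

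First I would treat $\mathcal{N}_0[u,\phi;w]$. Write $c(t)=P_0(|u(t)|^2)-P_0(|\phi|^2)$; since $u$ solves \eqref{eq:u'}, differentiating $P_0(|u|^2)$ in $t$ and integrating from $0$ shows $c(t)=\int_0^t\partial_t P_0(|u|^2)\,dt'$, and $\partial_t P_0(|u|^2)$ is a quartic expression in $u$ (the cubic-derivative terms integrate to a quartic after integration by parts, and the parabolic term contributes $-\beta P_0(|u|^2)\|D_x^{1/2}u\|_{L^2}^2$-type quantities) — in any case one gets $\|c\|_{L^\infty_t[0,T]}\lesssim_{s,\mu}T^{1/2}\|u\|_{Z^s_\mu(T)}^4$ or better, using $s>1/4$ and the $L^2_tH^{s+1/2}_x$ component of $Z^s_\mu$. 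Then $\mathcal{N}_0[u,\phi;w]=c(t)\cdot(2\alpha\partial_x+\beta D_x)w$, and by \eqref{est:Str1mu} applied with a spatial derivative moved onto $w$,
\[
\|\mathcal{N}_0[u,\phi;w]\|_{N^\sigma_\mu(T)}\lesssim_{s,\mu}T^{\varepsilon/2}\|J^{\sigma-\frac18+\varepsilon}\big(c\cdot\partial_x w\big)\|_{L^{4/3}_{t,x}}\lesssim T^{\varepsilon/2}\|c\|_{L^\infty_t L^\infty_x}\|J^{\sigma+\frac78+\varepsilon}w\|_{L^{4/3}_{t,x}},
\]
and the last factor is $\lesssim_\mu\|w\|_{X^{\sigma,1/2}_\mu}\le\|w\|_{Z^\sigma_\mu(T)}$ after paying the half-derivative smoothing (here one uses that $J^{\sigma+7/8+\varepsilon}$ is controlled by $J^\sigma\langle i(\tau+n^2)+\mu|n|\rangle^{1/2}$ modulo a harmless $\varepsilon$, by $\langle i(\tau+n^2)+\mu|n|\rangle^{1/2}\gtrsim_\mu\langle n\rangle^{1/2}$, combined with $\|J^\sigma v\|_{L^{4/3}_{t,x}}\lesssim\|v\|_{X^{\sigma,\cdot}}$ from duality of \eqref{est:L4mu}); collecting powers of $T$ gives $\delta>0$. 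The second bound in (i) is identical, except $c(t)=P_0(|u_1|^2)-P_0(|u_2|^2)=\big(P_0(|\phi_1|^2)-P_0(|\phi_2|^2)\big)+\int_0^t\partial_t\big(P_0(|u_1|^2)-P_0(|u_2|^2)\big)dt'$; the boundary term yields the $\|\phi_j\|_{L^2}\|\phi_1-\phi_2\|_{L^2}$ piece with \emph{no} power of $T$ (this is why that term is singled out), and the time-integral term is quartic and multilinear in the differences, giving $T^\delta\|u_j\|^3\|u_1-u_2\|$.

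For part (ii), $\mathcal{N}_u[u_1,u_2,u_3]$ consists of three pieces on the Fourier side. The first two — the term $\frac{\beta}{2\pi}\big(\sum_{n'}[\mathrm{sgn}(n-n')-\mathrm{sgn}(n')]\hat u_1(n')\bar{\hat u}_2(n')\big)n\hat u_3(n)$ and the term $-\frac{\alpha}{2\pi}in\hat u_1(n)\bar{\hat u}_2(n)\hat u_3(n)$ — are \emph{resonant} (output frequency $n$ equals an input frequency), so the single factor of $n$ is the full derivative loss; but $[\mathrm{sgn}(n-n')-\mathrm{sgn}(n')]$ is supported on $|n'|\lesssim|n-2n'|$... more precisely it vanishes unless $n'$ lies between $0$ and $n$, which forces the $n'$-sum to be a bounded-support-type convolution and one gains back the derivative from $\langle i(\tau+n^2)+\mu|n|\rangle^{-1/2}\lesssim_\mu\langle n\rangle^{-1/2}$ plus the $L^2_tH^{s+1/2}_x\hookleftarrow Z^s_\mu$ smoothing, so these terms are bounded by $\|u_1\|_{Z^s_\mu}\|u_2\|_{Z^s_\mu}\|u_3\|_{Z^s_\mu}$; similarly for the second, simpler resonant term. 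The third piece is the non-resonant sum $\sum_{n=n_1-n_2+n_3,\,n_2\ne n_1,n_3}$; there one has the standard dispersive gain: the modulation identity gives $\tau+n^2-(\tau_1+n_1^2)+(\tau_2+n_2^2)-(\tau_3+n_3^2)= -2(n_1-n_2)(n_3-n_2)$ whose modulus is $\gtrsim\max(|n_1-n_2|,|n_3-n_2|)$ since both factors are nonzero integers, and this dominates the derivative loss $|n|$ except on a region handled by the $L^4$ Strichartz estimate \eqref{est:L4mu} — this recovers the cubic bound without $T$. For the second statement of (ii), with $s'<s$ one has room to split $\langle n\rangle^{s'}=\langle n\rangle^{s}\langle n\rangle^{-(s-s')}$, and the extra negative power $\langle n\rangle^{-(s-s')}$ at high frequency, combined with a crude estimate at low frequency and the time-localization Lemma~\ref{lem:linear2}/\ref{lem:Str}, produces a positive power $T^\delta$; concretely one inserts $\psi_T$ and uses \eqref{est:Str1} or \eqref{est:Str1mu} to convert the $\langle n\rangle^{-(s-s')}$ cushion into $T^\delta$.

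The main obstacle I anticipate is the \emph{derivative-loss bookkeeping in the resonant terms of} $\mathcal{N}_u$: the factor $n$ is a full derivative, and the only way to recover it is the $\frac12$-smoothing from the parabolic modulation weight together with the structure of the $\mathrm{sgn}$-difference (which confines $n'$ and effectively splits the derivative between the high-frequency factor and the quadratic form $\sum_{n'}\hat u_1(n')\bar{\hat u}_2(n')$). Making the $\ell^2_n\ell^1_{n'}$-type Young/Hölder arithmetic close at exactly $s>1/4$ — i.e. verifying that $s+\frac12$ (from smoothing) minus $1$ (the derivative) plus the $\frac14+$ absorbed by $\ell^{4/3+}$-type summation is nonnegative — is the delicate endpoint computation, and it is precisely the place where the dissipativity ($\mu>0$) is indispensable. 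Everything else is routine convolution estimates of the kind already carried out in Lemma~\ref{lem:gauge} and Lemma~\ref{lem:ubyv}.
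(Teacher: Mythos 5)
There is a genuine gap in your treatment of part (i), and it is exactly at the point you half-acknowledge when you write that the net derivative count is $\tfrac12-1<0$. A positive power of $T$ cannot compensate for a loss of spatial derivatives. Concretely, your chain
$\|\mathcal{N}_0\|_{N^\sigma_\mu(T)}\lesssim T^{\varepsilon/2}\|c\|_{L^\infty}\|J^{\sigma+\frac78+\varepsilon}w\|_{L^{4/3}_{t,x}}$
cannot be closed by $\|w\|_{X^{\sigma,1/2}_\mu}$: the parabolic weight gives only $\langle i(\tau+n^2)+\mu|n|\rangle^{1/2}\gtrsim_\mu\langle n\rangle^{1/2}$, i.e.\ half a derivative, and \eqref{est:Str1mu} gains only $\tfrac18$ more, so you recover $\tfrac58$ of the full derivative sitting on $w$ and are short by $\tfrac38$. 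The missing ingredient is the structural estimate \eqref{est:N0-0}, $\| f(t)D_xw\|_{N^\sigma_\mu(T)}\lesssim _\mu \| f\|_{L^\infty_t}\| w\|_{Z^\sigma_\mu(T)}$ (quoted from the authors' earlier paper), which is what the paper actually uses. It exploits the fact that $c$ depends on $t$ only, so the output and input spatial frequencies coincide and \emph{both} modulation weights see the same $|n|$: the factor $|n|\,\langle i(\tau+n^2)+\mu|n|\rangle^{-1/2}\langle i(\tau'+n^2)+\mu|n|\rangle^{-1/2}\lesssim_\mu 1$ absorbs the entire derivative. Without this observation (or an equivalent one) part (i) does not close at any regularity. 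Your handling of the coefficient itself is fine and matches the paper in spirit: the paper bounds $\sup_t\big|\|u(t)\|_{L^2}^2-\|\phi\|_{L^2}^2\big|$ via the exact $L^2$ identity \eqref{L2eq} and the bound $\int_0^T\|D_x^{1/2}(|u|^2)\|_{L^2}^2\lesssim T^{0+}\|u\|_{Z^{1/4+}_\mu(T)}^4$, and your splitting of the difference coefficient into the $T$-free boundary term $P_0(|\phi_1|^2)-P_0(|\phi_2|^2)$ plus a $T^\delta$ integral term is exactly right.

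For part (ii) the paper does not reprove anything: the first estimate is cited from \cite[Proposition~3.3, Lemma~3.4]{KT22}, and the second is obtained by rerunning that proof with the observation that the derivative loss at regularity $s'<s$ is strictly below first order, which yields the $T^\delta$. Your from-scratch sketch has the right architecture (resonant/non-resonant splitting, the identity \eqref{resonanceineq}, $L^4$ Strichartz, parabolic smoothing), but it is not yet a proof: the endpoint arithmetic at $s>\tfrac14$, which you yourself identify as the crux, is deferred rather than carried out, and your support claim for the $\mathrm{sgn}$-difference is backwards --- $\mathrm{sgn}(n-n')-\mathrm{sgn}(n')$ \emph{vanishes} when $n'$ lies strictly between $0$ and $n$ and is nonzero outside that range, so one must check separately that on the actual support the derivative $|n|$ can be transferred onto the quadratic form $\hat u_1(n')\bar{\hat u}_2(n')$; this is precisely the bookkeeping that must be done carefully (and is done in \cite{KT22}).
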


\begin{proof}
(i) First, we observe that
\begin{gather}\label{est:N0-0}
\| f(t)D_xw\|_{N^\sigma_\mu(T)}\lesssim _\mu \| f\|_{L^\infty_t([0,T])} \| w\|_{Z^\sigma_\mu(T)}
\end{gather}
for any bounded function $f$ on $[0,T]$ and $w\in Z^\sigma_\mu(T)$ with $\sigma \in \mathbf{R}$, $\mu >0$, $T>0$ (see \cite[proof of Proposition~3.2]{KT22} for a proof).
Then, it suffices to prove
\begin{gather*}
\sup _{t\in [0,T]}\big| \| u(t)\|_{L^2}^2-\| \phi \|_{L^2}^2\big| \lesssim _{s,\mu}T^{\delta}\| u\|_{Z^s_\mu(T)}^4,\\
\sup _{t\in [0,T]}\big| \| u_1(t)\|_{L^2}^2-\| u_2(t) \|_{L^2}^2\big| \lesssim _{s,\mu} \| \phi_j\| _{L^2}\| \phi_1-\phi_2\|_{L^2}+T^{\delta}\| u_j\|_{Z^s_\mu (T)}^3\| u_1-u_2\|_{Z^s_\mu (T)}
\end{gather*}
for $s>\frac14$ and solutions $u,u_1,u_2\in Z^s_\mu(T)$ of \eqref{eq:u'}.
These estimates follow from the $L^2$ energy equality
\begin{gather}\label{L2eq}
\| u(t)\|_{L^2}^2=\| \phi \|_{L^2}^2+\beta \int _0^t\int _{\mathbf{T}}\big| D_x^{\frac12}(|u(t')|^2)\big| ^2\,dx\,dt',\quad t\in [0,T],
\end{gather}
which is valid for any solution $u\in Z^s_\mu(T)\subset C_TH^s\cap L^2_TH^{s+\frac12}$ to \eqref{eq:u'} and \eqref{ic} with $s>\frac14$.%
\footnote{Specifically, this can be shown by calculating the corresponding equality for $P_{\leq N}u$ and letting $N\to \infty$.
The limiting procedure is verified under the regularity $C_TH^{\frac14+}\cap L^2_TH^{\frac34+}$.}
Indeed, from the H\"older, Sobolev inequalities and interpolation, we see that
\begin{align*}
&\int _0^T\int _{\mathbf{T}}\big| D_x^{\frac12}(|u(t')|^2)\big| ^2\,dx\,dt'\lesssim \int _0^T\| u(t')\| _{L^4}^2\| D_x^{\frac12}u(t')\| _{L^4}^2\,dt' \\
&\quad \lesssim T^{0+}\| u\|_{L^{\infty}_TH^{\frac14}}^2\| u\|_{L^{2+}_TH^{\frac34}}^2\lesssim T^{0+}\| u\|_{L^{\infty}_TH^{\frac14}}^{2+}\| u\|_{L^2_TH^{\frac34+}}^{2-}\lesssim_{s,\mu} T^{0+}\| u\|_{Z^{\frac14+}_\mu (T)}^4,
\end{align*}
and similarly, that
\begin{align*}
&\int _0^T\bigg| \int _{\mathbf{T}}\big| D_x^{\frac12}(|u_1(t')|^2)\big| ^2\,dx-\int _{\mathbf{T}}\big| D_x^{\frac12}(|u_2(t')|^2)\big| ^2\,dx\bigg| \,dt'\\
&\quad \lesssim _{s,\mu}T^{0+}\| u_j\|_{Z^{\frac14+}_\mu (T)}^3\| u_1-u_2\|_{Z^{\frac14+}_\mu (T)}.
\end{align*}

(ii) We have proved the first estimate in \cite[Proposition~3.3, Lemma~3.4]{KT22}.
For the second one, it suffices to follow the proof of the first estimate, noticing that the loss of derivative is now strictly lower than the first order.
\end{proof}

For the gauge transformed equation \eqref{eq:vpm'}, we have better estimates with the factor $T^{0+}$:
\begin{lem}\label{lem:est-v}
Let $s>\frac14$ and $\mu >0$.
Then, there exist $\delta =\delta (s)>0$ and $C=C(s,\mu )>0$ such that for any $0<T\leq 1$ we have
\begin{gather*}
\| \mathcal{N}_v^\pm [u]\| _{N^s_\mu (T)}\leq CT^{\delta}\big( 1+e^{C\| u\|_{Z^s_\mu(T)}^2}\| u\|_{Z^s_\mu(T)}^2\big) \big( \| u\|_{Z^s_\mu(T)}^3+\| u\|_{Z^s_\mu(T)}^5\big) ,\\
\begin{aligned}
&\| \mathcal{N}_v^\pm [u_1]-\mathcal{N}_v^\pm [u_2]\| _{N^s_\mu (T)}\\
&\leq CT^{\delta}\big( 1+e^{C\| u_j\|_{Z^s_\mu(T)}^2}\| u_j\|_{Z^s_\mu(T)}^2\big) \big( \| u_j\|_{Z^s_\mu(T)}^2+\| u_j\|_{Z^s_\mu(T)}^4\big) \| u_1-u_2\|_{Z^s_\mu (T)}.
\end{aligned}
\end{gather*}
\end{lem}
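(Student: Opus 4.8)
The plan is to estimate $\mathcal N_v^\pm[u]$ term by term along the decomposition \eqref{nonl:ubarx}--\eqref{nonl:quintic}, and in each term to split the gauge factor as $e^{\rho^\pm[u]}=1+r^\pm[u]$ with $r^\pm[u]:=e^{\rho^\pm[u]}-1$. By Lemma~\ref{lem:gauge}, $\|r^\pm[u]\|_{G^s(T)}\lesssim e^{C\|u\|_{Z^s_\mu(T)}^2}\|u\|_{Z^s_\mu(T)}^2$, which produces the prefactor $1+e^{C\|u\|^2}\|u\|^2$ in the statement; the ``$1$'' part of each term is a pure polynomial in $u,\bar u$ (of degree three or five), while the ``$r^\pm[u]$'' part is that polynomial — estimated in a slightly lower-regularity space, where a power of $T$ is available (in the spirit of Lemma~\ref{lem:est-u}(ii)) — multiplied by a $G^s(T)$ function, and is therefore controlled by Lemma~\ref{lem:ubyv} (and the analogue, proved identically, that multiplication by a $G^s(T)$ function is bounded between the relevant $Z$- and $N$-type spaces, the weight $\langle n\rangle^{s+1}$ furnishing any derivative needed) together with Corollary~\ref{cor:vbyu}. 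Since every term is of this monomial-times-gauge-factor form, the Lipschitz estimate follows from the first one by the usual multilinear expansion $T(u_1,\dots)-T(u_2,\dots)=\sum T(u_1,\dots,u_1-u_2,\dots,u_2)$ together with the second bound of Lemma~\ref{lem:gauge} for the difference of the gauge factors, so I only discuss the first estimate. Throughout, $C$ absorbs the $\mu$-dependent constants and $\delta>0$ depends only on $s$ (the minimum of the various subcritical gains).

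The terms \eqref{nonl:u3x}, \eqref{nonl:p-1}, \eqref{nonl:quintic} are the easy ones. In \eqref{nonl:p-1} and \eqref{nonl:quintic} the operators $P^\pm_{\alpha,\beta}$, $P_0$, $\mathcal H$ are bounded and the only derivatives, a $\partial_x^{-1}$ and a $\partial_x$, balance, so these are genuine quintic terms with no net derivative loss; in \eqref{nonl:u3x} each commutator $[P_\pm,\cdot]$ or $[\mathcal H,\bar u]$ gains one derivative, so these reduce to cubic terms with no derivative loss. Each such term is handled by a routine multilinear estimate: every factor $u$ is placed in $Z^s_\mu(T)$, and the $L^4$-Strichartz bound \eqref{est:L4mu} (using $\mu>0$), the parabolic smoothing $Z^s_\mu(T)\hookrightarrow L^2(0,T;H^{s+\frac12})$ (Remark~\ref{rem:normequiv}) and the embedding $Z^s_\mu(T)\hookrightarrow C([0,T];H^s)$ are combined to put the product into $N^s_\mu(T)$ via \eqref{est:Str0}, \eqref{est:Str1mu} and Lemma~\ref{lem:linear2}, which is where $T^\delta$ appears; the gauge factor is transported by Lemma~\ref{lem:ubyv}/Corollary~\ref{cor:vbyu}. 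The term \eqref{nonl:pm} is easy for a structural reason: since $P_\mp P_\pm=0$, only $r^\pm[u]$ contributes to $P_\mp[e^{\rho^\pm[u]}P_\pm u]$, and producing an opposite-sign output from $P_\pm u$ forces the frequency of $r^\pm[u]$ to exceed the output frequency, so $\partial_x$ is absorbed by the weight $\langle n\rangle^{s+1}$ in $\|r^\pm[u]\|_{G^s}$; the scalar $P_0(|u|^2)$ is bounded by $\|u\|^2$ and $T^\delta$ is recovered as above.

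The heart of the lemma is \eqref{nonl:ubarx}, i.e.\ the cubic nonlinearity $\alpha P_\pm(u^2\bar u_x)+\beta P_\pm(\mathcal H(u\bar u_x)u)-2iP^\pm_{\alpha,\beta}(u\bar u_x)P_\pm u$ times $e^{\rho^\pm[u]}$; its $r^\pm[u]$ part is treated as above, so the real task is to bound this pure cubic in $N^s_\mu(T)$ with a factor $T^\delta$. The crucial structural feature is that in all three monomials the derivative falls on the \emph{conjugated} factor $\bar u_x$, so in the $u\bar u u$-type interaction producing output frequency $n$ the resonance function factors into a product of two frequency gaps of the form $(n-n_1)(n-n_3)$ and vanishes only on the diagonal where the differentiated conjugate mode cancels a $u$-mode. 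After a dyadic decomposition: when the differentiated factor does not carry the top frequency there is essentially no loss and the $L^4$-Strichartz estimate \eqref{est:L4mu} suffices; when it carries the top frequency, the non-resonant interactions are controlled by the dispersive/parabolic modulation weight $\langle i(\tau+n^2)+\mu|n|\rangle^{\frac12}$ (which beats the loss, the resonance function being then large), while on the resonant and near-resonant part the nonlinearity collapses to (essentially) $\big(\sum_l l\,|\hat u_l(t)|^2\big)P_\pm u$, whose coefficient lies in $L^{1+}_t([0,T])$ with norm $\lesssim T^\delta\|u\|_{Z^s_\mu(T)}^2$ because $Z^s_\mu(T)\hookrightarrow L^2(0,T;H^{s+\frac12})$ and $s>\frac14$; multiplying by $P_\pm u\in C([0,T];H^s)$ and applying \eqref{est:Str0} puts this into $N^s_0(T)\hookrightarrow N^s_\mu(T)$ with a gain of $T$. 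Since the problem is subcritical one only needs the output in $X^{s,-\frac12+}_\mu$, so Lemma~\ref{lem:linear2} supplies the required positive power of $T$; this trilinear analysis follows the pattern of the gauge-transformed estimates of \cite{KT22} and, for the derivative NLS, \cite{Tao04,KeTa06}.

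The main obstacle is precisely this last estimate for \eqref{nonl:ubarx}: reconciling the genuine first-order loss coming from $\bar u_x$ in the worst, near-resonant, all-frequencies-large interaction with having only $s>\frac14$ derivatives on the data. This is where the dissipative structure is indispensable — the parabolic smoothing $Z^s_\mu(T)\hookrightarrow L^2(0,T;H^{s+\frac12})$ is exactly what makes the resonant contribution summable down to $s>\frac14$, whereas the purely dispersive modulation gain would only reach $s>\frac12$ — and it is the reason the argument must be run on the renormalized equation and the $\mu$-weighted spaces rather than on \eqref{kdnls} directly.
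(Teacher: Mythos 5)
Your proposal follows essentially the same route as the paper's proof: peel off the gauge factor by writing $e^{\rho^\pm[u]}=1+(e^{\rho^\pm[u]}-1)$ and absorbing the remainder via Lemmas~\ref{lem:gauge} and \ref{lem:ubyv}; reduce the difference bound to the first bound by multilinearity; identify \eqref{nonl:ubarx} as the critical term and treat it by the resonance identity $\lambda-\lambda_1+\lambda_2-\lambda_3=2(n_2-n_1)(n_2-n_3)$, modulation weights, the $L^4$ Strichartz estimate, and the parabolic smoothing $Z^s_\mu\hookrightarrow L^2_tH^{s+\frac12}$ on the diagonal $n_1=n_2$; and handle \eqref{nonl:pm} by the sign constraint forced by $P_\mp P_\pm=0$, which places the derivative on the $\langle n\rangle^{s+1}$-weighted part of the $G^s$ norm. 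All of this matches the paper, including the role of the dissipation in reaching $s>\frac14$.

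One claim is inaccurate as stated and would trip up a reader who follows it literally: you assert that in \eqref{nonl:u3x} the commutators ``gain one derivative, so these reduce to cubic terms with no derivative loss,'' to be disposed of by a routine product/Strichartz estimate. The commutators do not remove the derivative; they only force the frequency $|n_3|$ of the differentiated factor to be dominated by the frequency of the quadratic factor, i.e.\ by $\max(|n_1|,|n_2|)$. One is left with the trilinear estimate carrying the full weight $|n_3|$ under the constraint $|n_1|\gtrsim|n_3|$, and in the regime $|n_1|\sim|n_3|\gg|n_2|$ a pure product estimate needs $\langle n_3\rangle\lesssim\langle n_1\rangle^{s}\langle n_3\rangle^{s}$, i.e.\ $s\geq\frac12$ --- it fails for $\frac14<s<\frac12$. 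This regime requires exactly the same resonance analysis as your main case (here $|\Phi|\sim\langle n_1\rangle\langle n_3\rangle$, so the modulation weight $\langle i(\tau+n^2)+\mu|n|\rangle^{\frac12}$ recovers the derivative, with the parabolic gain covering the $\langle\lambda\rangle^{\frac12-\varepsilon}$ leftover); the paper runs it in parallel with case (I) of the estimate for \eqref{nonl:ubarx}. Since you already have all the needed tools in your treatment of \eqref{nonl:ubarx}, this is a mischaracterization rather than a missing idea, but the phrase ``no derivative loss'' should be replaced by ``the derivative cannot fall on the unique highest-frequency factor,'' and the term routed through the same modulation/parabolicity argument.
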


\begin{proof}
In principle, the case $s>\frac12$ can be shown by a standard argument using Fourier restriction norm, and the parabolicity allows us to push down the regularity to $s>\frac14$, keeping the small factor $T^{\delta}$.
Multiplication by the factor $e^{\rho ^\pm [u]}$ will cause no trouble, with the aid of Lemmas~\ref{lem:ubyv} and \ref{lem:gauge}. 
We now describe how to derive the estimates of \eqref{nonl:ubarx}--\eqref{nonl:quintic}.

{\bf Estimate of \eqref{nonl:ubarx}}.
Let us consider the last term in \eqref{nonl:ubarx}, but the same argument is applicable to the other terms.
In view of Lemma~\ref{lem:gauge}, it suffices to prove
\[ \big\| P^\pm_{\alpha,\beta}\big( u_1\partial_x\bar{u}_2\big) P_\pm u_3\big\|_{N^s_\mu(T)}\lesssim _{s,\mu}T^\delta \| u_1\|_{Z^s_\mu(T)}\| u_2\|_{Z^s_\mu(T)}\| u_3\|_{Z^s_\mu(T)}\]
and
\[ \big\| P^\pm_{\alpha,\beta}\big( u_1\partial_x\bar{u}_2\big) P_\pm u_3\cdot g\big\|_{N^s_\mu(T)}\lesssim _{s,\mu}T^\delta \| u_1\|_{Z^s_\mu(T)}\| u_2\|_{Z^s_\mu(T)}\| u_3\|_{Z^s_\mu(T)}\| g\|_{G^s(T)},\]
where $g$ is a function of the form $e^{\rho^\pm [u]}-1$ or $e^{\rho^\pm [u]}-e^{\rho^\pm [u']}$ with $u,u'\in Z^s_\mu(T)$.
We then take suitable extensions and consider the estimates in $N^s_\mu$, $Z^s_\mu$ and $G^s$.
We also insert $\psi_T$'s to derive $T^\delta$ by Lemma~\ref{lem:linear2}, though we will occasionally omit writing $\psi_T$.
In addition, any function on $\mathbf{R}_t\times \mathbf{T}_x$ may be assumed to have non-negative Fourier transform: $\tilde{u}(\tau,n)\geq 0$.
Now, the claim is reduced to
\begin{gather}\label{est:ubarx}
\begin{aligned}
&\bigg\| \mathcal{F}^{-1}\bigg[ |\hat{\psi}_T|*_\tau \int_{\tau=\tau_1-\tau_2+\tau_3}\sum _{n=n_1-n_2+n_3}|n_2|\tilde{u}_1(\zeta_1)\tilde{u}_2(\zeta_2)\tilde{u}_3(\zeta_3)\,d\tau_1\,d\tau_2\bigg] \bigg\|_{N^s_\mu}\\
&\quad \lesssim _{s,\mu} T^{\delta} \| u_1\|_{Z^s_\mu}\| u_2\|_{Z^s_\mu}\| u_3\|_{Z^s_\mu}
\end{aligned}
\end{gather}
and
\[ \begin{aligned}
&\bigg\| \mathcal{F}^{-1}\bigg[ |\hat{\psi}_T|*_\tau \int_{\tau=\tau_1-\tau_2+\tau_3+\tau_4}\sum _{n=n_1-n_2+n_3+n_4}|n_2|\tilde{u}_1(\zeta_1)\tilde{u}_2(\zeta_2)\tilde{u}_3(\zeta_3)\tilde{g}(\zeta_4)\,d\tau_1\,d\tau_2\,d\tau_3\bigg] \bigg\|_{N^s_\mu}\\
&\quad \lesssim_{s,\mu} T^{\delta}\| u_1\|_{Z^s_\mu}\| u_2\|_{Z^s_\mu}\| u_3\|_{Z^s_\mu}\| g\|_{G^s},
\end{aligned}
\]
where $\zeta_j:=(\tau_j,n_j)$.
Note that the latter estimate is reduced to the former by considering $u_3g$ as $u_3$ and applying Lemma~\ref{lem:ubyv}.
Hence, we shall establish \eqref{est:ubarx} below.
We may assume $|n_1|\geq |n_3|$.
Recall the resonance inequality: writing $\lambda :=\tau+n^2$ and $\lambda _j:=\tau_j+n_j^2$, we have
\begin{gather}\label{resonanceineq}
\lambda -\lambda_1+\lambda_2-\lambda_3=n^2-n_1^2+n_2^2-n_3^2=2(n_2-n_1)(n_2-n_3)=:\Phi 
\end{gather}
under the relation $(\tau ,n)=(\tau_1,n_1)-(\tau_2,n_2)+(\tau_3,n_3)$.

\underline{(I) $|n_2|\gg |n_1|$.}
In this case, we have $|\Phi |\sim |n_2|^2$, and from \eqref{resonanceineq},
\[ \langle n\rangle ^s|n_2|\lesssim \langle n_2\rangle ^s\big( \langle \lambda_1\rangle ^{\frac12}+\langle \lambda_2\rangle ^{\frac12}+\langle \lambda_3\rangle ^{\frac12}\big) +\langle n_2\rangle ^{s+2\varepsilon} \langle \lambda \rangle ^{\frac12-\varepsilon}. \]
Using the estimate \eqref{est:Str1} for the first three cases and the H\"older, Young inequalities in $\tau$ for the last case, the left-hand side of \eqref{est:ubarx} is bounded by
\begin{align*}
&T^{\frac18-\varepsilon} \Big( \| (\Lambda^{\frac12}u_1)(\overline{J^su_2})u_3\| _{L^{\frac43}_{t,x}}+\| u_1(\overline{J^s\Lambda^{\frac12}u_2})u_3\|_{L^{\frac43}_{t,x}}+\| u_1(\overline{J^su_2})(\Lambda^{\frac12}u_3)\|_{L^{\frac43}_{t,x}}\Big) \\
&+\| (J^{\frac14}u_1)(\overline{J^{s+2\varepsilon}u_2})(J^{-\frac14}u_3)\|_{L^2_{t,x}},
\end{align*}
where $J^s,\Lambda^b$ are the same as in the proof of Lemma~\ref{lem:ubyv}.
The first term is estimated by
\[ T^{\frac18-\varepsilon}\| \Lambda^{\frac12}u_1\|_{L^2_tL^4_x}\| J^su_2\|_{L^\infty_tL^2_x}\| u_3\|_{L^4_tL^\infty_x}\lesssim T^{\frac18-\varepsilon}\| u_1\|_{X^{\frac14+,\frac12}}\| u_2\|_{Y^{s,0}}\| u_3\|_{X^{\frac14+,\frac38}},\]
where we have used \eqref{est:L4} for $u_3$.
The third term is treated similarly.
For the second term, we have the bound
\[ T^{\frac18-\varepsilon}\| u_1\|_{L^\infty_tL^4_x}\| J^s\Lambda^{\frac12}u_2\|_{L^2_{t,x}}\| u_3\|_{L^4_tL^\infty_x}\lesssim T^{\frac18-\varepsilon}\| u_1\|_{Y^{\frac14+,0}}\| u_2\|_{X^{s,\frac12}}\| u_3\|_{X^{\frac14+,\frac38}}.\]
The last term is bounded by
\[ \| J^{\frac14}u_1\|_{L^4_{t,x}}\| J^{s+2\varepsilon}u_2\|_{L^4_{t,x}}\| J^{-\frac14}u_3\|_{L^\infty_{t,x}}\lesssim \| u_1\|_{X^{\frac14,\frac38}}\| u_2\|_{X^{s+2\varepsilon,\frac38}}\| u_3\|_{Y^{\frac14+,0}}.\]
Now, we take $\varepsilon \leq \frac{1}{16}$ and use parabolicity (the effect of $\mu >0$) to bound the norm of $u_2$ as
\[ \| u_2\|_{X^{s+\frac18,\frac38}}\leq \| u_2\|_{X^{s,\frac12}}^{\frac34}\| u_2\|_{X^{s+\frac12,0}}^{\frac14}\lesssim _\mu \| u_2\|_{X^{s,\frac12}_\mu}.\]
We also derive a factor $T^{\frac18}$ from the norm of $u_1$ by using Lemma~\ref{lem:linear2}.
Gathering the estimates so far, we obtain in this case that
\[ \text{LHS of \eqref{est:ubarx}}\lesssim_{s,\mu} T^{\frac18-}\| u_1\|_{Z^s_0}\| u_2\|_{Z^s_\mu}\| u_3\|_{Z^s_0}.\]

\underline{(II) $|n_1|\gtrsim |n_2|\gg |n_3|$.}
We need further division of cases as follows.

(i) $|n_1-n_2|\sim |n_1|$.
Since $|\Phi|\sim \langle n_1\rangle \langle n_2\rangle \gtrsim \langle n_2\rangle^2$, from \eqref{resonanceineq} we have
\[ \langle n\rangle^s|n_2|\lesssim \langle n_1\rangle ^s\big( \langle \lambda_1\rangle ^{\frac12}+\langle \lambda_2\rangle ^{\frac12}+\langle \lambda_3\rangle ^{\frac12}\big) +\langle n_1\rangle ^{s+2\varepsilon} \langle \lambda \rangle ^{\frac12-\varepsilon}. \]
Then, the proof is parallel to the previous case (I), with the roles of $u_1$ and $u_2$ exchanged.

(ii) $n_1-n_2=0$.
By Lemma~\ref{lem:linear2}, we have
\[ \| \mathcal{F}^{-1}(|\hat{\psi}_T|*_\tau |\tilde{F}|)\| _{N^s_0}\lesssim T^{\frac12-\varepsilon}\| F\|_{L^2_{t,x}}.\]
We thus estimate $\| P_0(u_1\overline{Ju_2})J^su_3\|_{L^2_{t,x}}$.
This is bounded by
\[ \| J^{\frac34}u_1\|_{L^2_{t,x}}\| J^{\frac14}u_2\|_{L^\infty_tL^2_x}\| J^su_3\|_{L^\infty_tL^2_x}\lesssim _\mu \| u_1\|_{X^{\frac14,\frac12}_\mu}\| u_2\|_{Y^{\frac14,0}}\| u_3\|_{Y^{s,0}},\]
where we have used parabolicity for $u_1$.

(iii) $|n_1|\gg |n_1-n_2|\neq 0$.
In this case, we have $|n_1|\sim |n_2|$, $|\Phi|\sim \langle n_1-n_2\rangle \langle n_2\rangle$ and 
\[ \langle n\rangle^s|n_2|\lesssim \frac{\langle n_1\rangle ^{s+\frac12}}{\langle n_1-n_2\rangle^{\frac12}}\big( \langle \lambda_1\rangle ^{\frac12}+\langle \lambda_2\rangle ^{\frac12}+\langle \lambda_3\rangle ^{\frac12}\big) +\frac{\langle n_1\rangle ^{s+\frac12}\langle n_2\rangle ^{2\varepsilon}}{\langle n_1-n_2\rangle^{\frac12-\varepsilon}} \langle \lambda \rangle ^{\frac12-\varepsilon}. \]
Similarly to the case (I), we use \eqref{est:Str1} and bound the left-hand side of \eqref{est:ubarx} by
\begin{align*}
&T^{\frac18-\varepsilon}\Big( \| J^{-\frac12}[(J^s\Lambda^{\frac12}u_1)(\overline{J^{\frac12}u_2})]u_3\| _{L^{\frac43}_{t,x}}+\| J^{-\frac12}[(J^{s+\frac14}u_1)(\overline{J^{\frac14}\Lambda^{\frac12}u_2})]u_3\|_{L^{\frac43}_{t,x}} \\
&\qquad +\| J^{-\frac12}[(J^{s+\frac14}u_1)(\overline{J^{\frac14}u_2})](\Lambda^{\frac12}u_3)\|_{L^{\frac43}_{t,x}}\Big) +\| J^{-\frac12+\varepsilon}[(J^{s+\frac14}u_1)(\overline{J^{\frac14+2\varepsilon}u_2})]u_3\|_{L^2_{t,x}}.
\end{align*}
The first term is bounded by
\[ T^{\frac18-\varepsilon}\| (J^{s}\Lambda^{\frac12}u_1)(\overline{J^{\frac12}u_2})\|_{L^{\frac43}_tL^1_x}\| u_3\|_{L^\infty_tL^{4+}_x} \lesssim T^{\frac18-\varepsilon}\| u_1\|_{X^{s,\frac12}}\| J^{\frac12}u_2\|_{L^4_tL^2_x}\| u_3\|_{Y^{\frac14+,0}}.\]
The norm of $u_2$ is treated with the following estimate using parabolicity: 
\begin{gather}
\begin{aligned}
\| J^{\sigma +\frac14}u\|_{L^4_tL^2_x}&\leq \| J^{\sigma}u\|_{L^\infty_tL^2_x}^{\frac12}\| J^{\sigma+\frac12}u\|_{L^2_{t,x}}^{\frac12}\lesssim _\mu \| u\|_{Y^{\sigma ,0}}^{\frac12}\| u\|_{X^{\sigma,\frac12}_\mu}^{\frac12}\\
&\lesssim_\mu \| u\|_{Z^{\sigma}_\mu}\qquad (\sigma\in \mathbf{R}).
\end{aligned}\label{est:L4L2}
\end{gather}
In a similar manner, the second term is estimated by
\[ T^{\frac18-\varepsilon}\| J^{s+\frac14}u_1\|_{L^4_tL^2_x}\| J^{\frac14}\Lambda^{\frac12}u_2\|_{L^2_{t,x}}\| u_3\|_{L^\infty_tL^{4+}_x}\lesssim _\mu T^{\frac18-\varepsilon}\| u_1\|_{Z^s_\mu}\| u_2\|_{X^{\frac14,\frac12}}\| u_3\|_{Y^{\frac14+,0}}.\]
For the third term, we proceed as follows:
\[ T^{\frac18-\varepsilon}\| J^{s+\frac14}u_1\| _{L^4_tL^2_x}\| J^{\frac14}u_2\|_{L^\infty_tL^2_x}\| \Lambda^{\frac12}u_3\|_{L^2_tL^{4+}_x}\lesssim _\mu T^{\frac18-\varepsilon}\| u_1\|_{Z^s_\mu}\| u_2\|_{Y^{\frac14,0}}\| u_3\|_{X^{\frac14+,\frac12}}.\]
Finally, the last term is treated with \eqref{est:L4} and Lemma~\ref{lem:linear2}, bounded by
\[ \| J^{s+\frac14}u_1\|_{L^4_tL^2_x}\| J^{\frac14+2\varepsilon}u_2\|_{L^4_{t,x}}\| u_3\|_{L^\infty_tL^{4+}_x} \lesssim _\mu T^{\frac18}\| u_1\|_{Z^s_\mu}\| u_2\|_{Z^{\frac14+2\varepsilon,\frac12}}\| u_3\|_{Y^{\frac14+,0}}.\]

\underline{(III) $|n_3|\gtrsim |n_2|$.}
In this case, we apply \eqref{est:Str1mu} and divide the weight factor as
\[ \langle n\rangle ^{s-\frac18+}|n_2|\lesssim \langle n_1\rangle ^{s+\frac18}\langle n_2\rangle ^{\frac38+}\langle n_3\rangle ^{\frac38}.\]
Then, we apply \eqref{est:L4} to obtain the bound
\[ T^{0+}\| (J^{s+\frac18}u_1)(\overline{J^{\frac38+}u_2})J^{\frac38}u_3\|_{L^{\frac43}_{t,x}}\lesssim T^{0+}\| u_1\|_{X^{s+\frac18,\frac38}}\| u_2\|_{X^{\frac38+,\frac38}}\| u_3\| _{X^{\frac38,\frac38}}.\]
We obtain the desired estimate by applying the following variant of \eqref{est:L4L2}:
\[ \| u\|_{X^{\sigma+\frac18,\frac38}}\leq \| u\|_{X^{\sigma ,\frac12}}^{\frac34}\| u\|_{X^{\sigma+\frac12,0}}^{\frac14}\lesssim _\mu \| u\|_{X^{\sigma,\frac12}_\mu}\quad (\sigma \in \mathbf{R}).\]

{\bf Estimate of \eqref{nonl:u3x}.}
Arguing similarly to the estimate of \eqref{nonl:ubarx}, it suffices to prove
\begin{gather}
\begin{aligned}
&\bigg\| \mathcal{F}^{-1}\bigg[ |\hat{\psi}_T|*_\tau \int_{\tau=\tau_1-\tau_2+\tau_3}\sum _{\begin{smallmatrix}n=n_1-n_2+n_3\\ |n_1|\gtrsim |n_3|\end{smallmatrix}}|n_3|\tilde{u}_1(\zeta_1)\tilde{u}_2(\zeta_2)\tilde{u}_3(\zeta_3)\,d\tau_1\,d\tau_2\bigg] \bigg\|_{N^s_\mu}\\
&\quad \lesssim _{s,\mu} T^{0+} \| u_1\|_{Z^s_\mu}\| u_2\|_{Z^s_\mu}\| u_3\|_{Z^s_\mu}.
\end{aligned} \label{est:u3x}
\end{gather}
In fact, the commutator structure of these terms prevents the spatial derivative to fall onto the function of the highest frequency, and then the estimate is reduced to either \eqref{est:ubarx} or \eqref{est:u3x}.
(Note that we regard $\bar{u}_2g$ as $\bar{u}_2$ in this case.)
In the frequency region $\{ |n_3|\lesssim |n_2|\}$, the claimed estimate \eqref{est:u3x} is weaker than \eqref{est:ubarx}, and hence we may assume $|n_1|\gtrsim |n_3|\gg |n_2|$.
In this situation, we have $|\Phi|\sim \langle n_1\rangle \langle n_3\rangle \gtrsim \langle n_3\rangle ^2$, and thus by \eqref{resonanceineq}
\[ \langle n\rangle^s|n_3|\lesssim \langle n_1\rangle ^s\big( \langle \lambda_1\rangle ^{\frac12}+\langle \lambda_2\rangle ^{\frac12}+\langle \lambda_3\rangle ^{\frac12}\big) +\langle n_1\rangle ^{s+2\varepsilon} \langle \lambda \rangle ^{\frac12-\varepsilon}. \]
The rest of the proof is parallel to the case (I) for the estimate of \eqref{est:ubarx} above, with the roles of $u_1$ and $u_2$ exchanged.

{\bf Estimate of \eqref{nonl:pm}.}
Due to the frequency projection $P_\pm$, the estimate is reduced to showing
\begin{gather*}
\begin{aligned}
&\bigg\| \mathcal{F}^{-1}\bigg[ |\hat{\psi}_T|*_\tau \int_{\tau=\tau_1-\tau_2+\tau_3+\tau_4}\sum _{\begin{smallmatrix}n_1-n_2=0 \\ n=n_3+n_4 \\ |n_3|<|n_4|\end{smallmatrix}}|n_4|\tilde{u}_1(\zeta_1)\tilde{u}_2(\zeta_2)\tilde{u}_3(\zeta_3)\tilde{g}(\zeta_4)\,d\tau_1\,d\tau_2\,d\tau_3\bigg] \bigg\|_{N^s_\mu}\\
&\quad \lesssim _{s} T^{0+} \| u_1\|_{Z^s_\mu}\| u_2\|_{Z^s_\mu}\| u_3\|_{Z^s_\mu}\| g\|_{G^s},
\end{aligned} 
\end{gather*}
where $g$ is the same as before.
Similarly to the case (II)-(ii) for the estimate of \eqref{est:ubarx} above, the left-hand side is bounded by
\[ T^{\frac12-}\| P_0(u_1\bar{u}_2)u_3J^{s+1}g\|_{L^2_{t,x}}\lesssim T^{\frac12-}\| u_1\|_{L^\infty_tL^2_x}\| u_2\|_{L^\infty_tL^2_x}\| u_3\|_{L^4_tL^\infty_x}\| J^{s+1}g\|_{L^4_tL^2_x}.\]
The estimate follows from $\| u_3\|_{L^4_tL^\infty_x}\lesssim \| u_3\|_{X^{\frac14+,\frac38}}$ by \eqref{est:L4} and $\| J^{s+1}g\|_{L^4_tL^2_x}\lesssim \| \langle n\rangle ^{s+1}\tilde{g}\|_{\ell^2_nL^{\frac43}_\tau}$ by the Hausdorff-Young inequality.

{\bf Estimate of \eqref{nonl:p-1}.}
It suffices to prove the following:
\begin{gather}
\begin{aligned}
&\bigg\| \mathcal{F}^{-1}\bigg[ |\hat{\psi}_T|*_\tau \int_{\tau=\tau_1-\tau_2+\dots +\tau_5}\sum _{n=n_1-n_2+\dots +n_5}\frac{|n_3-n_4|\tilde{u}_1(\zeta_1)\tilde{u}_2(\zeta_2)\cdots \tilde{u}_5(\zeta_5)}{\langle n_1-n_2+n_3-n_4\rangle}\,d\tau_1\cdots d\tau_4\bigg] \bigg\|_{N^s_\mu}\\
&\quad \lesssim _{s,\mu} T^{0+} \| u_1\|_{Z^s_\mu}\| u_2\|_{Z^s_\mu}\cdots \| u_5\|_{Z^s_\mu}.
\end{aligned} \label{est:p-1}
\end{gather}
We may also restrict our attention to the case $s\in (\frac14,\frac12)$.
We only consider the case $|n_1-n_2+n_3-n_4|\ll |n_3-n_4|\,(\sim |n_1-n_2|)$, since otherwise the estimate is reduced to \eqref{est:quintic} below.
We first use \eqref{est:Str0} to obtain 
\[ \text{LHS of \eqref{est:p-1}}\lesssim T^{0+}\| J^{-1}[J^{\frac12}(u_1\bar{u}_2)J^{\frac12}(u_3\bar{u}_4)]J^su_5\|_{L^{1+}_tL^2_x} \]
for $|n|\lesssim |n_5|$ and
\[ \text{LHS of \eqref{est:p-1}}\lesssim T^{0+}\| J^{-1+s}[J^{\frac12}(u_1\bar{u}_2)J^{\frac12}(u_3\bar{u}_4)]u_5\|_{L^{1+}_tL^2_x} \]
for $|n_5|\ll |n|$.
By the Sobolev embedding and the H\"older inequality (and the condition $s\in (\frac14,\frac12)$), both of them are bounded by
\[ T^{0+}\| J^{\frac12}(u_1\bar{u}_2)\|_{L^{2+}_{t,x}}\| J^{\frac12}(u_3\bar{u}_4)\|_{L^{2+}_{t,x}}\| J^su_5\|_{L^\infty_tL^2_x}.\]
The desired estimate \eqref{est:p-1} is derived from
\[ \| (J^{\frac12}u_1)\bar{u}_2\|_{L^{2+}_{t,x}}\lesssim \| J^{\frac12+}u_1\|_{L^{4+}_tL^2_x}\| J^{\frac14+}u_2\|_{L^4_{t,x}}\lesssim _\mu \| u_1\|_{Z^{\frac14+}_\mu}\| u_2\|_{X^{\frac14+,\frac38}},\]
where at the last inequality we have used \eqref{est:L4} and a slight modification of \eqref{est:L4L2}.

{\bf Estimate of \eqref{nonl:quintic}.}
The estimate is reduced to
\begin{gather}
\begin{aligned}
&\bigg\| \mathcal{F}^{-1}\bigg[ |\hat{\psi}_T|*_\tau \int_{\tau=\tau_1-\tau_2+\dots +\tau_5}\sum _{n=n_1-n_2+\dots +n_5}\tilde{u}_1(\zeta_1)\tilde{u}_2(\zeta_2)\cdots \tilde{u}_5(\zeta_5)\,d\tau_1\cdots d\tau_4\bigg] \bigg\|_{N^s_\mu}\\
&\quad \lesssim _{s} T^{0+} \| u_1\|_{Z^s_\mu}\| u_2\|_{Z^s_\mu}\cdots \| u_5\|_{Z^s_\mu}.
\end{aligned} \label{est:quintic}
\end{gather}
We may assume $|n_1|=\max _{1\leq j\leq 5}|n_j|$ without loss of generality.
Applying \eqref{est:Str1}, the left-hand side of the above is estimated by
\[ T^{\frac18-}\| (J^su_1)\bar{u}_2u_3\bar{u}_4u_5\|_{L^{\frac43}_{t,x}}\lesssim T^{\frac18-}\| J^su_1\| _{L^4_{t,x}}\| u_2\|_{L^8_{t,x}}\cdots \| u_5\|_{L^8_{t,x}}.\]
The estimate \eqref{est:quintic} follows from \eqref{est:L4} and 
\[ \| u\|_{L^8_{t,x}}\leq \| J^{\frac14}u\|_{L^4_{t,x}}^{\frac12}\| J^{-\frac14}u\|_{L^\infty_{t,x}}^{\frac12}\lesssim \| u\|_{X^{\frac14,\frac38}}^{\frac12}\| u\|_{Y^{\frac14+,0}}^{\frac12}.\]

\noindent
Thus, the proof of Lemma~\ref{lem:est-v} is complete.
\end{proof}

\subsection{A priori bounds on $u$}
\label{subsec:apriori-lwp}

\begin{prop}\label{prop:apriori}
Let $s>1/4$ and $0<T\leq 1$.
Assume that $u\in Z^s_1(T)$ is a solution to the Cauchy problem \eqref{kdnls-r} and \eqref{ic} on $[0,T]$ with $\phi \in B_{H^s}(R,r)$, where $R\geq r>0$.
Then, there exists $\delta =\delta (s)>0$ such that we have
\[ \| u\|_{Z^s_1(T)}\leq C_\phi +T^{\delta}C_u,\]
where the constants $C_\phi ,C_u$ depend on $s, \alpha ,\beta , R,r$ (but uniform in $T$), and $C_u$ additionally depends on $\| u\|_{Z^s_1(T)}$.
\end{prop}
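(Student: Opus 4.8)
The plan is to run a Duhamel estimate on the gauge-transformed equation \eqref{eq:vpm'} and transfer the resulting bound back to $u$ via the estimates of Subsection~\ref{subsec:gaugeest}. Since $\phi\in B_{H^s}(R,r)$, the parabolic coefficient
\[ \mu:=-\beta P_0(|\phi|^2)=-\tfrac{\beta}{2\pi}\|\phi\|_{L^2}^2\in\Bigl[\tfrac{|\beta|}{2\pi}r^2,\ \tfrac{|\beta|}{2\pi}R^2\Bigr] \]
is bounded above and below by positive constants of the form allowed for $C_\phi$ and $C_u$. Hence, by Remark~\ref{rem:normequiv}, it suffices to bound $\|u\|_{Z^s_\mu(T)}$, and all $\mu$-dependent implicit constants appearing from Lemmas~\ref{lem:linear1}--\ref{lem:est-v} and Corollary~\ref{cor:vbyu} are then of the allowed form.

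Write $u=P_0u+P_+u+P_-u$. Applying $P_0$ to \eqref{eq:u'} kills $\mu D_xP_0u$ and the entire right-hand side (each term carries $\partial_x$ or $D_x$, and every Fourier mode of $\mathcal{N}_u$ vanishes at $n=0$), so the mean is conserved: $P_0u(t)\equiv P_0\phi=U_\mu(t)P_0\phi$, and $\|P_0u\|_{Z^s_\mu(T)}\lesssim\|P_0\phi\|_{H^s}\le R$ by Lemma~\ref{lem:linear1}. For the nonzero modes, $P_\pm u=e^{-\rho^\pm[u]}v_\pm$ with $v_\pm$ defined by \eqref{def:vpm}; since $u\in Z^s_\mu(T)$, Corollary~\ref{cor:vbyu} gives $v_\pm\in Z^s_\mu(T)$, and $v_\pm$ solves \eqref{eq:vpm'} with $v_\pm(0)=e^{\rho^\pm[\phi]}P_\pm\phi$. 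By the Duhamel formula and Lemma~\ref{lem:linear1},
\[ \|v_\pm\|_{Z^s_\mu(T)}\lesssim\|v_\pm(0)\|_{H^s}+\|\mathcal{N}_0[u,\phi;v_\pm]\|_{N^s_\mu(T)}+\|\mathcal{N}_v^\pm[u]\|_{N^s_\mu(T)}. \]
Here $\|v_\pm(0)\|_{H^s}=\|e^{\rho^\pm[\phi]}P_\pm\phi\|_{H^s}\lesssim\bigl(1+e^{C\|\phi\|_{H^s}^2}\|\phi\|_{H^s}^2\bigr)\|\phi\|_{H^s}=:C_\phi$ by the time-independent analogues of Lemmas~\ref{lem:ubyv}--\ref{lem:gauge} (valid for $s>1/4$); Lemma~\ref{lem:est-u}(i) gives $\|\mathcal{N}_0[u,\phi;v_\pm]\|_{N^s_\mu(T)}\lesssim T^\delta\|u\|_{Z^s_\mu(T)}^4\|v_\pm\|_{Z^s_\mu(T)}$, its proof resting on the $L^2$ energy identity \eqref{L2eq} that produces $\sup_{[0,T]}\bigl|\,\|u(t)\|_{L^2}^2-\|\phi\|_{L^2}^2\bigr|\lesssim T^\delta\|u\|_{Z^s_\mu(T)}^4$; and Lemma~\ref{lem:est-v} gives the analogous bound for $\mathcal{N}_v^\pm[u]$ with a gain $T^\delta$. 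Replacing the stray factor $\|v_\pm\|_{Z^s_\mu(T)}$ on the right by Corollary~\ref{cor:vbyu}, we obtain $\|v_\pm\|_{Z^s_\mu(T)}\le C_\phi+T^\delta C_u$, with $C_u$ depending only on $s,\alpha,\beta,R,r$ and $\|u\|_{Z^s_\mu(T)}$.

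To finish, transfer back: by Lemma~\ref{lem:ubyv},
\[ \|P_\pm u\|_{Z^s_\mu(T)}=\|e^{-\rho^\pm[u]}v_\pm\|_{Z^s_\mu(T)}\le\bigl(1+C\|e^{-\rho^\pm[u]}-1\|_{G^s(T)}\bigr)\|v_\pm\|_{Z^s_\mu(T)}, \]
and $\|e^{-\rho^\pm[u]}-1\|_{G^s(T)}\lesssim e^{C\|u\|_{Z^{s'}_\mu(T)}^2}\|u\|_{Z^{s'}_\mu(T)}^2$ for some $s'<s$ by Lemma~\ref{lem:gauge}. Summing over $\pm$, adding $\|P_0u\|_{Z^s_\mu(T)}\lesssim R$, undoing the normalisation to $\mu=1$ through Remark~\ref{rem:normequiv}, and sorting the resulting terms — the free evolution of the gauge-transformed data into $C_\phi$, and everything carrying a $\|u\|$-dependent gauge or nonlinear factor into $C_u$ — yields $\|u\|_{Z^s_1(T)}\le C_\phi+T^\delta C_u$.

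The analytic work is essentially all contained in Lemmas~\ref{lem:linear1}--\ref{lem:est-v} and Corollary~\ref{cor:vbyu}; the role of this proposition is to assemble them, and what needs care is, first, verifying that the quantities fed into $C_\phi$ (the free evolutions of $v_\pm(0)=e^{\rho^\pm[\phi]}P_\pm\phi$ and of $P_0\phi$) are genuinely $\|u\|$-free, so that the subsequent continuity/bootstrap argument of Subsection~\ref{subsec:proof-lwp} closes; second, exploiting the $L^2$ identity \eqref{L2eq} so that the renormalised term $\mathcal{N}_0[u,\phi;\cdot]$ enters with the small factor $T^\delta$; and third, keeping $\mu$ uniformly away from $0$ and $\infty$ in terms of $R,r$ so that Remark~\ref{rem:normequiv} and the parabolic-smoothing lemmas apply with constants of the allowed type. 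I expect the first of these — the bookkeeping that cleanly separates the $\phi$-dependence from the $\|u\|$-dependence — to be the main point to get right.
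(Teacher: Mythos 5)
Your architecture is the paper's --- pass to $Z^s_\mu$ with $\mu\sim\|\phi\|_{L^2}^2$, estimate $v_\pm$ by Duhamel on \eqref{eq:vpm'}, and transfer back through $P_\pm u=e^{-\rho^\pm[u]}v_\pm$ --- and most of the assembly is sound (your observation that $P_0u(t)\equiv P_0\phi$ is a clean way to dispose of the zero mode). The gap is in the transfer back, precisely at the point you yourself flag as the one to get right. You bound $\| e^{-\rho^\pm[u]}-1\|_{G^s(T)}\lesssim e^{C\| u\|_{Z^{s'}_\mu(T)}^2}\| u\|_{Z^{s'}_\mu(T)}^2$ and declare that this factor ``goes into $C_u$''. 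But $C_u$ is only permitted to enter the final inequality with a prefactor $T^\delta$, and the $G^s(T)$ norm of $e^{-\rho^\pm[u]}-1$ does not shrink as $T\to0$ (its $\ell^2_nL^1_\tau$ component already controls $\sup_t\| \cdot\|_{H^{s+3/4}}$, which at $t=0$ is a fixed positive quantity). Multiplying your gauge factor against the $C_\phi$ part of $\| v_\pm\|_{Z^s_\mu(T)}\leq C_\phi+T^\delta C_u$ therefore produces a term of size $C_uC_\phi$ carrying no power of $T$; it can be absorbed neither into $C_\phi$ (which must be independent of $\| u\|_{Z^s_1(T)}$) nor into $T^\delta C_u$. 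This is not cosmetic: the continuity argument in Step~1 of the proof of Theorem~\ref{thm:lwp2} closes only because one can choose $T$ with $T^{\delta}C_u\big|_{\| u\|_{Z^s_1(T)}=2C_\phi}\leq\frac12C_\phi$, and an extra $C_uC_\phi$ term defeats it.

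The paper repairs exactly this step by comparing with the free evolution: it writes $e^{-\rho^\pm[u]}-1=\bigl(e^{-\rho^\pm[u]}-e^{-\rho^\pm[U_\mu(t)\phi]}\bigr)+\bigl(e^{-\rho^\pm[U_\mu(t)\phi]}-1\bigr)$. The second piece is a genuine $C_\phi$, and the first is controlled, via the difference estimate in Lemma~\ref{lem:gauge}, by $C_u\| u-U_\mu(t)\phi\|_{Z^{s-}_\mu(T)}$, which the Duhamel formula for $u$ at slightly lower regularity (estimate \eqref{est:u-s'}, where $\mathcal{N}_u$ gains $T^{0+}$ precisely because an $\varepsilon$ of derivative is given up) bounds by $T^{0+}C_u$. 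Then $\bigl(C_\phi+T^{0+}C_u\bigr)\bigl(C_\phi+T^{0+}C_u\bigr)$ is again of the admissible form. Alternatively, one could try to feed \eqref{est:u-s'} directly into your bound $e^{C\| u\|_{Z^{s'}_\mu(T)}^2}\| u\|_{Z^{s'}_\mu(T)}^2$ (which is, after all, phrased at a lower regularity $s'<s$) and expand the exponential so as to split it as $C_\phi+T^{0+}C_u$; but some such step is indispensable, and your proof as written omits it.
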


\begin{proof}
We actually prove the claimed estimate in the $Z^s_\mu (T)$ norm with $\mu =|\beta| P_0(|\phi |^2)$.
However, the difference of the parameter $\mu$ is not important here, because $\phi \in B_{H^s}(R,r)$ and thus $\| u\|_{Z^s_\mu (T)}\sim _{\beta ,R,r}\| u\|_{Z^s_1(T)}$ by Remark~\ref{rem:normequiv}.

We first observe that
\[ \| u\|_{Z^s_\mu (T)}\leq \| P_0u\|_{Z^s_\mu (T)} +\| e^{-\rho ^+[u]}v_+\|_{Z^s_\mu (T)} +\| e^{-\rho ^-[u]}v_-\|_{Z^s_\mu (T)},\]
where $v_\pm$ are the gauge transforms of $u$ defined by \eqref{def:vpm}.
The first term on the right-hand side is bounded by $\| u\|_{Z^0_\mu(T)}$.
We apply the following estimate to treat this term:
by the integral equation associated to \eqref{eq:u'} and Lemma~\ref{lem:est-u},
\begin{gather}\label{est:u-s'}
\begin{aligned}
\| u\| _{Z^{s'}_\mu (T)}&\lesssim \| \phi\| _{H^s}+\| \mathcal{N}_0[u,\phi ;u]\|_{N^s_\mu(T)}+\| \mathcal{N}_u[u,u,u]\|_{N^{s'}_\mu(T)}\\
&\lesssim _{s,s',\mu}\| \phi\| _{H^s}+T^{0+}\| u\| _{Z^s_\mu (T)}^5+T^{0+}\| u\| _{Z^s_\mu (T)}^3\qquad (s'<s) .
\end{aligned}
\end{gather}
To estimate the terms $e^{-\rho ^\pm[u]}v_\pm$, we use Lemmas~\ref{lem:ubyv} and \ref{lem:gauge}:
\begin{align*}
&\| e^{-\rho ^\pm [u]}v_\pm \|_{Z^s_\mu (T)}\\
&\leq C\big( \| e^{-\rho ^\pm [u]}-e^{-\rho ^\pm [U_\mu (t)\phi ]}\| _{G^s(T)}+\| e^{-\rho ^\pm [U_\mu (t)\phi]}-1\|_{G^s(T)}+1\big) \| v_\pm \|_{Z^s_\mu(T)}\\
&\leq \big( C_u\| u-U_\mu (t)\phi\|_{Z^{s-}_\mu (T)}+C_\phi\big) \| v_\pm \| _{Z^s_\mu(T)},
\intertext{and then apply a variant of \eqref{est:u-s'} to obtain}
&\leq \big( T^{0+}C_u+C_\phi\big) \| v_\pm \| _{Z^s_\mu(T)}.
\end{align*}
Hence, the claimed estimate follows if we have the same estimate for $v_\pm$: 
\[ \| v_\pm \| _{Z^s_\mu(T)}\leq C_\phi +T^{0+}C_u.\]
Note that this is better than $\| v_\pm\|_{Z^s_\mu(T)}\leq C_u$ which follows from Corollary~\ref{cor:vbyu}.
To show this, we use the integral equation associated to \eqref{eq:vpm'} (with initial data $e^{\rho ^\pm [\phi]}P_\pm \phi$) and Lemmas~\ref{lem:est-u} (i), \ref{lem:est-v}, together with Corollary~\ref{cor:vbyu},
\begin{align*}
\| v_\pm\| _{Z^s_\mu(T)}&\lesssim \| e^{\rho ^\pm [\phi ]}P_\pm \phi \|_{H^s}+\|\mathcal{N}_0[u,\phi ;v_\pm ]\| _{N^s_\mu(T)}+\| \mathcal{N}_v^\pm [u]\| _{N^s_\mu(T)} \\
&\lesssim _{s,\mu} \big( \| e^{\rho ^\pm [\phi ]}-1\| _{H^{s\vee (\frac12+)}}+1\big) \| \phi\|_{H^s}+T^{0+}\| u\|_{Z^s_\mu(T)}^4\| v_\pm\|_{Z^s_\mu(T)}+T^{0+}C_u\\
&\lesssim C_\phi +T^{0+}C_u.
\end{align*}
Here, we have estimated the term $e^{\rho ^\pm [\phi ]}P_\pm \phi$ by the Sobolev inequalities:
\[ \| fg\|_{H^s}\lesssim \| f\|_{H^{s\vee (\frac12+)}}\| g\|_{H^s},\quad \| fg\| _{H^{[s\vee (\frac12+)]-1}}\lesssim \| f\|_{H^s}\| g\|_{H^s}\quad (s>0)\]
and 
\begin{align*}
\| e^{\rho ^\pm [\phi ]}-1\| _{H^{s\vee (\frac12+)}}&\leq \sum _{m=1}^\infty \frac{C^m}{m!}\big\| \partial _x^{-1}P_{\neq 0}(|\phi |^2)\big\| _{H^{s\vee (\frac12+)}}^m\leq e^{C\| \phi\|_{H^s}^2}.
\end{align*} 
This is the end of the proof.
\end{proof}

A slight modification of the argument used so far verifies the following \emph{a~priori} estimate for difference of two solutions to \eqref{kdnls-r}:
\begin{prop}\label{prop:apriori-diff}
Let $s>1/4$ and $0<T\leq 1$.
Assume that $u_1,u_2\in Z^s_1(T)$ are solutions of \eqref{kdnls-r} on $[0,T]$ with initial data $\phi _1,\phi _2\in B_{H^s}(R,r)$, respectively.
Then, there exists $\delta =\delta (s)>0$ such that we have
\[ \| u_1-u_2\| _{Z^s_1(T)}\leq C_{u_1,u_2}\big( \| \phi _1-\phi _2\|_{H^s}+T^{\delta}\| u_1-u_2\|_{Z^s_1(T)}\big) ,\]
where $C_{u_1,u_2}>0$ is a constant depending on $s,\alpha ,\beta ,R,r$ and $\| u_1\|_{Z^s_1(T)}$, $\| u_2\|_{Z^s_1(T)}$.
\end{prop}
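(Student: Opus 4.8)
The plan is to mirror the proof of Proposition~\ref{prop:apriori}, with the equations \eqref{eq:u'} and \eqref{eq:vpm'} replaced by their difference counterparts \eqref{eq:u-diff} and \eqref{eq:vpm-diff}, and with the single-function estimates replaced by their difference forms supplied by Lemmas~\ref{lem:est-u}, \ref{lem:est-v}, \ref{lem:gauge} and Corollary~\ref{cor:vbyu}. As in Proposition~\ref{prop:apriori} we work with the $Z^s_\mu(T)$ norm for $\mu:=|\beta|P_0(|\phi_1|^2)$; since $\phi_1,\phi_2\in B_{H^s}(R,r)$, Remark~\ref{rem:normequiv} makes this norm equivalent to $\|\cdot\|_{Z^s_1(T)}$ and to $\|\cdot\|_{Z^s_{\mu_2}(T)}$ with $\mu_2:=|\beta|P_0(|\phi_2|^2)$, up to constants depending only on $\beta,R,r$, so all spaces in which $u_j$ and $v_{\pm,j}$ live are comparable. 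Write $w:=u_1-u_2$, $w_\pm:=v_{\pm,1}-v_{\pm,2}$, and let $C_{u_1,u_2}$ denote a constant depending on $s,\alpha,\beta,R,r$ and $\|u_1\|_{Z^s_\mu(T)},\|u_2\|_{Z^s_\mu(T)}$, allowed to grow from line to line. The quantity to be estimated is decomposed as
\[
w=P_0w+\sum_{\pm}\big(e^{-\rho^\pm[u_1]}v_{\pm,1}-e^{-\rho^\pm[u_2]}v_{\pm,2}\big),
\]
and the three pieces are treated separately.

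First I would prove the crude bound, valid for a suitable $s'\in(1/4,s)$: there is $\delta>0$ with
\[
\|w\|_{Z^{s'}_\mu(T)}\leq C_{u_1,u_2}\big(\|\phi_1-\phi_2\|_{H^s}+T^{\delta}\|w\|_{Z^s_\mu(T)}\big).
\]
This follows from the Duhamel formula for \eqref{eq:u-diff} (with data $\phi_1-\phi_2$) and Lemma~\ref{lem:linear1}: $\mathcal{N}_0[u_1,\phi_1;w]$ is estimated by Lemma~\ref{lem:est-u}(i) (contributing $T^\delta\|u_1\|_{Z^s_\mu(T)}^4\|w\|_{Z^s_\mu(T)}$); $\mathcal{N}_0[u_1,u_2;u_2]$ by the difference estimate in Lemma~\ref{lem:est-u}(i), where the $\|\phi_j\|_{L^2}\|\phi_1-\phi_2\|_{L^2}$-part is acceptable since $\|\phi_j\|_{L^2}\leq R$ and $\|\phi_1-\phi_2\|_{L^2}\leq\|\phi_1-\phi_2\|_{H^s}$; the renormalization-mismatch term $-2\alpha[P_0(|\phi_1|^2)-P_0(|\phi_2|^2)]\partial_xu_2$ by $|P_0(|\phi_1|^2)-P_0(|\phi_2|^2)|\lesssim R\|\phi_1-\phi_2\|_{L^2}$ together with the parabolic-smoothing estimate \eqref{est:N0-0}; and $\mathcal{N}_u[u_1,u_1,u_1]-\mathcal{N}_u[u_2,u_2,u_2]$ by telescoping (multilinearity of $\mathcal{N}_u$) into three terms, each containing one factor $w$, and applying the low-regularity estimate in Lemma~\ref{lem:est-u}(ii), which carries $T^\delta$. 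This already handles the low-frequency piece, since $\|P_0w\|_{Z^s_\mu(T)}\leq\|w\|_{Z^{s'}_\mu(T)}$.

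Next I would treat the gauge pieces by splitting $e^{-\rho^\pm[u_1]}v_{\pm,1}-e^{-\rho^\pm[u_2]}v_{\pm,2}=(e^{-\rho^\pm[u_1]}-e^{-\rho^\pm[u_2]})v_{\pm,1}+e^{-\rho^\pm[u_2]}w_\pm$. The first summand is bounded, by Lemma~\ref{lem:ubyv} and the difference estimate in Lemma~\ref{lem:gauge}, by $C_{u_1,u_2}\|w\|_{Z^{s'}_\mu(T)}\|v_{\pm,1}\|_{Z^s_\mu(T)}$ for the $s'<s$ appearing there (which we may assume lies in $(1/4,s)$), and since $\|v_{\pm,1}\|_{Z^s_\mu(T)}\lesssim C_{u_1}$ by Corollary~\ref{cor:vbyu}, the crude bound puts it into the desired form. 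For the second summand I would write $e^{-\rho^\pm[u_2]}=(e^{-\rho^\pm[u_2]}-e^{-\rho^\pm[U_\mu(t)\phi_2]})+(e^{-\rho^\pm[U_\mu(t)\phi_2]}-1)+1$ and argue exactly as in Proposition~\ref{prop:apriori} (using Lemmas~\ref{lem:ubyv}, \ref{lem:gauge} and a variant of \eqref{est:u-s'} for $u_2$) to get $\|e^{-\rho^\pm[u_2]}w_\pm\|_{Z^s_\mu(T)}\leq(C_{\phi_2}+T^\delta C_{u_2})\|w_\pm\|_{Z^s_\mu(T)}$, with $C_{\phi_2}$ absorbed into $C_{u_1,u_2}$. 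It then remains to bound $\|w_\pm\|_{Z^s_\mu(T)}$, and here — just as the sharp bound on $\|v_\pm\|$ was decisive in Proposition~\ref{prop:apriori} — I would use the Duhamel formula for \eqref{eq:vpm-diff}, whose data is $e^{\rho^\pm[\phi_1]}P_\pm\phi_1-e^{\rho^\pm[\phi_2]}P_\pm\phi_2$. The data term is $\lesssim C_{u_1,u_2}\|\phi_1-\phi_2\|_{H^s}$ by the Sobolev product estimates and the Taylor expansion of $e^{\rho}$ used in Proposition~\ref{prop:apriori}; the terms $\mathcal{N}_0[u_1,\phi_1;w_\pm]$, $\mathcal{N}_0[u_1,u_2;v_{\pm,2}]$ and $-2\alpha[P_0(|\phi_1|^2)-P_0(|\phi_2|^2)]\partial_xv_{\pm,2}$ are handled as in the crude bound (using $\|v_{\pm,2}\|_{Z^s_\mu(T)}\lesssim C_{u_2}$ and $\|w_\pm\|_{Z^s_\mu(T)}\lesssim C_{u_1,u_2}\|w\|_{Z^s_\mu(T)}$ where convenient); and $\mathcal{N}_v^\pm[u_1]-\mathcal{N}_v^\pm[u_2]$ is controlled directly by Lemma~\ref{lem:est-v}, which gives $T^\delta$ at the full regularity $s$. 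Summing the three pieces and passing back to the $Z^s_1(T)$ norm via Remark~\ref{rem:normequiv} completes the proof.

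The main obstacle is structural, and is the same one already faced in Proposition~\ref{prop:apriori}: a mere bound $\|w\|_{Z^s_\mu(T)}\lesssim C_{u_1,u_2}\|w\|_{Z^s_\mu(T)}$ would be useless, and one must instead extract the form $C_{u_1,u_2}(\|\phi_1-\phi_2\|_{H^s}+T^\delta\|w\|_{Z^s_\mu(T)})$. This dictates the two-tier scheme — a preliminary $Z^{s'}$ bound ($s'<s$) to feed the gauge-factor estimate of Lemma~\ref{lem:gauge}, where the $T^\delta$ gain is available only below regularity $s$ because of the exactly first-order derivative loss in $\mathcal{N}_u$, together with a separate treatment of $w_\pm$ via \eqref{eq:vpm-diff}, where the gauge transformation has removed the worst resonances so that Lemma~\ref{lem:est-v} yields $T^\delta$ at regularity $s$ itself. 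A secondary point requiring care is the asymmetry between $\phi_1$ and $\phi_2$ introduced by the renormalization: the terms involving $P_0(|\phi_1|^2)-P_0(|\phi_2|^2)$ in \eqref{eq:u-diff} and \eqref{eq:vpm-diff} must be absorbed into the $\|\phi_1-\phi_2\|_{H^s}$ part by means of the parabolic-smoothing estimate \eqref{est:N0-0}, and one should note that all $\mu$-dependent constants remain uniform because $\mu,\mu_2\sim_{\beta,R,r}1$.
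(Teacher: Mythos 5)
Your proposal is correct and follows essentially the same route as the paper's proof: the same decomposition $w=P_0w+\sum_\pm(e^{-\rho^\pm[u_1]}v_{\pm,1}-e^{-\rho^\pm[u_2]}v_{\pm,2})$, the same preliminary $Z^{s'}_\mu(T)$ bound on $w$ via \eqref{eq:u-diff}, the same splitting of the gauge pieces fed into Lemmas~\ref{lem:ubyv}, \ref{lem:gauge} and Corollary~\ref{cor:vbyu}, and the same Duhamel estimate for $w_\pm$ via \eqref{eq:vpm-diff} with Lemmas~\ref{lem:est-u}(i), \ref{lem:est-v} and \eqref{est:N0-0}. The only (harmless) divergence is your extra decomposition of $e^{-\rho^\pm[u_2]}$ through $U_\mu(t)\phi_2$, which the paper skips here since the multiplicative constant in front of $\|w_\pm\|_{Z^s_\mu(T)}$ may simply be absorbed into $C_{u_1,u_2}$.
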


\begin{proof}
This time we actually prove the claimed estimate in the $Z^s_\mu (T)$ norm with $\mu =|\beta| P_0(|\phi _1|^2)$.
We divide $u_1-u_2$ as in the proof of the preceding proposition and focus on the estimates of
\[ \| u_1-u_2\| _{Z^{s'}_\mu (T)}\quad (s'<s),\qquad \| e^{-\rho ^\pm [u_1]}v_{1,\pm}-e^{-\rho^\pm [u_2]}v_{2,\pm}\|_{Z^s_\mu (T)},\]
where $v_{1,\pm},v_{2,\pm}$ denote the gauge transforms of $u_1,u_2$.
For the first one, we use the integral equation associated to \eqref{eq:u-diff} and Lemma~\ref{lem:est-u}, \eqref{est:N0-0} to obtain
\begin{align*}
\| u_1-u_2\| _{Z^{s'}_{\mu}(T)}
&\lesssim \| \phi _1-\phi _2\| _{H^s}+\| \mathcal{N}_0[u_1,\phi _1;u_1-u_2]\| _{N^s_{\mu}(T)}\\
&\quad +\| \mathcal{N}_0[u_1,u_2;u_2]\| _{N^s_{\mu}(T)}+\| [P_0(|\phi_1|^2)-P_0(|\phi_2|^2)]\partial_xu_2\| _{N^s_{\mu}(T)}\\
&\quad +\| \mathcal{N}_u[u_1,u_1,u_1]-\mathcal{N}_u[u_2,u_2,u_2]\|_{N^{s'}_{\mu}(T)}\\
&\lesssim C_{u_1,u_2} \big( \| \phi _1-\phi _2\| _{H^s}+T^{0+}\| u_1-u_2\|_{Z^s_{\mu}(T)}\big) .
\end{align*}
For the second term, we use Lemmas~\ref{lem:ubyv}, \ref{lem:gauge} to have
\begin{align*}
&\| e^{-\rho ^\pm [u_1]}v_{1,\pm}-e^{-\rho ^\pm [u_2]}v_{2,\pm}\|_{Z^s_\mu (T)}\\
&\lesssim \| e^{-\rho ^\pm [u_1]}-e^{-\rho ^\pm [u_2]}\|_{G^s(T)}\| v_{1,\pm}\|_{Z^s_\mu (T)}+\big( \| e^{-\rho ^\pm [u_2]}-1\|_{G^s(T)}+1\big) \| v_{1,\pm}-v_{2,\pm}\|_{Z^s_{\mu}(T)} \\
&\lesssim C_{u_1,u_2}\big( \| u_1-u_2\|_{Z^{s-}_{\mu}(T)}\| v_{1,\pm}\|_{Z^s_\mu(T)}+\| v_{1,\pm}-v_{2,\pm}\|_{Z^s_{\mu}(T)}\big) .
\end{align*}
The first term in the last line is treated using the previous estimates for $u_1-u_2$ and the rough bound in Corollary~\ref{cor:vbyu} for $v^\pm_1$.
To estimate $v_{1,\pm}-v_{2,\pm}$, we use the integral equation associated to \eqref{eq:vpm-diff}, together with Lemmas~\ref{lem:est-u} (i), \ref{lem:est-v}, the estimate \eqref{est:N0-0} and Corollary~\ref{cor:vbyu}, to have
\begin{align*}
\| v_{1,\pm}-v_{2,\pm}\|_{Z^s_{\mu}(T)}
&\lesssim  \| e^{\rho ^\pm [\phi _1]}P_\pm \phi _1-e^{\rho ^\pm [\phi _2]}P_\pm \phi _2\| _{H^s}+\| \mathcal{N}_0[u_1,\phi _1;v_{1,\pm}-v_{2,\pm}]\| _{N^s_{\mu}(T)}\\
&\quad +\| \mathcal{N}_0[u_1,u_2;v_{2,\pm}]\| _{N^s_{\mu}(T)}+\| [P_0(|\phi_1|^2)-P_0(|\phi_2|^2)]\partial_xv_{2,\pm}\| _{N^s_{\mu}(T)}\\
&\quad +\| \mathcal{N}^\pm_v[u_1]-\mathcal{N}^\pm_v[u_2]\|_{N^s_{\mu}(T)}\\
&\lesssim C_{u_1,u_2}\big( \| \phi _1-\phi _2\| _{H^s}+T^{0+}\| u_1-u_2\|_{Z^s_{\mu}(T)}\big) .
\end{align*}
We therefore obtain the desired estimate.
\end{proof}

By further refining the argument, we also have the estimates in higher regularities as follows:
\begin{prop}\label{prop:apriori-pr}
Let $s>1/4$, $0<T\leq 1$ and $\theta >0$.
Assume that $u,u_1,u_2\in Z^{s+\theta}_1(T)$ are solutions of \eqref{kdnls-r} on $[0,T]$ with initial data $\phi ,\phi _1,\phi _2\in H^{s+\theta}\cap B_{H^s}(R,r)$, respectively.
Then, there exists $\delta =\delta (s)>0$ such that we have
\begin{align*}
\| u\|_{Z^{s+\theta}_1(T)}&\leq \tilde{C}_u\big( \| \phi \| _{H^{s+\theta}}+T^{\delta}\| u\|_{Z^{s+\theta}_1(T)}\big) ,\\
\| u_1-u_2\| _{Z^{s+\theta}_1(T)}&\leq \tilde{C}_{u_1,u_2}\big( \| \phi _1-\phi _2\|_{H^{s+\theta}}+T^\delta \| u_1-u_2\| _{Z^{s+\theta}_1(T)}\big) \\
&\quad +\tilde{C}_{u_1,u_2}\big( \| \phi _j\|_{H^{s+\theta}}+\| u_j\|_{Z^{s+\theta}_1(T)}\big) \big( \| \phi _1-\phi _2\|_{H^s}+\| u_1-u_2\|_{Z^s_1(T)}\big) ,
\end{align*}
where the constants $\tilde{C}_u,\tilde{C}_{u_1,u_2}$ are depending on the same quantities as for $C_u,C_{u_1,u_2}$ in Propositions~\ref{prop:apriori}, \ref{prop:apriori-diff} and in addition on $\theta$, but not on $\| \phi\| _{H^{s+\theta}}$, $\| \phi_j\| _{H^{s+\theta}}$, $\| u\| _{Z^{s+\theta}_1(T)}$ and $\| u_j\| _{Z^{s+\theta}_1(T)}$. 
\end{prop}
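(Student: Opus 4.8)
The plan is to revisit the proofs of Propositions~\ref{prop:apriori} and \ref{prop:apriori-diff} and replace each multilinear and gauge estimate used there by a \emph{high--low} variant, in which the $\theta$ extra derivatives are always transferred onto the factor of highest frequency, so that the remaining factors stay in the base space $Z^s_\mu(T)$ and every exponential factor $e^{C\|\cdot\|^2}$ coming from the gauge expansion involves only base-regularity norms $\|u\|_{Z^s_\mu(T)}$ (resp.\ $\|\phi\|_{H^s}$), which are controlled by the quantities allowed in Propositions~\ref{prop:apriori}, \ref{prop:apriori-diff}. As in those proofs I would work in $Z^{s+\theta}_\mu(T)$ with $\mu=|\beta|P_0(|\phi|^2)$ (resp.\ $\mu=|\beta|P_0(|\phi_1|^2)$), which is equivalent to the $Z^{s+\theta}_1(T)$ norm up to constants depending only on $\beta,R,r$ by Remark~\ref{rem:normequiv}.

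For the first estimate I would decompose $u=P_0u+\sum_\pm e^{-\rho^\pm[u]}v_\pm$ with $v_\pm$ from \eqref{def:vpm}; the zero mode is harmless since $\|P_0u\|_{Z^{s+\theta}_\mu(T)}=\|P_0u\|_{Z^0_\mu(T)}\le\|u\|_{Z^s_\mu(T)}$. For each gauged piece I would split $e^{-\rho^\pm[u]}=1+(e^{-\rho^\pm[U_\mu(t)\phi]}-1)+(e^{-\rho^\pm[u]}-e^{-\rho^\pm[U_\mu(t)\phi]})$ exactly as in the proof of Proposition~\ref{prop:apriori}, and apply high--low versions of Lemmas~\ref{lem:ubyv} and \ref{lem:gauge} (obtained from the same Fourier-analytic computations by letting the weight $\langle n\rangle^{s+\theta}$ fall on one of the two factors): the middle piece then contributes $\|e^{-\rho^\pm[U_\mu\phi]}-1\|_{G^{s+\theta}(T)}$, which by the Taylor series, fractional Leibniz keeping the top regularity on one factor, and $\|U_\mu\phi\|_{Z^\sigma_\mu(T)}\lesssim\|\phi\|_{H^\sigma}$ (Lemma~\ref{lem:linear1}) is $\lesssim e^{C\|\phi\|_{H^s}^2}\|\phi\|_{H^s}\|\phi\|_{H^{s+\theta}}$, times $\|v_\pm\|_{Z^s_\mu(T)}$; while the last piece produces a factor $\|u-U_\mu(t)\phi\|_{Z^{\sigma-}_\mu(T)}$ with $\sigma\in\{s,s+\theta\}$, which by the Duhamel formula for \eqref{eq:u'}, Lemma~\ref{lem:est-u}(i) and the high--low upgrade of Lemma~\ref{lem:est-u}(ii) (compare \eqref{est:u-s'}) is $\lesssim T^\delta(1+\|u\|_{Z^s_\mu(T)}^4)\|u\|_{Z^\sigma_\mu(T)}$. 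It then remains to bound $v_\pm$ via the Duhamel formula for \eqref{eq:vpm'}: the data term $e^{\rho^\pm[\phi]}P_\pm\phi$ is estimated in $H^{s+\theta}$ by fractional Leibniz with the top regularity on $\phi$, $\mathcal N_0[u,\phi;v_\pm]$ by Lemma~\ref{lem:est-u}(i) at level $\sigma=s+\theta$ (giving $\lesssim T^\delta\|u\|_{Z^s_\mu(T)}^4\|v_\pm\|_{Z^{s+\theta}_\mu(T)}$), and $\mathcal N_v^\pm[u]$ by the high--low upgrade of Lemma~\ref{lem:est-v} (giving $\lesssim T^\delta(1+e^{C\|u\|_{Z^s_\mu(T)}^2}\|u\|_{Z^s_\mu(T)}^2)(\|u\|_{Z^s_\mu(T)}^2+\|u\|_{Z^s_\mu(T)}^4)\|u\|_{Z^{s+\theta}_\mu(T)}$). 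Solving the resulting coupled inequalities for $\|u\|_{Z^{s+\theta}_\mu(T)}$ and $\|v_\pm\|_{Z^{s+\theta}_\mu(T)}$ (using the crude high--low bound $\|v_\pm\|_{Z^{s+\theta}_\mu(T)}\lesssim(1+e^{C\|u\|_{Z^s_\mu(T)}^2}\|u\|_{Z^s_\mu(T)}^2)\|u\|_{Z^{s+\theta}_\mu(T)}$ from a high--low version of Corollary~\ref{cor:vbyu} to dispose of the residual $\|v_\pm\|$-term, absorbing the $T^\delta$-terms for $T$ small and enlarging the constant on the complementary range $T\in(T_0,1]$) gives the first estimate.

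The difference estimate would be proved in the same way, with $w=u_1-u_2$ and the Duhamel formulas for \eqref{eq:u-diff} and \eqref{eq:vpm-diff}. The only new point is structural: every multilinear term and every exponential difference---the latter via its Taylor series together with $\rho^\pm[u_1]-\rho^\pm[u_2]=\partial_x^{-1}P^\pm_{\alpha,\beta}((u_1-u_2)\bar u_1)+\partial_x^{-1}P^\pm_{\alpha,\beta}(u_2\overline{(u_1-u_2)})$---telescopes into pieces each containing exactly one \emph{difference factor} ($u_1-u_2$, $\phi_1-\phi_2$, $v_{\pm,1}-v_{\pm,2}$, or a product carrying one of these). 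When the extra weight $\langle n\rangle^{s+\theta}$ lands on the difference factor one gets $\|u_1-u_2\|_{Z^{s+\theta}_\mu(T)}$ (or $\|\phi_1-\phi_2\|_{H^{s+\theta}}$) times base-regularity norms, contributing the $\|\phi_1-\phi_2\|_{H^{s+\theta}}+T^\delta\|u_1-u_2\|_{Z^{s+\theta}_\mu(T)}$ part; when it lands on a non-difference factor one gets $\|u_j\|_{Z^{s+\theta}_\mu(T)}$ (or $\|\phi_j\|_{H^{s+\theta}}$) times $\|u_1-u_2\|_{Z^s_\mu(T)}$ (or $\|\phi_1-\phi_2\|_{H^s}$) times base-regularity norms, which is precisely the cross term in the statement. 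The scalar term $-2\alpha[P_0(|\phi_1|^2)-P_0(|\phi_2|^2)]\partial_xv_{\pm,2}$ is handled by \eqref{est:N0-0}, the bound $|P_0(|\phi_1|^2)-P_0(|\phi_2|^2)|\lesssim\|\phi_j\|_{L^2}\|\phi_1-\phi_2\|_{L^2}$ and the crude high--low bound on $v_{\pm,2}$, again giving a cross term; the coefficients $\sup_t|\,\|u_1(t)\|_{L^2}^2-\|u_2(t)\|_{L^2}^2\,|$ are controlled as in Lemma~\ref{lem:est-u}(i). Since no difference factor ever meets the exponential weight, the constant $\tilde C_{u_1,u_2}$ stays independent of the high-regularity norms, and the second estimate follows.

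The bulk of the work---and the main obstacle---is the case-by-case verification of these high--low upgrades, in particular for Lemma~\ref{lem:est-v}: one must check that in every frequency regime treated there the weight $\langle n\rangle^{s+\theta}$ can be moved onto the highest-frequency factor without weakening the resonance identity \eqref{resonanceineq} or the $L^4$/parabolic Strichartz inputs that produced the gain $T^\delta$, and that all exponential factors keep only the base-regularity norm $\|u\|_{Z^s_\mu}$. For the derivative-carrying terms \eqref{nonl:pm} and \eqref{nonl:p-1} this is automatic: the commutator and projection structure already forbids the spatial derivative from falling on the highest-frequency factor, and the same structure leaves that factor free to carry the extra weight. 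No genuinely new difficulty of principle arises---this is the standard persistence-of-regularity mechanism---but the bookkeeping is lengthy, especially for the difference estimate, where one must also track which norm is a difference norm in order to land exactly on the stated cross term.
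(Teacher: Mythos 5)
Your proposal is correct and follows essentially the same route as the paper: upgrade every gauge and multilinear estimate (Lemmas~\ref{lem:ubyv}, \ref{lem:gauge}, \ref{lem:est-u}(ii), \ref{lem:est-v}, Corollary~\ref{cor:vbyu}) to a high--low form in which the extra $\theta$ derivatives land on a single factor while all exponential factors keep only base-regularity norms, then rerun the arguments of Propositions~\ref{prop:apriori} and \ref{prop:apriori-diff} through the gauge decomposition and the Duhamel formulas, tracking the single difference factor to produce exactly the stated cross term. The one technical point the paper makes explicit that you pass over is that the time-restricted high--low bilinear estimates require first splitting the product by frequency (into pieces $B_1,B_2$ according to which factor is larger) and extending each piece separately, since a single extension of a function in $Z^{s+\theta}_\mu(T)$ need not be near-optimal simultaneously in $Z^s_\mu(T)$ and $Z^{s+\theta}_\mu(T)$; this is resolved exactly as one would expect and does not affect your argument.
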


\begin{proof}
The claim is shown by generalizing the nonlinear estimates obtained in Subsections~\ref{subsec:gaugeest}--\ref{subsec:nonlest} to the corresponding estimates at the $H^{s+\theta}$ level and repeating the argument for Propositions~\ref{prop:apriori}, \ref{prop:apriori-diff}.

Let us see the first half.
The first estimate in Lemma~\ref{lem:ubyv} is immediately generalized as 
\[ \| gw\|_{Z^{s+\theta}_\mu}\lesssim \| g\|_{G^{s+\theta}}\| w\|_{Z^s_\mu}+\| g\|_{G^s}\| w\|_{Z^{s+\theta}_\mu}\qquad (s>\tfrac14,~\theta \geq 0,~\mu >0).\]
The second one in the restricted norms,
\[ \| gw\|_{Z^{s+\theta}_\mu(T)}\lesssim \| g\|_{G^{s+\theta}(T)}\| w\|_{Z^s_\mu(T)}+\| g\|_{G^s(T)}\| w\|_{Z^{s+\theta}_\mu(T)},\]
needs more care.%
\footnote{%
The problem is that we do not know if a suitable extension $u^\dagger$ of a function $u\in Z^{s+\theta}_\mu(T)$ is also suitable as an extension from $Z^s_\mu(T)$ to $Z^s_\mu$; i.e., if there is an extension $u^\dagger$ satisfying both $\| u^\dagger \|_{Z^s_\mu}\lesssim \| u\|_{Z^s_\mu(T)}$ and $\| u^\dagger \|_{Z^{s+\theta}_\mu}\lesssim \| u\|_{Z^{s+\theta}_\mu(T)}$.
If there is a canonical extension operator $\rho_T:Z^s_\mu(T)\to Z^s_\mu$ which is bounded and commutes with $J^\theta_x$, the restricted estimate follows immediately from the non-restricted one.}
We first decompose the bilinear operator $(g,w)\mapsto gw$ as
\[ gw=B_1(g,w)+B_2(g,w),\quad \mathcal{F}_xB_1(g,w)(n):=\sum _{|n_1|\geq |n-n_1|}\hat{g}(n_1)\hat{w}(n-n_1).\]
Then, from the proof for the non-restricted estimate, we have 
\[ \| B_1(g,w)\|_{Z^{s+\theta}_\mu(T)}\leq \| B_1(g^\dagger,w^\dagger)\|_{Z^{s+\theta}_\mu}\lesssim \| g^\dagger \| _{G^{s+\theta}}\| w^\dagger \|_{Z^s_\mu}\lesssim \| g\|_{G^{s+\theta}(T)}\| w\|_{Z^s_\mu(T)},\]
where $^\dagger$ means a suitable extension.
Similarly, we have 
\[ \| B_2(g,w)\|_{Z^{s+\theta}_\mu(T)}\lesssim \| g\|_{G^s(T)}\| w\|_{Z^{s+\theta}_\mu(T)},\]
and the desired estimate is shown.
Concerning Lemma~\ref{lem:gauge}, we need only to see that \eqref{est:G1}--\eqref{est:G2} are generalized as
\begin{gather*}
\| f_1f_2\|_{G^{s+\theta}(T)}\lesssim \| f_1\|_{G^{s+\theta}(T)}\| f_2\|_{G^s(T)}+\| f_1\|_{G^s(T)}\| f_2\|_{G^{s+\theta}(T)},\\
\| \partial_x^{-1}P^\pm_{\alpha,\beta}(u_1\bar{u}_2)\|_{G^{s+\theta}(T)}\lesssim \| u_1\|_{Z^{s+\theta-}_\mu(T)}\| u_2\|_{Z^{s-}_\mu(T)}+\| u_1\|_{Z^{s-}_\mu(T)}\| u_2\|_{Z^{s+\theta-}_\mu(T)}.
\end{gather*}
These can be shown by similar decomposition of bilinear operators and the argument for the non-restricted estimate with $\theta=0$.
Consequently, we obtain the estimates
\begin{gather*}
\| e^{c\partial_x^{-1}P^\pm_{\alpha,\beta}(|u|^2)}-1\|_{G^{s+\theta}(T)}\lesssim e^{C\| u\|_{Z^{s-}_\mu(T)}^2}\| u\|_{Z^{s-}_\mu(T)}\| u\|_{Z^{s+\theta-}_\mu(T)},\\
\begin{aligned}
&\| e^{c\partial_x^{-1}P^\pm_{\alpha,\beta}(|u_1|^2)}-e^{c\partial_x^{-1}P^\pm_{\alpha,\beta}(|u_2|^2)}\|_{G^{s+\theta}(T)}\\
&\quad \lesssim e^{C\| u_j\|_{Z^{s-}_\mu(T)}^2}\big( \| u_j\|_{Z^{s-}_\mu(T)}\| u_1-u_2\|_{Z^{s+\theta-}_\mu(T)}+\| u_j\|_{Z^{s+\theta-}_\mu(T)}\| u_1-u_2\|_{Z^{s-}_\mu(T)}\big) 
\end{aligned}
\end{gather*}
as a generalization of Lemma~\ref{lem:gauge}.
Using these estimates, Corollary~\ref{cor:vbyu} is generalized as
\begin{align*}
\| v_\pm\|_{Z^{s+\theta}_\mu(T)}&\lesssim \big( 1+e^{C\| u\|_{Z^s_\mu(T)}^2}\| u\|_{Z^s_\mu(T)}^2\big) \| u\|_{Z^{s+\theta}_\mu(T)},\\
\| v_{1,\pm}-v_{2,\pm}\|_{Z^{s+\theta}_\mu(T)}
&\lesssim \big( 1+e^{C\| u_j\|_{Z^s_\mu(T)}^2}\| u_j\|_{Z^s_\mu(T)}^2\big) \| u_1-u_2\|_{Z^{s+\theta}_\mu(T)}\\
&\quad +e^{C\| u_j\|_{Z^s_\mu(T)}^2}\| u_j\|_{Z^s_\mu(T)}\| u_j\|_{Z^{s+\theta}_\mu(T)}\| u_1-u_2\|_{Z^s_\mu(T)}.
\end{align*}
By a similar argument, for Lemmas~\ref{lem:est-u} (ii) and \ref{lem:est-v} we obtain 
\begin{gather*}
\| \mathcal{N}_u[u,u,u]\|_{N^{s+\theta-}_\mu(T)}\lesssim T^\delta \| u\|_{Z^s_\mu(T)}^2\| u\|_{Z^{s+\theta}_\mu(T)},\\
\begin{aligned}
&\| \mathcal{N}_u[u_1,u_1,u_1]-\mathcal{N}_u[u_2,u_2,u_2]\|_{N^{s+\theta-}_\mu(T)}\\
&\quad \lesssim T^\delta \big( \| u_j\|_{Z^s_\mu(T)}^2\| u_1-u_2\|_{Z^{s+\theta}_\mu(T)}+\| u_j\|_{Z^s_\mu(T)}\| u_j\|_{Z^{s+\theta}_\mu(T)}\| u_1-u_2\|_{Z^s_\mu(T)}\big) ,
\end{aligned}
\end{gather*}
and
\begin{gather*}
\begin{aligned}
\| \mathcal{N}_v^\pm [u]\| _{N^{s+\theta}_\mu (T)}&\lesssim T^{\delta}\big( 1+e^{C\| u\|_{Z^s_\mu(T)}^2}\| u\|_{Z^s_\mu(T)}^2\big) \big( \| u\|_{Z^s_\mu(T)}^2+\| u\|_{Z^s_\mu(T)}^4\big) \| u\|_{Z^{s+\theta}_\mu(T)},
\end{aligned}\\
\begin{aligned}
\| \mathcal{N}_v^\pm [u_1]-\mathcal{N}_v^\pm [u_2]\| _{N^{s+\theta}_\mu (T)}
&\lesssim T^{\delta}\big( 1+e^{C\| u_j\|_{Z^s_\mu(T)}^2}\| u_j\|_{Z^s_\mu(T)}^2\big) \big( \| u_j\|_{Z^s_\mu(T)}+\| u_j\|_{Z^s_\mu(T)}^3\big) \\
\times \big( &\| u_j\|_{Z^s_\mu(T)}\| u_1-u_2\|_{Z^{s+\theta}_\mu (T)}+\| u_j\|_{Z^{s+\theta}_\mu(T)}\| u_1-u_2\|_{Z^s_\mu (T)}\big) .
\end{aligned}
\end{gather*}
Finally, we note that Lemma~\ref{lem:est-u} (i) and the estimate \eqref{est:N0-0} are ready to apply at the $H^{s+\theta}$ level, just by setting $\sigma =s+\theta$.

Now, we prove the claimed \emph{a priori} estimates.
By the same decomposition of $u$ and $u_1-u_2$ as before, it suffices to estimate $\| u\|_{Z^{s+\theta-}_\mu(T)}$, $\| e^{-\rho ^\pm[u]}v_\pm\|_{Z^{s+\theta}_\mu(T)}$ and the corresponding difference terms.
The treatment of $\| u\|_{Z^{s+\theta-}_\mu(T)}$ is similar to the argument for the $Z^{s-}_\mu(T)$ estimate, and we use the $H^{s+\theta}$-level nonlinear estimates shown above to obtain
\begin{align*}
\| u\|_{Z^{s+\theta-}_\mu(T)}&\lesssim \| \phi\|_{H^{s+\theta}}+T^{0+}\tilde{C}_u\| u\|_{Z^{s+\theta}_\mu(T)},\\
\| u_1-u_2\|_{Z^{s+\theta-}_\mu(T)}&\lesssim \| \phi_1-\phi_2\|_{H^{s+\theta}}+T^{0+}\tilde{C}_{u_1,u_2}\Big( \| u_1-u_2\|_{Z^{s+\theta}_\mu(T)}\\
&\qquad +\| u_j\|_{Z^{s+\theta}_\mu(T)}\big( \| \phi_1-\phi_2\|_{L^2}+\| u_1-u_2\|_{Z^s_\mu(T)}\big) \Big) .
\end{align*}
To estimate $e^{-\rho ^\pm[u]}v_\pm$ and the difference, we make a decomposition as follows:
\begin{gather*}
\begin{aligned}
&\| e^{-\rho ^\pm[u]}v_\pm\|_{Z^{s+\theta}_\mu(T)}\\
&\quad \lesssim \| e^{-\rho ^\pm[u]}-1\|_{G^{s+\theta}(T)}\| v_\pm\|_{Z^s_\mu(T)}+\big( \| e^{-\rho ^\pm[u]}-1\|_{G^s(T)}+1\big) \| v_\pm \|_{Z^{s+\theta}_\mu(T)},
\end{aligned}\\
\begin{aligned}
&\| e^{-\rho ^\pm[u_1]}v_{1,\pm}-e^{-\rho ^\pm[u_2]}v_{2,\pm}\|_{Z^{s+\theta}_\mu(T)}\\
&\lesssim \| e^{-\rho ^\pm[u_1]}-e^{-\rho ^\pm[u_2]}\|_{G^{s+\theta}(T)}\| v_{1,\pm}\|_{Z^s_\mu(T)}+\| e^{-\rho ^\pm[u_1]}-e^{-\rho ^\pm[u_2]}\|_{G^{s}(T)}\| v_{1,\pm}\|_{Z^{s+\theta}_\mu(T)}\\
&\quad +\| e^{-\rho ^\pm[u_2]}-1\|_{G^{s+\theta}(T)}\| v_{1,\pm}-v_{2,\pm}\|_{Z^s_\mu(T)}+\big( \| e^{-\rho ^\pm[u_2]}-1\|_{G^{s}(T)}+1\big) \| v_{1,\pm}-v_{2,\pm}\|_{Z^{s+\theta}_\mu(T)}.
\end{aligned}
\end{gather*}
For each case, the first term on the right-hand side is estimated in terms of $\| u\|_{Z^{s+\theta-}_\mu(T)}$ or $\| u_1-u_2\|_{Z^{s+\theta-}_\mu(T)}$, for which we use the above estimates to obtain the claimed bounds.
To handle the last term in each case, we estimate $\| v_\pm \|_{Z^{s+\theta}_\mu(T)}$ and $\| v_{1,\pm}-v_{2,\pm}\|_{Z^{s+\theta}_\mu(T)}$ by using the integral equations associated to \eqref{eq:vpm'} and \eqref{eq:vpm-diff} similarly to the previous argument.
Here, we only mention how the linear parts are estimated.
Using the Sobolev inequalities
\begin{align*}
\| fg\|_{H^{s+\theta}}&\lesssim \| f\|_{H^{s+\theta+\frac12}}\| g\|_{H^{0+}}+\| f\|_{H^{\frac12+}}\| g\|_{H^{s+\theta}},\\
\| fg\|_{H^{s+\theta-\frac12}}&\lesssim \| f\|_{H^{s+\theta}}\| g\|_{H^{0+}}+\| f\|_{H^{0+}}\| g\|_{H^{s+\theta}},\\
\| fg\|_{H^{-\frac12+}}&\lesssim \| f\|_{H^{0+}}\| g\|_{H^{0+}},
\end{align*}
we obtain 
\begin{gather*}
\| e^{\rho^\pm [\phi]}P_\pm \phi\|_{H^{s+\theta}}\lesssim e^{C\| \phi \|_{H^s}^2}\| \phi\|_{H^{s+\theta}},\\
\begin{aligned}
&\| e^{\rho^\pm [\phi_1]}P_\pm \phi_1 -e^{\rho^\pm [\phi_2]}P_\pm \phi_2\|_{H^{s+\theta}}\\
&\quad \lesssim e^{C\| \phi _j\|_{H^s}^2}\big( \| \phi_1-\phi_2 \|_{H^{s+\theta}}+\| \phi_j\|_{H^s}\| \phi_j\|_{H^{s+\theta}}\| \phi_1-\phi_2\|_{H^s}\big) .
\end{aligned}
\end{gather*}
This completes the proof.
\end{proof}

\subsection{Proof of local well-posedness}
\label{subsec:proof-lwp}

\begin{proof}[Proof of Theorem~\ref{thm:lwp2}]
Let $s>\frac14$, and $R\geq r>0$ be arbitrary.
For given $\phi\in B_{H^s}(R,r)$, we take a sequence of approximating initial data $\{ \phi _j\} _{j\in \mathbf{N}}\subset H^\sigma \cap B_{H^s}(2R,\frac{r}{2})$ (with $\sigma$ sufficiently large) satisfying $\phi _j\to \phi$ in $H^s$.
The local well-posedness of the Cauchy problem \eqref{kdnls-r} and \eqref{ic} in $H^s(\mathbf{T})$ is shown for $s>\frac32$ by the classical energy method (see Theorem~\ref{thm:lwp-e} and the following remark in Appendix~\ref{sec:appendix}).
Let $u_j\in C([0,T_j];H^\sigma )$ be the unique solution of \eqref{kdnls-r} with initial condition $u_j(0)=\phi _j$, and we note that $u_j\in C^1([0,T_j];H^{\frac72+})\subset Z^{\frac32+}_1(T_j)$ if $\sigma$ is large enough (see Lemma~\ref{lem:Zmu}).

\medskip
\textbf{Step 1}: Uniform existence time for $\{ u_j\}_j$.
From the local well-posedness in $H^{\frac32+}$, it suffices to show \emph{a priori} bound 
\begin{equation}\label{apriori:H32}
\| u_j(t)\| _{H^{\frac32+}}\leq C\| \phi_j\|_{H^{\frac32+}}, \qquad t\in [0,T]
\end{equation}
for some $C,T>0$ depending only on $s,R,r$.
The \emph{a~priori} bound given in Proposition~\ref{prop:apriori} and a continuity argument (with the aid of Lemma~\ref{lem:Zmu}) show that
\begin{equation}\label{apriori:Hs}
\| u_j\| _{Z^s_1(T)}\leq 2C_\phi 
\end{equation}
if $T$ is chosen as $T^\delta \cdot C_u\big| _{\| u\|_{Z^s_1(T)}=2C_\phi}\leq \frac12 C_\phi$.
Substituting the bound \eqref{apriori:Hs} into the first estimate in Proposition~\ref{prop:apriori-pr}, we have
\[ \| u_j\| _{Z^{\frac32+}_1(T)}\leq \tilde{C}_\phi \big( \| \phi _j\| _{H^{\frac32+}}+T^{\delta}\| u_j\| _{Z^{\frac32+}_1(T)}\big) ,\]
where the constant $\tilde{C}_\phi:=\tilde{C}_u\big| _{\| u\|_{Z^s_1(T)}=2C_\phi}$ still depends only on $s,R,r$.
Replacing $T$ so that $T^{\delta}\tilde{C}_\phi \leq \frac12$, we obtain
\[ \| u_j\| _{Z^{\frac32+}_1(T)}\leq 2\tilde{C}_\phi \| \phi _j\|_{H^{\frac32+}},\]
which implies \eqref{apriori:H32}.

\medskip
\textbf{Step 2}: Existence of a solution in $Z^s_1(T)$ and Lipschitz continuous dependence on initial data. 
From the \emph{a~priori} bound \eqref{apriori:Hs} and Proposition~\ref{prop:apriori-diff}, we have
\[ \| u_j-u_{j'}\| _{Z^s_1(T)}\leq C\big( \| \phi _j-\phi _{j'}\| _{H^s}+T^{\delta}\| u_j-u_{j'}\| _{Z^s_1(T)}\big) ,\]
where the constant $C$ depends only on $s,R,r$.
If $T$ is chosen as $T^{\delta}C\leq \frac12$, this bound implies that $\{ u_j\}_j$ is a Cauchy sequence in $Z^s_1(T)$, and the limit is a solution satisfying the same bound as \eqref{apriori:Hs}.
To ensure the $L^2$ lower bound, we replace $T$ again, noticing the fact that
\[ \sup_{t\in [0,T]}\big| \| u(t)\| _{L^2}^2-\| \phi\| _{L^2}^2\big| \lesssim T^{\delta}\| u\|_{Z^s_1(T)}^4,\]
which we have shown in the proof of Lemma~\ref{lem:est-u} (i).
The Lipschitz dependence follows again from Proposition~\ref{prop:apriori-diff}, since we know that the solution constructed above obeys the same bound as \eqref{apriori:Hs}.%
\footnote{Note that we did not prove the same \emph{a priori} bound as \eqref{apriori:Hs} for general solutions in $Z^s_1(T)$ with $u(0)\in B_{H^s}(R,r)$.
This is because we did not prove the continuity of the map $T'\mapsto \| u\|_{Z^s_1(T')}$ without assuming additional regularity $C^1([0,T];H^{s+2})$, and thus the continuity argument in Step~1 does not work for general solutions in $Z^s_1(T)$.
Consequently, this Lipschitz bound is not sufficient to prove uniqueness.}

At this point, we can fix the local existence time $T$ depending on $s,R,r$.

\medskip
\textbf{Step 3}: Uniqueness in $Z^s_1(T)$.
We note that Proposition~\ref{prop:apriori-diff} is valid for any solutions of \eqref{kdnls-r} belonging to $Z^s_1(T)$ that are not necessarily approximated by smooth solutions.%
\footnote{To be more precise, we need to confirm that for any solution $u\in Z^s_1(T)$ the gauge transforms $v_\pm$ satisfy the transformed equations (in a suitable sense).
This may be verified by considering the equation for $v^N_\pm :=e^{\rho^\pm[P_{\leq N}u]}P_\pm P_{\leq N}u$ and taking the limit $N\to \infty$.}
Let $u_1,u_2\in Z^s_1(T)$ be two solutions with $u_1(0)=u_2(0)=\phi \in B_{H^s}(R,r)$, and suppose for contradiction that $t_0:=\sup \{ T'\geq 0:u_1(t)=u_2(t)~\text{on}~[0,T']\} <T$.
Let $\phi':=u_1(t_0)=u_2(t_0)$. 
The possibility that $\phi'=0$ is excluded, since it would imply $u_1(t)=u_2(t)=0$ on $[t_0,T]$ by the monotonicity of the $L^2$ norm \eqref{L2eq}.
Then, $u'_j(t,x):=u_j(t+t_0,x-\nu t)$ with $\nu :=2\alpha \big( P_0(|\phi'|^2)-P_0(|\phi |^2)\big)$, $j=1,2$, are two solutions of \eqref{kdnls-r} on $[0,T-t_0]$ with the same non-zero initial data $\phi'$.
Applying Proposition~\ref{prop:apriori-diff}, we obtain
\[ \| u'_1-u'_2\| _{Z^s_1(T')}\leq C(T')^{\delta}\| u'_1-u'_2\| _{Z^s_1(T')}\]
for $T'\in (0,T-t_0]$, with the constant $C>0$ depending on $s,\| \phi'\|_{H^s}$, $\| \phi'\|_{L^2}>0$, and%
\footnote{We recall that the transformation $u(t,x)\mapsto u(t,x-\nu t)$ is a bi-Lipschitz bijection on $Z^s_1(T)$ for any $\nu \in \mathbf{R}$, $s\in \mathbf{R}$ and $T>0$.
In fact, this can be seen from $\mathcal{F}[u(t,x-\nu t)](\tau,n)=\tilde{u}(\tau -\nu n,n)$ and $\langle i(\tau +\nu n+n^2)+|n|\rangle \sim _\nu \langle i(\tau +n^2)+|n|\rangle$.}
\[ \| u'_j\|_{Z^s_1(T')}\lesssim _\nu \| u_j(\cdot +t_0)\| _{Z^s_1(T')}\leq \| u_j\|_{Z^s_1(T)}\qquad (j=1,2).\]
From this, we deduce that $u_1(t)=u_2(t)$ on $[t_0,t_0+T']$ for some $T'>0$, which contradicts the definition of $t_0$.
Hence, we see $t_0=T$, and uniqueness holds.

\medskip
\textbf{Step 4}: Persistence of regularity.
Recall that the solution $u$ on $[0,T]$ constructed above stays for all time in the set $B_{H^s}(R',r')$ for some $R'>0$ depending on $s,R,r$ and $r':=\frac{r}{2}$.
It suffices to prove the following: there exists $T'=T'(s,\theta,R',r')>0$ such that if $u(t_0)\in H^{s+\theta}$ for some $t_0\in [0,T)$, then $u\in C(I;H^{s+\theta})\cap L^2(I;H^{s+\theta+\frac12})$, $I:=[t_0,t_0+T']\cap [0,T]$.
Again considering $u(t+t_0,x-2\alpha (P_0(|u(t_0)|^2)-P_0(|\phi |^2))t)\in Z^s_1(T-t_0)$ instead of $u(t,x)$, we may set $t_0=0$.
Take an approximating sequence $\{ \phi_j\}_j\subset H^{\sigma}\cap B_{H^s}(2R',\frac{r'}{2})$ of $\phi$ in $H^{s+\theta}$ (with $\sigma$ sufficiently large).
Following the argument in Steps~1--2, we see that the corresponding classical solution $u_j$ of \eqref{kdnls-r} exists on $[0,T']$ with $T'=T'(s,\theta ,R',r')>0$ and $u_j\in C^1([0,T'];H^{s+\theta+2})\subset Z^{s+\theta}_1(T')$, satisfying a bound $\| u_j\|_{Z^{s+\theta}_1(T')}\leq C'\| \phi _j\|_{H^{s+\theta}}$ as well as a Lipschitz bound $\| u_j-u_{j'}\|_{Z^s_1(T')}\leq C'\| \phi _j-\phi _{j'}\|_{H^s}$ for some $C'>0$ depending on $s,\theta,R',r'$.
Substituting these bounds into the second estimate in Proposition~\ref{prop:apriori-pr}, we see that $\{ u_j\}_j$ is a Cauchy sequence in $Z^{s+\theta}_1(T')$:
\[ \| u_j-u_{j'}\|_{Z^{s+\theta}_1(T')}\leq C'\Big\{ \| \phi _j-\phi_{j'}\|_{H^{s+\theta}}+\Big( \sup _j\| \phi_j\|_{H^{s+\theta}}\Big) \| \phi _j-\phi _{j'}\|_{H^s}\Big\} ,\]
where $C',T'$ may be replaced but still depending only on $s,\theta,R',r'$.
The limit $u'\in Z^{s+\theta}_1(T')$ is a solution on $[0,T']$ with initial data $\phi$, and by uniqueness in $Z^s_1(T')$ we have $u'(t)=u(t)$ for $t\in [0,T']$, showing that $u\in C([0,T'];H^{s+\theta})\cap L^2([0,T'];H^{s+\theta+\frac12})$.

\medskip
\textbf{Step 5}: Smoothing property.
Smoothness in $x$ follows from uniqueness and persistence of regularity, together with an observation that if $u(t_0)\in H^{s'}$ for some $t_0\in [0,T)$ and $s'\geq s$, then $u(t)$ has higher regularity $H^{s'+\frac12}$ for almost every $t\in (t_0,T)$.
Then, smoothness in $t$ is deduced from that in $x$ and the equation \eqref{kdnls-r}.
\end{proof}

\begin{proof}[Proof of Corollary~\ref{cor:lwp2}]
All the statements in Theorem~\ref{thm:lwp2} but the Lipschitz dependence can be easily verified for the original Cauchy problem \eqref{kdnls}--\eqref{ic}.
On the other hand, it is easy to see that 
\[ \| u(t,x)-u(t,x-\nu t)\| _{Z^s_1(T)}\to 0\qquad (\nu \to 0)\]
for $u\in Z^s_1(T)$.
Then, for two solutions $u_1,u_2\in Z^s_1(T)$ of \eqref{kdnls} with $\phi_1,\phi_2\in B_{H^s}(R,r)$, we have
\begin{align*}
\| u_1-u_2\|_{Z^s_1(T)}&\leq \| u_1(t,x)-u_1(t,x-2\alpha (P_0(|\phi_1|^2)-P_0(|\phi_2|^2))t)\|_{Z^s_1(T)}\\
&\quad +\| u_1(t,x-2\alpha (P_0(|\phi_1|^2)-P_0(|\phi_2|^2))t)-u_2(t,x)\|_{Z^s_1(T)}\\
&\leq o(1) +C\| u_1(t,x-2\alpha P_0(|\phi_1|^2)t)-u_2(t,x-2\alpha P_0(|\phi_2|^2)t)\|_{Z^s_1(T)}\\
&\leq o(1) +C\| \phi_1-\phi_2\|_{H^s}\\
&=o(1)\qquad (\| \phi_1-\phi_2\|_{H^s}\to 0),
\end{align*}
which shows continuity of the solution map.
\end{proof}

%%%%%%%%%%%%%%%%%%%%%%%%%%%%%%%%%
%%%%%%%%%%%%%%%%%%%%%%%%%%%%%%%%%

\section{Global solvability}\label{sec:global}

In \cite{KTmatrix}, we established the following a priori bounds for \emph{small} solutions to \eqref{kdnls}:

\begin{prop}[\cite{KTmatrix}, Theorem~1]\label{prop:small}
Let $\alpha ,\beta \in \mathbf{R}$.
Let $u$ be a smooth solution to \eqref{kdnls}--\eqref{ic} on $[0,T]\times Z$, where $Z$ is either $\mathbf{R}$ or $\mathbf{T}$.
Then, it holds that
\begin{equation}\label{L2equality}
\| u(t)\| _{L^2}^2=\| \phi \| _{L^2}^2+\beta \int _0^t\| D_x^{\frac12}(|u(t')|^2)\| _{L^2}^2\,dt',\qquad t\in [0,T].
\end{equation}
Moreover, if $\beta <0$, there exist $C_*,C>0$ depending only on $\alpha ,\beta$ (and bounded when $\beta \to 0$) such that if $\| \phi \| _{L^2} \leq C_*^{-1}$, then
\begin{gather*}
\| u(t)\| _{H^1}^2+\frac{|\beta |}{4}\int _0^t \| D_x^{\frac12}\partial _x(|u(t')|^2)\|_{L^2}^2\,dt' \leq 4\| \phi \| _{H^1}^2e^{C\| \phi \| _{L^2}^2},\\
\| u(t)\|_{L^2}^2\geq \| \phi \| _{L^2}^2\exp \Big[ -C\| \phi \|_{H^1}e^{C\| \phi \| _{L^2}^2}|\beta |^{1/2}t^{1/2}\Big]
\end{gather*}
for $t\in [0,T]$.
\end{prop}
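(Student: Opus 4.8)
The plan is to establish the three assertions in the stated order: the $L^2$ identity first, then (when $\beta<0$ and the data is small) the $H^1$ bound together with its dissipative companion, and finally the $L^2$ lower bound as a consequence of the first two.

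\emph{The $L^2$ identity.} I would multiply \eqref{kdnls} by $\bar u$, integrate over $Z$, and take the real part. The term $i\partial_x^2u$ contributes nothing, since $\mathrm{Re}\int_Z\bar u\,(i\partial_x^2u)\,dx=\mathrm{Re}\big(-i\|\partial_xu\|_{L^2}^2\big)=0$. The $\alpha$-term also drops out: after one integration by parts and the identity $\mathrm{Re}(u\partial_x\bar u)=\tfrac12\partial_x|u|^2$ it becomes $-\tfrac{\alpha}{4}\int_Z\partial_x\big((|u|^2)^2\big)\,dx=0$. For the $\beta$-term the same manipulation yields $-\tfrac{\beta}{2}\int_Z\mathcal{H}(|u|^2)\,\partial_x(|u|^2)\,dx$, and a computation on the Fourier side with $\mathcal{H}=\mathcal{F}^{-1}(-i\,\mathrm{sgn})\mathcal{F}$ gives the algebraic identity $\int_Z\mathcal{H}(f)\,\partial_xf\,dx=-\|D_x^{1/2}f\|_{L^2}^2$, valid for real $f$. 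Hence $\tfrac{d}{dt}\|u(t)\|_{L^2}^2=\beta\|D_x^{1/2}(|u(t)|^2)\|_{L^2}^2$, and integrating in time gives \eqref{L2equality}. In particular, for $\beta<0$ the $L^2$ norm is non-increasing, which is the seed of the dissipative mechanism.

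\emph{The $H^1$ bound.} This is the core of the argument. I would work with a corrected energy functional $\mathcal{E}(u)$ equal to $\|\partial_xu\|_{L^2}^2$ plus explicit cubic and quintic correction terms, modeled on the conserved energy of the (gauge-transformed) derivative NLS, and chosen so that: (a) $\mathcal{E}(u)\sim\|u\|_{H^1}^2$ as soon as $\|u\|_{L^2}$ is below a fixed threshold of order $C_*^{-1}$ — proved by absorbing the correction terms via the Gagliardo--Nirenberg inequality; and (b) in $\tfrac{d}{dt}\mathcal{E}(u)$ every term carrying a net loss of derivative cancels, the corrections being designed precisely to kill the contributions in which a derivative falls on the highest-frequency factor, leaving a good negative term $-c|\beta|\,\|D_x^{1/2}\partial_x(|u|^2)\|_{L^2}^2$ from the Hilbert-transform nonlinearity together with derivative-loss-free multilinear remainders. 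Estimating those remainders by Sobolev and Gagliardo--Nirenberg leads to a differential inequality of the schematic form
\begin{align*}
&\tfrac{d}{dt}\mathcal{E}(u)+\tfrac{|\beta|}{2}\|D_x^{1/2}\partial_x(|u|^2)\|_{L^2}^2\\
&\qquad\le C\|u\|_{L^2}\,\|D_x^{1/2}\partial_x(|u|^2)\|_{L^2}^2+C|\beta|\,\|D_x^{1/2}(|u|^2)\|_{L^2}^2\,\mathcal{E}(u).
\end{align*}
Under the smallness hypothesis $\|\phi\|_{L^2}\le C_*^{-1}$, the identity \eqref{L2equality} gives $\|u(t)\|_{L^2}\le\|\phi\|_{L^2}\le C_*^{-1}$, so the first term on the right is absorbed into the left, and Gronwall yields $\mathcal{E}(u(t))\le\mathcal{E}(\phi)\exp\big(C|\beta|\int_0^t\|D_x^{1/2}(|u|^2)\|_{L^2}^2\,ds\big)$. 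But, again by \eqref{L2equality}, $|\beta|\int_0^t\|D_x^{1/2}(|u|^2)\|_{L^2}^2\,ds=\|\phi\|_{L^2}^2-\|u(t)\|_{L^2}^2\le\|\phi\|_{L^2}^2$, so $\mathcal{E}(u(t))\le\mathcal{E}(\phi)e^{C\|\phi\|_{L^2}^2}$. Translating back to the $H^1$ norm via (a) gives the stated $H^1$ bound (the numerical factor reflecting the comparability constants), and integrating the differential inequality over $[0,t]$ gives the bound on $\tfrac{|\beta|}{4}\int_0^t\|D_x^{1/2}\partial_x(|u|^2)\|_{L^2}^2$.

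\emph{The $L^2$ lower bound.} Starting from $\tfrac{d}{dt}\|u\|_{L^2}^2=-|\beta|\,\|D_x^{1/2}(|u|^2)\|_{L^2}^2$, I would estimate the dissipation quotient by combining Gagliardo--Nirenberg interpolation with the Cauchy--Schwarz split $\|D_x^{1/2}g\|_{L^2}^2\le\|g\|_{L^2}\,\|D_x^{1/2}\partial_xg\|_{L^2}$ (valid for mean-zero $g$, applied to $g=P_{\neq0}(|u|^2)$), so as to trade half a derivative for the higher dissipation quantity $\|D_x^{1/2}\partial_x(|u|^2)\|_{L^2}$, whose spacetime $L^2$ integral carries a gain of $|\beta|^{-1}$ by the $H^1$ bound just established. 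Applying Cauchy--Schwarz in $t$, feeding in the bounds on $\sup_t\|u\|_{H^1}$ and on $\int_0^t\|D_x^{1/2}\partial_x(|u|^2)\|_{L^2}^2\,ds$, and finishing with a Gronwall argument produces the multiplicative lower bound; the rate $|\beta|^{1/2}t^{1/2}$ is precisely the signature of this use of an $L^2_t$-type dissipation estimate ($|\beta|$ in front of the dissipation integral against $|\beta|^{-1/2}$ from its spacetime bound).

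\emph{Main obstacle.} The delicate point is the construction of $\mathcal{E}$ and the verification of property (b): one must track carefully which multilinear terms produced by differentiating the cubic and quintic corrections along the flow cancel the top-order part of the nonlinearity, and — crucially — one must confirm that after these algebraic rearrangements the $\beta$-part of the nonlinearity still yields a term of the correct (negative) sign, i.e., that the corrections introduced to tame the dispersive ($\alpha$) nonlinearity do not spoil the parabolic structure coming from the Hilbert transform. Once this structural computation is in place, the remaining steps — the Gagliardo--Nirenberg absorptions, the two Gronwall arguments, and the Fourier-side estimates for the lower bound — are routine.
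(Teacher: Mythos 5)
The paper does not actually prove this proposition---it is imported verbatim from \cite{KTmatrix} (Theorem~1), with only the energy functional $\int\{|u_x|^2-\frac32(\alpha|u|^2+\beta\mathcal{H}(|u|^2))\Im(\bar uu_x)+\frac12\alpha^2|u|^6\}\,dx$ recorded in the text---but your strategy is essentially the one used there and in the paper's own detailed proof of the large-data analogue, Proposition~\ref{prop:aprioriH1large}: the $L^2$ identity via $\int\mathcal{H}(f)\partial_xf\,dx=-\|D_x^{1/2}f\|_{L^2}^2$; a modified energy with exactly the cubic and quintic corrections you describe, whose time derivative keeps the negative dissipative terms while the remainders are derivative-loss free; Gronwall fed by the budget $|\beta|\int_0^t\|D_x^{1/2}(|u(t')|^2)\|_{L^2}^2\,dt'\le\|\phi\|_{L^2}^2$ coming from \eqref{L2equality}, which is precisely the source of the factor $e^{C\|\phi\|_{L^2}^2}$; and the lower bound from the differential inequality $\partial_t\|u\|_{L^2}^{-2}\le C|\beta|\,\|D_x^{1/2}\partial_x(|u|^2)\|_{L^2}\|u\|_{L^2}^{-2}$ combined with Cauchy--Schwarz in $t$, which explains the $|\beta|^{1/2}t^{1/2}$ rate. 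The only structural divergence is that the paper's own Section~4 argument replaces this functional by the manifestly non-negative completed square $E[u]$ of \eqref{defn:energy}, so that coercivity needs no smallness and the bad trilinear terms are absorbed by Young's inequality at the cost of $\beta$-dependent constants, whereas in the small-data regime you (correctly) use $\|\phi\|_{L^2}\le C_*^{-1}$ for the absorption; the one detail to watch in your sketch is that the split $\|D_x^{1/2}g\|_{L^2}^2\le\|g\|_{L^2}\|D_x^{1/2}\partial_xg\|_{L^2}$ applied to $g=P_{\neq 0}(|u|^2)$ is a torus device and must be replaced by a different low-frequency argument when $Z=\mathbf{R}$.
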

The above $H^1$ bound was shown by analyzing the differential equalities for the mass $\| u(t)\| _{L^2}^2$ and the energy functional
\[ \int \Big\{ |u_x|^2-\frac{3}{2}\Big( \alpha |u|^2+\beta \mathcal{H}{|u|^2}\Big) \Im (\bar{u}u_x) +\frac{1}{2}\alpha ^2|u|^6\Big\} \,dx,\]
which is a natural extension of the conserved energy (the case $\beta=0$) for the derivative nonlinear Schr\"odinger equation.
In this section, we aim to show the $H^1$ a priori bound for solutions with \emph{arbitrarily large} $L^2$ norm by modifying the previous argument.
Indeed, we have the following proposition for a large solution, the proof of which is described after the proof of Theorem~\ref{cor:gwp}.

\begin{prop}\label{prop:aprioriH1large}
Let $\alpha \in \mathbf{R}$ and $\beta <0$.
Let $u$ be a smooth solution to \eqref{kdnls}--\eqref{ic} on $[0,T]\times Z$, where $Z$ is either $\mathbf{R}$ or $\mathbf{T}$.
Then, there exists $C>0$ depending only on $\alpha ,\beta$ (and unbounded as $\beta \to 0$) such that we have
\begin{align*}
\int _0^t \| D_x^{\frac12}\partial _x(|u(t')|^2)\|_{L^2}^2\,dt' \leq C\big( 1+\| \phi\| _{L^2}^4\big) \| \phi _x\|_{L^2}^2 e^{C(1+\| \phi \| _{L^2}^6)\| \phi \|_{L^2}^2},\quad &\\
\| u_x(t)\|_{L^2}^2 \leq C\big( 1+\| \phi\| _{L^2}^8\big) \| \phi _x\|_{L^2}^2 e^{C(1+\| \phi \| _{L^2}^6)\| \phi \|_{L^2}^2},\quad& \\[5pt]
\| u(t)\|_{L^2}^2\geq \|\phi \|_{L^2}^2\exp \Big[ -Ct^{1/2}\big( 1+\| \phi\| _{L^2}^2\big) \| \phi _x\|_{L^2} e^{C(1+\| \phi \| _{L^2}^6)\| \phi \|_{L^2}^2}&\Big] ,\qquad t\in [0,T]
\end{align*}
for the case $Z=\mathbf{R}$, and also have the same estimates but with $\| \phi_x\|_{L^2}$ replaced by $\| \phi\|_{H^1}$ for the case $Z=\mathbf{T}$.
\end{prop}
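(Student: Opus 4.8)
The plan is to adapt the energy method of \cite{KTmatrix} to data of arbitrary size. Throughout, $u$ may be taken smooth, the bounds then passing to the rough solutions of Theorems~\ref{thm:lwp}, \ref{cor:gwp} by the usual approximation. Two facts are used repeatedly. First, the mass identity \eqref{L2equality}: since $\beta<0$ it gives $\|u(t)\|_{L^2}\le\|\phi\|_{L^2}$ for all $t$ and, after integration, $\int_0^t\|D_x^{1/2}(|u(t')|^2)\|_{L^2}^2\,dt'\le|\beta|^{-1}\|\phi\|_{L^2}^2$ uniformly in $t$ and $T$. Second, the energy functional
\[ E(u)=\int\Big\{|u_x|^2-\tfrac32\big(\alpha|u|^2+\beta\mathcal H(|u|^2)\big)\Im(\bar uu_x)+\tfrac12\alpha^2|u|^6\Big\}\,dx, \]
whose cubic and sextic pieces are tuned so that, when one differentiates $E(u(t))$ along \eqref{kdnls} and integrates by parts (as in the DNLS energy conservation, supplemented by the extra manipulations forced by the $\mathcal H$-terms), all derivative-loss contributions cancel while the $\beta$-part produces a favourable parabolic term. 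This is exactly the computation already carried out in \cite{KTmatrix}, whose outcome is an identity of the form
\[ \frac{d}{dt}E(u(t))+c|\beta|\,\big\|D_x^{1/2}\partial_x(|u(t)|^2)\big\|_{L^2}^2=\mathcal R(u(t)), \]
with $c>0$ absolute and $\mathcal R(u)$ a finite sum of quartic and sextic space–time integrals carrying at most a half-derivative in excess.

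The point of departure from \cite{KTmatrix} is the estimate of $\mathcal R(u)$ without recourse to the smallness of $\|\phi\|_{L^2}$. Using the one–dimensional Gagliardo–Nirenberg inequalities $\|u\|_{L^p}\lesssim\|u_x\|_{L^2}^{1/2-1/p}\|u\|_{L^2}^{1/2+1/p}$, the fractional Leibniz rule, the embedding $\dot H^{3/2}\hookrightarrow\dot W^{1,\infty}$, and the interpolation $\|D_x^{1/2}(|u|^2)\|_{L^2}^2\lesssim(1+\|u\|_{L^2}^2)\|u\|_{L^2}^2(1+\|u_x\|_{L^2}^2)$, I would distribute the derivatives in each term of $\mathcal R$ so as to obtain
\[ \mathcal R(u)\le \tfrac{c|\beta|}{2}\big\|D_x^{1/2}\partial_x(|u|^2)\big\|_{L^2}^2+C\big(1+\|\phi\|_{L^2}^{N}\big)\,\big\|D_x^{1/2}(|u|^2)\big\|_{L^2}^2\big(1+\|u_x\|_{L^2}^2\big), \]
where $C=C(\alpha,\beta)$ is unbounded as $\beta\to0$, $N$ is an explicit integer, and $\|u\|_{L^2}\le\|\phi\|_{L^2}$ has been used to freeze all $L^2$ factors. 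The structural content of this bound — and the reason it replaces the smallness used in \cite{KTmatrix} — is that every growth-driving coefficient is carried by the \emph{dissipated} quantity $\|D_x^{1/2}(|u|^2)\|_{L^2}^2$, whose time integral is already controlled by $|\beta|^{-1}\|\phi\|_{L^2}^2$.

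Absorbing the first term on the left and integrating on $[0,t]$ yields
\[ E(u(t))+\tfrac{c|\beta|}{2}\int_0^t\big\|D_x^{1/2}\partial_x(|u|^2)\big\|_{L^2}^2\,dt'\le E(\phi)+\int_0^t a(t')\big(1+\|u_x(t')\|_{L^2}^2\big)\,dt', \]
with $a(t'):=C(1+\|\phi\|_{L^2}^N)\|D_x^{1/2}(|u(t')|^2)\|_{L^2}^2$, so $\int_0^\infty a\lesssim C(1+\|\phi\|_{L^2}^N)|\beta|^{-1}\|\phi\|_{L^2}^2$, and $|E(\phi)|\lesssim(1+\|\phi\|_{L^2}^4)\|\phi_x\|_{L^2}^2$ (again by Gagliardo–Nirenberg; on $\mathbf T$ one keeps $\|\phi\|_{H^1}$ in order to carry the $L^2$ part). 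A Gronwall argument with ``time variable'' $\int_0^t a$ then produces the asserted bound for $\int_0^t\|D_x^{1/2}\partial_x(|u|^2)\|_{L^2}^2$ and, via $\|u_x(t)\|_{L^2}^2=E(u(t))-(\text{cubic})-(\text{sextic})$, for $\|u_x(t)\|_{L^2}^2$. The delicate point — and the main obstacle — is that, unlike in the small-data situation, $E$ is \emph{not} coercive: the cubic correction is of size $\sim\|u\|_{L^2}^2\|u_x\|_{L^2}^2$ for large mass (exactly as for DNLS), so $\|u_x(t)\|_{L^2}^2$ cannot be read off statically from a bound on $E(u(t))$. I would close this gap by a continuity/bootstrap argument: assuming $\|u_x(t')\|_{L^2}^2\le2M$ on $[0,t]$ with $M$ the right-hand side of the proposition, one feeds this into the integrated inequality and uses that the dangerous term occurs \emph{against} the integrable weight $a(t')$ rather than merely bounded, so that re-inserting the bootstrap bound and invoking $\int_0^\infty a\lesssim\mathrm{poly}(\|\phi\|_{L^2})$ recovers $\|u_x(t)\|_{L^2}^2\le M$; arranging the powers of $\|\phi\|_{L^2}$ in $M$ to match the statement is the bookkeeping heart of the proof.

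Finally, the $L^2$ lower bound follows directly from the mass identity $\|u(t)\|_{L^2}^2=\|\phi\|_{L^2}^2+\beta\int_0^t\|D_x^{1/2}(|u|^2)\|_{L^2}^2\,dt'$: estimating $\|D_x^{1/2}(|u|^2)\|_{L^2}^2\lesssim\||u|^2\|_{L^2}\|\partial_x(|u|^2)\|_{L^2}\lesssim\|u_x\|_{L^2}^{1/2}\|u\|_{L^2}^{3/2}\|\partial_x(|u|^2)\|_{L^2}$, applying Cauchy–Schwarz in $t'$ (this is what produces the $t^{1/2}$), and inserting the just-established $H^1$ bound together with the bound on $\int_0^t\|D_x^{1/2}\partial_x(|u|^2)\|_{L^2}^2$, one obtains a differential inequality for $\log\|u(t)\|_{L^2}^2$ and hence the stated exponential lower bound. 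The case $Z=\mathbf R$ is identical, using the $\mathbf R$-versions of the Gagliardo–Nirenberg and Sobolev inequalities in place of their periodic analogues.
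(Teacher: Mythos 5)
Your overall architecture (energy functional whose time derivative produces a dissipative term, remainder controlled by the already-dissipated quantity $\|D_x^{1/2}(|u|^2)\|_{L^2}^2$ so that Gronwall closes using $\int_0^\infty\|D_x^{1/2}(|u|^2)\|_{L^2}^2\,dt\le|\beta|^{-1}\|\phi\|_{L^2}^2$) matches the paper's strategy. But there is a genuine gap at exactly the point you flag as "the main obstacle," and the fix you propose does not work. You keep the energy of \cite{KTmatrix}, $E(u)=\int\{|u_x|^2-\frac32(\alpha|u|^2+\beta\mathcal H(|u|^2))\Im(\bar uu_x)+\frac12\alpha^2|u|^6\}\,dx$, which is not coercive for large mass: the pointwise-in-time relation one gets is of the form
\begin{equation*}
\| u_x(t)\|_{L^2}^2\ \le\ E(u(t))+C\| u(t)\|_{L^2}^2\| u_x(t)\|_{L^2}^2+\dots,
\end{equation*}
and when $C\|\phi\|_{L^2}^2\ge 1$ the term on the right cannot be absorbed. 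Your bootstrap ("assume $\|u_x\|_{L^2}^2\le 2M$, recover $\le M$") addresses the \emph{dynamic} growth of $E$ through the integrable weight $a(t')$, but the obstruction here is \emph{static}: it sits in the algebraic passage from a bound on $E(u(t))$ back to a bound on $\|u_x(t)\|_{L^2}^2$ at each fixed time, and no continuity argument in $t$ repairs an inequality of the form $X\le E+\theta X$ with $\theta\ge1$. This is precisely why \cite{KTmatrix} needed $\|\phi\|_{L^2}$ small, and your proposal does not remove that need.

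The paper's resolution is to change the energy functional to the complete square
\begin{equation*}
E[u]=\int\Big|u_x-i\tfrac34\big(\alpha|u|^2+\beta\mathcal H(|u|^2)\big)u\Big|^2\,dx\ \ge\ 0,
\end{equation*}
which differs from your $E$ in the quartic/sextic part (coefficient $\tfrac9{16}$ of the full square $(\alpha|u|^2+\beta\mathcal H(|u|^2))^2|u|^2$ rather than $\tfrac12\alpha^2|u|^6$), and then to control the error in passing between $E[u]$ and $\|u_x\|_{L^2}^2$ by the Hayashi--Ozawa gauge transformation: setting $v=e^{i\Phi[u]}u$ with $\Phi[u]$ a primitive of $-\tfrac34(\alpha|u|^2+\beta\mathcal H(|u|^2))$, one has $\|v_x\|_{L^2}^2=E[u]$ (on $\mathbf R$), hence $\|u\|_{L^6}^6=\|v\|_{L^6}^6\lesssim\|v\|_{L^2}^4\|v_x\|_{L^2}^2=\|u\|_{L^2}^4E[u]$, giving the two-sided comparison $\|u_x\|_{L^2}^2\le C(1+\|u\|_{L^2}^4)E[u]$ and $E[u]\le C(1+\|u\|_{L^2}^4)\|u_x\|_{L^2}^2$ with no smallness whatsoever. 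With that comparison in hand, the Gronwall step you describe goes through and yields the stated bounds; your treatment of the $L^2$ lower bound is then essentially the paper's. A secondary point: the identity for $\partial_tE[u]$ produces \emph{two} good terms, $\frac{\beta}{4}\|D_x^{1/2}\partial_x(|u|^2)\|_{L^2}^2$ and $3\beta\|D_x^{1/2}\Im(\bar uu_x)\|_{L^2}^2$, and the second is also used to absorb part of the remainder $F$; your sketch retains only the first.
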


From these bounds and the local well-posedness results obtained so far, we can show global well-posedness:
\begin{proof}[Proof of Theorem~\ref{cor:gwp}]
We may assume $1/4<s<1$.
Theorem~\ref{thm:lwp} shows that for any $\phi\in H^s\setminus \{ 0\}$, the Cauchy problem has a solution $u$ on some time interval $[0,T]$ and $u(T)\in H^\infty$, $\| u(T)\|_{L^2}\geq \frac12 \| \phi \|_{L^2}>0$.
By virtue of \emph{a priori} $H^1$ and $L^2$-lower bounds for smooth solutions given in Proposition~\ref{prop:aprioriH1large}, we can iterate the construction of local $H^1$ solution (which is smooth by persistence of regularity) to obtain a global solution.
\end{proof}

In the rest of this section, we shall prove Proposition~\ref{prop:aprioriH1large}.
\begin{proof}[Proof of Proposition~\ref{prop:aprioriH1large}]
We will use a different energy functional
\begin{equation}\label{defn:energy}
\begin{aligned}
E[u]&:= \int \Big| u_x-i\frac{3}{4}\Big( \alpha |u|^2+\beta \mathcal{H}(|u|^2)\Big) u\Big| ^2\,dx \\
&\;= \int \Big\{ |u_x|^2-\frac{3}{2}\Big( \alpha |u|^2+\beta \mathcal{H}(|u|^2)\Big) \Im (\bar{u}u_x) \\
&\qquad +\frac{9}{16}\Big( \alpha ^2|u|^6 +2\alpha \beta |u|^4\mathcal{H}(|u|^2)+\beta ^2 |u|^2\big[ \mathcal{H}(|u|^2)\big] ^2\Big) \Big\} \,dx.
\end{aligned}
\end{equation}
By definition, $E[u]$ is non-negative.
From now on, we write 
\[ \mathfrak{m}:=|u|^2,\qquad \mathfrak{p}:=\Im (\bar{u}u_x)\]
for simplicity.
Then, $E[u]$ is rewritten as
\[ E[u]=\int \Big\{ |u_x|^2-\frac{3}{2}\big( \alpha \mathfrak{m} +\beta \mathcal{H}\mathfrak{m}\big) \mathfrak{p} +\frac{9}{16}\big( \alpha ^2\mathfrak{m}^3+2\alpha \beta \mathfrak{m}^2\mathcal{H}\mathfrak{m} +\beta ^2\mathfrak{m}(\mathcal{H}\mathfrak{m})^2\big) \Big\} \,dx.\]
Recall \cite[Lemma~2]{KTmatrix}: let $u$ be a smooth solution to \eqref{kdnls}, then
\begin{align*}
\partial_t \int |u_x|^2\,dx
&=-3\int \big( \alpha \mathfrak{m}+\beta \mathcal{H}\mathfrak{m}\big) \partial _x(|u_x|^2) \,dx +\beta \big\| D^{1/2}\partial _x\mathfrak{m}\big\| _{L^2}^2,\\
\partial_t \int \mathfrak{m}\mathfrak{p}\,dx
&=-2\int \mathfrak{m}\partial _x (|u_x|^2)\,dx +\int \Big\{ 2\alpha \partial_x (\mathfrak{m}^2) +4\beta\mathfrak{m}D\mathfrak{m} \Big\} \mathfrak{p}\,dx,\\
\partial_t \int (\mathcal{H}\mathfrak{m})\mathfrak{p}\,dx
&=-2\int (\mathcal{H}\mathfrak{m})\partial _x(|u_x|^2)\,dx-2\big\| D^{1/2}\mathfrak{p}\big\| _{L^2}^2+\frac{1}{2}\big\| D^{1/2}\partial_x \mathfrak{m}\big\| _{L^2}^2 \\
&\qquad + \alpha \int \Big\{  \frac{3}{2}D(\mathfrak{m}^2)- \mathfrak{m}D\mathfrak{m}+2(\mathcal{H}\mathfrak{m})\partial _x\mathfrak{m} \Big\} \mathfrak{p}\,dx \\
&\qquad + \beta \int \Big\{ D[\mathfrak{m}\mathcal{H}\mathfrak{m}] +\mathcal{H}[\mathfrak{m}D\mathfrak{m}] +(\mathcal{H}\mathfrak{m})D\mathfrak{m} \Big\} \mathfrak{p}\,dx,\\
\partial_t \int \mathfrak{m}^3\,dx 
&=6\int \partial _x(\mathfrak{m}^2)\mathfrak{p} \,dx+2\beta \int \mathfrak{m}^3D\mathfrak{m}\,dx.
\end{align*}
Furthermore, we calculate 
\begin{align*}
\partial_t \int \mathfrak{m}^2\mathcal{H}\mathfrak{m}\,dx 
&=\int \Big\{ 4\partial_x [\mathfrak{m}\mathcal{H}\mathfrak{m}] -2D(\mathfrak{m}^2)\Big\} \mathfrak{p}\,dx \\
&\qquad +\alpha \int \mathfrak{m}^2 \Big\{ \frac{3}{2}D(\mathfrak{m}^2)-2\mathfrak{m}D\mathfrak{m}\Big\} \,dx \\
&\qquad +\beta \int \mathfrak{m}^2\Big\{ D[\mathfrak{m}\mathcal{H}\mathfrak{m}] +\mathcal{H}[ \mathfrak{m}D\mathfrak{m}]+2(\mathcal{H}\mathfrak{m})D\mathfrak{m} \Big\} \,dx,\\
\partial_t \int \mathfrak{m}(\mathcal{H}\mathfrak{m})^2
&=\int \Big\{ 4(\mathcal{H}\mathfrak{m})D\mathfrak{m}-4D[\mathfrak{m}\mathcal{H}\mathfrak{m}]\Big\} \mathfrak{p}\,dx \\
&\qquad +\alpha \int \mathfrak{m}(\mathcal{H}\mathfrak{m})\Big\{ 3D(\mathfrak{m}^2) -3\mathfrak{m}D\mathfrak{m} \Big\} \,dx \\
&\qquad +\beta \int \mathfrak{m}(\mathcal{H}\mathfrak{m})\Big\{ 2D[\mathfrak{m}\mathcal{H}\mathfrak{m}] +2\mathcal{H}[\mathfrak{m}D\mathfrak{m}] -(\mathcal{H}\mathfrak{m})D\mathfrak{m} \Big\} \,dx.
\end{align*}
The point is that $\partial_tE[u]$ can be written as
\begin{equation}\label{eq:E}
\partial _tE[u] = \frac{\beta}{4}\big\| D^{1/2}\partial _x\mathfrak{m}\big\| _{L^2}^2 +3\beta \big\| D^{1/2}\mathfrak{p}\big\| _{L^2}^2 + F(\mathfrak{m},\mathfrak{p}) +G(\mathfrak{m}),
\end{equation}
where $F(\mathfrak{m},\mathfrak{p})$ is a sum of terms of the form
\begin{equation}\label{defn:F}
c_{\alpha ,\beta} \int\limits _{\xi _1+\xi _2+\xi _3=0}a(\xi _1,\xi _2,\xi _3)\hat{\mathfrak{m}}(\xi _1)\hat{\mathfrak{m}}(\xi _2)\hat{\mathfrak{p}}(\xi _3),\qquad |a(\xi _1,\xi _2,\xi _3)|\lesssim |\xi _1|+|\xi _2|,
\end{equation}
and $G(\mathfrak{m})$ is a sum of terms of the form
\begin{equation}\label{defn:G}
c_{\alpha ,\beta} \int\limits_{\xi _1+\dots +\xi _4=0}b(\xi _1,\dots ,\xi _4)\hat{\mathfrak{m}}(\xi _1)\hat{\mathfrak{m}}(\xi _2)\hat{\mathfrak{m}}(\xi _3)\hat{\mathfrak{m}}(\xi _4),\qquad |b(\xi _1,\dots ,\xi _4)|\lesssim |\xi _1|+\dots +|\xi _4|.
\end{equation}

We will use the Gagliardo-Nirenberg inequalities:
\[ \| f\| _{L^\infty (Z)}^2 \lesssim \Bigg\{ \begin{aligned}
&\| f\| _{L^2}\| f_x\| _{L^2}, \\
&\| f\| _{L^2}\| f\| _{H^1}, \end{aligned}
\qquad \| f\| _{L^6(Z)}^6\lesssim \Bigg\{ \begin{alignedat}{2}
&\| f\| _{L^2}^4\| f_x\| _{L^2}^2 & &(Z=\mathbf{R}),\\
&\| f\| _{L^2}^4\| f\| _{H^1}^2&\qquad &(Z=\mathbf{T}),
\end{alignedat} \]
but the following proof does not require the use of the best constants for these inequalities.

\begin{lem}\label{lem:FG}
In the case $Z=\mathbf{R}$, the following estimates hold:
\begin{align*}
\big| F(\mathfrak{m},\mathfrak{p})\big| &\leq \frac{|\beta|}{8}\Big( \big\| D^{1/2}\partial _x\mathfrak{m}\big\| _{L^2}^2 +\big\| D^{1/2}\mathfrak{p}\big\| _{L^2}^2 \Big) +C_{\alpha,\beta} \Big( 1+ \| u\|_{L^2}^2\Big) \big\| D^{1/2}\mathfrak{m}\big\| _{L^2}^2\| u_x\| _{L^2}^2,\\
\big| G(\mathfrak{m})\big| &\leq C_{\alpha,\beta} \| u\|_{L^2}^2\big\| D^{1/2}\mathfrak{m}\big\| _{L^2}^2\| u_x\| _{L^2}^2.
\end{align*}
When $Z=\mathbf{T}$, these estimates hold if we replace $\| u_x\|_{L^2}$ by $\| u\|_{H^1}$.
\end{lem}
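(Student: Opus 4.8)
The plan is to treat $G$ and $F$ separately, in each case reducing the monomials to shapes that can be closed by Cauchy--Schwarz together with fractional Leibniz (Kato--Ponce), or, for the hard part of $F$, by a Littlewood--Paley frequency analysis. Two preliminary reductions make the $\mathfrak m$- and $\mathcal H\mathfrak m$-components interchangeable: since $\mathcal H$ is unitary on $L^2$ and commutes with $D^s$ and $\partial_x$, one has $\|D^{1/2}(\mathcal H\mathfrak m)\|_{L^2}=\|D^{1/2}\mathfrak m\|_{L^2}$, $\|D(\mathcal H\mathfrak m)\|_{L^2}=\|\partial_x\mathfrak m\|_{L^2}$, and, by Gagliardo--Nirenberg applied to $\mathcal H\mathfrak m$, $\|\mathcal H\mathfrak m\|_{L^\infty}^2\lesssim\|\mathfrak m\|_{L^2}\|\partial_x\mathfrak m\|_{L^2}\lesssim\|u\|_{L^2}^2\|u_x\|_{L^2}^2$, the same bound as $\|\mathfrak m\|_{L^\infty}^2$; moreover, using $\partial_x=-\mathcal H D$ and the antisymmetry of $\mathcal H$, any outer $\partial_x$ or $\mathcal H$ in a monomial is absorbed into the remaining factors without affecting the relevant $L^2$/$L^\infty$/$\dot H^{1/2}$ norms. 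I will also use the interpolation inequality $\|\partial_x\mathfrak m\|_{L^2}^2\le\|D^{1/2}\mathfrak m\|_{L^2}\|D^{1/2}\partial_x\mathfrak m\|_{L^2}$, the bound $\|\mathfrak p\|_{L^2}\lesssim\|u\|_{L^\infty}\|u_x\|_{L^2}\lesssim\|u\|_{L^2}^{1/2}\|u_x\|_{L^2}^{3/2}$, and the elementary $t^{2/3}\le 1+t$. The $Z=\mathbf T$ statement follows from the same arguments with the periodic Gagliardo--Nirenberg inequalities (so $\|u\|_{H^1}$ replaces $\|u_x\|_{L^2}$) and discrete Littlewood--Paley projections.

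For $G$: each summand is $c_{\alpha,\beta}\int(\text{one }D\text{ or }\partial_x)$ applied to a product of four factors from $\{\mathfrak m,\mathcal H\mathfrak m\}$; after the reductions above it becomes $c_{\alpha,\beta}\int(D^{1/2}A)(D^{1/2}B)\,dx$ with $AB$ the product of those four factors, and Cauchy--Schwarz together with the three-fold Kato--Ponce inequality gives $\|D^{1/2}A\|_{L^2}\|D^{1/2}B\|_{L^2}\lesssim\|D^{1/2}\mathfrak m\|_{L^2}^2\|u\|_{L^2}^2\|u_x\|_{L^2}^2$ (one $D^{1/2}$-derivative per block, every other factor in $L^\infty$). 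This yields the claimed bound for $G$ without using the dissipative terms at all.

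For $F$: split the summands into those where the derivative falls on the \emph{product} of the two $\mathfrak m$-type factors (symbol $\lesssim|\xi_1+\xi_2|=|\xi_3|$) and those where it falls on a \emph{single} factor. For the first group, move the derivative onto $\mathfrak p$ via $\int D(\cdot)\mathfrak p=\int D^{1/2}(\cdot)D^{1/2}\mathfrak p$, bound by $\|D^{1/2}(\mathfrak m\text{-product})\|_{L^2}\|D^{1/2}\mathfrak p\|_{L^2}\lesssim\|D^{1/2}\mathfrak m\|_{L^2}\|\mathfrak m\|_{L^\infty}\|D^{1/2}\mathfrak p\|_{L^2}$, and let Young split off $\tfrac{|\beta|}{8}\|D^{1/2}\mathfrak p\|_{L^2}^2$. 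For the second group, Littlewood--Paley-decompose and use that $\xi_1+\xi_2+\xi_3=0$ forces the two largest frequencies to be comparable: (a) if a top frequency sits on $\mathfrak p$, split the single derivative as $D^{1/2}\otimes D^{1/2}$ between the high $\mathfrak m$ and $\mathfrak p$, keep the remaining $\mathfrak m$ in $L^\infty$, and land on $\|\mathfrak m\|_{L^\infty}\|D^{1/2}\mathfrak m\|_{L^2}\|D^{1/2}\mathfrak p\|_{L^2}$ as before; (b) if both $\mathfrak m$-type factors carry the (comparable) top frequencies, write the symbol $\lesssim|\xi_1|^{1/2}|\xi_2|^{1/2}$, apply the $L^4$--$L^4$--$L^2$ Hölder inequality with Bernstein on the high pieces and keep $\mathfrak p$ undecomposed, summing to $\|\mathfrak p\|_{L^2}\|D^{3/4}\mathfrak m\|_{L^2}^2\le\|\mathfrak p\|_{L^2}\|D^{1/2}\mathfrak m\|_{L^2}^{3/2}\|D^{1/2}\partial_x\mathfrak m\|_{L^2}^{1/2}$; then, since every summand of the second group carries a factor of $\beta$, Young gives $\tfrac{|\beta|}{8}\|D^{1/2}\partial_x\mathfrak m\|_{L^2}^2+C_{\alpha,\beta}\|\mathfrak p\|_{L^2}^{4/3}\|D^{1/2}\mathfrak m\|_{L^2}^2$, and $\|\mathfrak p\|_{L^2}^{4/3}\lesssim\|u\|_{L^2}^{2/3}\|u_x\|_{L^2}^2\le(1+\|u\|_{L^2}^2)\|u_x\|_{L^2}^2$ finishes.

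The main obstacle is case (b) above: the near-resonant interaction $\xi_1\approx-\xi_2$ with both $\mathfrak m$-frequencies large and $\mathfrak p$ at low frequency. Here the symbol carries a genuine full derivative, while the only dissipative quantities available are $\|D^{1/2}\partial_x\mathfrak m\|_{L^2}^2$ (which is \emph{two} regularities above $\mathfrak m$, not one) and $\|D^{1/2}\mathfrak p\|_{L^2}^2$ (attached to the low-frequency factor), so a plain Hölder estimate leaks a power of $\|u_x\|_{L^2}$ and cannot reproduce the right-hand side; the point is to spend a $D^{1/2}$ of the derivative on $\mathfrak m$ and interpolate the leftover $D^{1/2}$-worth into $\|D^{1/2}\partial_x\mathfrak m\|_{L^2}$ through $\|\partial_x\mathfrak m\|_{L^2}^2\le\|D^{1/2}\mathfrak m\|_{L^2}\|D^{1/2}\partial_x\mathfrak m\|_{L^2}$, thereby keeping the weight $\|D^{1/2}\mathfrak m\|_{L^2}^2$ required by the statement. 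The only other delicate point is the bookkeeping over the many monomials in $F$ and $G$ produced by the time-differentiation of $E[u]$, together with checking that every "bad" monomial indeed carries a factor of $\beta$ so that Young may feed on $\tfrac{|\beta|}{8}$ of the dissipation.
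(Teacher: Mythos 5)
Your proof is correct, and it lands on exactly the same two intermediate quantities as the paper's argument --- $\|D^{1/2}\mathfrak{m}\|_{L^2}\,\|\mathfrak{m}\|_{L^\infty}\,\|D^{1/2}\mathfrak{p}\|_{L^2}$ for the configurations where $D^{1/2}$ can be paired with $\mathfrak{p}$, and $\|\mathfrak{p}\|_{L^2}\,\|D^{1/2}\mathfrak{m}\|_{L^2}^{3/2}\,\|D^{1/2}\partial_x\mathfrak{m}\|_{L^2}^{1/2}$ for the high--high $\mathfrak{m}$--$\mathfrak{m}$ interaction --- followed by the same Young-inequality absorption into $\tfrac{|\beta|}{8}$ of the dissipation. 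The difference is purely one of implementation: the paper dispenses with Littlewood--Paley, Bernstein and Kato--Ponce entirely by taking absolute values of all Fourier transforms, distributing the symbol via the elementary inequality $|\xi_1|+|\xi_2|\lesssim |\xi_1|^{1/2}|\xi_3|^{1/2}+|\xi_2|^{1/2}|\xi_3|^{1/2}+|\xi_1|^{1/2}|\xi_2|^{1/2}$ on $\xi_1+\xi_2+\xi_3=0$, and putting the leftover factor in $L^\infty$ through $\|\mathcal{F}^{-1}|\hat{f}|\|_{L^\infty}\le\|\hat{f}\|_{L^1}\lesssim\|f\|_{L^2}^{1/2}\|f_x\|_{L^2}^{1/2}$; this makes all of your preliminary bookkeeping (absorbing outer $\mathcal{H}$'s and $\partial_x$'s, paraproduct summation, the first-group/second-group split) automatic, since the abstract forms \eqref{defn:F}--\eqref{defn:G} only record $|a|\lesssim|\xi_1|+|\xi_2|$ and $|b|\lesssim\sum_j|\xi_j|$. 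One small correction: your insistence that every ``second-group'' monomial carry a factor of $\beta$ is unnecessary --- the constant $C_{\alpha,\beta}$ here and in Proposition~\ref{prop:aprioriH1large} is explicitly allowed to blow up as $\beta\to 0$, so Young's inequality with $\epsilon=|\beta|/8$ applies to any term regardless of its coefficient (the observation happens to be true for the monomials at hand, but nothing hinges on it).
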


\begin{proof}
We focus on the case $Z=\mathbf{R}$, but the argument can be easily adapted to the case $Z=\mathbf{T}$.
From the expressions \eqref{defn:F} and \eqref{defn:G}, $F$ and $G$ can be evaluated as follows:
\begin{align*}
\big| F(\mathfrak{m},\mathfrak{p})\big| &\leq C_{\alpha,\beta}\Big( \big\| D^{1/2}\mathfrak{m}\big\| _{L^2}\big\| \mathcal{F}^{-1}|\hat{\mathfrak{m}}|\big\| _{L^\infty} \big\| D^{1/2}\mathfrak{p}\big\|_{L^2}\\
&\qquad\qquad +\big\| D^{1/2}\mathfrak{m}\big\| _{L^2}\big\| \mathcal{F}^{-1}\big| \mathcal{F}[D^{1/2}\mathfrak{m}]\big| \big\| _{L^\infty}\| \mathfrak{p}\|_{L^2}\Big) ,\\
\big| G(\mathfrak{m})\big| &\leq C_{\alpha,\beta}\big\| D^{1/2}\mathfrak{m}\big\| _{L^2}^2\big\| \mathcal{F}^{-1}|\hat{\mathfrak{m}}|\big\| _{L^\infty}^2.
\end{align*}
Using the inequalities
\begin{align*}
\big\| \mathcal{F}^{-1}|\hat{\mathfrak{m}}|\big\| _{L^\infty}&\lesssim \| \mathfrak{m}\|_{L^2}^{1/2}\| \partial_x \mathfrak{m}\|_{L^2}^{1/2}\lesssim \| u\|_{L^2}^{1/2}\| u_x\|_{L^2}^{1/2}\| u\|_{L^\infty}\lesssim \| u\|_{L^2}\| u_x\|_{L^2},\\
\big\| \mathcal{F}^{-1}\big| \mathcal{F}[D^{1/2}\mathfrak{m}]\big| \big\| _{L^\infty}&\lesssim \big\| D^{1/2}\mathfrak{m}\big\|_{L^2}^{1/2}\big\| D^{1/2}\partial_x\mathfrak{m}\big\|_{L^2}^{1/2},\\
\| \mathfrak{p}\| _{L^2}&\leq \| u\|_{L^\infty}\| u_x\| _{L^2}\lesssim \| u\|_{L^2}^{1/2}\| u_x\|_{L^2}^{3/2},
\end{align*}
we obtain the claimed estimates for $F$ and $G$ as follows:
\begin{align*}
\big| F(\mathfrak{m},\mathfrak{p})\big| 
&\leq C_{\alpha ,\beta} \Big( \big\| D^{1/2}\mathfrak{m}\big\| _{L^2}\| u\|_{L^2}\| u_x\|_{L^2}\big\| D^{1/2}\mathfrak{p}\big\|_{L^2}\\
&\qquad\qquad +\big\| D^{1/2}\mathfrak{m}\big\| _{L^2}^{3/2}\big\| D^{1/2}\partial_x\mathfrak{m}\big\|_{L^2}^{1/2}\| u\|_{L^2}^{1/2}\| u_x\|_{L^2}^{3/2}\Big) \\
&\leq \frac{|\beta |}{8}\Big( \big\| D^{1/2}\mathfrak{p}\big\|_{L^2}^2+\big\| D^{1/2}\partial_x\mathfrak{m}\big\|_{L^2}^2\Big) \\
&\qquad +C_{\alpha ,\beta} \Big( \| u\|_{L^2}^2+\| u\|_{L^2}^{2/3}\Big) \big\| D^{1/2}\mathfrak{m}\big\| _{L^2}^2\| u_x\|_{L^2}^2,\\
\big| G(\mathfrak{m})\big| &\leq C_{\alpha,\beta}\| u\|_{L^2}^2\big\| D^{1/2}\mathfrak{m}\big\| _{L^2}^2\| u_x\|_{L^2}^2.\qedhere
\end{align*}
\end{proof}

\begin{lem}\label{lem:u_x}
We have
\[ \| u_x\|_{L^2}^2\leq C_{\alpha,\beta}\Big( 1+\| u\|_{L^2}^4\Big) E[u] ,\quad E[u]\leq C_{\alpha,\beta}\Big( 1+\| u\|_{L^2}^4\Big) \| u_x\|_{L^2}^2\qquad (Z=\mathbf{R})\]
and 
\[ \begin{gathered}
\| u\|_{H^1}^2\leq C_{\alpha,\beta}\Big( 1+\| u\|_{L^2}^4\Big) \Big( E[u] +\| u\|_{L^2}^2+\| u\|_{L^2}^6\Big),\\
E[u] +\| u\|_{L^2}^2+\| u\|_{L^2}^6\leq C_{\alpha,\beta}\Big( 1+\| u\|_{L^2}^4\Big) \| u\|_{H^1}^2
\end{gathered}
\qquad (Z=\mathbf{T}).\]
\end{lem}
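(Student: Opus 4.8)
The plan is to read off everything from the algebraic identity $E[u]=\big\|u_x-\tfrac{3i}{4}Au\big\|_{L^2}^2$, where I abbreviate $A:=\alpha\mathfrak{m}+\beta\mathcal{H}\mathfrak{m}$, a \emph{real-valued} function, and set $w:=u_x-\tfrac{3i}{4}Au$, so that $E[u]=\|w\|_{L^2}^2$. Writing $u_x=w+\tfrac{3i}{4}Au$ and using the triangle inequality in $L^2$ immediately gives $\|u_x\|_{L^2}^2\le 2E[u]+\tfrac98\|Au\|_{L^2}^2$ and, conversely, $E[u]\le 2\|u_x\|_{L^2}^2+\tfrac98\|Au\|_{L^2}^2$. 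Thus both pairs of inequalities reduce to two-sided control of $\|Au\|_{L^2}$ by $E[u]$ (resp.\ by $\|u_x\|_{L^2}$ or $\|u\|_{H^1}$), with constants depending only on $\alpha,\beta$ and powers of $\|u\|_{L^2}$.

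The decisive ingredient is a total-variation bound for $\mathfrak{m}=|u|^2$. Since $A$ is real, $\mathrm{Re}\big(\bar u\cdot iAu\big)=A\,\mathrm{Re}(i|u|^2)=0$, so
\[ \partial_x\mathfrak{m}=2\mathrm{Re}(\bar uu_x)=2\mathrm{Re}(\bar uw),\]
whence $\|\partial_x\mathfrak{m}\|_{L^1}\le 2\|u\|_{L^2}\|w\|_{L^2}=2\|u\|_{L^2}E[u]^{1/2}$. On $Z=\mathbf{R}$, since $\mathfrak{m}\ge0$ and $\mathfrak{m}\to0$ at infinity, this yields $\|\mathfrak{m}\|_{L^\infty}\le 2\|u\|_{L^2}E[u]^{1/2}$; on $Z=\mathbf{T}$, comparing with the mean value $\tfrac1{2\pi}\|u\|_{L^2}^2$ (attained by the intermediate value theorem) gives $\|\mathfrak{m}\|_{L^\infty}\le \tfrac1{2\pi}\|u\|_{L^2}^2+2\|u\|_{L^2}E[u]^{1/2}$. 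I then estimate, using the $L^2$-boundedness of $\mathcal{H}$,
\[ \|Au\|_{L^2}^2=\int_Z|A|^2\mathfrak{m}\,dx\le\|\mathfrak{m}\|_{L^\infty}\|A\|_{L^2}^2\lesssim_{\alpha,\beta}\|\mathfrak{m}\|_{L^\infty}\|\mathfrak{m}\|_{L^2}^2\le\|\mathfrak{m}\|_{L^\infty}^2\|u\|_{L^2}^2 .\]
Inserting the $L^\infty$ bounds for $\mathfrak{m}$ gives $\|Au\|_{L^2}^2\lesssim_{\alpha,\beta}\|u\|_{L^2}^4E[u]$ on $\mathbf{R}$ and $\|Au\|_{L^2}^2\lesssim_{\alpha,\beta}\|u\|_{L^2}^6+\|u\|_{L^2}^4E[u]$ on $\mathbf{T}$; combined with $\|u_x\|_{L^2}^2\le 2E[u]+\tfrac98\|Au\|_{L^2}^2$ (and, on $\mathbf{T}$, $\|u\|_{H^1}^2=\|u\|_{L^2}^2+\|u_x\|_{L^2}^2$) this yields the first inequality in each display.

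For the reverse inequalities I bound $\|Au\|_{L^2}$ by $\|u_x\|_{L^2}$ instead: by H\"older and the $L^3$-boundedness of $\mathcal{H}$, $\|Au\|_{L^2}\lesssim_{\alpha,\beta}\| |u|^2\|_{L^3}\|u\|_{L^6}=\|u\|_{L^6}^3$, and the Gagliardo–Nirenberg inequalities stated above give $\|u\|_{L^6}^3\lesssim\|u\|_{L^2}^2\|u_x\|_{L^2}$ when $Z=\mathbf{R}$ and $\|u\|_{L^6}^3\lesssim\|u\|_{L^2}^2\|u\|_{H^1}$ when $Z=\mathbf{T}$. Hence $E[u]\le 2\|u_x\|_{L^2}^2+\tfrac98\|Au\|_{L^2}^2\lesssim_{\alpha,\beta}(1+\|u\|_{L^2}^4)\|u_x\|_{L^2}^2$ on $\mathbf{R}$, while on $\mathbf{T}$ the same computation together with $\|u\|_{L^2}^2\le\|u\|_{H^1}^2$ and $\|u\|_{L^2}^6\le\|u\|_{L^2}^4\|u\|_{H^1}^2$ gives $E[u]+\|u\|_{L^2}^2+\|u\|_{L^2}^6\lesssim_{\alpha,\beta}(1+\|u\|_{L^2}^4)\|u\|_{H^1}^2$.

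The only genuinely delicate step is the total-variation bound, and it is the reality of $A$ — equivalently, that $E[u]$ is a squared $L^2$-norm of a modulated derivative — that makes the cubic term drop out of $\partial_x\mathfrak{m}$ and converts smallness of $E[u]$ into \emph{pointwise} smallness of $|u|^2$. This is exactly what avoids the circularity (absorbing $\|u_x\|_{L^2}$ into $E[u]$) that would otherwise block the argument for data with large $L^2$ norm; everything else is H\"older, Gagliardo–Nirenberg, and $L^p$-boundedness of the Hilbert transform.
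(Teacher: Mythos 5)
Your proof is correct. It reaches the same estimates as the paper but by a somewhat different route: the paper introduces the Hayashi--Ozawa gauge $v=e^{i\Phi[u]}u$ with $\Phi_x=-\tfrac34(\alpha|u|^2+\beta\mathcal{H}(|u|^2))$, identifies $E[u]$ with $\|v_x\|_{L^2}^2$ (up to the $\|u\|_{L^2}^6$ correction on $\mathbf{T}$ coming from the zero mode of the phase), and then bounds $\|u\|_{L^6}^6=\|v\|_{L^6}^6$ by the $L^6$ Gagliardo--Nirenberg inequality applied to $v$; you instead never write down a phase, working directly with $w=u_x-\tfrac{3i}{4}Au$ and extracting the pointwise bound $\|\,|u|^2\|_{L^\infty}\lesssim \|u\|_{L^2}E[u]^{1/2}$ (plus the mean on $\mathbf{T}$) from $\partial_x|u|^2=2\mathrm{Re}(\bar uw)$. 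These are two faces of the same mechanism --- your total-variation step is in effect the proof of the $L^\infty$ Gagliardo--Nirenberg inequality for the gauged function, since $|v|=|u|$ and $|v_x|=|w|$ --- but your version is more self-contained: it sidesteps the need to define the primitive $\Phi[u]$ and verify it makes sense (which the paper relegates to a footnote), and it handles the periodic zero-mode issue transparently by comparing with the mean. The price is mild: in the reverse direction you invoke $L^3$-boundedness of the Hilbert transform to control $\|Au\|_{L^2}$ by $\|u\|_{L^6}^3$, whereas the paper only ever needs Gagliardo--Nirenberg; both ingredients are standard, and all your constants and exponents check out against the statement.
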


\begin{proof}
We first estimate $\| u_x\|_{L^2}^2$ as
\[ \| u_x\|_{L^2}^2\leq 2E[u] + C_{\alpha,\beta}\| u\| _{L^6}^6.\]
In order to estimate $\| u\|_{L^6}$ in terms of $E[u]$ and $\| u\|_{L^2}$, we follow the argument of Hayashi and Ozawa~\cite{HO92} and introduce a gauge transformation as follows:%
\footnote{%
The function $\Phi [u]$ is well-defined if $u\in H^s(Z)$ for some $s>0$.
In fact, by the Sobolev embedding, $|u|^2\in L^p$ for some $p>1$.
Since $\mathcal{H}$ is bounded on $L^p$, we have $\mathcal{H}(|u|^2)\in L^p\subset L^1_{\mathrm{loc}}$, and hence the $y$-integral in the definition of $\Phi [u]$ makes sense.
Note that the function $\Phi[u]$ may be unbounded on $\mathbf{R}$ when $Z=\mathbf{R}$.}
\[ v:=e^{i\Phi [u]}u,\quad \Phi [u](x):=\begin{cases}
-\dfrac{3}{4}\displaystyle \int _0^x \Big( \alpha |u|^2 +\beta \mathcal{H}(|u|^2)\Big) (y)\,dy &(Z=\mathbf{R}), \\[10pt]
-\dfrac{3}{4}-\hspace{-14pt}\displaystyle\int _{\mathbf{T}} \int _z^x \Big( \alpha P_{\neq 0}(|u|^2) +\beta \mathcal{H}(|u|^2)\Big) (y)\,dy\,dz\quad &(Z=\mathbf{T}),
\end{cases} \]
where $-\hspace{-11pt}\int _{\mathbf{T}}f:=\frac{1}{2\pi}\int _{\mathbf{T}}f$.
Note that $|u|=|v|$ and
\[ \begin{alignedat}{2}
|v_x| &=\Big| u_x-i\frac{3}{4}\Big( \alpha |u|^2 +\beta \mathcal{H}(|u|^2)\Big) u\Big| & &(Z=\mathbf{R}),\\
|v_x| &\leq \Big| u_x-i\frac{3}{4}\Big( \alpha |u|^2 +\beta \mathcal{H}(|u|^2)\Big) u\Big| +\frac{3|\alpha |}{8\pi} \|u\|_{L^2}^2|u| &\qquad &(Z=\mathbf{T}).
\end{alignedat} \]
In particular, we have
\[ \begin{alignedat}{2}
\| v_x\| _{L^2}^2&=E[u] & &(Z=\mathbf{R}),\\
\| v_x\| _{L^2}^2&\leq 2E[u]+ C_\alpha \| u\|_{L^2}^6 &\qquad &(Z=\mathbf{T}).
\end{alignedat} \]
When $Z=\mathbf{R}$, we see that
\[ \| u\| _{L^6}^6=\| v\|_{L^6}^6\leq C\| v\| _{L^2}^4\| v_x\|_{L^2}^2=C\| u\|_{L^2}^4E[u],\]
which is sufficient for the former estimate.
When $Z=\mathbf{T}$, the argument is modified as
\[
\| u\| _{L^6}^6=\| v\|_{L^6}^6\leq C\| v\| _{L^2}^4\| v_x\| _{L^2}^2+C\| v\|_{L^2}^6\leq C\| u\| _{L^2}^4\Big(  2E[u]+C_\alpha \| u\| _{L^2}^6\Big) +C\| u\|_{L^2}^6,
\]
which also implies the former estimate in this case.

In both cases of $Z=\mathbf{R}$ and $\mathbf{T}$, the latter estimates on $E[u]$ follows from the Gagliardo-Nirenberg inequality.
\end{proof}

From \eqref{eq:E}, Lemma~\ref{lem:FG} and Lemma~\ref{lem:u_x}, we have
\[ \partial _tE[u] + \frac{|\beta|}{8}\big\| D^{1/2}\partial _x(|u|^2)\big\| _{L^2}^2 
\leq C_{\alpha,\beta} \Big( 1+\| u\|_{L^2}^6\Big) \big\| D^{1/2}(|u|^2)\big\| _{L^2}^2E[u] \]
for $Z=\mathbf{R}$, and (together with $\partial_t\| u(t)\|_{L^2}^2\leq 0$)
\begin{align*}
&\partial _t\Big( E[u] +\| u\|_{L^2}^2+\| u\|_{L^2}^6\Big) + \frac{|\beta|}{8}\big\| D^{1/2}\partial _x(|u|^2)\big\| _{L^2}^2 \\
&\quad \leq C_{\alpha,\beta} \Big( 1+\| u\|_{L^2}^6\Big) \big\| D^{1/2}(|u|^2)\big\| _{L^2}^2\Big( E[u] +\| u\|_{L^2}^2+\| u\|_{L^2}^6\Big) 
\end{align*}
for $Z=\mathbf{T}$.
Applying the Gronwall inequality and the $L^2$ bound \eqref{L2equality}, we obtain
\[ E[u(t)]+\frac{|\beta|}{8}\int _0^t\big\| D^{1/2}\partial _x(|u(\tau )|^2)\big\| _{L^2}^2\,d\tau \leq E[\phi ]\exp \Big[ C_{\alpha,\beta}\Big( 1+\| \phi\|_{L^2}^6\Big) \| \phi \|_{L^2}^2\Big] \]
for $Z=\mathbf{R}$, and 
\begin{align*}
&E[u(t)] +\| u(t)\|_{L^2}^2+\| u(t)\|_{L^2}^6 + \frac{|\beta|}{8}\int _0^t\big\| D^{1/2}\partial _x(|u(\tau )|^2)\big\| _{L^2}^2\,d\tau \\
&\quad \leq \Big( E[\phi ] +\| \phi \|_{L^2}^2+\| \phi \|_{L^2}^6\Big) \exp \Big[ C_{\alpha,\beta}\Big( 1+\| \phi\|_{L^2}^6\Big) \| \phi \|_{L^2}^2\Big]
\end{align*}
for $Z=\mathbf{T}$.
These two inequalities show the first inequality in Proposition~\ref{prop:aprioriH1large}.
Finally, the second inequality in Proposition~\ref{prop:aprioriH1large} is obtained from Lemma~\ref{lem:u_x} and \eqref{L2equality}.

For the lower bound of $\| u(t)\| _{L^2}^2$, we just repeat the argument in \cite{KTmatrix} based on the differential inequality
\[ \partial _t\| u(t)\|_{L^2}^{-2}\leq C|\beta |\big\| D^{1/2}\partial _x(|u(t)|^2)\big\| _{L^2}\| u(t)\|_{L^2}^{-2}. \]

This completes the proof of Proposition~\ref{prop:aprioriH1large}.
\end{proof}

%%%%%%%%%%%%%%%%%%%%%%%%%%%%%%%%%
%%%%%%%%%%%%%%%%%%%%%%%%%%%%%%%%%

\appendix

\section{Energy method for KDNLS}
\label{sec:appendix}

In the appendix, we give a proof of local well-posedness for \eqref{kdnls} in high regularity by the classical energy method.
Since the proof does not rely on the parabolicity or the gauge transformation, we can treat both the periodic and non-periodic cases.
Hence, we consider
\begin{alignat}{2}
   \partial_tu - i \partial_x^2u &= \alpha \partial_x \big[ |u|^2u\big] +\beta \partial_x\big[\mathcal{H}(|u|^2)u\big] , &\qquad t > 0, ~ &x \in \Omega, \label{kdnls-e} \\
   u(0, x) &= \phi (x), & &x \in \Omega , \label{ic-e}
\end{alignat}
where $\alpha \in \mathbf{R}$, $\beta \leq 0$, and $\Omega$ is either $\mathbf{R}$ or $\mathbf{T}$.

\begin{thm} \label{thm:lwp-e}
Let $s>\frac32$, and $\Omega =\mathbf{R}$ or $\mathbf{T}$.
Then, the Cauchy problem \eqref{kdnls-e}--\eqref{ic-e} is locally well-posed in $H^s(\Omega)$.
More precisely, for any $\phi \in H^s(\Omega)$ there exist $T=T(\| \phi \|_{H^{\frac32+}})>0$ and a unique solution $u$ on $[0,T]$ in the class $C([0,T];H^s(\Omega))$.
Moreover, the solution map $\phi \mapsto u$ from $H^s$ to $C([0,T]; H^s)$ is continuous.
\end{thm}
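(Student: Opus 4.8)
The plan is to run the classical energy method via an approximation scheme, keeping track of the two structural cancellations that make the quasilinear loss of derivative harmless. First I would construct approximate solutions by a Friedrichs mollification: fix a Fourier multiplier $J_\delta$ with symbol $\chi(\delta\xi)$, $\chi\in C^\infty_0$, $\chi\equiv 1$ near $0$, and solve $\partial_t u^\delta-i\partial_x^2u^\delta=J_\delta\big(\alpha\partial_x(|J_\delta u^\delta|^2J_\delta u^\delta)+\beta\partial_x(\mathcal{H}(|J_\delta u^\delta|^2)J_\delta u^\delta)\big)$ with $u^\delta(0)=J_\delta\phi$; for each fixed $\delta>0$ this is an ODE in $H^s$ with locally Lipschitz right-hand side, hence has a unique solution $u^\delta\in C([0,T_\delta];H^\infty)$. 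The mollification is harmless for the energy estimates because $J_\delta$ is self-adjoint and commutes with $\partial_x$, $\mathcal{H}$ and $J^s:=\langle\partial_x\rangle^s$: pairing $J^s\partial_t u^\delta$ with $J^su^\delta$ just reproduces the energy identity for $v^\delta:=J_\delta u^\delta$, whose Sobolev norms are dominated by those of $u^\delta$.

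The core is the \emph{a priori} $H^s$ energy estimate for $s>3/2$. Apply $J^s$ to \eqref{kdnls-e}, pair with $J^s u$ in $L^2$, take twice the real part; the dispersive term $i\partial_x^2 u$ contributes nothing. The two key structural facts are: (i) $|u|^2$ is real, so the worst contribution of $\alpha\partial_x(|u|^2u)$, after commuting $J^s$ with $|u|^2$ and integrating by parts once, equals $-2\alpha\int\partial_x(|u|^2)\,|J^su|^2\,dx$ plus analogous terms, all bounded by $C\|u\|_{H^{3/2+}}^2\|u\|_{H^s}^2$; (ii) the worst contribution of $\beta\partial_x(\mathcal{H}(|u|^2)u)=\beta\,\mathcal{H}(\partial_x|u|^2)u+\beta\,\mathcal{H}(|u|^2)u_x$ — namely the part $\beta\, u\,\mathcal{H}\big(\bar u\,\partial_xJ^su+u\,\overline{\partial_xJ^su}\big)$ — reduces, after writing $\bar u\,\partial_xJ^su+u\,\overline{\partial_xJ^su}=\partial_x h-2\,\mathrm{Re}(\bar u_x J^s u)$ with $h:=2\,\mathrm{Re}(\bar uJ^su)$ real and using skew-adjointness of $\mathcal{H}$ together with $\mathcal{F}[\mathcal{H}\partial_x h](\xi)=|\xi|\hat h(\xi)$, to $\beta\int h\,\mathcal{H}(\partial_x h)\,dx=\beta\|D^{1/2}h\|_{L^2}^2\le 0$, where $\beta\le 0$ is used. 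Everything else — the commutators $[J^s,|u|^2]$, $[J^s,\mathcal{H}(|u|^2)]$, $[\mathcal{H},u]$ and the lower-order Leibniz remainders — loses strictly less than one derivative and is controlled by $C(\|u\|_{H^{3/2+}})\|u\|_{H^s}^2$ via fractional Leibniz and Kato--Ponce commutator estimates, valid identically on $\mathbf{R}$ and on $\mathbf{T}$. This yields $\tfrac{d}{dt}\|u\|_{H^s}^2+|\beta|\,\|D^{1/2}h\|_{L^2}^2\le C(\|u\|_{H^{3/2+}})\|u\|_{H^s}^2$. Bootstrapping first at level $s=\tfrac32+$ fixes an existence time $T=T(\|\phi\|_{H^{3/2+}})$ with $\|u^\delta\|_{L^\infty_TH^{3/2+}}\le 2\|\phi\|_{H^{3/2+}}$ (so the ODE solutions all extend to $[0,T]$), and feeding this back into the estimate at level $H^s$ gives $\|u^\delta\|_{L^\infty_TH^s}\lesssim\|\phi\|_{H^s}$ uniformly in $\delta$.

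Next I would pass to the limit. The same cancellations, applied to the difference $w=u^\delta-u^{\delta'}$ at the $L^2$ level, give $\tfrac{d}{dt}\|w\|_{L^2}^2\le C(\|u^\delta\|_{H^{3/2+}},\|u^{\delta'}\|_{H^{3/2+}})\|w\|_{L^2}^2+(\text{mollification error})$ with \emph{no} loss of derivative — the dangerous terms become $\int\partial_x(\cdots)\,|w|^2$ after integration by parts, or a nonpositive $D_x$-contribution — so $\{u^\delta\}$ is Cauchy in $C([0,T];L^2)$, and interpolating with the uniform $H^s$ bound, it converges in $C([0,T];H^{s'})$ for every $s'<s$. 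The limit $u\in L^\infty([0,T];H^s)\cap\bigcap_{s'<s}C([0,T];H^{s'})$ solves \eqref{kdnls-e}--\eqref{ic-e}, is weakly continuous into $H^s$, and is unique in this class by the same $L^2$ difference estimate and Grönwall.

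Finally, to upgrade to $u\in C([0,T];H^s)$ and to obtain continuity of the solution map, I would run the Bona--Smith argument. Since $\phi\in H^s$, one has $\|(J_\delta-I)\phi\|_{H^{s-\sigma}}\le\delta^\sigma\omega(\delta)$ with $\omega(\delta)\to0$ for $0\le\sigma\le s$, while $\|u^\delta\|_{L^\infty_TH^{s+1}}\lesssim\delta^{-1}$ from the energy estimate one level higher (the mollified solutions being smooth). Comparing $u^\delta$ with $u^{\delta'}$ for $\delta'\in[\delta/2,\delta]$ at the $H^s$ level, every term is either of the form $C\|w\|_{H^s}^2$ or is controlled, via $\|w\|_{L^\infty_TL^2}\lesssim\delta^s\omega(\delta)$, the interpolation $\|w\|_{H^{s-1}}\lesssim\|w\|_{L^2}^{1/s}\|w\|_{H^s}^{(s-1)/s}$ and $\|u^{\delta'}\|_{H^{s+1}}\lesssim\delta^{-1}$, by $\omega(\delta)^{1/s}\|w\|_{H^s}^{(s-1)/s}$, which after Young's inequality contributes $\varepsilon\|w\|_{H^s}^2+C_\varepsilon\,\omega(\delta)^{2/(s+1)}$; the correction tends to $0$ and $\|w(0)\|_{H^s}=\|J_\delta\phi-J_{\delta'}\phi\|_{H^s}\to0$, so by Grönwall $\{u^\delta\}$ is Cauchy in $C([0,T];H^s)$ and $u\in C([0,T];H^s)$. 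Continuous dependence follows from the same scheme applied to two nearby data, splitting through the mollified solutions and using uniqueness plus the Lipschitz bound in the smooth class. I expect the main obstacle to be the $H^s$ energy estimate itself — isolating the favorable-sign dissipative remnant $\beta\|D^{1/2}h\|_{L^2}^2$ of the Hilbert-transform nonlinearity and checking that the remaining Leibniz/commutator terms are genuinely sub-principal for $s>3/2$ (this is exactly what demands $|u|^2$-first-derivatives in $L^\infty$, hence $s>3/2$) — together with the delicate balancing of the $\delta^{-1}$ growth of $\|u^\delta\|_{H^{s+1}}$ against the $\delta^s\omega(\delta)$ smallness of $\|u^\delta-u^{\delta'}\|_{L^2}$ in the Bona--Smith step.
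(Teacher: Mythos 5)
Your overall architecture --- regularize, prove a uniform $H^s$ energy bound exploiting the sign of $\beta$, contract in $L^2$, and upgrade to $C([0,T];H^s)$ by a Bona--Smith comparison --- is the same as the paper's, which uses a parabolic regularization $\varepsilon\partial_x^2u_\varepsilon$ together with the frequency truncation $\phi_\varepsilon=P_{\leq\varepsilon^{-\lambda}}\phi$ where you use a Friedrichs mollification of the nonlinearity; either regularization is workable. Your single-solution $H^s$ estimate, including the extraction of the nonpositive remnant $\beta\big\| D_x^{1/2}\big(2\Re(\bar uJ^su)\big)\big\|_{L^2}^2$, matches Proposition~\ref{prop:Hs-e}.

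The gap is in the $H^s$ difference estimate on which your Bona--Smith step rests. You assert that every term there is either $C\| w\|_{H^s}^2$ or a standard Bona--Smith term beaten by $\| u^{\delta'}\|_{H^{s+1}}\lesssim\delta^{-1}$ against the $L^2$ smallness of $w$. That is false for this nonlinearity. Decomposing the difference of the nonlinearities as in \eqref{nonl-e}, the pieces $\partial_x[\mathcal{H}(|w|^2)u]$ and $\partial_x[\mathcal{H}(2\Re(u\bar w))w]$ produce, after applying $D_x^s$ and pairing with $D_x^s\bar w$, a contribution of the form
\begin{equation*}
\int_\Omega \Big( D_x^{s+1}(|w|^2)\,u+D_x^{s+1}\big(2\Re(u\bar w)\big)\,w\Big) D_x^s\bar w\,dx
\ \approx\ 2\int_\Omega D_x^{s+\frac12}(|w|^2)\cdot D_x^{s+\frac12}\big(2\Re(u\bar w)\big)\,dx ,
\end{equation*}
which places $s+\tfrac12$ derivatives on each of two copies of $w$: half a derivative too many for $C\| w\|_{H^s}^2$, and the excess sits on $w$ rather than on the background solution, so interpolating against $\| u\|_{H^{s+1}}\lesssim\delta^{-1}$ does not recover it. The only way to close is to observe that the remaining pieces of the difference nonlinearity generate exactly the two nonpositive dissipative terms $\beta\big\| D_x^{s+\frac12}(|w|^2)\big\|_{L^2}^2$ and $\beta\big\| D_x^{s+\frac12}\big(2\Re(u\bar w)\big)\big\|_{L^2}^2$, and that the cross term above is absorbed by their sum via $2ab\leq a^2+b^2$ --- with no margin to spare. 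This is precisely the $I_3$/$I_4$ computation in Proposition~\ref{prop:Hsdiff-e}, and is why the paper stresses that, due to the lack of symmetry, the difference estimates require a more careful use of the dissipative structure than the single-solution bound. As written, your argument uses the dissipative structure only for one solution; without carrying it through the asymmetric difference equation, the Bona--Smith comparison does not close and you obtain convergence (and continuous dependence) only in $H^{s'}$ for $s'<s$.
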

\begin{rem}
(i) Theorem \ref{thm:lwp-e} is not a straightforward result following Bona and Smith \cite{BS75}.
Indeed, we use the argument in \cite{BS75}, but we need to take advantage of the dissipative nature of the system to obtain \emph{a priori} estimates of solution.
\par (ii) The same results as in the above theorem holds for the renormalized equation \eqref{kdnls-r}, because the transformation $u(t,x)\mapsto u(t,x-\nu t)$ is a homeomorphism on $C([0,T];H^s)$.
\end{rem}

In the following, we only consider the case $\alpha =0$ and $\beta =-1$ for simplicity, but the argument can be easily adapted to the case $\alpha \neq 0$ and $\beta <0$.
To prove the theorem, we consider the following regularized equation:
\begin{align}
   &\partial_t u_\varepsilon = i \partial_x^2 u_\varepsilon + \varepsilon \partial_x^2 u_\varepsilon - \partial_x \bigl [ \mathcal{H} (|u_\varepsilon|^2) u_\varepsilon \bigr ] , \quad t > 0, ~ x \in \Omega, \quad \varepsilon > 0. \label{kdnls-e2}
\end{align}
It is easy to show that the Cauchy problem \eqref{kdnls-e2} and \eqref{ic-e} is locally well-posed in $H^s(\Omega)$ for $s>\frac12$, with the local existence time $T=T(s, \| \phi \|_{H^s}, \varepsilon )>0$ vanishing as $\varepsilon \to 0$.

\subsection{Energy estimates}

In this subsection, we prove various energy-type estimates for a smooth solution and the difference of two smooth solutions of the regularized equation \eqref{kdnls-e2}.
We begin with the following:
\begin{prop}\label{prop:Hs-e}
For any $s\geq s_0>\frac32$ and $\phi \in H^\infty (\Omega)$, there exist $T_0=T_0(s_0, \| \phi \| _{H^{s_0}})>0$ and $C=C(s,s_0)>0$ such that the following holds.
Let $\varepsilon >0$ be arbitrary, $0<T\leq T_0$, and assume that the smooth solution $u_\varepsilon$ to \eqref{kdnls-e2} and \eqref{ic-e} exists on $[0,T]$.
Then, it holds that
\[ \| u_\varepsilon(t) \|_{H^s}\leq C\| \phi \| _{H^s},\qquad t\in [0,T].\] 
\end{prop}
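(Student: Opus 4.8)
The plan is to run a quasilinear energy estimate directly on the regularized equation \eqref{kdnls-e2} (with $\alpha=0$, $\beta=-1$, as fixed in the appendix), using the dissipative structure of the nonlinearity $\partial_x[\mathcal H(|u|^2)u]$ to absorb the unavoidable loss of one derivative. Since $\phi\in H^\infty$ the solution $u_\varepsilon$ is smooth and every manipulation below is legitimate; the content is that the resulting bounds are uniform in $\varepsilon$ with the advertised dependence. Following Bona and Smith, I would first establish the estimate at the single exponent $s=s_0$, where it has to close by itself, and then deduce the bound for all $s\ge s_0$ from a linear Gronwall inequality whose coefficient is controlled by the (already bounded) $H^{s_0}$ norm.

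For the $H^s$ estimate, apply $J^s=\langle\partial_x\rangle^s$ to \eqref{kdnls-e2}, pair with $J^s u_\varepsilon$ and take real parts. The term $iu_{xx}$ is skew-adjoint and drops out, while the regularization contributes $-\varepsilon\|J^s\partial_x u_\varepsilon\|_{L^2}^2\le0$, which I discard. It remains to control $\mathcal I:=-\operatorname{Re}\langle J^s\partial_x[\mathcal H(|u_\varepsilon|^2)u_\varepsilon],J^s u_\varepsilon\rangle$. I would move the outer $\partial_x$ by integration by parts, expand $J^s[\mathcal H(|u_\varepsilon|^2)u_\varepsilon]$ by the Leibniz rule, and integrate by parts repeatedly in $x$, using the skew-adjointness of $\partial_x$ and of $\mathcal H$ and the conjugate-linear structure of $|u|^2u$. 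This should split $\mathcal I$ into: (a) terms whose coefficient is $\partial_x\mathcal H(|u_\varepsilon|^2)=\mathcal H(\partial_x|u_\varepsilon|^2)$, which lies in $L^\infty$ once $s_0>3/2$ (Sobolev embedding and boundedness of $\mathcal H$ on $H^{1/2+}$), so these are $\lesssim_{s,s_0}\|u_\varepsilon\|_{H^{s_0}}^2\|u_\varepsilon\|_{H^s}^2$; (b) commutator and Leibniz-remainder terms, bounded by Kato--Ponce-type estimates with coefficients measured in $H^{s_0}\hookrightarrow W^{1,\infty}$, again $\lesssim_{s,s_0}\|u_\varepsilon\|_{H^{s_0}}^2\|u_\varepsilon\|_{H^s}^2$ (these estimates are identical for $\Omega=\mathbf R$ and $\Omega=\mathbf T$); and (c) the single genuinely order-one-losing piece, the diagonal frequency interaction, which for $\Omega=\mathbf T$ is $\beta\sum_{n_1}\langle n_1\rangle^{2s}n_1\big(\sum_{n_2}\operatorname{sgn}(n_1-n_2)|\widehat{u_\varepsilon}(n_2)|^2\big)|\widehat{u_\varepsilon}(n_1)|^2$ (with the obvious analogue on $\mathbf R$); symmetrizing in $(n_1,n_2)$ and using that $x\mapsto\langle x\rangle^{2s}x$ is increasing shows this equals $\beta$ times a non-negative quantity, hence is $\le0$ because $\beta<0$, so it is discarded together with the $\varepsilon$-term. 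Step (c) is the parabolic-smoothing effect: this term is not controllable by Sobolev norms, and its favorable sign is the only reason the estimate closes --- for the derivative NLS, where this sign is absent, one is forced to introduce a gauge transformation instead.

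Putting (a)--(c) together yields $\frac{d}{dt}\|u_\varepsilon(t)\|_{H^s}^2\le C_{s,s_0}\big(1+\|u_\varepsilon(t)\|_{H^{s_0}}^2\big)\|u_\varepsilon(t)\|_{H^s}^2$ for every $s\ge s_0$, with $C_{s,s_0}$ independent of $\varepsilon$. At $s=s_0$ this is an autonomous differential inequality $y'\le C_{s_0}(1+y)y$ with $y(0)=\|\phi\|_{H^{s_0}}^2$, so there is $T_0\sim\big(C_{s_0}(1+\|\phi\|_{H^{s_0}}^2)\big)^{-1}$, depending only on $s_0$ and $\|\phi\|_{H^{s_0}}$, on which $\|u_\varepsilon(t)\|_{H^{s_0}}\le2\|\phi\|_{H^{s_0}}$ uniformly in $\varepsilon$. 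Feeding this back into the inequality for a general $s\ge s_0$ and applying Gronwall on $[0,T_0]$, the exponent $C_{s,s_0}(1+4\|\phi\|_{H^{s_0}}^2)T_0$ is a constant depending only on $s$ and $s_0$, so $\|u_\varepsilon(t)\|_{H^s}\le C(s,s_0)\|\phi\|_{H^s}$ on $[0,T_0]$ (hence on any $[0,T]$ with $T\le T_0$), which is the claim.

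The main obstacle is the organization of $\mathcal I$ in (a)--(c): one must arrange the successive integrations by parts and commutator reductions so that the one term for which no spare derivative is available is exactly the dissipative diagonal term, while every other contribution is shown to lose strictly less than a full derivative with all coefficients controlled by the threshold norm $H^{s_0}$, $s_0>3/2$. Carrying the several Hilbert transforms through these manipulations (and exploiting the antisymmetry $\operatorname{Re}\langle\mathcal H f,\bar f\rangle=0$ where needed) is where essentially all the work lies; the bootstrap from $s_0$ to general $s$, and the later passage $\varepsilon\to0$, are routine.
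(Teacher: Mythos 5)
Your overall architecture --- an $H^s$ energy identity in which the dispersive and $\varepsilon$-terms are discarded, the derivative-losing part of the cubic term is isolated and shown to carry a favorable sign because $\beta<0$, and a Gronwall/bootstrap at the threshold exponent $s_0$ produces a $T_0$ depending only on $s_0$ and $\|\phi\|_{H^{s_0}}$ --- is exactly the paper's, and your concluding Gronwall/continuity step is correct. The gap is in your step (c): the genuinely order-one-losing part of $-2\Re\langle J^s\partial_x[\mathcal H(|u|^2)u],J^su\rangle$ is \emph{not} exhausted by the resonant (diagonal) piece $\sum_{n_1,n_2}\langle n_1\rangle^{2s}n_1\,\mathrm{sgn}(n_1-n_2)|\hat u(n_1)|^2|\hat u(n_2)|^2$. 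Take frequencies $n_1=N$, $n_2=0$, $n_3=1$, $n=N+1$ (compatible with $n+n_2=n_1+n_3$): the multiplier is $\langle N+1\rangle^{2s}(N+1)\,\mathrm{sgn}(N)\sim N^{2s+1}$, the monomial $\hat u(N)\overline{\hat u(0)}\hat u(1)\overline{\hat u(N+1)}$ has no definite sign, and $N^{2s+1}|\hat u(N)||\hat u(N+1)||\hat u(0)||\hat u(1)|$ exceeds $\|u\|_{H^{s_0}}^2\|u\|_{H^s}^2$ by a full factor of $N$. This interaction comes from $D_x(|u|^2)\cdot u$ with $|u|^2$ at \emph{high} frequency, so it is not of your type (a) (whose integration by parts only helps when $\mathcal H(|u|^2)$ carries the low frequency), and no Kato--Ponce-type commutator bound can absorb the extra derivative since both high factors would each need $s+\tfrac12$ derivatives.

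What rescues this term is that it is precisely a cross term of a complete square: the correct sign-definite object is $\beta\,\|D_x^{s+1/2}(|u|^2)\|_{L^2}^2=\beta\sum_\xi|\xi|^{2s+1}\,|\widehat{|u|^2}(\xi)|^2\le 0$, whose expansion contains your diagonal \emph{and} all the off-diagonal high$\times$low cross terms such as the one above (at $\xi=N$, $\widehat{|u|^2}(N)=\hat u(N)\overline{\hat u(0)}+\hat u(N+1)\overline{\hat u(1)}$ and the square produces exactly the offending cross term with the needed size and sign). This is how the paper argues: it reduces the dangerous contribution to $-\int D_x^{s+1}(|u|^2)\cdot 2\Re(uD_x^s\bar u)\,dx$, replaces $2\Re(uD_x^s\bar u)$ by $D_x^s(|u|^2)$ at the cost of a double-commutator remainder controlled via the symbol inequality $\bigl||\xi+\eta|^\sigma-|\xi|^\sigma-|\eta|^\sigma\bigr|\lesssim(|\xi|\vee|\eta|)^{\sigma-1}(|\xi|\wedge|\eta|)$, and identifies what remains as $-\|D_x^{s+1/2}(|u|^2)\|_{L^2}^2$. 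You should replace your diagonal extraction by this complete-square identification; with that correction the remainder classification and the bootstrap in your final paragraph go through as written.
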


\begin{proof}
It suffices to prove the estimate in the equivalent norm
\[ \| u\| _{\tilde{H}^s}^2:=\| u\|_{L^2}^2+\| D_x^su\|_{L^2}^2, \quad D_x=(-\partial _x^2)^{1/2}=\partial _x\mathcal{H}.\]

For the $L^2$ norm, we have
\begin{align*}
\frac{d}{dt}\| u_\varepsilon (t)\|_{L^2}^2&=-2\varepsilon \| \partial_xu_\varepsilon (t)\|_{L^2}^2-2\Re \int _\Omega \partial_x \big[ \mathcal{H}(|u_\varepsilon (t)|^2)u_\varepsilon (t)\big] \bar{u}_\varepsilon (t) \,dx \\
&\leq -2\int _\Omega D_x(|u_\varepsilon (t)|^2)|u_\varepsilon (t)|^2 \,dx -\int _\Omega \mathcal{H}(|u_\varepsilon (t)|^2)\partial_x (|u_\varepsilon (t)|^2) \,dx\\
&=-\int _\Omega D_x(|u_\varepsilon (t)|^2)|u_\varepsilon (t)|^2 \,dx
=-\big\| D_x^{\frac12}(|u_\varepsilon (t)|^2)\big\|_{L^2}^2
~\leq 0.
\end{align*}
Next, we have
\begin{align*}
\frac{d}{dt}\| D_x^su_\varepsilon (t)\| _{L^2}^2&=-2\varepsilon \| \partial_xD_x^su_\varepsilon (t)\|_{L^2}^2-2\Re \int _\Omega D_x^s\partial_x \big[ \mathcal{H}(|u_\varepsilon (t)|^2)u_\varepsilon (t)\big] D_x^s\bar{u}_\varepsilon (t) \,dx \\
&\leq -2\Re \int _\Omega D_x^s\big( D_x(|u_\varepsilon (t)|^2)u_\varepsilon (t) + \mathcal{H}(|u_\varepsilon (t)|^2)\partial_xu_\varepsilon (t)\big) D_x^s\bar{u}_\varepsilon (t) \,dx.
\end{align*}

Since 
\[ \sigma \geq 1,~\xi ,\eta \in \mathbf{R}\quad \Rightarrow \quad \big| |\xi +\eta |^\sigma -|\xi |^\sigma -|\eta |^\sigma \big| \leq C(|\xi |\vee |\eta |)^{\sigma -1}(|\xi |\wedge |\eta |),\]
we see that (for $s>3/2$)
\begin{align*}
\frac{d}{dt}\| D_x^su_\varepsilon (t)\| _{L^2}^2&\leq -\int _\Omega D_x^{s+1}(|u_\varepsilon (t)|^2)\cdot 2\Re \big( u_\varepsilon (t)D_x^s\bar{u}_\varepsilon (t) \big) \,dx \\
&\quad -\int _\Omega \mathcal{H}(|u_\varepsilon (t)|^2)\cdot 2\Re \big( D_x^s\partial_xu_\varepsilon (t)D_x^s\bar{u}_\varepsilon (t)\big) \,dx
~+\mathcal{R}[u_\varepsilon (t)] ,
\end{align*}
where $\mathcal{R}$ denotes the integrals of products of four functions with $2s+1$ derivatives such that each function has at most $s$ derivatives.
The first integral on the right-hand side is treated as 
\begin{align*}
&-\int _\Omega D_x^{s+1}(|u_\varepsilon (t)|^2)\cdot 2\Re \big( u_\varepsilon (t)D_x^s\bar{u}_\varepsilon (t) \big) \,dx \\
&\quad =-\int _\Omega D_x^{s+1}(|u_\varepsilon (t)|^2)\cdot D_x^s( |u_\varepsilon (t)|^2) \,dx\\
&\qquad +\int _\Omega D_x^s(|u_\varepsilon (t)|^2)\cdot D_x\big( D_x^s(|u_\varepsilon (t)|^2)-\bar{u}_\varepsilon (t)D_x^su_\varepsilon (t) -u_\varepsilon (t)D_x^s\bar{u}_\varepsilon (t)\big) \,dx\\
&\quad =-\big\| D_x^{s+\frac{1}{2}}(|u_\varepsilon (t)|^2)\big\| _{L^2}^2+\mathcal{R}[u_\varepsilon (t)] 
~ \leq \mathcal{R}[u_\varepsilon (t)] ,
\end{align*}
while the second integral is already $\mathcal{R}[u_\varepsilon (t)]$ by an integration by parts.
Hence, using interpolation and Sobolev embedding, and then combining with the $L^2$ estimate, we obtain
\[ \frac{d}{dt}\| u_\varepsilon (t)\| _{\tilde{H}^s}^2\leq C_s\| u_\varepsilon (t)\|_{\tilde{H}^{s_0}}^2\| u_\varepsilon (t)\|_{\tilde{H}^s}^2\]
for $s_0>3/2$, where $C_s$ is a positive constant depending on $s$ (and $s_0$).
By the Gronwall inequality, we have
\begin{equation}
\| u_\varepsilon (t)\| _{\tilde{H}^s}^2\leq \| \phi \|_{\tilde{H}^s}^2\exp \Big( C_s\int _0^t\| u_\varepsilon (\tau )\|_{\tilde{H}^{s_0}}^2\,d\tau \Big) .\label{est:Hs-e}
\end{equation}
Take $T_0>0$ such that $\exp \big( C_{s_0}\cdot 2\| \phi\| _{\tilde{H}^{s_0}}^2T_0\big) <2$.
Then, from \eqref{est:Hs-e} with $s=s_0$ and a continuity argument, we see $\| u_\varepsilon (t)\|_{\tilde{H}^{s_0}}^2\leq 2\| \phi\| _{\tilde{H}^{s_0}}^2$ for $t\leq T_0$.
Substituting this bound into \eqref{est:Hs-e}, we have $\| u_\varepsilon (t)\|_{\tilde{H}^s}^2\leq 2^{C_s/C_{s_0}}\| \phi\| _{\tilde{H}^s}^2$ for $t\leq T_0$.
\end{proof}

Next, we give \emph{a priori} estimates for the difference of two solutions of the regularized equations \eqref{kdnls-e2} with different $\varepsilon$'s.
Due to the lack of symmetry, the difference estimates require more careful use of the dissipative structure of KDNLS.
Let $\varepsilon \geq \varepsilon' >0$, and let $u_\varepsilon, u_{\varepsilon'}$ be smooth solutions of \eqref{kdnls-e2} with parameters $\varepsilon ,\varepsilon'$ starting from the data $\phi _{\varepsilon}$ and $\phi_{\varepsilon'}$, respectively.
By Proposition~\ref{prop:Hs-e}, these solutions exist on a time interval $[0,T_0]$ depending on $M:=\max \{ \| \phi _{\varepsilon}\|_{H^{s_0}},\,\| \phi _{\varepsilon'}\|_{H^{s_0}}\}$ but not on $\varepsilon$, $\varepsilon'$, and satisfy
\begin{equation}\label{apriori-e}
\| u_\varepsilon (t)\|_{H^s}\leq C_s\| \phi _{\varepsilon}\|_{H^s},\quad \| u_{\varepsilon'}(t)\|_{H^s}\leq C_s\| \phi _{\varepsilon'}\|_{H^s},\quad t\in [0,T_0]
\end{equation}
for any $s>\frac32$.
Define $w:=u_\varepsilon -u_{\varepsilon'}$, then $w$ is a solution of 
\begin{align}
&\partial_t w = i \partial_x^2 w +\varepsilon'\partial_x^2w +(\varepsilon -\varepsilon') \partial_x^2u_\varepsilon \notag \\
&\qquad\quad - \partial_x \bigl [ \mathcal{H} (|u_\varepsilon|^2) u_\varepsilon - \mathcal{H}(|u_{\varepsilon'}|^2) u_{\varepsilon'} \bigr ] , \quad (t,x)\in [0,T_0]\times \Omega, \label{kdnls-e3} \\
&w(0,x)=\phi _{\varepsilon}(x)-\phi _{\varepsilon'}(x), \quad x\in \Omega .\label{ic-e3}
\end{align}
Note that
\begin{align}
&\mathcal{H} (|u_\varepsilon|^2) u_\varepsilon - \mathcal{H}(|u_{\varepsilon'}|^2) u_{\varepsilon'}=\mathcal{H} (|u_\varepsilon|^2) u_\varepsilon - \mathcal{H}(|u_\varepsilon -w|^2) (u_\varepsilon -w) \notag \\
&=\mathcal{H}(|w|^2)w-\big( \mathcal{H}(2\Re(u_\varepsilon \bar{w}))w+\mathcal{H}(|w|^2)u_\varepsilon \big) +\big( \mathcal{H}(|u_\varepsilon |^2)w+\mathcal{H}(2\Re(u_\varepsilon \bar{w}))u_\varepsilon \big) .\label{nonl-e}
\end{align}

Before proving the difference estimate in $H^s$, we state and prove the easier $L^2$ estimate as the following lemma:
\begin{lem}\label{lem:L2diff-e}
Let $s_0>\frac32$.
With the above notation, we have
\[ \| w(t)\|_{L^2}^2\lesssim \| \phi _{\varepsilon}-\phi _{\varepsilon'} \|_{L^2}^2+\varepsilon ^2\| \phi _{\varepsilon}\|_{H^2}^2,\qquad t\in [0,T_0],\]
where the implicit constant depends only on $M:=\max \{ \| \phi _{\varepsilon}\|_{H^{s_0}},\,\| \phi _{\varepsilon'}\|_{H^{s_0}}\}$ (and $s_0$) and not on $\varepsilon$, $\varepsilon'$.
\end{lem}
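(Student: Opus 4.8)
The plan is to run an $L^2$ energy estimate directly on the difference equation \eqref{kdnls-e3}, exploiting the dissipative structure of KDNLS exactly as in the proof of Proposition~\ref{prop:Hs-e}. Multiplying \eqref{kdnls-e3} by $\bar w$, integrating over $\Omega$ and taking real parts, the term $i\partial_x^2 w$ drops out, and the term $\varepsilon'\partial_x^2 w$ produces $-2\varepsilon'\|\partial_x w\|_{L^2}^2\le 0$, which we simply discard. The inhomogeneous term $(\varepsilon-\varepsilon')\partial_x^2 u_\varepsilon$ is handled by Cauchy--Schwarz and Young's inequality together with the \emph{a priori} bound \eqref{apriori-e} at the $H^2$ level (note $2>\tfrac32$):
\[ 2|\varepsilon-\varepsilon'|\,\Big|\Re\int_\Omega \partial_x^2 u_\varepsilon\,\bar w\,dx\Big| \le (\varepsilon-\varepsilon')^2\|u_\varepsilon\|_{H^2}^2+\|w\|_{L^2}^2 \lesssim_M \varepsilon^2\|\phi_\varepsilon\|_{H^2}^2+\|w\|_{L^2}^2. \]
It then remains to control the nonlinear contribution $-2\Re\int_\Omega \partial_x\big[\mathcal{H}(|u_\varepsilon|^2)u_\varepsilon-\mathcal{H}(|u_{\varepsilon'}|^2)u_{\varepsilon'}\big]\bar w\,dx$, for which I use the decomposition \eqref{nonl-e} into its five terms.

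For each of the five terms I write $\partial_x[\mathcal{H}(f)h]=D_x(f)h+\mathcal{H}(f)\partial_x h$ and integrate by parts, using that $\partial_x\mathcal{H}=D_x$ is self-adjoint and that $\int_\Omega D_x(\varphi)\varphi\,dx=\|D_x^{1/2}\varphi\|_{L^2}^2$ for real-valued $\varphi$. Setting $g:=2\Re(u_\varepsilon\bar w)$, so that $|w|^2-g=|u_{\varepsilon'}|^2-|u_\varepsilon|^2$, the four contributions carrying a genuine half-derivative on $w$ — the cubic term $\mathcal{H}(|w|^2)w$, the terms $-\mathcal{H}(g)w$ and $-\mathcal{H}(|w|^2)u_\varepsilon$, and the part $D_x(g)\,u_\varepsilon$ of $\mathcal{H}(g)u_\varepsilon$ — combine, after these integrations by parts, into
\[ -\big\|D_x^{1/2}(|w|^2)\big\|_{L^2}^2+2\int_\Omega D_x^{1/2}(|w|^2)\,D_x^{1/2}g\,dx-\big\|D_x^{1/2}g\big\|_{L^2}^2 = -\big\|D_x^{1/2}\big(|u_\varepsilon|^2-|u_{\varepsilon'}|^2\big)\big\|_{L^2}^2\le 0, \]
which is the difference analogue of the dissipative term $-\|D_x^{1/2}(|u|^2)\|_{L^2}^2$ in the $L^2$ identity. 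All remaining pieces — $D_x(|u_\varepsilon|^2)w$ and $\mathcal{H}(|u_\varepsilon|^2)\partial_x w$ (the latter again integrated by parts so that the derivative moves off $w$), $\mathcal{H}(|w|^2)\partial_x u_\varepsilon$, and $\mathcal{H}(g)\partial_x u_\varepsilon$ — carry at most one derivative, always on $u_\varepsilon$, and are bounded by $C_M\|w\|_{L^2}^2$ using the Sobolev embedding $H^{s_0}\hookrightarrow W^{1,\infty}$ ($s_0>\tfrac32$), the $L^2$-boundedness of $\mathcal{H}$, the elementary inequality $\|w\|_{L^4}^2\le\|w\|_{L^\infty}\|w\|_{L^2}$, and the \emph{a priori} bounds \eqref{apriori-e} (which give in particular $\|u_\varepsilon(t)\|_{W^{1,\infty}}+\|w(t)\|_{L^\infty}\lesssim_M 1$ on the common existence interval $[0,T_0]$).

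Collecting these estimates yields a differential inequality
\[ \frac{d}{dt}\|w(t)\|_{L^2}^2\le C_M\|w(t)\|_{L^2}^2+C_M\,\varepsilon^2\|\phi_\varepsilon\|_{H^2}^2,\qquad t\in[0,T_0], \]
and since $w(0)=\phi_\varepsilon-\phi_{\varepsilon'}$ and $T_0$ depends only on $M$ (and $s_0$), Gronwall's inequality gives the claimed bound. I expect the only delicate point to be the bookkeeping in the previous paragraph: one must check that, after all the integrations by parts, the half-derivative terms appear with exactly the coefficients $(-1,+2,-1)$ needed to assemble the perfect negative square $-\|D_x^{1/2}(|u_\varepsilon|^2-|u_{\varepsilon'}|^2)\|_{L^2}^2$ — rather than merely a quantity that must be absorbed via Young's inequality — which is precisely the place where dissipativity ($\beta<0$, here $\beta=-1$) is used, just as in Proposition~\ref{prop:Hs-e}.
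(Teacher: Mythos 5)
Your proof is correct, and I verified the key cancellation: with $g:=2\Re(u_\varepsilon\bar w)$, expanding $\partial_x[\mathcal{H}(f)h]=D_x(f)h+\mathcal{H}(f)\partial_x h$, integrating by parts and using $\int\mathcal{H}(f)\partial_xf\,dx=-\|D_x^{1/2}f\|_{L^2}^2$ for real $f$, the half-derivative contributions do come out with coefficients $(-1,+2,-1)$ and assemble into $-\|D_x^{1/2}(|w|^2-g)\|_{L^2}^2=-\|D_x^{1/2}(|u_\varepsilon|^2-|u_{\varepsilon'}|^2)\|_{L^2}^2\le 0$, while every leftover piece has its one derivative on $u_\varepsilon$ (or on $|u_\varepsilon|^2$) and is $\lesssim_M\|w\|_{L^2}^2$ by the $H^{s_0}\hookrightarrow W^{1,\infty}$ bounds from \eqref{apriori-e}. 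The overall skeleton (energy identity, discarding $-2\varepsilon'\|\partial_xw\|_{L^2}^2$, Cauchy--Schwarz on $(\varepsilon-\varepsilon')\partial_x^2u_\varepsilon$, the decomposition \eqref{nonl-e}, Gronwall) is the same as the paper's; the difference is in how the dissipation is exploited. The paper is cruder: it bounds everything that is quadratic or higher in $w$ directly by $C_M\|w\|_{L^2}^2$ (using that $\|w\|_{H^{s_0}}\lesssim_M 1$, so extra factors of $w$ beyond two cost only constants), and from the terms linear in $w$ it extracts a single negative square $-\|D_x^{1/2}(2\Re(u_\varepsilon\bar w))\|_{L^2}^2$, which it simply discards. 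Your version exhibits the full linearized dissipation as one perfect negative square, which is structurally cleaner (it is the exact difference analogue of the identity \eqref{L2eq}) and would be the right form if one needed the dissipation quantitatively, at the cost of the coefficient bookkeeping you flag; the paper's route avoids that bookkeeping entirely because at the $L^2$ level none of these good terms is actually needed. Both arguments close the same way via Gronwall.
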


\begin{proof}
The equation \eqref{kdnls-e3} gives
\begin{align*}
\frac{d}{dt}\| w\|_{L^2}^2&=-2\varepsilon' \| \partial_xw\|_{L^2}^2+2(\varepsilon -\varepsilon')\Re \int _{\Omega} \partial _x^2u_\varepsilon \cdot \bar{w}\,dx \\
&\quad -2\Re \int _\Omega \partial _x\big( \mathcal{H} (|u_\varepsilon|^2) u_\varepsilon - \mathcal{H}(|u_{\varepsilon'}|^2) u_{\varepsilon'}\big) \bar{w}\,dx .
\end{align*}
We dispose of the first term and apply the Schwarz inequality to the second term to obtain the bound 
\[ \varepsilon \| u_\varepsilon\|_{H^2}\| w\|_{L^2}\leq \| w\|_{L^2}^2+ C\varepsilon ^2\| \phi _{\varepsilon}\|_{H^2}^2,\]
where we have used the \emph{a priori} estimate \eqref{apriori-e}.
For the last integral, we apply \eqref{nonl-e} and observe that the terms which are cubic or higher with respect to $w$ can be estimated (using \eqref{apriori-e}) by
\[ \big( \| u_\varepsilon\|_{H^{s_0}}+\| u_{\varepsilon'}\|_{H^{s_0}}\big) ^2\| w\| _{L^2}^2\lesssim _M\| w\|_{L^2}^2.\]
The remaining terms, which are quadratic in $w$, can be treated as follows:
\begin{gather*}
\begin{aligned}
-2\Re \int _\Omega \partial _x\big( \mathcal{H}(|u_\varepsilon |^2)w\big) \bar{w}\,dx &=\int _\Omega \mathcal{H}(|u_\varepsilon |^2)\partial _x(|w|^2)\,dx \\
&\leq \big\| D_x(|u_\varepsilon |^2)\big\| _{L^\infty}\| w\| _{L^2}^2
~\lesssim _M\| w\|_{L^2}^2,
\end{aligned}\\
\begin{aligned}
&-2\Re \int _\Omega \partial _x\big( \mathcal{H}(2\Re(u_\varepsilon \bar{w}))u_\varepsilon \big) \bar{w}\,dx\\
&\quad =-\int _\Omega D_x(2\Re(u_\varepsilon \bar{w}))\cdot 2\Re (u_\varepsilon \bar{w})\,dx -\int _\Omega \mathcal{H}(2\Re(u_\varepsilon \bar{w}))\cdot 2\Re (\partial _xu_\varepsilon \cdot \bar{w})\,dx\\
&\quad \leq -\big\| D_x^{\frac{1}{2}}(2\Re (u_\varepsilon \bar{w}))\big\|_{L^2}^2+4\| u_\varepsilon\|_{L^\infty}\| \partial _xu_\varepsilon\|_{L^\infty}\| w\|_{L^2}^2
~ \lesssim _M\| w\|_{L^2}^2.
\end{aligned}
\end{gather*}
From the estimates obtained above, we have
\begin{equation}\label{est:L2diff-e}
\frac{d}{dt}\| w(t)\|_{L^2}^2\lesssim _M\| w(t)\| _{L^2}^2+\varepsilon ^2\| \phi _{\varepsilon}\|_{H^2}^2,\qquad t\in [0,T_0].
\end{equation}
The Gronwall inequality implies the claimed estimate.
\end{proof}

Now, we prove the $H^s$ difference estimate.
\begin{prop}\label{prop:Hsdiff-e}
Let $s\geq s_0>\frac32$.
We have
\begin{align*}
\| w(t)\|_{H^s}^2&\lesssim \| \phi _{\varepsilon}-\phi _{\varepsilon'} \|_{H^s}^2+\varepsilon ^2\| \phi _{\varepsilon}\|_{H^{s+2}}^2\\
&\quad +\big( \| \phi _{\varepsilon}-\phi _{\varepsilon'} \|_{L^2}^2+\varepsilon ^2\| \phi _{\varepsilon}\|_{H^2}^2\big) \| \phi _{\varepsilon}\|_{H^{s+1}}^{\frac{2s}{s+1-s_0}},\qquad t\in [0,T_0],
\end{align*}
where the implicit constant depends only on $s$, $s_0$, $M$ but not on $\varepsilon$, $\varepsilon'$.
\end{prop}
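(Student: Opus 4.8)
The plan is to run a weighted energy estimate for $\|w(t)\|_{\tilde H^s}^2=\|w(t)\|_{L^2}^2+\|D_x^sw(t)\|_{L^2}^2$ (in the equivalent norm used in the proof of Proposition~\ref{prop:Hs-e}), noting that the $L^2$ part is already furnished by Lemma~\ref{lem:L2diff-e}, and so concentrating on $\|D_x^sw\|_{L^2}^2$. Differentiating in time via \eqref{kdnls-e3}, the genuine dissipation $-2\varepsilon'\|\partial_xD_x^sw\|_{L^2}^2$ is non-positive and discarded, the parabolic-regularization forcing $2(\varepsilon-\varepsilon')\,\mathrm{Re}\int D_x^s\partial_x^2u_\varepsilon\cdot D_x^s\bar w\,dx$ is estimated by Cauchy--Schwarz and, with the uniform-in-$\varepsilon$ bound \eqref{apriori-e} and Young's inequality, contributes $C_M\varepsilon^2\|\phi_\varepsilon\|_{H^{s+2}}^2$ plus a controllable fraction of $\|D_x^sw\|_{L^2}^2$. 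The remaining, main, contribution is $-2\,\mathrm{Re}\int D_x^s\partial_x\big[\mathcal{H}(|u_\varepsilon|^2)u_\varepsilon-\mathcal{H}(|u_{\varepsilon'}|^2)u_{\varepsilon'}\big]\,D_x^s\bar w\,dx$, which I would attack using the decomposition \eqref{nonl-e}.

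Insert \eqref{nonl-e} to split the difference of the nonlinearities into pieces that are linear, quadratic and cubic in $w$. In every piece the potentially uncontrollable situation is when a full $s+1$ derivatives land on a factor of $w$; as in the proof of Proposition~\ref{prop:Hs-e}, I would integrate by parts and symmetrize so that these top-order contributions either become harmless (a coefficient $\|D_x(|u_\varepsilon|^2)\|_{L^\infty}\lesssim_M1$ times $\|D_x^sw\|_{L^2}^2$) or turn into negative squares of the form $-\big\|D_x^{s+\frac12}\big(2\,\mathrm{Re}(\bar u_\varepsilon w)\big)\big\|_{L^2}^2$, $-\big\|D_x^{s+\frac12}(|w|^2)\big\|_{L^2}^2$ and the like, which are discarded thanks to $\beta<0$ (this is the ``more careful use of the dissipative structure'' needed because the difference equation no longer enjoys the $u\leftrightarrow\bar u$ symmetry of a single solution). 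What survives are commutator remainders from the symmetrization together with all non-top-order terms; these are $\mathcal{R}$-type integrals carrying $2s+1$ derivatives in which each factor of $w$ carries at most $s$ derivatives while a single factor of $u_\varepsilon$ may carry up to $s+1$. By Kato--Ponce/fractional-Leibniz estimates, Sobolev embedding, and \eqref{apriori-e} for the low-order $u_\varepsilon$ norms, each such term is bounded either by $C_M\|D_x^sw\|_{L^2}^2$ (when no high $u_\varepsilon$ norm appears, using $\|w\|_{L^\infty}\le\|u_\varepsilon\|_{L^\infty}+\|u_{\varepsilon'}\|_{L^\infty}\lesssim_M1$) or by $C_M\|u_\varepsilon\|_{H^{s+1}}\,\|w\|_{L^\infty}\,\|D_x^sw\|_{L^2}$.

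For the latter, crucial, type I would \emph{not} use $\|w\|_{L^\infty}\lesssim_M1$ but instead interpolate, $\|w\|_{L^\infty}\lesssim\|w\|_{H^{s_0-1}}\lesssim\|w\|_{L^2}^{1-\frac{s_0-1}{s}}\|D_x^sw\|_{L^2}^{\frac{s_0-1}{s}}$ (valid since $s_0-1>\tfrac12$ and $s_0-1<s$), and then absorb a small multiple of $\|D_x^sw\|_{L^2}^2$ into the left-hand side by Young's inequality; the conjugate exponent produces precisely $\|u_\varepsilon\|_{H^{s+1}}^{\frac{2s}{s+1-s_0}}\|w\|_{L^2}^2$. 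Collecting everything yields a differential inequality
\[
\frac{d}{dt}\|D_x^sw(t)\|_{L^2}^2\le C_M\|D_x^sw(t)\|_{L^2}^2+C_M\|u_\varepsilon(t)\|_{H^{s+1}}^{\frac{2s}{s+1-s_0}}\|w(t)\|_{L^2}^2+C_M\varepsilon^2\|\phi_\varepsilon\|_{H^{s+2}}^2
\]
on $[0,T_0]$. Applying Gronwall's inequality on $[0,T_0]$, using $\|u_\varepsilon(t)\|_{H^{s+1}}\le C\|\phi_\varepsilon\|_{H^{s+1}}$ from \eqref{apriori-e}, inserting the $L^2$ difference bound of Lemma~\ref{lem:L2diff-e} for $\|w(t)\|_{L^2}^2$, and combining with that $L^2$ bound, gives the stated estimate. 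The main obstacle is the second step: carefully tracking which terms in the difference of the two nonlinearities force $s+1$ derivatives onto $w$, and verifying that, despite the loss of conjugation symmetry, the symmetrization still produces dissipative (favorably-signed) squares, so that the energy argument closes with $\|u_\varepsilon\|_{H^{s+1}}$ — not $\|w\|_{H^{s+1}}$ — on the right-hand side.
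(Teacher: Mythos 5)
Your overall strategy coincides with the paper's: differentiate $\|D_x^sw\|_{L^2}^2$, discard $-2\varepsilon'\|\partial_xD_x^sw\|_{L^2}^2$, handle $(\varepsilon-\varepsilon')\partial_x^2u_\varepsilon$ by Cauchy--Schwarz and \eqref{apriori-e}, decompose via \eqref{nonl-e}, and close the terms carrying $\|u_\varepsilon\|_{H^{s+1}}$ by exactly the interpolation/Young step you describe (your exponent $\frac{2s}{s+1-s_0}$ is the right one). However, there is a genuine gap at the heart of the argument: your dichotomy that every top-order contribution is ``either harmless or a negative square which is discarded'' is false, and your own closing sentence flags the issue without resolving it. After symmetrization, the quadratic-in-$w$ piece of \eqref{nonl-e} produces the sign-indefinite cross term
\[
\int_\Omega\Big(D_x^{s+1}\big(2\Re(u_\varepsilon\bar w)\big)\,D_x^s(|w|^2)+D_x^{s+1}(|w|^2)\,D_x^s\big(2\Re(u_\varepsilon\bar w)\big)\Big)\,dx
\;=\;2\int_\Omega D_x^{s+\frac12}\big(2\Re(u_\varepsilon\bar w)\big)\,D_x^{s+\frac12}(|w|^2)\,dx,
\]
which carries $s+\tfrac12$ derivatives on quadratic expressions in $w$ and is \emph{not} controllable by $C_M\|w\|_{H^s}^2$ nor by $\|u_\varepsilon\|_{H^{s+1}}\|w\|_{L^\infty}\|D_x^sw\|_{L^2}$. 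It is bounded by $2\|D_x^{s+\frac12}(|w|^2)\|_{L^2}\,\|D_x^{s+\frac12}(2\Re(u_\varepsilon\bar w))\|_{L^2}$, and the only way to close is to \emph{retain} — not discard — the two negative squares $-\|D_x^{s+\frac12}(|w|^2)\|_{L^2}^2$ (from the cubic term $\mathcal{H}(|w|^2)w$) and $-\|D_x^{s+\frac12}(2\Re(u_\varepsilon\bar w))\|_{L^2}^2$ (from the term $D_x^{s+1}(2\Re(u_\varepsilon\bar w))\,u_\varepsilon$ in the linear-in-$w$ piece), and absorb the cross term via $2ab\le a^2+b^2$. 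This exact cancellation between the three pieces of \eqref{nonl-e} is the ``careful use of the dissipative structure'' the paper refers to; without it, your differential inequality does not close with only $\|D_x^sw\|_{L^2}^2$ and $\|u_\varepsilon\|_{H^{s+1}}^{\frac{2s}{s+1-s_0}}\|w\|_{L^2}^2$ on the right-hand side. The remainder of your argument (commutator remainders, the $L^2$ input from Lemma~\ref{lem:L2diff-e}, and Gronwall) is fine once this step is supplied.
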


\begin{proof}
The equation \eqref{kdnls-e3} gives
\begin{align*}
\frac{d}{dt}\| D_x^sw\|_{L^2}^2&=-2\varepsilon' \| \partial_xD_x^sw\|_{L^2}^2+2(\varepsilon -\varepsilon')\Re \int _{\Omega} \partial _x^2D_x^su_\varepsilon \cdot D_x^s\bar{w}\,dx \\
&\quad -2\Re \int _\Omega D_x^s\partial _x\big( \mathcal{H} (|u_\varepsilon|^2) u_\varepsilon - \mathcal{H}(|u_{\varepsilon'}|^2) u_{\varepsilon'}\big) D_x^s\bar{w}\,dx .
\end{align*}

We treat the first two terms on the right-hand side as we did in the proof of Lemma~\ref{lem:L2diff-e}.
In view of \eqref{apriori-e}, these terms are bounded by
\begin{equation}\label{est0-e}
\| w(t)\|_{H^s}^2+C\varepsilon ^2\| \phi _{\varepsilon}\|_{H^{s+2}}^2.
\end{equation}

Let us consider the last integral based on the expression \eqref{nonl-e}.
The term corresponding to $\mathcal{H}(|w|^2)w$ is estimated in exactly the same way as we did in the proof of Proposition~\ref{prop:Hs-e}, which gives
\begin{equation}\label{est1-e}
\begin{split}
&-2\Re \int _\Omega D_x^s\partial _x\big( \mathcal{H}(|w(t)|^2)w(t)\big) D_x^s\bar{w}(t)\,dx \\
&\quad \leq -\big\| D_x^{s+\frac12}(|w(t)|^2)\big\|_{L^2}^2+C\| w(t)\| _{H^{s_0}}^2\| w(t)\|_{H^s}^2\\
&\quad \leq -\big\| D_x^{s+\frac12}(|w(t)|^2)\big\|_{L^2}^2+C_M\| w(t)\|_{H^s}^2,
\end{split}
\end{equation}
by \eqref{apriori-e}.
For the other terms
\begin{align*}
&+2\Re \int _\Omega D_x^s\partial _x\big( \mathcal{H}(2\Re(u_\varepsilon \bar{w}))w+\mathcal{H}(|w|^2)u_\varepsilon\big) D_x^s\bar{w}\,dx \\
&-2\Re \int _\Omega D_x^s\partial _x\big( \mathcal{H}(|u_\varepsilon |^2)w+\mathcal{H}(2\Re(u_\varepsilon \bar{w}))u_\varepsilon\big) D_x^s\bar{w}\,dx\\
=\,&+2\Re \int _\Omega D_x^s\big( D_x(2\Re(u_\varepsilon \bar{w}))w+ \mathcal{H}(2\Re(u_\varepsilon \bar{w})) \partial_xw+D_x(|w|^2)u_\varepsilon+\mathcal{H}(|w|^2)\partial _xu_\varepsilon \big) D_x^s\bar{w}\,dx\\
&-2\Re \int _\Omega D_x^s\big( D_x(|u_\varepsilon |^2)w+\mathcal{H}(|u_\varepsilon |^2)\partial _xw+D_x(2\Re(u_\varepsilon \bar{w}))u_\varepsilon+\mathcal{H}(2\Re(u_\varepsilon \bar{w}))\partial_xu_\varepsilon\big) D_x^s\bar{w}\,dx,
\end{align*}
we first consider the lower-order part by subtracting the terms
\begin{align*}
I:=\,&+2\Re \int _\Omega \big( D_x^{s+1}(2\Re(u_\varepsilon \bar{w}))w+ \mathcal{H}(2\Re(u_\varepsilon \bar{w})) D_x^s\partial_xw\\
&\qquad\qquad\qquad\quad +D_x^{s+1}(|w|^2)u_\varepsilon+\mathcal{H}(|w|^2)D_x^s\partial _xu_\varepsilon \big) D_x^s\bar{w}\,dx\\
&-2\Re \int _\Omega \big( D_x^{s+1}(|u_\varepsilon |^2)w+\mathcal{H}(|u_\varepsilon |^2)D_x^s\partial _xw\\
&\qquad\qquad\quad +D_x^{s+1}(2\Re(u_\varepsilon \bar{w}))u_\varepsilon+\mathcal{H}(2\Re(u_\varepsilon \bar{w}))D_x^s\partial_xu_\varepsilon\big) D_x^s\bar{w}\,dx,
\end{align*}
in which all the derivatives $D_x^s\partial_x$ fall onto one function.
Using a commutator estimate
\[ \| D_x^s(fg)-gD_x^sf \|_{L^2}\lesssim \| f\|_{H^{s-1}}\| g\| _{H^{\frac{3}{2}+}}+\| f\| _{H^{\frac{1}{2}+}}\| g\|_{H^s},\]
interpolation, \eqref{apriori-e}, and Lemma~\ref{lem:L2diff-e}, the remaining lower-order part is bounded by
\begin{align}
&\| D_x^sw\|_{L^2}\Big( \big\| 2\Re (u_\varepsilon \bar{w})\big\| _{H^s}\big( \| w\|_{H^{s_0}}+\| u_\varepsilon\|_{H^{s_0}}\big) +\big\| 2\Re (u_\varepsilon \bar{w})\big\| _{H^{s_0}}\big( \| w\|_{H^{s}}+\| u_\varepsilon\|_{H^{s}}\big) \notag \\
&\qquad +\| |w|^2\|_{H^s}\| u_\varepsilon\|_{H^{s_0}}+\| |w|^2\|_{H^{s_0}}\| u_\varepsilon\|_{H^{s}}+\| |u_\varepsilon|^2\|_{H^s}\| w\|_{H^{s_0}}+\| |u_\varepsilon|^2\|_{H^{s_0}}\| w\|_{H^{s}}\Big) \notag \\
&\lesssim \| w\|_{H^s}\Big( \| w\|_{H^s}\big( \| w\|_{H^{s_0}}+\| u_\varepsilon\|_{H^{s_0}}\big) \| u_\varepsilon\|_{H^{s_0}} + \| w\|_{H^{s_0}}\big( \| w\|_{H^{s_0}}+\| u_\varepsilon\|_{H^{s_0}}\big) \| u_\varepsilon\|_{H^s}\Big) \notag \\
&\lesssim _M\| w\|_{H^s}^2+\| w\|_{H^s}\| w\|_{H^{s_0}}\| u_\varepsilon\|_{H^s}\notag \\
&\lesssim \| w\|_{H^s}^2+\| w\|_{H^s}^{1+\frac{s_0}{s}}\| w\|_{L^2}^{\frac{s-s_0}{s}}\| u_\varepsilon\|_{H^{s_0}}^{\frac{1}{s+1-s_0}}\| u_\varepsilon\|_{H^{s+1}}^{\frac{s-s_0}{s+1-s_0}}\notag \\
&\lesssim _M\| w\|_{H^s}^2+\big( \| w\|_{H^s}^{1+\frac{s_0}{s}}\big) ^{\frac{2s}{s+s_0}}+\big( \| w\|_{L^2}^{\frac{s-s_0}{s}}\| u_\varepsilon\|_{H^{s+1}}^{\frac{s-s_0}{s+1-s_0}}\big) ^{\frac{2s}{s-s_0}}
\lesssim \| w\|_{H^s}^2+\| w\|_{L^2}^2\| u_\varepsilon\|_{H^{s+1}}^{\frac{2s}{s+1-s_0}}\notag \\
&\lesssim _M\| w\|_{H^s}^2+\big( \| \phi_{\varepsilon}-\phi_{\varepsilon'} \|_{L^2}^2+\varepsilon ^2\| \phi_{\varepsilon}\|_{H^2}^2\big) \| \phi_{\varepsilon}\|_{H^{s+1}}^{\frac{2s}{s+1-s_0}}.\label{est2-e}
\end{align}
In what follows, we write $\mathcal{R}[w,u_\varepsilon]$ to denote any terms bounded by \eqref{est2-e}.

We next consider 
\begin{align*}
I=\,&2\Re \int _\Omega \big( \mathcal{H}(|w|^2-2\Re (u_\varepsilon\bar{w}))D_x^s\partial_xu_\varepsilon -D_x^{s+1}(|u_\varepsilon|^2)w\big) D_x^s\bar{w}\,dx \\
&+2\Re \int _\Omega \mathcal{H}(2\Re (u_\varepsilon \bar{w})-|u_\varepsilon|^2)D_x^s\partial _xw\cdot D_x^s\bar{w}\,dx\\
&+2\Re \int_\Omega \big( D_x^{s+1}(2\Re(u_\varepsilon \bar{w}))w +D_x^{s+1}(|w|^2)u_\varepsilon\big) D_x^s\bar{w}\,dx\\
&-2\Re \int _\Omega D_x^{s+1}(2\Re(u_\varepsilon \bar{w}))u_\varepsilon D_x^s\bar{w}\,dx\\
=:\,&I_1+I_2+I_3+I_4.
\end{align*}
For $I_1$, we use interpolation as in \eqref{est2-e} to obtain
\begin{align*}
I_1&\lesssim \| w\|_{H^s}\| w\|_{H^{s_0-1}}\big( \| w\|_{H^{s_0-1}}+\| u_\varepsilon\|_{H^{s_0-1}}\big) \| u_\varepsilon\|_{H^{s+1}}\\
&\lesssim _M\| w\|_{H^s}^{1+\frac{s_0-1}{s}}\| w\|_{L^2}^{\frac{s-s_0+1}{s}}\| u_\varepsilon\|_{H^{s+1}}\\
&\lesssim \| w\| _{H^s}^2+\| w\|_{L^2}^2\| u_\varepsilon\|_{H^{s+1}}^{\frac{2s}{s-s_0+1}}
~=\mathcal{R}[w,u_\varepsilon] .
\end{align*}
We also observe that an integration by parts implies
\[ I_2=-\int _\Omega D(2\Re (u_\varepsilon \bar{w})-|u_\varepsilon|^2)|D^sw|^2\,dx =\mathcal{R}[w,u_\varepsilon].\]
We estimate $I_3$ as follows:%
\footnote{In the estimate of $2\Re (u_\varepsilon D_x^s\bar{w})-D_x^s(2\Re (u_\varepsilon \bar{w}))$ the highest derivative $D_x^{s+1}u_\varepsilon$ appears (after transferring one derivative from $|w|^2$), which however can be treated as for $I_1$.}
\begin{align*}
I_3&=\int _\Omega \big( D_x^{s+1}(2\Re(u_\varepsilon \bar{w}))\cdot 2\Re (wD_x^s\bar{w}) +D_x^{s+1}(|w|^2)\cdot 2\Re (u_\varepsilon D_x^s\bar{w})\big) \,dx \\
&=\int _\Omega \big( D_x^{s+1}(2\Re(u_\varepsilon \bar{w}))\cdot D_x^s(|w|^2) +D_x^{s+1}(|w|^2)\cdot D_x^s(2\Re (u_\varepsilon \bar{w}))\big) \,dx +\mathcal{R}[w,u_\varepsilon] \\
&\leq 2\big\| D_x^{s+\frac{1}{2}}(|w(t)|^2)\big\| _{L^2}\big\| D_x^{s+\frac12}(2\Re(u_\varepsilon(t)\bar{w}(t)))\big\|_{L^2}+\mathcal{R}[w,u_\varepsilon] .
\end{align*}
The first term on the right-hand side is problematic, since it includes the highest derivative on $w$ and it does not have the negative sign.
Fortunately, this term can be absorbed into the negative term in \eqref{est1-e} and another one arising in $I_4$:
\begin{align*}
I_4&=-\int _\Omega D_x^{s+1}(2\Re(u_\varepsilon \bar{w}))\cdot 2\Re (u_\varepsilon D_x^s\bar{w})\,dx\\
&=-\int _\Omega D_x^{s+1}(2\Re(u_\varepsilon \bar{w}))\cdot D_x^s(2\Re (u_\varepsilon \bar{w}))\,dx +\mathcal{R}[w,u_\varepsilon] \\
&=-\big\| D_x^{s+\frac12}(2\Re(u_\varepsilon(t)\bar{w}(t)))\big\|_{L^2}^2+\mathcal{R}[w,u_\varepsilon] .
\end{align*}
From the above estimates, we obtain
\begin{align*}
I&\leq \big\| D_x^{s+\frac{1}{2}}(|w(t)|^2)\big\| _{L^2}^2+C_M\Big( \| w\|_{H^s}^2+\big( \| \phi_{\varepsilon}-\phi_{\varepsilon'} \|_{L^2}^2+\varepsilon ^2\| \phi_{\varepsilon}\|_{H^2}^2\big) \| \phi_{\varepsilon}\|_{H^{s+1}}^{\frac{2s}{s+1-s_0}}\Big) .
\end{align*}
Combining it with \eqref{est0-e}--\eqref{est2-e}, we arrive at
\begin{align*}
\frac{d}{dt}\| D_x^sw(t)\|_{L^2}^2&\lesssim _M \| w(t)\|_{H^s}^2+\varepsilon ^2\| \phi_{\varepsilon}\|_{H^{s+2}}^2\\
&\qquad +\big( \| \phi_{\varepsilon}-\phi_{\varepsilon'} \|_{L^2}^2+\varepsilon ^2\| \phi_{\varepsilon}\|_{H^2}^2\big) \| \phi_{\varepsilon}\|_{H^{s+1}}^{\frac{2s}{s+1-s_0}},\quad t\in [0,T_0].
\end{align*}
The claim follows from the $L^2$ estimate \eqref{est:L2diff-e} and the Gronwall inequality.
\end{proof}

\subsection{Proof of Theorem~\ref{thm:lwp-e}}

We are in a position to prove Theorem~\ref{thm:lwp-e}.

\begin{proof}[Proof of Theorem~\ref{thm:lwp-e}]
We divide the proof into several steps.

{\bf Step 1}: Approximation of the initial datum.
Let $s>\frac32$, and $s_0\in (\frac32,s\wedge 2)$ be fixed.
Take an initial datum $\phi\in H^s(\Omega)$.
We use the Bona-Smith type approximation of $\phi$ (see \cite{BS75}): for $\varepsilon\in (0,1)$, define $\phi_\varepsilon:=P_{\leq \varepsilon^{-\lambda}}\phi$ with $\lambda \in (0,\frac12)$.
We see that $\{ \phi _\varepsilon\}_\varepsilon \subset H^\infty (\Omega)$, $\phi_\varepsilon \to \phi$ in $H^s$, 
\[ \sup_{\varepsilon \in (0,1)}\| \phi_\varepsilon\|_{H^s}=\| \phi\|_{H^s},\qquad \sup_{\varepsilon \in (0,1)}\| \phi_\varepsilon\|_{H^{s_0}}=\| \phi\|_{H^{s_0}}.\]
Moreover, noticing
\begin{gather*}
\| \phi_{\varepsilon}\|_{H^{\sigma}}\lesssim \varepsilon ^{-\lambda (\sigma -s)}\| \phi \|_{H^s}\qquad (\sigma >s),\\
\| \phi_{\varepsilon}-\phi_{\varepsilon'} \|_{L^2}\lesssim \varepsilon^{\lambda s}\| \phi \|_{H^s}\qquad (0<\varepsilon'<\varepsilon<1),
\end{gather*}
we can show that
\begin{gather}\label{est:BonaSmith}
\varepsilon ^2\| \phi_{\varepsilon}\|_{H^{s+2}}^2+\big( \| \phi_{\varepsilon}-\phi_{\varepsilon'} \|_{L^2}^2+\varepsilon ^2\| \phi_{\varepsilon}\|_{H^2}^2\big) \| \phi_{\varepsilon}\|_{H^{s+1}}^{\frac{2s}{s+1-s_0}}\ \lesssim \varepsilon^\gamma
\end{gather}
for some $\gamma>0$, where the implicit constant depends only on $s$ and $\|\phi\|_{H^s}$.

{\bf Step 2}: Existence.
For $\varepsilon\in (0,1)$, let $u_\varepsilon$ be the smooth solution of the regularized equation \eqref{kdnls-e2} with $u_\varepsilon (0)=\phi_{\varepsilon}$.
From the local well-posedness for \eqref{kdnls-e2} and the \emph{a priori} estimate given in Proposition~\ref{prop:Hs-e}, the solution $u_\varepsilon$ exists on a time interval $[0,T_0]$ depending only on $s_0,\| \phi \|_{H^{s_0}}$ and not on $s,\| \phi\|_{H^s},\varepsilon$.
Moreover, $\{ u_\varepsilon \}_\varepsilon$ is bounded in $C([0,T_0];H^s)$.
By the difference estimate in Proposition~\ref{prop:Hsdiff-e} and \eqref{est:BonaSmith}, we have
\[ \| u_{\varepsilon}-u_{\varepsilon'}\|_{C([0,T_0];H^s)}^2\lesssim \| \phi_{\varepsilon}-\phi_{\varepsilon'}\|_{H^s}^2+\varepsilon^\gamma \qquad (0<\varepsilon'<\varepsilon<1),\]
where the implicit constant depends only on $s,s_0,\| \phi\|_{H^{s}}$.
This shows that $\{ u_\varepsilon \}_\varepsilon$ is Cauchy in $C([0,T_0];H^s)$, and hence converges to some $u\in C([0,T_0];H^s)$ as $\varepsilon \to 0$ and 
\begin{gather}\label{contdep-e}
\| u_{\varepsilon}-u\|_{C([0,T_0];H^s)}^2\lesssim \| \phi_{\varepsilon}-\phi\|_{H^s}^2+\varepsilon^\gamma \qquad (0<\varepsilon<1).
\end{gather}
It is straightforward to show that the limit $u$ solves \eqref{kdnls-e}--\eqref{ic-e}, either in the sense of distributions or as a solution to the associated integral equation.

{\bf Step 3}: Continuous dependence on initial data.
Let us take any sequence $\{ \phi^j\} _{j\in \mathbf{N}}\subset H^s$ converging to $\phi$. 
Let $u^j,u_\varepsilon^j$ be the solutions of \eqref{kdnls-e}, \eqref{kdnls-e2} with $u^j(0)=\phi^j$ and $u_\varepsilon^j(0)=P_{\leq \varepsilon ^{-\lambda}}\phi^j$, respectively.
Note that these solutions exist on the common interval $[0,T_0]$ corresponding to the data size $2\| \phi\|_{H^{s_0}}$ for sufficiently large $j$.
Applying Proposition~\ref{prop:Hsdiff-e} to the difference $u_\varepsilon ^j-u_\varepsilon$ (for fixed $\varepsilon =\varepsilon'\in (0,1)$) and using \eqref{est:BonaSmith}, we have
\[ \| u_\varepsilon ^j-u_\varepsilon\|_{C([0,T_0];H^s)}^2\lesssim \| P_{\leq \varepsilon ^{-\lambda}}(\phi^j-\phi)\| _{H^s}^2+\| P_{\leq \varepsilon ^{-\lambda}}(\phi^j-\phi)\| _{L^2}^2\varepsilon ^{-\delta}+\varepsilon ^\gamma ,\]
where $\delta :=\frac{2\lambda s}{s+1-s_0}>0$.
Combining it with \eqref{contdep-e}, we have
\begin{align*}
&\limsup _{j\to \infty}\| u^j-u\|_{C([0,T_0];H^s)}^2\\
&\lesssim \limsup _{j\to \infty}\big( \| P_{>\varepsilon ^{-\lambda}}\phi^j\|_{H^s}^2+ \| P_{>\varepsilon ^{-\lambda}}\phi\|_{H^s}^2+\varepsilon ^\gamma +\| \phi^j-\phi\|_{H^s}^2+\| \phi^j-\phi\|_{L^2}^2\varepsilon ^{-\delta}\big) \\
&\lesssim \sup _{j\in \mathbf{N}}\| P_{>\varepsilon ^{-\lambda}}\phi^j\|_{H^s}^2+ \| P_{>\varepsilon ^{-\lambda}}\phi\|_{H^s}^2+\varepsilon ^\gamma \qquad (0<\varepsilon<1).
\end{align*}
The first term on the right-hand side vanishes as $\varepsilon \to 0$, since $\{ \phi^j\}_j$ is precompact in $H^s$.
Therefore, letting $\varepsilon \to 0$ shows the convergence $u^j\to u$ in $C([0,T_0];H^s)$.

{\bf Step 4}: Uniqueness.
Let $u,v\in C([0,T];H^s)$ be two solutions of \eqref{kdnls-e} on $[0,T]$ with the common initial datum.
For $N\in \mathbf{N}$, let $u_N:=P_{\leq N}u$ and $v_N:=P_{\leq N}v$.
Then, $u_N$ and $v_N$ are smooth and satisfy
\[ \partial_tu_N=i\partial_x^2u_N-P_{\leq N}\partial_x\big[ \mathcal{H}(|u|^2)u\big] ,\quad (t,x)\in [0,T]\times \Omega .\]
Similarly to the proof of Lemma~\ref{lem:L2diff-e}, we estimate the time derivative of the $L^2$ norm of $w_N(t):=u_N(t)-v_N(t)$ as
\begin{align*}
&\frac{d}{dt}\| w_N\| _{L^2}^2~=~-2\Re \int _\Omega P_{\leq N}\partial_x \big( \mathcal{H}(|u|^2)u-\mathcal{H}(|v|^2)v\big) \bar{w}_N\,dx\\
&\leq 2\big\| P_{\leq N}\partial_x \big( \mathcal{H}(|u|^2)u-\mathcal{H}(|v|^2)v\big) -\partial_x \big( \mathcal{H}(|u_N|^2)u_N-\mathcal{H}(|v_N|^2)v_N\big) \big\| _{L^2}\| w_N\| _{L^2}\\
&\quad -2\Re \int _\Omega \partial_x \big( \mathcal{H}(|u_N|^2)u_N-\mathcal{H}(|v_N|^2)v_N\big) \bar{w}_N\,dx\\
&\leq 2\big\| P_{>N}\big( \mathcal{H}(|u|^2)u-\mathcal{H}(|v|^2)v\big) \big\| _{H^1}\| w_N\| _{L^2}\\
&\quad +2\big\| \big( \mathcal{H}(|u|^2)u-\mathcal{H}(|v|^2)v\big) -\big( \mathcal{H}(|u_N|^2)u_N-\mathcal{H}(|v_N|^2)v_N\big) \big\| _{H^1}\| w_N\| _{L^2}\\
&\quad -\big\| D_x^{\frac{1}{2}}(2\Re (u_N\bar{w}_N))\big\| _{L^2}^2+C\big( \| u_N\|_{H^{s_0}}+\| v_N\|_{H^{s_0}}\big) ^2\| w_N\|_{L^2}^2.
\end{align*}
For the first two terms on the right-hand side, we use the regularity of $u,v$ to have
\begin{gather*}
\begin{aligned}
&\big\| P_{>N}\big( \mathcal{H}(|u|^2)u-\mathcal{H}(|v|^2)v\big) \big\| _{H^1}\\
&\quad \lesssim N^{-(s-1)}\big\| \mathcal{H}(|u|^2)u-\mathcal{H}(|v|^2)v\big\|_{H^s}~\lesssim N^{-(s-1)}M^3,
\end{aligned}\\
\begin{aligned}
&\big\| \big( \mathcal{H}(|u|^2)u-\mathcal{H}(|v|^2)v\big) -\big( \mathcal{H}(|u_N|^2)u_N-\mathcal{H}(|v_N|^2)v_N\big) \big\| _{H^1}\\
&\quad \lesssim \big( \| u\|_{H^1}^2\| P_{>N}u\|_{H^1}+\| v\|_{H^1}^2\| P_{>N}v\|_{H^1}\big) ~\lesssim N^{-(s-1)}M^3,
\end{aligned}
\end{gather*}
where $M:=\max \{ \| u\|_{C([0,T];H^s)},\| v\|_{C([0,T];H^s)}\}<\infty$.
Consequently, we have
\begin{align*}
\frac{d}{dt}\| w_N\| _{L^2}^2&\lesssim N^{-(s-1)}M^3\| w_N\| _{L^2}+M^2\| w_N\| _{L^2}^2 \\
&\lesssim M^2\| w_N\| _{L^2}^2 +N^{-2(s-1)}M^4,\qquad t\in [0,T].
\end{align*}
Applying the Gronwall inequality (and noting that $u(0)=v(0)$), we obtain
\[ \sup_{t\in [0,T]}\big\| P_{\leq N}\big( u(t)-v(t)\big) \big\|_{L^2}^2\leq N^{-2(s-1)}M^2e^{CM^2T}.\]
Letting $N\to \infty$, we conclude that $u(t)=v(t)$ on $[0,T]$.

This is the end of the proof of Theorem~\ref{thm:lwp-e}.
\end{proof}

%%%%%%%%%%%%%%%%%%%%%%%%%%%%%%%%%%%%%%%%%%%%%%%%%%%%%%%%%%%%%%%%%%%
%     Acknowledgements
%%%%%%%%%%%%%%%%%%%%%%%%%%%%%%%%%%%%%%%%%%%%%%%%%%%%%%%%%%%%%%%%%%%

\bigskip
\noindent
{\bf Acknowledgements}.
The first author N.K is partially supported by JSPS KAKENHI Grant-in-Aid for Young Researchers (B) (16K17626) and Grant-in-Aid for Scientific Research (C) (20K03678).
The second author Y.T is partially supported by JSPS KAKENHI Grant-in-Aid for Scientific Research (B) (17H02853).

%%%%%%%%%%%%%%%%%%%%%%%%%%%%%%%%%%%%%%%%
%%%%%%%%%%%%%%%%%%%%%%%%%%%%%%%%%%%%%%%%
%%%%%%%%%%%%%%%%%%%%%%%%%%%%%%%%%%%%%%%%

\end{document}